\numberwithin{equation}{section}
\theoremstyle{plain}
\newtheorem{thm}{Theorem}[section]
\newtheorem{cor}[thm]{Corollary}
\newtheorem{lem}[thm]{Lemma}
\newtheorem{prop}[thm]{Proposition}
\theoremstyle{definition}
\newtheorem{defn}[thm]{Definition}
\theoremstyle{remark}
\newtheorem{rem}[thm]{Remark}
\newtheorem*{notation}{Notation}
\newcommand{\op}{\operatorname}
\newcommand{\N}{\mathbb N}
\newcommand{\Z}{\mathbb Z}
\newcommand{\R}{\mathbb R}
\newcommand{\W}{\mathcal W}
\newcommand{\e}{\varepsilon}
\newcommand{\wt}{\widetilde}
\newcommand{\diam}{\op{diam}}
\newcommand{\ls}{\lesssim}
\newcommand{\gs}{\gtrsim}
\newcommand{\loc}{\mathrm{loc}}
\newcommand{\supp}{\op{supp}}
\newcommand{\ch}{\check}
\newcommand{\X}{\mathcal{X}}
\newcommand{\CC}{\mathcal{C}}
\renewcommand{\t}{\tilde}
\newcommand{\la}{\lambda}
\newcommand{\p}{\partial}
\def\Xint#1{\mathchoice
{\XXint\displaystyle\textstyle{#1}}%
{\XXint\textstyle\scriptstyle{#1}}%
{\XXint\scriptstyle\scriptscriptstyle{#1}}%
{\XXint\scriptscriptstyle\scriptscriptstyle{#1}}%
\!\int}
\def\XXint#1#2#3{{\setbox0=\hbox{$#1{#2#3}{\int}$ }
\vcenter{\hbox{$#2#3$ }}\kern-.6\wd0}}
\def\dashint{\Xint-}
\title[Patterson-Sullivan construction and global leaf geometry for Anosov flows]{The Patterson-Sullivan construction and global leaf geometry for Anosov flows}
\author[C.~Butler]{Clark Butler}\address{\noindent e-mail: \rm
  \texttt{butlerphi@gmail.com}}
\begin{document}
\begin{abstract}
We give a new construction of the measure of maximal entropy for transitive Anosov flows through a method analogous to the construction of Patterson-Sullivan measures in negative curvature. In order to carry out our procedure we prove several new results concerning the global geometry of the leaves of the center-unstable foliation of an Anosov flow. We show that the universal covers of the center-unstable leaves are Gromov hyperbolic in the induced Riemannian metric and their relative Gromov boundaries canonically identify with the unstable leaves within in such a way that the Hamenst\"adt metrics on these leaves correspond to visual metrics on the relative Gromov boundary. These center-unstable leaves are then uniformized according to a technique inspired by methods of Bonk-Heinonen-Koskela which, in addition to its utility in the construction itself, also leads to rich analytic properties for these uniformized leaves such as supporting a Poincar\'e inequality. As a corollary we obtain that the fundamental group of a closed Riemannian manifold with Anosov geodesic flow must be Gromov hyperbolic.  
\end{abstract}

\maketitle

\section{Introduction}\label{sec:intro}

The Patterson-Sullivan theory of measures on the boundary at infinity associated to group actions on hyperbolic spaces has many rich interactions with the dynamics of the geodesic flow on the associated quotient space. We refer to the introduction of \cite{PPS15} for a comprehensive account of recent developments in this area. Specializing to the realm of cocompact group actions and geodesic flows on closed negatively curved Riemannian manifolds, this interplay leads to a trio of related characterizations of the measure of maximal entropy in the negatively curved setting: as a product of leaf measures equivalent to the Patterson-Sullivan measures \cite{Rob03}, as a limit in average of measures concentrated on periodic orbits \cite{Bow73}, and as a product of conditional measures that arise from pushing forward and renormalizing Lebesgue measure on unstable leaves \cite{Mar70}.  

When we transition to the general setting of Anosov flows the second and third characterizations described above still go through but the interpretation of the conditionals of the measure of maximal entropy on unstable leaves as arising from Patterson-Sullivan measures is lost. Indeed due to the poor overall understanding of the generic structure of Anosov flows in higher dimensions there may not be any group action on a universal cover for us to exploit in the first place. Nevertheless it is still desirable to have some version of the Patterson-Sullivan construction available for these flows, both to gain a new perspective on existing methods and to better handle certain issues that are delicate to deal with using current methods (such as leaf measures).  

Our starting point is the critical observation of Hamenst\"adt \cite{Ham89} which was later expanded upon by Hasselblatt \cite{Has4} that the conditionals of the Bowen-Margulis measure of maximal entropy on unstable manifolds are the Hausdorff measures of a family of metrics on these leaves, known now as \emph{Hamenst\"adt metrics}. This suggests that these conditionals could be constructed by establishing certain Ahlfors regularity properties for the Hamenst\"adt metrics first. This will be the path that we take in our construction. In a closely related construction Climenhaga-Pesin-Zelerowicz harnessed the family of dynamical metrics used to define Bowen balls in common constructions related to topological entropy to prove that the measure of maximal entropy can be built using the \emph{Caratheodory dimension structure} defined by the family as a whole \cite{CPZ19}, \cite{CPZ20}. 

Despite our dynamical motivations, the dynamical content of the proofs in this paper is quite limited. Instead we draw inspiration from the fields of coarse geometry and analysis on metric spaces for our methods. The bulk of this work is devoted to establishing that the dynamically invariant foliations associated to an Anosov flow are an ideal setting to apply these techniques. We will show that the center-unstable manifolds of an Anosov flow are Gromov hyperbolic spaces with their induced Riemannian metrics and, through a uniformization technique inspired by the work of Bonk-Heinonen-Koskela \cite{BHK}, we will show that the Riemannian metrics on these manifolds can be conformally rescaled to produce a uniform metric space satisfying a doubling condition and supporting a Poincar\'e inequality with respect to a renormalization of the Riemannian volume. These efforts were done in preparation for the development of a proper function theory for the Hamenst\"adt metrics which we unfortunately do not have the space to elaborate on in this paper. See \cite{Bu21} for some analysis in a more abstract setting that will be applied to the realm of Hamenst\"adt metrics associated to Anosov flows in a future work, and \cite{BBS21} for the work that inspired \cite{Bu21}. 

We move now to stating our results in detail. For this we will need to recall the basic structural properties of an Anosov flow and to fix some notation. These properties can be found in any basic reference, e.g. \cite{FH19}.  For $r \geq 3$ we let $M$ be a $C^{r+1}$ closed Riemannian manifold and let $f^{t}:M \rightarrow M$ be a $C^r$ transitive Anosov flow on a $C^{r+1}$ Riemannian manifold $M$. To be precise we mean a $C^{r+1}$ closed manifold $M$ equipped with a $C^r$ Riemannian metric, and we mean that $f^{t}$ is $C^r$ in the sense that its generator $\dot{f}:M \rightarrow TM$ is $C^r$ vector field on $M$. The Anosov condition means that there is a $Df^{t}$-invariant splitting $TM = E^{u} \oplus E^{c} \oplus E^{s}$ of the tangent bundle of $M$ such that $E^{c} = \op{span}(\dot{f})$ and there are constants $C_{u},C_{s} \geq 1$, $0 < a_{u} \leq A_{u}$ and $0 < a_{s} \leq A_{s}$ such that for any $v \in E^{u}$ and $t \geq 0$,
\begin{equation}\label{unstable expansion}
C_{u}^{-1}e^{a_{u}t}\|v\| \leq \|Df^{t}v\| \leq C_{u}e^{A_{u}t}\|v\|,
\end{equation}
and for any $v \in E^{s}$ and $t \geq 0$,
\begin{equation}\label{stable expansion}
C_{s}^{-1}e^{-A_{s}t}\|v\| \leq \|Df^{t}v\| \leq C_{s}e^{-a_{s}t}\|v\|.
\end{equation}
We write $E^{cu} = E^{u} \oplus E^{c}$ and $E^{cs} = E^{s} \oplus E^{c}$. In addition to these inequalities we also always assume that $\dot{f}$ is a unit vector field and the splitting $TM = E^{u} \oplus E^{c} \oplus E^{s}$ is orthogonal. Neither of these assumptions are restrictive as one can simply modify the Riemannian metric if needed so that they hold.

There are $f^{t}$-invariant foliations $\W^{*}$, $* \in \{u,c,s,cu,cs\}$ tangent to each of the subbundles $E^{*}$ with uniformly $C^r$ leaves. The foliations $\W^{*}$ are in general only H\"older continuous, however the foliation $\W^{c}$ tangent to $\dot{f}$ is $C^r$. An in-depth discussion on the topic of foliation regularity in this context (in particular a precise description of the notion of having uniformly $C^r$ leaves) can be found in \cite[Section 2]{Bu17}. 

Common examples of Anosov flows include suspension flows of hyperbolic toral automorphisms and the geodesic flows of negatively curved Riemannian manifolds. In dimension 3 the picture is much  more diverse with various surgery constructions available to produce Anosov flows on new manifolds from existing ones.

Our first theorem establishes the Gromov hyperbolicity of the center-unstable manifolds of an Anosov flow and identifies their Gromov boundary relative to a distinguished point $\ast$ at infinity with an unstable manifold (in fact any unstable manifold) within that same leaf. We refer to Section \ref{sec:expanding cone} for more details on the definitions of Gromov hyperbolicity, the relative Gromov boundary, and the Hamenst\"adt metric. 

As this theorem does not require the full structure of an Anosov flow detailed above, it is convenient to isolate the structure that is needed for applications. With $r \geq 3$ as above we let $f^{t}:M \rightarrow M$ be a $C^r$ flow on a closed $C^{r+1}$ Riemannian manifold $M$. We assume that we have a foliation $\W^{cu}$ of $M$ with uniformly $C^r$ leaves such that the tangent bundle $E^{cu}:=T\W^{cu}$ to the leaves of the foliation splits orthogonally as $E^{cu} = E^{u} \oplus E^{c}$ with $E^{c} = \op{span}(\dot{f})$ and \eqref{unstable expansion} holding on $E^{u}$. In this case we say $\W^{cu}$ is a \emph{weak expanding foliation} for $f^{t}$. See \cite[Section 2.7]{Bu17} for further details. While it's not stated as part of the definition there is also a foliation $\W^{u}$ that is tangent to $E^{u}$ and $C^r$ along $\W^{cu}$ \cite[Proposition 2.23]{Bu17}. Finally, below for $y \in \W^{cu}(x)$ and $t \geq 0$ we write $\gamma_{y}(t) = f^{t}y$ for the geodesic ray in $\W^{cu}(x)$ starting from $y$ that follows the flow of $f^{t}$.

\begin{thm}\label{thm:expanding hyperbolic}
Let $f^{t}:M \rightarrow M$ be a $C^r$ flow on a closed $C^{r+1}$ Riemannian manifold $M$ ($r \geq 3$) with weak expanding foliation $\W^{cu}$. Then the universal cover $\wt{\W}^{cu}(x)$ of each leaf $\W^{cu}(x)$ is Gromov hyperbolic in the induced metric from $M$ with hyperbolicity constant $\delta = \delta(C_{u},a_{u},A_{u})$. The map $y \rightarrow \gamma_{y}$ for $y \in \W^{u}(x)$ canonically identifies $\W^{u}(x)$ with the relative Gromov boundary $\p_{*}\wt{\W}^{cu}(x)$.
\end{thm}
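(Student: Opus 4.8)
The plan is to exploit the expansion property \eqref{unstable expansion} to build, inside each leaf $\wt\W^{cu}(x)$, the structure of a ``warped product over an $\R$-tree direction,'' and then verify the thin-triangles condition by hand. The flow lines $\gamma_y$ furnish a canonical distinguished geodesic ray for every point, all asymptotic to a common point $\ast$ at infinity (since flowing backward contracts the unstable directions exponentially, by the reversed form of \eqref{unstable expansion}, so any two such rays become exponentially close in backward time). The key quantitative input is that for $p,q$ on the same unstable leaf $\W^u(z)$, the distance $d(f^{-t}p, f^{-t}q)$ decays like $e^{-a_u t}$ up to the multiplicative constant $C_u$, while along a single flow line distance is just elapsed time. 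This gives, for points $p \in \W^u(z_1)$ and $q \in \W^u(z_2)$ lying over a common ``height'' (i.e. having flowed the same amount), an estimate of Busemann/Gromov-product type: the Gromov product $(p \mid q)_{\ast}$ is comparable to the time $t$ at which the backward orbits of $p$ and $q$ first come within a fixed distance $O(1)$ of each other. The first step is to make this precise: define $\beta_{\ast}(p,q)$ to be (roughly) this coalescence time and show $|(p\mid q)_{\ast} - \beta_{\ast}(p,q)| \le \delta_0(C_u,a_u,A_u)$.

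Second, I would establish that geodesics in $\wt\W^{cu}(x)$ are, up to bounded Hausdorff distance, concatenations of at most one ``backward flow segment'' and one ``forward flow segment'' joined by a short transverse hop along an unstable leaf: to go from $p$ to $q$, flow both backward until their unstable-holonomy distance is $O(1)$, make the short transverse jump, then flow forward. That this near-geodesic has length within an additive constant of $d(p,q)$ follows from the exponential contraction: a transverse detour of size $s$ at height $-t$ costs at least $\sim \min(s, a_u^{-1}\log(1/s)\cdot(\text{stuff}))$ to shortcut, and the exponential rates force the optimal strategy to descend. This is essentially a leafwise version of the description of geodesics in the hyperbolic plane (or in warped products $\R \times_{e^t} Y$), and it is the technical heart of the argument. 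The thin-triangles inequality then reduces to a short computation with these three-segment paths: given $p,q,r$, their pairwise coalescence heights satisfy the tree-like (ultrametric-up-to-$\delta$) inequality because the ``coalescence height'' is essentially a Gromov product on the $\R$-tree obtained by collapsing unstable leaves, and any side of the triangle stays within $\delta$ of the union of the other two. The hyperbolicity constant depends only on $C_u, a_u, A_u$ because those are the only constants entering the contraction estimates.

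Third, having hyperbolicity, I would identify the relative boundary $\p_{\ast}\wt\W^{cu}(x)$. Every point of $\wt\W^{cu}(x)$ flows forward to escape every compact set and, by the previous paragraph, any geodesic ray not equivalent to $\gamma_{\ast}$ (the class of $\ast$) is asymptotic to some forward flow ray $\gamma_y$; conversely distinct $y, y' \in \W^u(x)$ give non-asymptotic rays since $d(f^t y, f^t y') \ge C_u^{-1} e^{a_u t} d_{\W^u}(y,y') \to \infty$ uniformly, hence their Gromov product relative to $\ast$ is bounded and they define distinct boundary points. That the assignment $y \mapsto [\gamma_y]$ is onto $\p_{\ast}\wt\W^{cu}(x) \setminus\{[\ast]\}$ uses the three-segment description again: pick a sequence $p_n \to \xi$, descend each to its coalescence height, extract a limiting unstable leaf. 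Lifting to the universal cover is harmless here because $\W^u(x)$ is simply connected and embeds in $\wt\W^{cu}(x)$ (the unstable foliation inside a center-unstable leaf has no holonomy, as $E^u$ is a genuine subfoliation and the leaves are contractible along the flow).

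I expect the main obstacle to be the second step: controlling geodesics in $\wt\W^{cu}(x)$ when the unstable foliation is only H\"older transversally, so one cannot naively write the leaf as a smooth warped product. The workaround is to avoid global coordinates entirely and argue purely with the two metric estimates we do control — elapsed-time along flow lines, and the $C_u e^{-a_u t}$ contraction of unstable-holonomy distance under backward flow — deriving lower bounds on lengths of arbitrary paths by slicing them at dyadic heights and summing, rather than by solving a variational problem. A secondary subtlety is that $a_u$ and $A_u$ may differ, so forward flow need not contract anything; but since we only ever use \emph{backward} contraction of $E^u$ (equivalently, forward expansion) and the trivial isometry-along-flow estimate, the mismatch only affects constants, not the structure of the argument.
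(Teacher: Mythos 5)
Your plan is correct in outline and arrives at the same geometric picture as the paper, but the technical engine is genuinely different. The paper does not argue by coarse slicing: it first removes the constant $C_u$ via a Lyapunov-type change of metric (Proposition~\ref{Lyapunov metric}, at the cost of a derivative), and then the entire hyperbolicity argument is carried out by smooth Riemannian convexity of the height function $b$ (your ``$\R$-tree direction''). Concretely, the expansion lower bound yields $\nabla^{2}b|_{E^{u}} \ge a g$ (Lemma~\ref{convex}), from which one gets the clean ODE $b_{\gamma}'' \ge a\sqrt{1-(b_{\gamma}')^{2}}$ along any geodesic (Lemma~\ref{convex inequality}). Integrating this gives, with explicit constants, exactly your claim that geodesics descend to a unique minimum height $\doteq (x|y)_{b}$ and then re-ascend, with the horospherical projection of each half having uniformly bounded length (Lemmas~\ref{horosphere projection} and \ref{inf busemann}). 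The $\delta$-inequality for Gromov products based at $b$ then drops out almost immediately (Proposition~\ref{hyperbolicity cone}), and the conversion to genuine $\delta$-hyperbolicity with basepoint in $\X$ uses the cross-difference/Tetrahedron-Lemma device rather than a direct thin-triangles check. The boundary identification (Proposition~\ref{vert identify}) matches yours closely: injectivity from expansion, surjectivity by projecting a sequence to $\W^{u}(x)$ and extracting a Cauchy limit. What the paper's convexity route buys is that the hard step you flagged (controlling arbitrary competitor paths, not just the three-segment ones) comes for free from the Hessian bound — you never have to slice and sum — and the constant $\delta$ falls out of an elementary ODE integration. What your coarse route would buy, if carried through, is independence from the $C^{3}$ regularity needed to make sense of $\nabla^{2}b$ and to run Proposition~\ref{Lyapunov metric}; the paper itself notes such an extension to $C^{0}$ flows that are only uniformly $C^{r}$ along leaves, but does not pursue it. One caution on your plan: the ``slice at dyadic heights and sum'' lower bound on path length is the genuine content and you should not underestimate it — the paper's proof of the lower bound in Lemma~\ref{uniform distance} (in the uniformization section) is essentially the rigorous version of that slicing computation, and it requires the projection estimate \eqref{projection comparison} as a nontrivial input. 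Also note that you only need the lower expansion bound $a_{u}$ for the convexity/hyperbolicity part, exactly as you observe; the upper bound $A_{u}$ enters only later, for the metric comparisons.
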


Passage to the universal cover is required when the leaf $\W^{cu}(x)$ contains a periodic orbit for $f^{t}$; this is further described in the proof near the end of Section \ref{sec:expanding cone}.

Theorem \ref{thm:expanding hyperbolic} of course applies to the center-unstable foliation $\W^{cu}$ of an Anosov flow in particular, but it also applies in a number of other cases, such as when the unstable bundle $E^{u}$ of an Anosov flow $f^{t}$ has a dominated splitting into two subbundles $E^{u} = E^{uu} \oplus E^{wu}$ in which case there is always a weak expanding foliation for $f^{t}$ tangent to $E^{c} \oplus E^{uu}$ (here $E^{uu}$ is the subbundle along which the strongest expansion occurs). This situation occurs naturally in the context of suspension flows for hyperbolic toral automorphisms with multiple eigenvalues of distinct absolute value as well as for the geodesic flows of nonconstant negative curvature locally symmetric spaces. Furthermore it is stable under $C^1$ small perturbations, leading to a broad class of interesting weak expanding foliations for flows that Theorem \ref{thm:expanding hyperbolic} applies to. 

It is important to note for future work that orbit equivalences between flows $f^{t}$ and $g^{t}$ with weak expanding foliations $\W^{cu,f}$ and $\W^{cu,g}$ induce leafwise quasi-isometries between the associated Gromov hyperbolic metrics on their leaves (Proposition \ref{quasi orbit}). In other words, quasi-isometry invariants of the leaves define orbit equivalence invariants for the flows.

By applying Theorem \ref{thm:expanding hyperbolic} to the center-unstable foliation of a closed Riemannian manifold with Anosov geodesic flow we derive the following corollary.

\begin{cor}\label{Anosov metric hyperbolic}
Suppose that a smooth closed Riemannian manifold $M$ has Anosov geodesic flow. Then $\pi_{1}(M)$ is Gromov hyperbolic. 
\end{cor}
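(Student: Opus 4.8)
The plan is to derive Corollary~\ref{Anosov metric hyperbolic} from Theorem~\ref{thm:expanding hyperbolic} by exhibiting $\pi_1(M)$ as acting geometrically (properly discontinuously, cocompactly, by isometries) on one of the Gromov hyperbolic spaces furnished by that theorem, and then invoking the Milnor--\v{S}varc lemma together with the fact that a group quasi-isometric to a Gromov hyperbolic space is itself Gromov hyperbolic. Concretely, let $g^t$ be the geodesic flow on the unit tangent bundle $N = T^1 M$, which is a transitive Anosov flow when the geodesic flow of $M$ is Anosov (transitivity follows since $N$ is connected and the nonwandering set of an Anosov flow on a connected manifold whose stable/unstable foliations are robustly transitive is all of $N$; for geodesic flows this is classical). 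Fix a center-unstable leaf $\W^{cu}(v)$ of $g^t$ through a point $v \in N$. By Theorem~\ref{thm:expanding hyperbolic}, its universal cover $\wt{\W}^{cu}(v)$ is Gromov hyperbolic with a uniform hyperbolicity constant.

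The key geometric input is that $\W^{cu}(v)$ is, up to the flow direction, essentially a copy of the universal cover $\wt M$ of $M$. More precisely: the projection $\pi \colon N \to M$ restricted to a single unstable leaf $\W^u(v)$ is a local diffeomorphism onto $M$, and its lift identifies $\W^u(v)$ (or rather its universal cover) with $\wt M$; the center-unstable leaf is then a line bundle (the flow direction) over this base. The precise statement I would use is the standard one for Anosov geodesic flows: the center-unstable leaf $\W^{cu}(v)$ of $g^t$ is diffeomorphic to $\wt M \times \R$ via the map $(x,t) \mapsto g^t(\wt{\exp}\text{-horosphere lift})$, and under this diffeomorphism the $\pi_1(M)$-action on $\wt M$ (by deck transformations, which commute with the geodesic flow on $\wt N = T^1 \wt M$) induces an action on $\wt{\W}^{cu}(v)$ by isometries of the induced metric, because deck transformations are isometries of $\wt M$ and hence of $T^1\wt M$ with the Sasaki-type metric, and they preserve $\W^{cu}$. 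This action is properly discontinuous and cocompact since it is so on $\wt M$ and the $\R$-factor contributes a cocompact orthogonal direction (the quotient $\W^{cu}(v)/(\text{stabilizer})$ sits inside the compact manifold $N$). Therefore $\pi_1(M)$ acts geometrically on $\wt{\W}^{cu}(v)$; I should be slightly careful here about whether the relevant group is $\pi_1(M)$ itself or the stabilizer of the leaf in $\pi_1(N)$, but since $\pi_1(N)$ is an extension of $\pi_1(M)$ by $\pi_1(S^{n-1}) = \Z/2$ or $0$ (for $n \geq 3$, resp. the fiber $S^1$ case in dimension $2$ giving $\Z$), the relevant stabilizer is commensurable with $\pi_1(M)$, and commensurable groups are quasi-isometric, so Gromov hyperbolicity transfers.

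By the Milnor--\v{S}varc lemma, $\pi_1(M)$ (or a finite-index/commensurable subgroup, sufficing for the conclusion) is quasi-isometric to $\wt{\W}^{cu}(v)$ with its induced length metric, which is a geodesic metric space since $\W^{cu}(v)$ is a complete Riemannian manifold and passing to the universal cover preserves this. Gromov hyperbolicity is a quasi-isometry invariant among geodesic metric spaces, so from Theorem~\ref{thm:expanding hyperbolic} we conclude $\pi_1(M)$ is Gromov hyperbolic, as desired.

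The main obstacle I anticipate is the bookkeeping in the second paragraph: verifying carefully that the deck group action on $\wt M$ genuinely descends to a geometric action on $\wt{\W}^{cu}(v)$ rather than on some auxiliary space, and correctly tracking the passage between $\pi_1(M)$, $\pi_1(N)$, and the leaf stabilizer. One must confirm (i) that the induced metric on $\W^{cu}(v)$ as a submanifold of $N$ is quasi-isometric (indeed bi-Lipschitz on the nose, after the metric normalizations assumed in the paper) to the product metric on $\wt M \times \R$ — this uses that $E^{cu} = E^u \oplus E^c$ is an orthogonal splitting and that the unstable foliation restricted to $\W^{cu}$ is uniformly $C^r$, so the leaf geometry is comparable to $\wt M$ — and (ii) that the quotient of $\wt{\W}^{cu}(v)$ by the leaf stabilizer is compact, which follows because it embeds as an immersed submanifold of the compact manifold $N$ with the stabilizer acting cocompactly. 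Once these identifications are in hand, the remaining invocations (Milnor--\v{S}varc, QI-invariance of hyperbolicity, commensurability invariance) are entirely standard and require no further work.
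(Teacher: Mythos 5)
Your broad strategy --- deduce the corollary from Theorem~\ref{thm:expanding hyperbolic} via the Milnor--\v{S}varc lemma and quasi-isometry invariance of hyperbolicity --- is the right one, but the intermediate identification has a dimension error that propagates into your group-theoretic bookkeeping. With $\dim M = n$ we have $\dim T^1M = 2n-1$, $\dim E^u = \dim E^s = n-1$, so $\dim \W^{cu}(v) = n = \dim M$; the center-unstable leaf already has the dimension of $M$, and there is no extra $\R$-factor. Consequently the claim that $\pi$ restricted to a single unstable leaf $\W^u(v)$ is a local diffeomorphism onto $M$ cannot hold (it is a map from an $(n-1)$-manifold to an $n$-manifold), and the asserted diffeomorphism $\W^{cu}(v) \cong \wt M \times \R$ is off by one dimension. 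The correct statement is that the restriction of $\pi$ to $\W^{cu}(v)$ is a local diffeomorphism onto $M$, and that the universal cover $\wt{\W}^{cu}(v)$ is diffeomorphic to $\wt M$ itself.

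The paper's proof is shorter and avoids both the dimension issue and the commensurability bookkeeping you flag. It cites Klingenberg's theorem that $E^u$ and $E^s$ (hence $E^{cu}$) are transverse to the vertical subbundle $\ker D\pi$ for an Anosov geodesic flow; this makes $D\pi|_{E^{cu}}$ a fiberwise isomorphism, so $\pi|_{\W^{cu}(x)} \colon \W^{cu}(x) \to M$ is a local diffeomorphism that is locally biLipschitz (with uniform constants, by compactness). Surjectivity and hence the covering property follow from connectedness, and lifting to universal covers gives a globally biLipschitz diffeomorphism $\wt{\W}^{cu}(x) \to \wt M$. This directly transfers Gromov hyperbolicity to $\wt M$, and one applies Milnor--\v{S}varc to the deck action of $\pi_1(M)$ on $\wt M$, where properness and cocompactness are automatic. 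Your route tries to realize a geometric action of $\pi_1(M)$ (or a commensurable group) on the leaf itself, which requires the additional identifications you anticipate and is where your dimension slip lives. (Minor side remark: $\pi_1(S^{n-1})$ is $\Z$ for $n=2$ and trivial for $n \geq 3$, never $\Z/2$.) If you patch the dimension error and replace the group-action scaffolding with the direct biLipschitz comparison through Klingenberg transversality, you recover the paper's argument.
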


Since the Anosov condition is $C^1$ open among flows \cite[Chapter 5.4]{FH19}, there is no loss of generality in only considering smooth flows in the corollary. 

From here it is important to assume that $C_{u} = C_{s} = 1$ in \eqref{unstable expansion} and \eqref{stable expansion}, which can always be done through a modification of the Riemannian metric (Proposition \ref{Lyapunov metric}). Theorem \ref{thm:expanding hyperbolic} is proved by establishing the corresponding results in an abstract setting that we refer to as \emph{expanding cones}. These objects should be thought of as like the individual center-unstable manifolds of an Anosov flow ripped out of their original context. Theorem \ref{thm:expanding hyperbolic} leads to our construction of the measure of maximal entropy through the \emph{uniformization} procedure for expanding cones that we carry out in Section \ref{sec:uniform cone}. One should think of this procedure as mimicking the process that produces the Euclidean metric on the upper half plane model of hyperbolic space from the constant curvature $K \equiv-1$ hyperbolic metric.  

We remain in the setting of Theorem \ref{thm:expanding hyperbolic} specialized to the center-unstable foliation of an Anosov flow. For $x \in M$ we write $\X_{x} = \wt{\W}^{cu}(x)$ for the universal cover of the center-unstable leaf through $x$ and write $d_{x}$ for the Riemannian metric on $\X_{x}$ induced from $M$, and $\mu_{x}$ the Riemannian volume on $\X_{x}$ (these should be understood as lifted from $\W^{cu}(x)$ when $\W^{cu}(x)$ contains a periodic orbit).  We write $\X_{b,x}$ for the uniformization of $\X_{x}$ with respect to the height function $b$ vanishing at $x$ (with metric $d_{b,x}$) and for a parameter $\sigma > 0$ we write $\mu_{\sigma,x}$ for the uniformization of the Riemannian volume $\mu_{x}$ using $b$ (see \eqref{sigma def}). The height function $b$ has level sets the unstable manifolds inside of $\X_{x}$ and the metric $d_{b,x}$ is built by conformally rescaling the metric on $\X_{x}$ to transform it into an incomplete metric space with metric boundary $\p \X_{b,x}$ identifiable with the relative Gromov boundary $\p_{*}\X_{x}$. Lastly we write $h = h_{\mathrm{top}}(f)$ for the topological entropy of $f^{t}$ and $a = a_{u}$ for the minimal expansion rate on $E^{u}$ from \eqref{unstable expansion} used to define the Hamenst\"adt metrics and uniformization.

The next theorem shows that this construction can be used to build the conditionals of the Bowen-Margulis measure of maximal entropy on unstable leaves using a Patterson-Sullivan type construction. The noncompactness of $\X_{b,x}$ means that the ``renormalization" of the measures  $\mu_{\sigma,x}$ as $\sigma$ decreases to $h$ is a bit delicate to interpret. The identification $\p \X_{b,x} \cong \W^{u}(x)$ in the theorem statement is informal and corresponds to the identification in the statement of Theorem \ref{thm:expanding hyperbolic}. For both of these claims Section \ref{sec:max entropy} contains the relevant details. In the statement $\bar{\X}_{b,x} = \X_{b,x} \cup \p \X_{b,x}$ is the completion of the incomplete metric space $\X_{b,x}$.

\begin{thm}\label{thm:Patterson-Sullivan}
For each $x \in M$ the family of measures $\{\mu_{\sigma,x}\}_{\sigma > h}$ on $\bar{\X}_{b,x}$ can be renormalized  as $\sigma \rightarrow h$ in such a way that they converge to an $h/a$-dimensional Hausdorff measure for the Hamenst\"adt metric concentrated on $\p \X_{b,x} \cong \W^{u}(x)$. 
\end{thm}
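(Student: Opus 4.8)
The plan is to analyze $\mu_{\sigma,x}$ by slicing the uniformized leaf $\X_{b,x}$ along the flow. Near its boundary $\p\X_{b,x}\cong\W^u(x)$ the leaf is foliated by the geodesic rays $\gamma_y$, $y\in\W^u(x)$, transverse to the level sets of the height function $b$, which are pieces of unstable manifolds; away from a fixed core $\{b\le T_0\}$ the flow turns this into a product parametrization of the region $R_C$ lying over a Borel set $C\subseteq\W^u(x)$. Unwinding the definition of $\mu_{\sigma,x}$ and disintegrating $\mu_x$ along the rays, one gets $\mu_{\sigma,x}\big(R_C\cap\{b\ge T_0\}\big)=\int_{T_0}^{\infty}e^{-\sigma s}\,dV_C(s)$ where $V_C(s):=\mu_x\big(R_C\cap\{T_0\le b\le s\}\big)$. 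The central task is then to prove that $V_C$ grows \emph{exactly} exponentially with rate $h=h_{\mathrm{top}}(f)$, in the strong sense that $\nu(C):=\lim_{s\to\infty}e^{-hs}V_C(s)$ exists, is finite, defines a Borel measure $\nu$ on $\p\X_{b,x}$ positive on nonempty open sets, and is countably additive in $C$ — a Margulis-type equidistribution of the leafwise Riemannian volume onto the unstable foliation along the flow.

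Granting this, an integration by parts — the boundary term at $\infty$ vanishing because $e^{-\sigma s}V_C(s)\asymp e^{(h-\sigma)s}$ for $\sigma>h$ — gives $\mu_{\sigma,x}\big(R_C\cap\{b\ge T_0\}\big)=\sigma\int_{T_0}^{\infty}e^{-\sigma s}V_C(s)\,ds$. Substituting $V_C(s)=e^{hs}(\nu(C)+o(1))$, multiplying by the renormalization factor $c(\sigma):=\sigma-h$, using $(\sigma-h)\int_{T_0}^{\infty}e^{(h-\sigma)s}\,ds=e^{(h-\sigma)T_0}\to1$, and splitting the $o(1)$ contribution into a bounded near-part killed by $(\sigma-h)$ and a uniformly small far-part, one obtains $c(\sigma)\,\mu_{\sigma,x}\big(R_C\cap\{b\ge T_0\}\big)\to h\,\nu(C)$ as $\sigma\downarrow h$. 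This limit is insensitive to $T_0$ (the slab $R_C\cap\{T_0\le b\le T_0'\}$ has bounded $\mu_{\sigma,x}$-mass, which $(\sigma-h)$ annihilates), and summing over a countable partition of $\W^u(x)$ recovers $\mu_{\sigma,x}$ up to a $b$-bounded core whose renormalized mass concentrates near the point $*$, not near $\p\X_{b,x}$. Here the non-compactness must be confronted head-on: $\mu_{\sigma,x}$ has infinite total mass, both because $\W^u(x)$ is unbounded and because mass escapes to $*$ at infinity in $\X_{b,x}$; so the precise assertion is that $c(\sigma)\mu_{\sigma,x}$ converges \emph{vaguely} on $\bar\X_{b,x}$, tested against continuous functions of $d_{b,x}$-bounded support (which stay uniformly off $*$), to the measure $h\nu$ concentrated on $\p\X_{b,x}$ and Radon there.

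It remains to identify $\nu$ with an $h/a$-dimensional Hausdorff measure for the Hamenst\"adt metric. By Theorem~\ref{thm:expanding hyperbolic} and the uniformization of Section~\ref{sec:uniform cone}, $\p\X_{b,x}$ with the metric induced by $d_{b,x}$ is $\W^u(x)$ equipped with the Hamenst\"adt metric, a visual metric of parameter $a$ on the relative Gromov boundary, which $f^t$ scales by $e^{-at}$. Two things then pin $\nu$ down. First, estimating $\nu$ on a Hamenst\"adt ball $B(y,r)$ through the renormalized $\mu_{\sigma,x}$-mass of the region over it, and tracking how the weight $e^{-\sigma b}$ and the rate-$h$ volume growth combine, gives $\nu(B(y,r))\asymp r^{h/a}$, with implied constants uniform by the doubling property; so $\nu$ is Ahlfors $(h/a)$-regular for the Hamenst\"adt metric, the exponent $h/a$ being exactly the one matching the $e^{-at}$-scaling of the metric to the $e^{-ht}$-scaling of the volume. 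Second, a direct computation from the definitions shows $\mu_{\sigma,f^tx}=e^{\sigma t}\mu_{\sigma,x}$ (the height functions differ by the constant $t$ and the leaf volume is flow-intrinsic), so $\nu$ inherits the equivariance $\nu^u_{f^tx}=e^{ht}(f^t)_*\nu^u_x$ characterizing the Bowen-Margulis conditionals on unstable leaves. Together these should identify $\nu$, up to a normalizing constant, with the conditional of the Bowen-Margulis measure on $\W^u(x)$ — equivalently, one may appeal to the general relationship between weighted interior volumes and boundary Hausdorff measures in uniformized spaces developed in \cite{BHK} and \cite{Bu21} — and that conditional is, by the theorems of Hamenst\"adt~\cite{Ham89} and Hasselblatt~\cite{Has4}, the $h/a$-dimensional Hausdorff measure of the Hamenst\"adt metric.

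The step I expect to be the main obstacle is the first one: upgrading the (routine) exponential-\emph{rate} estimate for $V_C$ to the genuine limit $\nu(C)=\lim_s e^{-hs}V_C(s)$ and to its countable additivity — the equidistribution at the heart of the construction — which is precisely where the doubling and Poincar\'e properties of the uniformized leaves established in Section~\ref{sec:uniform cone} have to do real work. A second, genuine if more clerical difficulty is to carry out the renormalization cleanly despite the non-compactness of $\X_{b,x}$: one must check that $c(\sigma)$ and the resulting vague limit are independent of the truncation height $T_0$ and of any compact reference set used to calibrate the normalization, and that the limit is genuinely concentrated on $\p\X_{b,x}$ rather than leaking toward $*$.
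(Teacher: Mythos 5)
Your proposal shares the right high-level framework (uniformize, slice along the flow, renormalize, push the mass to the boundary), but it misses the central technical point that makes this a genuinely Patterson-Sullivan-style construction, and its final identification step is circular.

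\textbf{The main gap.} You correctly locate the crux — showing that $\nu(C) := \lim_{s\to\infty} e^{-hs}V_C(s)$ exists with countable additivity — but this is precisely the Margulis-type equidistribution theorem that the paper is designed to \emph{avoid} proving. Your entire subsequent chain (integration by parts, $c(\sigma)=\sigma-h$, vague convergence) presupposes the strong asymptotic $V_C(s)=e^{hs}(\nu(C)+o(1))$, and you offer no mechanism for establishing it. The paper instead uses the much weaker input that the separation counts $V_s$ are \emph{submultiplicative} (Lemma \ref{subadditive entropy}), so by Fekete's lemma $h=\inf_{s>0}\frac1s\log V_s$, hence $V_s\geq e^{hs}$ for \emph{all} $s$, hence $G(\sigma)=\int_0^\infty e^{-\sigma t}V_t\,dt$ diverges at $\sigma=h$. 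That divergence alone, combined with the cone-measure estimate $\mu_{\sigma}(\bar\CC(x,1))\asymp e^{-\sigma b(x)}G(\sigma)$ (Lemma \ref{crit lemma}), drives a weak* compactness argument: renormalize by $G(\sigma)^{-1}$, pass to subsequential weak* limits of $\check\mu_{\sigma,l,x}$, observe that the bounded truncated-cone mass is killed by the divergence, and get limit measures $\theta_{l,x}$ supported on $\p\X_{b,x}$. No convergence of $e^{-hs}V_C(s)$ is ever established, and indeed the paper never proves the limit is independent of the subsequence — the Ahlfors regularity of $\theta_x$ (Proposition \ref{Hamenstadt regular}) is derived directly from the cone estimates and holds for any such limit. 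Your remark that the doubling/Poincar\'e machinery of Section \ref{sec:doubling} "has to do real work" here is also a misreading: the paper explicitly notes that Section \ref{sec:doubling} can be skipped for Theorem \ref{thm:Patterson-Sullivan}; that machinery feeds Theorem \ref{thm:Anosov Poincare} only.

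\textbf{The circularity.} Your identification of the limit $\nu$ "by the theorems of Hamenst\"adt and Hasselblatt" invokes exactly the reasoning that Remark \ref{lit correction} is written to caution against: those papers identify the unstable conditionals of an \emph{already constructed} Bowen-Margulis measure with Hausdorff measure — they do not construct the measure of maximal entropy. Appealing to them presupposes the object you are trying to build. The paper's proof extracts Ahlfors $h/a$-regularity directly from the renormalization via the comparison $\theta_x(\bar\CC_*(y,e^{al}))\asymp e^{hl}$ and the cone–ball comparison \eqref{cone pre chain}, so the Hausdorff measure identification follows from scratch rather than from prior knowledge of the Bowen-Margulis measure.

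Finally, note that your choice $c(\sigma)=\sigma-h$ is only known to be the correct normalization \emph{a posteriori}: it is equivalent to $G(\sigma)^{-1}$ only once one knows $V_t\asymp e^{ht}$, which the paper obtains as a \emph{corollary} of Ahlfors regularity (equation \eqref{mult estimate}), not as an input.
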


Since the proof of Theorem \ref{thm:Patterson-Sullivan} does not use the existence of a measure of maximal entropy for $f^{t}$, as an immediate consequence of Theorem \ref{thm:Patterson-Sullivan} we obtain a new means of building the measure of maximal entropy for a topologically transitive Anosov flow. 

\begin{rem}\label{lit correction}
There are numerous misleadingly phrased citations of the papers \cite{Ham89}, \cite{Has4}of Hamenst\"adt and Hasselblatt that claim that these papers used the Hamenst\"adt metric to \emph{build} the measure of maximal entropy when these papers ``only" identified the unstable conditionals of the Bowen-Margulis measure as a constant multiple of the $h/a$-dimensional Hausdorff measure for the Hamenst\"adt metric given that this measure has already been built using Margulis' construction (this is still a significant contribution to be clear and both papers are very explicit that this is what they are doing). This issue seems to be well-known to specialists. To our knowledge Theorem \ref{thm:Patterson-Sullivan} is the first construction of the measure of maximal entropy that proceeds by directly establishing the Ahlfors regularity property of the Hamenst\"adt metrics. A similar approach to building the measure of maximal entropy can be found in the work of Climenhaga-Pesin-Zelerowicz \cite{CPZ19}, \cite{CPZ20} which uses Bowen balls instead of Hamenst\"adt balls; this requires the use of the more complex \emph{Caratheodory dimension structure} associated to the Bowen balls, but has the advantage that Bowen balls are not as ``rough" as Hamenst\"adt balls. 
\end{rem}

The technical nature of our final result and likely foreign terminology to an audience unfamiliar with analysis on metric spaces techniques makes a concise explanation trickier. We will state the result first and then explain its importance afterwards. All notation is the same as in Theorem \ref{thm:Patterson-Sullivan}.

\begin{thm}\label{thm:Anosov Poincare}
For each $x \in M$ and $\sigma > h$ the metric measure spaces $(\X_{b,x},d_{b,x},\mu_{\sigma,x})$ are doubling and support a Poincar\'e inequality with constants uniform in $x$. Furthermore the same is true of $(\bar{\X}_{b,x},d_{b,x},\mu_{\sigma,x})$.
\end{thm}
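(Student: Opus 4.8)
The plan is to work entirely inside the abstract framework of expanding cones and their uniformizations developed in Sections~\ref{sec:expanding cone}--\ref{sec:uniform cone}, where it has already been established that $\X_{b,x}$ is a bounded uniform space whose uniformity constant depends only on $C_u,a_u,A_u$ and hence is uniform in $x$. Both conclusions will be reduced to understanding $\mu_{\sigma,x}$ on two comparison families of balls in $\X_{b,x}$: \emph{Whitney balls}, that is, balls $B_{d_{b,x}}(z,r)$ with $r\ls\dist_{d_{b,x}}(z,\p\X_{b,x})$; and \emph{Carleson balls}, that is, balls centered at (or within comparable distance of) $\p\X_{b,x}$, which under the uniformization are comparable to the ``tents'' over Hamenst\"adt balls in $\W^u(x)$. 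That these two families suffice — every ball is, up to bounded dilation, of one of the two types — is part of the structure of a bounded uniform space obtained by uniformizing a Gromov hyperbolic space, and here it comes with constants uniform in $x$ because all the structural constants of the leaves $\X_x$ are.

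The first step is to record the volume estimates inside $\X_x$ itself. Since the leaves $\W^{cu}(x)$ are uniformly $C^r$, the Riemannian volume $\mu_x$ is uniformly Ahlfors $n$-regular at scales below a fixed $r_0$ ($n=\dim E^{cu}$) and supports the classical $(1,1)$-Poincar\'e inequality at those scales. The expansion hypothesis \eqref{unstable expansion} (with $C_u=1$ after passing to the Lyapunov metric of Proposition~\ref{Lyapunov metric}) gives the monotonicity $\mu_x(f^sD)\ge\mu_x(D)$ for $s\ge0$ and unstable regions $D$, together with a uniform bound $\mu_x(f^sD)\le C_\e e^{(h+\e)s}\mu_x(D)$ for every $\e>0$ — the uniform form of the statement that the exponential volume growth rate of $f^s$-images of unstable disks is the topological entropy $h$, which is the only genuinely dynamical input and the reason the admissible range of the parameter is $(h,\infty)$. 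Feeding these into the conformal formula \eqref{sigma def} (with interior density $d\mu_{\sigma,x}\asymp e^{-\sigma b}\,d\mu_x$), a Whitney ball at height $t$ of radius $r$ has $\mu_{\sigma,x}$-mass $\asymp e^{(an-\sigma)t}r^n$, manifestly doubling in $r$; and slicing a tent over $B_{\mathrm{Ham}}(\xi,\rho)$ along the unstable level sets of $b$ and integrating $e^{-\sigma b}$ against the slice volumes shows that its mass is finite for $\sigma>h$, is comparable to $e^{-\sigma t_0}$ times the $\mu_x$-mass of its top slice, and is doubling in $\rho$ (here one uses the doubling of the Hamenst\"adt metric together with the growth estimates just stated; note one obtains only doubling, not Ahlfors regularity, at the boundary). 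Combining the two families — and matching a Carleson box of radius $\asymp r$ against the Whitney ball of radius $\asymp r$ at the same height when a Whitney ball is doubled past the boundary — yields the doubling property of $(\X_{b,x},d_{b,x},\mu_{\sigma,x})$. The same computations give $\mu_{\sigma,x}(\p\X_{b,x})=0$ and show that Carleson balls have positive finite mass, so $\mu_{\sigma,x}$ extends to a doubling measure on $\bar\X_{b,x}$.

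For the Poincar\'e inequality I would follow the Bonk-Heinonen-Koskela scheme. On a Whitney ball the conformal factor $e^{-ab}$ varies by a bounded amount, so the desired $(1,1)$-Poincar\'e inequality is just the unit-scale Riemannian Poincar\'e inequality on $\X_x$ transported through a bounded conformal change of metric, which preserves it with uniform constants. To globalize one chains: any two points of $\X_{b,x}$ are joined by a uniform curve (the defining property of a uniform space), along which the Whitney balls have radii comparable to their distances from $\p\X_{b,x}$ and bounded overlap; the standard telescoping argument along such a curve — fed by the doubling property just established, the local Poincar\'e inequalities, and the Gehring-Hayman inequality for uniformizations of Gromov hyperbolic spaces (which controls $d_{b,x}$-lengths of hyperbolic geodesics) — produces the global $(1,1)$-Poincar\'e inequality with constants depending only on the uniform structural data. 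Finally, since $\mu_{\sigma,x}(\p\X_{b,x})=0$ and $\bar\X_{b,x}$ is obtained from the uniform space $\X_{b,x}$ by adjoining its (accessible) metric boundary, both the doubling property and the Poincar\'e inequality pass to $\bar\X_{b,x}$ unchanged.

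The main obstacle I expect is the two-sided control of the mass of Carleson balls with constants uniform in $x$: this requires reconciling the coarse identification of tents with Hamenst\"adt balls (Theorem~\ref{thm:expanding hyperbolic} together with the uniformization) with the dynamical volume growth, and in particular a version, uniform over $x$, of the equality of the unstable-leaf volume growth rate with $h$. It is this quantitative input, not any soft coarse-geometric fact, that both pins the admissible range of $\sigma$ to $(h,\infty)$ and keeps the doubling — hence the Poincar\'e — constant under control. By comparison, the chaining argument for the Poincar\'e inequality, while it must be run in the abstract uniform-space setting of $\X_{b,x}$ rather than in a Euclidean domain, is routine once doubling, a local Poincar\'e inequality, and uniform curves are in hand.
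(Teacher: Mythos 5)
Your proposal is, in outline, the same argument the paper carries out: estimate $\mu_{\sigma,x}$ on subWhitney balls (Lemmas~\ref{sub measure}, \ref{estimate upper}) and on the tents/cones over Hamenst\"adt balls (Lemmas~\ref{crit lemma}, \ref{crit extension}), feed these into a Whitney-vs-Carleson dichotomy (Propositions~\ref{crit above}, \ref{cone comparison}) to get doubling, verify the local Poincar\'e inequality on subWhitney balls by pulling back the Riemannian one through the nearly constant conformal factor (Lemma~\ref{Whitney Poincare}), upgrade to a global Poincar\'e inequality by chaining along uniform curves (packaged in the paper as the citation of \cite[Proposition 6.3]{BBS20} = Proposition~\ref{uniform upgrade}), and pass to the completion by \cite[Lemma 8.2.3]{HKST}. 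You also correctly identify the genuinely dynamical ingredient: the uniform-over-$x$ exponential bound $V_t \ls_\e e^{(h+\e)t}$ for the separated-set counts in unstable leaves, which the paper derives from the specification-property comparability of unstable balls (Lemmas~\ref{subball equivalence}--\ref{separated count}); this is what makes $G(\sigma)<\infty$ exactly on $(h,\infty)$ and pins the admissible range of $\sigma$.

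There is, however, one factual error in your setup that you should correct: $\X_{b,x}$ is \emph{not} a bounded uniform space. The height function $b_x$ is unbounded below, so descending geodesic rays have \emph{infinite} $d_{b,x}$-length; the paper notes this explicitly after Proposition~\ref{boundary identify}, in deliberate contrast to the standard Bonk--Heinonen--Koskela construction, which uniformizes a ray with a basepoint and produces a bounded space. The boundary $\p\X_{b,x}$ identifies with a single unstable leaf $\W^u(x)$ and the distinguished point $\ast\in\p\X_x$ at infinity is \emph{not} in the closure of $\X_{b,x}$. The Whitney/Carleson dichotomy that you invoke still holds (any ball either lies well inside or has radius comparable to its distance to $\p\X_{b,x}$); the claim is just not a consequence of boundedness. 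More importantly, the chaining theorem you lean on, stated in \cite{BBS20} for \emph{bounded} uniform domains, must be (and in the paper is) checked to work unchanged in the unbounded setting: it does, provided the measure is globally doubling and the subWhitney Poincar\'e inequality holds at all the appropriate scales, which are exactly the hypotheses established here. So the error is repairable, but as written your first paragraph asserts a structural fact that is false for this uniformization and would need to be replaced by the correct statement.
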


The notions of doubling and supporting a Poincar\'e inequality for a metric measure space are explained in Section \ref{sec:uniform cone}. Since $\mu_{\sigma,x}$ is extended to $\bar{\X}_{b,x}$ by $\mu_{\sigma,x}(\p \X_{b,x}) = 0$ the implication that the doubling property and Poincar\'e inequality hold on $(\bar{\X}_{b,x},d_{b,x},\mu_{\sigma,x})$ is immediate from the corresponding properties on $(\X_{b,x},d_{b,x},\mu_{\sigma,x})$ by the general theory \cite[Lemma 8.2.3]{HKST}. The conclusions cannot be extended to the critical exponent $\sigma = h$, as the discussion at the end of Section \ref{sec:max entropy} shows.

A succinct explanation of Theorem \ref{thm:Anosov Poincare} is that it implies that the potential theory associated to these uniformized center-unstable manifolds matches much of what one would expect generalizing from the potential theory of the upper half space in Euclidean space. However, whereas in the Euclidean case the metric on the boundary would simply be the Euclidean metric on a codimension one hyperplane, here the metric on $(\p \X_{b,x},d_{b,x})$ is biLipschitz equivalent to the Hamenst\"adt metric on $\W^{u}(x)$. This enables the study of function spaces like Besov spaces for the Hamenst\"adt metric through the analysis of boundary values of Sobolev-type functions on $\X_{b,x}$. We point to \cite{Bu21} for a general feel of the types of results that are possible here.

The structure of the paper is as follows: first in Section \ref{sec:expanding cone} we introduce the framework of expanding cones and prove Theorem \ref{thm:expanding hyperbolic}. In section \ref{sec:uniform cone} we detail the uniformization procedure described prior to the statement of Theorem \ref{sec:uniform cone}. Section \ref{sec:doubling} contains the necessary estimates on associated uniformized measures to obtain the doubling and Poincar\'e inequality properties in Theorem \ref{thm:Anosov Poincare}. Finally in Section \ref{sec:max entropy} we explain how these elements combine together to give our new construction of the measure of maximal entropy in this setting. For the reader only interested in Theorem \ref{thm:Patterson-Sullivan} the material of Section \ref{sec:doubling} can be skipped aside from the opening definitions. 

We thank Amie Wilkinson for commentary on an earlier draft of the paper. We also thank Zhenghe Zhang for providing the impetus to the author to pick up mathematics again.

\section{Expanding Cones}\label{sec:expanding cone}

\begin{notation} 
Throughout the paper the symbols $\asymp$, $\lesssim$, and $\gtrsim$ will be used to indicate the comparability of two quantities $V$ and $W$ up to a multiplicative constant $C \geq 1$ whose dependencies we will indicate towards the beginning of each section, with changes in the dependencies being further indicated as need be. Thus $V \ls W$ means $V \leq CW$, $V \gs W$ means $V \geq C^{-1}W$, and $V \asymp W$ means $V \ls W$ and $V \gs W$. When the constant $C$ needs to be made explicit we write $\asymp_{C}$, etc. 

For additive constants $c \geq 0$ we write $V \doteq W$ if $W-c \leq V \leq W+c$. As above we write $\doteq_{c}$ to explicitly indicate the constant. While the constants $C$ and $c$ described here may change from line to line, they should always be understood as being uniformly bounded above when the parameters they depend on are confined to a compact region of the parameter space. 

For a curve $\gamma:I \rightarrow \X$ in a Riemannian manifold $\X$ the notation $I$ for the domain of the  parametrization can denote any interval $I = (s,t)$ in $\R$, with $-\infty \leq s \leq t \leq \infty$ and each endpoint either included or not included. For $s \in I$ the notation $I_{\geq s} = I \cap [s,\infty)$ and $I_{\leq s} = I \cap (-\infty,s]$ will indicate restriction to times $t \geq s$ and $t \leq s$ respectively. We use the shorthand $\gamma_{\geq s} = \gamma|_{I_{\geq s}}$ and $\gamma_{\leq s} = \gamma|_{I_{\leq s}}$ for the restrictions of the geodesic $\gamma$ to these intervals. If $\gamma$ is rectifiable then we write $\ell(\gamma)$ for the length of $\gamma$. When $\gamma$ is a geodesic the parametrization will always be unit speed. 
\end{notation}

For a $C^{r+1}$ Riemannian manifold $\X$ we write $g$ for the $C^r$ Riemannian metric on $TM$ and write $d$ for the associated distance function. Throughout this section the regularity parameter $r$ will always satisfy $r \geq 2$, which enables us to apply the standard results of Riemannian geometry without issue.  Following the usual conventions we use the notation $\langle\,,\rangle$ and $\|\cdot\|$ for inner products and norms measured using $g$. We let $\nabla$ be the Levi-Civita connection for $g$. For a $C^1$ function $b: \X \rightarrow \R$ we write $\nabla b$ for the gradient of $b$, and if $b$ is $C^2$ then we write $\nabla^{2}b = \nabla(\nabla b)$ for the Hessian of $b$, defined to be the covariant derivative of the gradient of $b$. The Hessian of $b$ defines a symmetric bilinear form on $T\X$ by
\[
\nabla^{2}b(v,w) = \langle \nabla_{v}\nabla h,w\rangle,
\]
for tangent vectors $v$, $w \in T\X_{x}$ in the same tangent space. 

For the definition that follows we will be considering a $C^r$ unit vector field $\dot{f}:\X \rightarrow T\X$ on a complete $C^{r+1}$ Riemannian manifold $\X$ with $\dim \X \geq 2$. For such a vector field we write $f^{t}:\X \rightarrow \X$ for the $C^r$ flow it generates. We call a $C^{r+1}$ function $b:\X \rightarrow \R$ a \emph{height function} if the flow $f^{t}:\X \rightarrow \X$ generated by the gradient $\dot{f}=\nabla b$ satisfies 
\begin{equation}\label{height flow}
b(f^{t}x) = b(x) + t,
\end{equation}
for all $x \in \X$ and $t \in \R$. We will use the notations $\dot{f}$ and $\nabla b$ interchangeably for the gradient of $b$ depending on what needs to be emphasized. Observe that \eqref{height flow} implies that $b$ is surjective, that $\dot{f}$ is a unit vector field on $\X$, and that if we write $E^{c} = \op{span}(\dot{f})$ and $E^{u} = (E^{c})^{\perp}$ then there is a $Df^{t}$-invariant orthogonal decomposition $T\X = E^{u} \oplus E^{c}$ of the tangent bundle. Note that if $b$ is a height function then $b_{s} = b + s$ is also a height function for any $s \in \R$ with $\nabla b = \nabla b_{s}$.  Since $\nabla b = \dot{f}$ and $\dot{f}$ is a unit vector we conclude that $b$ is $1$-Lipschitz as a map $b:\X \rightarrow \R$; we will use this fact frequently in what follows. 

The level sets of $b$ form a codimension one $C^r$ foliation $\W^{u}$ of $\X$ tangent to $E^{u}$ with the leaf through $x \in \X$ being given by $\W^{u}(x) = b^{-1}(\{b(x)\})$.   The equation \eqref{height flow} shows that this foliation is $f^{t}$-invariant in the sense that $f^{t}(\W^{u}(x)) = \W^{u}(f^{t}x)$ for all $x \in \X$ and $t \in \R$. The flowlines of $f^{t}$ are unit speed geodesics that form a $1$-dimensional foliation $\W^{c}$ of $\X$ tangent to $E^{c}$. The flowline through $x$ will be denoted $\W^{c}(x) = \{f^{t}x\}_{t \in \R}$, with the resulting foliation $\W^{c}$ of $\X$ being $C^r$. See the discussion at the beginning of Section \ref{sec:max entropy} for more details on foliation regularity. 

\begin{defn}\label{defn: expanding cone}
An \emph{expanding cone} $(\X,b)$ is a complete $C^{r+1}$ Riemannian manifold $\X$ equipped with a $C^{r+1}$ height function $b: \X \rightarrow \R$ whose associated flow $f^{t}:\X \rightarrow \X$ has the property that there are constants $0 < a \leq A$ such that for all $v \in E^{u}$ and $t \geq 0$,
\begin{equation}\label{expansion}
e^{at}\|v\| \leq \|Df^{t}v\| \leq e^{At}\|v\|.
\end{equation}
\end{defn}

Thus in an expanding cone the gradient flow generated by the height function uniformly expands the leaves of the foliation $\W^{u}$. We will usually refer to $\X$ itself as the expanding cone whenever the height function $b$ is understood. We also point out that with $b_{s} = b+s$ as above the expanding cones $(\X,b_{s})$ for $s \in \R$ are considered to be distinct as they have different height functions.

Sometimes the inequality \eqref{expansion} is only satisfied up to a multiplicative constant. This will happen in the proof of Corollary \ref{Anosov metric hyperbolic} for instance. In this case it is useful to observe that this constant can be removed by a standard argument with a change of metric. This incurs a loss of a derivative on the metric $g$ which is typically irrelevant in practice. The proof given here is adapted from \cite[Lemma 9.1.11]{Lef25}.

\begin{prop}\label{Lyapunov metric}
Suppose that all of the conditions of Definition \ref{defn: expanding cone} are satisfied with $r \geq 3$ except that instead of \eqref{expansion} we have the weaker bound for $t \geq 0$ and $v \in E^{u}$,
\begin{equation}\label{expansion with constants}
C^{-1}e^{at}\|v\| \leq \|Df^{t}v\| \leq Ce^{At}\|v\|,
\end{equation}
for some constant $C \geq 1$. Then we can find a new $C^{r-1}$ Riemannian metric $g_{*}$ on $T\X$ such that $g_{*}$ satisfies Definition \ref{defn: expanding cone} with constants $0 < a_{*} \leq A_{*}$ for which we have for any $t \geq 0$ and $v \in E^{u}$,
\begin{equation}\label{expansion without constants}
e^{a_{*}t}\|v\|_{*} \leq \|Df^{t}v\|_{*} \leq e^{A_{*}t}\|v\|_{*},
\end{equation}
The constants $a_{*}$ and $A_{*}$ as well as the uniform comparability constants of $g_{*}$ to $g$ are determined only by $a$, $A$, and $C$.
\end{prop}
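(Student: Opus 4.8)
The plan is to build $g_{*}$ by the classical \emph{adapted (Lyapunov) metric} averaging trick, carried out over a \emph{finite} time window and with a weight whose exponent is chosen slightly below the bottom expansion rate $a$. Fix $a' \in (0,a)$ (for definiteness $a' = a/2$) and a time $T$ large enough that $C^{-2}e^{2(a-a')T} \geq 1$, i.e. $T \geq \tfrac{\ln C}{a-a'}$; note $a'$ and $T$ depend only on $a$ and $C$. For $x \in \X$ and $v \in E^{u}_{x}$ I would set
\[
\|v\|_{*,x}^{2} \;=\; \int_{0}^{T} e^{-2a's}\,\|Df^{s}v\|^{2}\,ds ,
\]
and extend this to a quadratic form on all of $T\X_{x}$ by declaring $E^{u}$ and $E^{c}$ to be $g_{*}$-orthogonal and $\|\dot{f}\|_{*} = 1$. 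Since the line bundle $E^{c} = \op{span}(\dot{f})$ and hence $E^{u} = (E^{c})^{\perp_{g}}$ are $C^{r}$, the metric $g_{*}$ has the same regularity as the form $v \mapsto \|v\|_{*,x}^{2}$.

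First I would record comparability with $g$. Substituting \eqref{expansion with constants} into the definition and integrating the bounds $C^{-2}e^{2as}\|v\|^{2} \le \|Df^{s}v\|^{2} \le C^{2}e^{2As}\|v\|^{2}$ against the weight over $[0,T]$ yields
\[
\tfrac{C^{-2}}{2(a-a')}\bigl(e^{2(a-a')T}-1\bigr)\,\|v\|^{2} \;\le\; \|v\|_{*}^{2} \;\le\; \tfrac{C^{2}}{2(A-a')}\bigl(e^{2(A-a')T}-1\bigr)\,\|v\|^{2}
\]
on $E^{u}$, with $\|v\|_{*}=\|v\|$ on $E^{c}$; both constants depend only on $a,A,C$. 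In particular $g_{*}$ is a genuine metric uniformly comparable to $g$, so $\X$ stays complete.

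The core of the proof is the growth estimate, which I would extract by differentiating in $t$. Using the cocycle identity $Df^{s}\circ Df^{t} = Df^{s+t}$ and the substitution $u=s+t$,
\[
\|Df^{t}v\|_{*}^{2} \;=\; \int_{0}^{T} e^{-2a's}\|Df^{s+t}v\|^{2}\,ds \;=\; e^{2a't}\int_{t}^{T+t} e^{-2a'u}\|Df^{u}v\|^{2}\,du ,
\]
hence
\[
\frac{d}{dt}\|Df^{t}v\|_{*}^{2} \;=\; 2a'\,\|Df^{t}v\|_{*}^{2} \;+\; e^{-2a'T}\|Df^{t+T}v\|^{2} \;-\; \|Df^{t}v\|^{2} .
\]
Applying \eqref{expansion with constants} to the vector $Df^{t}v \in E^{u}$ over time $T$, the choice of $T$ forces $e^{-2a'T}\|Df^{t+T}v\|^{2} \ge C^{-2}e^{2(a-a')T}\|Df^{t}v\|^{2} \ge \|Df^{t}v\|^{2}$, so the two trailing terms are nonnegative and $\frac{d}{dt}\|Df^{t}v\|_{*}^{2} \ge 2a'\|Df^{t}v\|_{*}^{2}$; integrating gives $\|Df^{t}v\|_{*} \ge e^{a't}\|v\|_{*}$, so one may take $a_{*} = a'$. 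For the upper bound, the comparability estimate bounds $\|Df^{t}v\|^{2}$ and $e^{-2a'T}\|Df^{t+T}v\|^{2}$ by a constant multiple of $\|Df^{t}v\|_{*}^{2}$ (the constant depending only on $a,A,C$), so $\frac{d}{dt}\|Df^{t}v\|_{*}^{2} \le 2A_{*}\|Df^{t}v\|_{*}^{2}$ for a suitable $A_{*} = A_{*}(a,A,C) \ge a_{*}$, and integrating gives the right-hand inequality in \eqref{expansion without constants}.

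Finally I would check the regularity and verify that $(\X,b,g_{*})$ is again an expanding cone. Since $\dot{f}$ is $C^{r}$ the flow $(s,x)\mapsto f^{s}x$ is $C^{r}$, so $(s,x)\mapsto Df^{s}|_{x}$ is $C^{r-1}$; together with $g$ being $C^{r}$ and integration over the compact interval $[0,T]$ (differentiation under the integral sign), the form $v\mapsto\|v\|_{*,x}^{2}$ is $C^{r-1}$ in $x$, so $g_{*}$ is $C^{r-1}$. The height function $b$ is unchanged, so \eqref{height flow} holds; its level sets are the leaves of $\W^{u}$ (tangent to $E^{u}$) and $\dot{f}\in E^{c}=(E^{u})^{\perp_{g_{*}}}$ is $g_{*}$-unit with $db(\dot{f})=1$, whence $\nabla_{g_{*}}b=\dot{f}$ and the gradient flow of $b$ for $g_{*}$ is again $f^{t}$. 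Combined with the growth bound just proved, this is exactly Definition \ref{defn: expanding cone} for $g_{*}$ with constants $a_{*}$, $A_{*}$. I do not anticipate a real obstacle; the one point that must be handled correctly is that the averaging window cannot be taken infinite (the integrand $e^{-2a's}\|Df^{s}v\|^{2}$ need not decay) and the weight exponent cannot be taken equal to $a$ (the boundary terms would then only be controlled up to the very constant $C$ one is trying to remove) --- using a finite window with the sub-critical exponent $a'<a$ resolves both at once.
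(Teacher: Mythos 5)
Your proof is correct and follows essentially the same route as the paper: build the adapted metric as a finite-time average of $g$ along the flow on $E^{u}$, extend orthogonally to $E^{c}$ with $\|\dot{f}\|_{*}=1$, differentiate $\|Df^{t}v\|_{*}^{2}$ in $t$, and read off the new expansion constants via a Gronwall argument. The one variation is your insertion of the sub-critical weight $e^{-2a's}$; this produces the term $2a'\|Df^{t}v\|_{*}^{2}$ directly in the derivative identity, so the lower bound $a_{*}=a'$ falls out from the nonnegativity of the boundary terms alone, whereas the paper's unweighted average $g_{*}(v,w)=\int_{0}^{T}g(Df^{t}v,Df^{t}w)\,dt$ first bounds $\frac{d}{dt}\|Df^{t}v\|_{*}^{2}$ against $\|Df^{t}v\|^{2}$ and must then invoke the comparability of $g$ and $g_{*}$ to recast it against $\|Df^{t}v\|_{*}^{2}$. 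Your closing remark about why an infinite window or the critical exponent $a'=a$ cannot be used is accurate, and your verification that $\nabla_{g_{*}}b=\dot{f}$ so that $(\X,b,g_{*})$ remains an expanding cone is a point the paper leaves implicit.
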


\begin{proof}
Since $C \geq 1$ we can choose $T > 0$ such that $C^{-1}e^{aT} = 2$ and define a new metric $g_{*}$ on $E^{u}$ by
\[
g_{*}(v,w) = \int_{0}^{T}g(Df^{t}v,Df^{t}w) \,dt,
\]
and then extending to $T\X$ by requiring the splitting $T\X = E^{u} \oplus E^{c}$ to be orthogonal and setting $g_{*}(\dot{f},\dot{f}) \equiv 1$. The metric $g_{*}$ is $C^{r-1}$ since $g$ and $f^{t}$ are $C^r$ and it will satisfy the requirements of Definition \ref{defn: expanding cone} provided we can verify \eqref{expansion without constants}. Then 
\[
\nabla_{\dot{f}}g_{*}|_{E^{u}} = (g \circ Df^{T} - g)|_{E^{u}}.
\]
Evaluating on a vector $v \in E^{u}$ and using \eqref{expansion with constants} gives
\[
(C^{-1}e^{aT}-1)g(v,v) \leq \nabla_{\dot{f}}g_{*}(v,v) \leq (Ce^{AT}-1)g(v,v),
\]
which implies by our choice of $T$,
\[
g(v,v) \leq \nabla_{\dot{f}}g_{*}(v,v) \leq Ce^{AT}g(v,v),
\]
and therefore by \eqref{expansion with constants},
\[
C^{-1}e^{aT}g_{*}(v,v) \leq \nabla_{\dot{f}}g_{*}(v,v) \leq C^{2}e^{2AT}g_{*}(v,v).
\]
The conclusion \eqref{expansion without constants} follows with $a_{*} = C^{-1}e^{aT}$ and $A_{*} = C^{2}e^{2AT}$.
\end{proof}

It is easy to see from the lower bound in \eqref{expansion} that $\X$ and the foliation $\W^{u}$ share the defining characteristic of center-unstable and unstable leaves for an Anosov flow,
\[
\W^{u}(x) = \{y \in \X: \lim_{t \rightarrow \infty} d(f^{-t}x,f^{-t}y) = 0\},
\]
and for $t \geq 0$ and any $x,y \in \X$
\[
d(f^{-t}x,f^{-t}y) \leq C,
\]
for some constant $C$; in fact we may take any $C > |b(x)-b(y)|$ provided $t$ is restricted to be sufficiently large. By combining \eqref{height flow} and \eqref{expansion} it is also not hard to show that each leaf $\W^{u}(x)$ of $\W^{u}$ is diffeomorphic to $\R^{n}$ and $\X$ is diffeomorphic to $\R^{n+1}$ ($n+1 = \dim \X$) via the same argument used to show that the unstable leaves of an Anosov flow are diffeomorphic to $\R^{n}$. In particular $\X$ and the leaves of $\W^{u}$ are contractible and therefore simply connected. 

In the bulk of this section (up until the discussion prior to Lemma \ref{metric comparison}) we only need to assume that the lower bound in \eqref{expansion} holds, i.e., the upper bound $A$ on the growth rate is unnecessary. Until then all implied constants in $\asymp$, $\doteq$, etc. will only depend only on $a$.

We will say that a geodesic $\gamma$ is \emph{vertical} if it is tangent to $E^{c}$, i.e., if it is part of a flowline of $f^{t}$. Since all vertical geodesics are contained within the flowlines of $f^{t}$, by uniqueness of geodesics a geodesic $\gamma$ that is not vertical will be nowhere tangent to $E^{c}$. A vertical geodesic is \emph{ascending} if $b(\gamma(t))$ increases with $t$ and \emph{descending} if $b(\gamma(t))$ decreases with $t$.  For a geodesic $\gamma: I \rightarrow \X$ we write $b_{\gamma}:I \rightarrow \R$ for the composition $b_{\gamma} = b \circ \gamma$.

For two symmetric $2$-tensors $g_{1}$ and $g_{2}$ we use the notation $g_{1} \geq g_{2}$ for the inequality $g_{1}(v,v) \geq g_{2}(v,v)$ for all $v \in T\X$, with $>$, $\leq$, $<$ defined analogously. Thus for instance the notation $\nabla^{2}h \geq cg$ for a $C^2$ function $h:\X \rightarrow \R$ and constant $c \in \R$ indicates the bound
\[
\nabla^{2}h(v,v) \geq c\|v\|^{2},
\] 
for all $v \in T\X$. The function $h$ is convex (in the Riemannian sense along geodesics) if and only if $\nabla^{2}h \geq 0$, and strictly convex if and only if $\nabla^{2}h > 0$. We also recall the notation $\iota_{v}g(w) = g(v,w)$ for the contraction of a tensor against a vector $v$.

\begin{lem}\label{convex}
We have $\iota_{\dot{f}}\nabla^{2}b = 0$, $\nabla^{2}b \geq 0$, and $\nabla^{2}b|_{E^{u}} \geq ag$. Consequently $b$ is convex. Furthermore if $\gamma:I \rightarrow \X$ is not a vertical geodesic then the function $b_{\gamma}$ is strictly convex on $I$. 
\end{lem}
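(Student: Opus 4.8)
The plan is to compute the Hessian $\nabla^2 b$ directly from the defining equation $b(f^t x) = b(x) + t$ together with the fact that $\dot f = \nabla b$ is the unit geodesic spray of the flow. First I would establish $\iota_{\dot f}\nabla^2 b = 0$: since $\|\nabla b\|^2 \equiv 1$, differentiating in any direction $w$ gives $0 = w\langle \nabla b,\nabla b\rangle = 2\langle \nabla_w \nabla b, \nabla b\rangle = 2\nabla^2 b(w,\dot f)$, so $\nabla^2 b(\dot f, w) = 0$ for all $w$ by symmetry of the Hessian. In particular the flowlines are geodesics ($\nabla_{\dot f}\dot f = \nabla_{\dot f}\nabla b$ has zero inner product with everything, hence vanishes), consistent with the earlier assertion that vertical geodesics are flowlines. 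This handles the first claim and shows that $\nabla^2 b$ is entirely determined by its restriction to $E^u \times E^u$.

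Next I would pin down $\nabla^2 b|_{E^u}$ using the expansion estimate \eqref{expansion}. The key identity is that $\nabla^2 b$ governs the infinitesimal growth of the $g$-norm of unstable vectors under $Df^t$. Concretely, for a vector field $V(t) = Df^t v$ with $v \in E^u$ (extended along the flowline by the differential of the flow, which preserves $E^u$ by $Df^t$-invariance of the splitting), one computes $\frac{d}{dt}\|V(t)\|^2 = 2\langle \nabla_{\dot f} V, V\rangle$, and since $V$ is (up to the obstruction measured by the second fundamental form / the flow being gradient) related to $\dot f = \nabla b$ via the flow, the Lie-derivative computation $\mathcal{L}_{\dot f} g(v,v)$ on $E^u$ equals $2\nabla^2 b(v,v)$ — this is exactly the computation used in the proof of Proposition \ref{Lyapunov metric}, where $\nabla_{\dot f} g_*|_{E^u} = (g\circ Df^T - g)|_{E^u}$, and differentiating the analogous expression at $T = 0$ gives $\nabla_{\dot f}g(v,v) = 2\nabla^2 b(v,v)$. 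The lower bound $\|Df^t v\| \geq e^{at}\|v\|$ for $t \geq 0$ then forces $\frac{d}{dt}\big|_{t=0}\|Df^t v\|^2 \geq 2a\|v\|^2$, i.e. $\nabla^2 b(v,v) \geq a\|v\|^2$ for $v \in E^u$. Combined with the first part this gives $\nabla^2 b \geq 0$ on all of $T\X$ (since $E^c$ is the kernel and the form is nonnegative on the complement $E^u$), hence $b$ is convex; and $\nabla^2 b|_{E^u} \geq ag$.

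For the final statement, let $\gamma: I \to \X$ be a non-vertical geodesic. Then $b_\gamma'' = \frac{d^2}{dt^2} b(\gamma(t)) = \nabla^2 b(\dot\gamma, \dot\gamma)$ since $\gamma$ is a geodesic ($\nabla_{\dot\gamma}\dot\gamma = 0$ kills the first-order term). Write $\dot\gamma = \alpha \dot f + w$ with $w \in E^u$ the $E^u$-component. Because $\gamma$ is non-vertical it is nowhere tangent to $E^c$ (as noted in the excerpt, by uniqueness of geodesics), so $w(t) \neq 0$ for all $t \in I$. Using $\iota_{\dot f}\nabla^2 b = 0$ we get $b_\gamma''(t) = \nabla^2 b(w,w) \geq a\|w(t)\|^2 > 0$, so $b_\gamma$ is strictly convex on $I$.

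The main obstacle is making the second paragraph rigorous: cleanly identifying $\frac{d}{dt}\big|_{t=0}\|Df^t v\|^2$ with $2\nabla^2 b(v,v)$ for $v \in E^u$. One must be careful that $Df^t v$ is the derivative of the flow applied to a fixed vector, not parallel transport, and relate this to the covariant derivative $\nabla_{\dot f}\nabla b$; the correct statement is that $(\mathcal{L}_{\dot f} g)(v,v) = 2\langle \nabla_v \nabla b, v\rangle = 2\nabla^2 b(v,v)$ for $v$ tangent, which is a standard identity for gradient flows, but its application to the $Df^t$-norm growth should be spelled out. Everything else is a short unwinding of definitions.
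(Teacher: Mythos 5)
Your proof is correct but takes a genuinely different route from the paper's. For $\iota_{\dot{f}}\nabla^{2}b = 0$ you differentiate $\|\nabla b\|^{2}\equiv 1$ and invoke Hessian symmetry, while the paper reads it off directly from $\nabla_{\dot{f}}\dot{f}=0$; both are one-line observations. The real divergence is in the lower bound $\nabla^{2}b|_{E^{u}} \geq ag$: the paper parallel-transports a unit vector $v\in E^{u}$ along the vertical geodesic, checks that the transport stays in $E^{u}$, applies the torsion-free identity $\nabla_{v}\dot{f}=-\mathcal{L}_{\dot{f}}V$, unwinds the Lie bracket as a flow limit, and finishes with Cauchy--Schwarz and the contraction estimate $\|Df^{-t}V\|\leq e^{-at}$. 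You instead invoke the Killing-type identity $(\mathcal{L}_{\nabla b}g)(v,w)=2\nabla^{2}b(v,w)$, which follows from $(\mathcal{L}_{X}g)(Y,Z)=g(\nabla_{Y}X,Z)+g(Y,\nabla_{Z}X)$ plus symmetry of the Hessian, and then identify $(\mathcal{L}_{\dot{f}}g)_{x}(v,v)$ with $\frac{d}{dt}\big|_{t=0}\|Df^{t}v\|^{2}$, which \eqref{expansion} forces to be at least $2a\|v\|^{2}$. This is cleaner: it trades parallel transport and Cauchy--Schwarz for the pullback interpretation of the Lie derivative plus a difference-quotient bound. Two things to tighten in the write-up: drop the aside $\frac{d}{dt}\|V(t)\|^{2}=2\langle\nabla_{\dot{f}}V,V\rangle$ for $V(t)=Df^{t}v$ (that $V$ is not a parallel field, so $\nabla_{\dot{f}}V$ is uncontrolled and this does not close the argument --- the Lie-derivative identity is what actually does the work), and justify the derivative bound at $t=0$ by noting that \eqref{expansion} gives $\|Df^{t}v\|^{2}\geq e^{2at}\|v\|^{2}$ for $t\geq 0$ and, applying the lower bound of \eqref{expansion} to $Df^{-t}$, $\|Df^{t}v\|^{2}\leq e^{2at}\|v\|^{2}$ for $t\leq 0$, so both one-sided difference quotients yield the same lower bound $2a\|v\|^{2}$. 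The convexity and strict-convexity conclusions are handled identically to the paper.
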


\begin{proof}
Since the flowlines of $f^{t}$ are unit speed geodesics we must have $\nabla_{\dot{f}}\dot{f} = 0$, and therefore $\nabla^{2}b(\dot{f},v) = 0$ for any vector $v \in T\X$. This gives $\iota_{\dot{f}}\nabla^{2}b = 0$. Furthermore, for a vertical geodesic $\gamma: I \rightarrow Y$ and a unit vector $v \in E_{\gamma(t)}^{u}$ for some $t \in I$, the unique extension of $v$ to a $C^r$ parallel vector field $V:I \rightarrow T\X$ along $\gamma$ will consist of unit vectors and take values in the orthogonal complement $E^{u}$ of $E^{c} = \op{span}(\dot{f})$ along $\gamma$. These are standard Riemannian geometry facts immediately obtained from the observation that the expressions $\langle V,V\rangle$ and $\langle V,\dot{f}\rangle$ along $\gamma$ are taken to $0$ by applying $\nabla_{\dot{f}}$. 

Now let $x \in \X$ be a given point and $v \in E_{x}^{u}$ a given unit vector. Let $\gamma: \R \rightarrow \X$ be the vertical geodesic with $\gamma(0) = x$. Let $V: \R \rightarrow E$ be the extension of $v$ along $\gamma$ by parallel transport, so that $V(0) = v$. Since the Levi-Civita connection is torsion free we have
\[
\nabla_{\dot{f}}V - \nabla_{V}\dot{f} = \mathcal{L}_{\dot{f}}V,
\] 
where $\mathcal{L}_{\dot{f}}V$ is the Lie derivative of $V$ along $\dot{f}$. Formally we extend $V$ to a $C^r$ vector field in a neighborhood of $\gamma$ for the purpose of taking the Lie derivative, but since this derivative at $x$ only depends on the values of $V$ along $\gamma$ the extension we choose doesn't matter. We have $\nabla_{\dot{f}}V = 0$ by the parallel condition on $V$ and $\nabla_{V}\dot{f}$ only depends on the value of $V$ at the point of evaluation.  Since $f^{t}$ describes the flow by $\dot{f}$, we can thus compute $\nabla_{v}\dot{f}$ as
\[
\nabla_{v}\dot{f} = \lim_{t \rightarrow \infty} \frac{v-Df^{-t}(V(\gamma(t)))}{t}. 
\]
By taking the inner product with $v$ we get
\[
\langle\nabla_{v}\dot{f},v\rangle = \lim_{t \rightarrow \infty} \frac{1-\langle Df^{-t}(V(\gamma(t))),v\rangle}{t}.
\]  
But by \eqref{expansion} and the fact that $V(\gamma(t))$ is a unit vector,
\[
\langle Df^{-t}(V(\gamma(t))),v\rangle \leq \|Df^{-t}(V(\gamma(t)))\| \leq e^{-at}.
\]
Thus
\[
\langle\nabla_{v}\dot{f},v\rangle \geq \lim_{t \rightarrow \infty} \frac{1-e^{-at}}{t} = a,
\]
since this last limit is just the derivative of the function $t \rightarrow -e^{-at}$ at $t = 0$. This concludes the proof of the main assertion. The claim that $b$ is convex follows immediately from $\nabla^{2}b \geq 0$. For the final claim, as noted prior to the statement of the lemma if a geodesic $\gamma: I \rightarrow Y$ is not a vertical geodesic then it is nowhere tangent to $E^{c}$. Thus $\gamma'(t)$ has a nontrivial component in $E^{u}$ for all $t \in I$, hence $\nabla^{2}b(\gamma'(t),\gamma'(t)) > 0$, and therefore  $b_{\gamma}$ has positive second derivative on $I$ which implies that $b_{\gamma}$ is strictly convex.
\end{proof}

\begin{rem}\label{upper convex bound}
Assuming the upper bound in \eqref{expansion} instead and swapping $t$ with $-t$ in the proof of Lemma \ref{convex} gives
\[
\nabla^{2}b(v,v) \leq A\|v\|^{2},
\] 
for all $v \in E^{u}$. Since the Hessian of $b$ is computed locally, it's also clear that these estimates on $\nabla^{2}b|_{E^{u}}$ hold at a point $x \in \X$ if we only assume the expansion condition \eqref{expansion} in a neighborhood of $x$. 
\end{rem}

We have the following straightforward consequence of Lemma \ref{convex}. 

\begin{lem}\label{convex consequences}
Let $\gamma:I \rightarrow \X$ be a geodesic in $\X$. Then $b_{\gamma}$ achieves a minimum at at most one time $s \in I$. If such a time $s$ exists and belongs to the interior of $I$ then $b_{\gamma}$ is strictly decreasing on $I_{\leq s}$ and strictly increasing on $I_{\geq s}$. Otherwise $b_{\gamma}$ is either strictly increasing or strictly decreasing. 

If $\gamma$ is a vertical geodesic then $b_{\gamma}' \equiv 1$. If $\gamma$ is not vertical then $b_{\gamma}'$ is strictly increasing on $I$.
\end{lem}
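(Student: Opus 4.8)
The plan is to reduce everything to elementary one-variable convex analysis applied to $b_\gamma$, using the two pointwise identities $b_\gamma'(t) = \langle \dot{f},\gamma'(t)\rangle$ and $b_\gamma''(t) = \nabla^2 b(\gamma'(t),\gamma'(t))$.

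First I would record these identities. Since $\nabla b = \dot{f}$, the chain rule gives $b_\gamma'(t) = \langle \nabla b,\gamma'(t)\rangle = \langle \dot{f},\gamma'(t)\rangle$. Differentiating once more and using that $\gamma$ is a geodesic, so $\nabla_{\gamma'}\gamma' = 0$, yields $b_\gamma''(t) = \langle \nabla_{\gamma'}\nabla b,\gamma'\rangle + \langle \nabla b,\nabla_{\gamma'}\gamma'\rangle = \nabla^2 b(\gamma'(t),\gamma'(t))$. By Lemma \ref{convex} we have $\nabla^2 b \geq 0$, so $b_\gamma'$ is non-decreasing on $I$; and if $\gamma$ is not vertical then, as recorded before Lemma \ref{convex}, $\gamma'(t)$ is nowhere tangent to $E^c$, hence has a nonzero $E^u$-component, and since $\iota_{\dot{f}}\nabla^2 b = 0$ and $\nabla^2 b|_{E^u} \geq ag$ we get $b_\gamma''(t) > 0$ for all $t$, i.e.\ $b_\gamma'$ is strictly increasing on $I$. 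This is precisely the second assertion. For the vertical case of that assertion, $\gamma$ lies in a single flowline of $f^t$, so in the (ascending) parametrization $\gamma(t) = f^t(\gamma(0))$ the height equation \eqref{height flow} gives $b_\gamma(t) = b(\gamma(0)) + t$, hence $b_\gamma' \equiv 1$ (and $\equiv -1$ in the descending parametrization).

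Next I would deduce the monotonicity statement from the fact that $b_\gamma'$ is monotone. In the vertical case $b_\gamma'$ is constantly $\pm 1$, so $b_\gamma$ is strictly monotone and there is nothing more to prove. In the non-vertical case $b_\gamma'$ is strictly increasing, hence changes sign at most once: if $b_\gamma' > 0$ throughout $I$ then $b_\gamma$ is strictly increasing, if $b_\gamma' < 0$ throughout then $b_\gamma$ is strictly decreasing, and if $b_\gamma'(s) = 0$ for some (necessarily unique) $s \in I$ then $b_\gamma' < 0$ on $I \cap (-\infty,s)$ and $b_\gamma' > 0$ on $I \cap (s,\infty)$, so $b_\gamma$ is strictly decreasing on $I_{\leq s}$ and strictly increasing on $I_{\geq s}$; this $s$ is then the unique minimizer, and when $s$ is an endpoint of $I$ the statement collapses to the strictly monotone case. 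This exhausts the listed alternatives.

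Finally I would dispatch the "at most one minimizer" claim directly from convexity, uniformly in both cases: if $b_\gamma$ attained its minimum value $m$ at two times $s_1 < s_2$, then convexity of $b_\gamma$ (Lemma \ref{convex}) forces $b_\gamma \leq m$ on $[s_1,s_2]$, hence $b_\gamma \equiv m$ there and $b_\gamma' \equiv 0$ on $(s_1,s_2)$, which is impossible since $b_\gamma'$ is either constantly $\pm 1$ or strictly increasing. I do not anticipate any genuine obstacle; the only points needing a moment of care are the geodesic-equation computation of $b_\gamma''$ and correctly distinguishing whether the critical time $s$ is interior to $I$ or an endpoint.
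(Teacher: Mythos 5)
Your proof is correct and follows essentially the same route as the paper: the paper's proof simply invokes ``the corresponding basic facts for a strictly convex function'' after noting $b_\gamma$ is strictly convex via Lemma \ref{convex}, whereas you spell out those one-variable convexity facts (plus the derivation $b_\gamma'' = \nabla^2 b(\gamma',\gamma')$ from the geodesic equation, which the paper implicitly reuses from Lemma \ref{convex inequality}). You also correctly observe that for a vertical geodesic $b_\gamma' \equiv \pm 1$ depending on orientation; the paper's statement $b_\gamma' \equiv 1$ tacitly takes the ascending parametrization.
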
 

\begin{proof}
If $\gamma$ is a vertical geodesic then the claims of the lemma are obvious, so we can assume that $\gamma$ is not vertical. Then $b_{\gamma}$ is strictly convex by Lemma \ref{convex}. The claims then follow from the corresponding basic facts for a strictly convex function defined on an interval $I \subseteq \R$.
\end{proof}

We can refine the consequences of convexity described in Lemma \ref{convex consequences} into the following differential inequality. 

\begin{lem}\label{convex inequality}
For any geodesic $\gamma$ in $\X$ the height function restricted to $\gamma$ satisfies the differential inequality
\begin{equation}\label{diff inequality}
b_{\gamma}'' \geq a\sqrt{1-(b_{\gamma}')^{2}}.
\end{equation}
\end{lem}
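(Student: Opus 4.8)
The plan is to decompose the geodesic velocity $\gamma'(t)$ into its components along $E^{c} = \op{span}(\dot f)$ and along $E^{u}$, and to exploit that $b_\gamma' = \langle \nabla b, \gamma'\rangle = \langle \dot f, \gamma'\rangle$ measures exactly the $E^{c}$-component. First I would handle the trivial case: if $\gamma$ is vertical then $b_\gamma' \equiv 1$ by Lemma \ref{convex consequences}, both sides of \eqref{diff inequality} vanish, and we are done. So assume $\gamma$ is not vertical, hence nowhere tangent to $E^{c}$.

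Fix a time $t$ and write $\gamma'(t) = \alpha \dot f + w$ with $w \in E^{u}_{\gamma(t)}$ and $\alpha = b_\gamma'(t) = \langle \dot f, \gamma'(t)\rangle$. Since $\gamma$ is unit speed, $\alpha^{2} + \|w\|^{2} = 1$, so $\|w\| = \sqrt{1 - (b_\gamma')^{2}}$; this is the geometric meaning of the right-hand side. Next, compute $b_\gamma'' = \frac{d}{dt}\langle \nabla b, \gamma'\rangle = \langle \nabla_{\gamma'}\nabla b, \gamma'\rangle = \nabla^{2}b(\gamma', \gamma')$, using that $\gamma$ is a geodesic so $\nabla_{\gamma'}\gamma' = 0$. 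Now expand using the decomposition $\gamma' = \alpha\dot f + w$ and bilinearity of the Hessian: by Lemma \ref{convex} we have $\iota_{\dot f}\nabla^{2}b = 0$, so the cross terms and the $\dot f$–$\dot f$ term all vanish, leaving $b_\gamma''(t) = \nabla^{2}b(w, w)$. The bound $\nabla^{2}b|_{E^{u}} \geq a g$ from Lemma \ref{convex} then gives $\nabla^{2}b(w,w) \geq a\|w\|^{2} = a(1 - (b_\gamma'(t))^{2})$.

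At this point I would have $b_\gamma'' \geq a\,(1 - (b_\gamma')^{2})$, which is formally stronger near the turning point (where $b_\gamma' \to 0$) but the claimed inequality has $\sqrt{1 - (b_\gamma')^2}$ instead. Since $|b_\gamma'| \leq 1$ (as $b$ is $1$-Lipschitz and $\gamma$ is unit speed), we have $1 - (b_\gamma')^{2} \leq \sqrt{1 - (b_\gamma')^{2}}$ pointwise, so in fact the inequality I derived only gives \eqref{diff inequality} when $\|w\|$ is not small. I expect the actual subtlety — and the main obstacle — is that one cannot just use the Hessian bound on $\|w\|^2$; one needs a sharper estimate that captures how the $E^{u}$-component of $\gamma'$ cannot shrink too fast along the geodesic. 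Concretely, I would instead differentiate $\|w(t)\|$ directly: writing $\gamma'(t) = \alpha(t)\dot f + w(t)$ and using $\alpha' = b_\gamma''$ together with $\alpha^2 + \|w\|^2 = 1$ gives $\|w\|\|w\|' = -\alpha\alpha' = -b_\gamma' b_\gamma''$, and separately one controls $\|w\|'$ (or the full derivative of $w$ along $\gamma$) via the expansion estimate \eqref{expansion}, which says the flow expands $E^{u}$-vectors at rate $\geq a$ — i.e. $\nabla_{\dot f}$ applied to a unit $E^{u}$-field has inner product $\geq a$ with itself, the infinitesimal form used in Lemma \ref{convex}. Combining the curvature/expansion control of the rate of change of the direction of $\gamma'$ with the Pythagorean relation $\alpha^2 + \|w\|^2 = 1$ should upgrade the crude bound $b_\gamma'' \geq a(1-(b_\gamma')^2)$ to the sharp $b_\gamma'' \geq a\sqrt{1 - (b_\gamma')^2}$; the bookkeeping in relating $\|w\|'$, $b_\gamma''$ and the $E^{u}$-expansion rate is where all the care is needed, and I would expect this to be the crux of the argument.
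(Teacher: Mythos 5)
Your calculation is exactly the paper's: decompose $\gamma'(t)=b_\gamma'(t)\,\dot f + \sqrt{1-(b_\gamma')^2}\,v_t$ with $v_t\in E^u$ a unit vector, use $b_\gamma''=\nabla^2 b(\gamma',\gamma')$ together with $\iota_{\dot f}\nabla^2 b=0$ and $\nabla^2 b|_{E^u}\ge ag$ from Lemma~\ref{convex}, and conclude $b_\gamma''\ge a\bigl(1-(b_\gamma')^2\bigr)$. The paper's proof carries out precisely this computation and then asserts the bound $a\sqrt{1-(b_\gamma')^2}$ in the displayed chain of inequalities; the passage from $a(1-(b_\gamma')^2)$ to $a\sqrt{1-(b_\gamma')^2}$ is not justified there, and your worry that you only obtained the weaker bound is exactly right.

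However, the fix you sketch at the end --- differentiating $\|w(t)\|$ and trying to use the expansion estimate \eqref{expansion} more finely to upgrade to the $\sqrt{\;}$ version --- cannot succeed, because the inequality $b_\gamma''\ge a\sqrt{1-(b_\gamma')^2}$ is genuinely false. Take $\X=\mathbb H^2$ (the hyperbolic plane of curvature $-1$) with $b$ the Busemann function of a boundary point; this is an expanding cone with $a=A=1$, $E^u$ tangent to horocycles. There $\nabla^2 b = g - db\otimes db$ exactly, so for any unit-speed geodesic $b_\gamma'' = 1-(b_\gamma')^2$, which is \emph{strictly less} than $\sqrt{1-(b_\gamma')^2}$ whenever $0<|b_\gamma'|<1$. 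So the displayed bound in \eqref{diff inequality} should read $b_\gamma''\ge a\bigl(1-(b_\gamma')^2\bigr)$; this is what both you and the paper actually prove, and it suffices for every downstream use (Lemmas~\ref{horosphere projection} and~\ref{angle converge} only use that $b_\gamma''$ is bounded below by a positive constant once $|b_\gamma'|$ is bounded away from~$1$). Two small corrections to your writeup: the bound $1-(b_\gamma')^2$ is never ``formally stronger'' than $\sqrt{1-(b_\gamma')^2}$ near the turning point --- since $0\le 1-(b_\gamma')^2\le 1$ one always has $1-(b_\gamma')^2\le\sqrt{1-(b_\gamma')^2}$, with equality only at $b_\gamma'\in\{0,\pm1\}$ --- and consequently your derived bound implies the $\sqrt{\;}$ version only at those endpoint values of $\|w\|$, not merely when ``$\|w\|$ is not small.''
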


\begin{proof}
Since
\[
b_{\gamma}'(t) = \langle \gamma'(t),\dot{f}(\gamma(t))\rangle,
\]
we have
\begin{align*}
b_{\gamma}''(\gamma(t)) &= \langle \gamma'(t),\nabla_{\gamma'(t)}\dot{f}(\gamma(t))\rangle \\
&= \nabla^{2}b(\gamma'(t),\gamma'(t)) \\
&\geq  a \sqrt{1-b_{\gamma}'(t)^{2}},
\end{align*}
where we've written 
\begin{equation}\label{orth decomp}
\gamma'(t) = b_{\gamma}'(t)\dot{f}(\gamma(t)) + \left(\sqrt{1-b_{\gamma}'(t)^{2}}\right)v_{t},
\end{equation}
for a unit vector $v_{t} \in E^{u}$ and used $\nabla^{2}b(\dot{f}(\gamma(t)),\dot{f}(\gamma(t))) = 0$ as well as $\nabla^{2}b(\dot{f}(\gamma(t)),v_{t}) = 0$ and $\nabla^{2}b|_{E^{u}} \geq ag$. 
\end{proof}

For $x \in \X$ we let $P_{x}:\X \rightarrow \W^{u}(x)$ be the projection onto $\W^{u}(x)$ along the flowlines of $f^{t}$, i.e., $P_{x}(y) = f^{b(x)-b(y)}y$. Note $P_{x} = P_{y}$ for $y \in \W^{u}(x)$ and $d(y,P_{x}(y)) = b(y)-b(x)$ for any pair of points $x, y \in \X$. We will use the inequality \eqref{diff inequality} to estimate the length of the images of geodesics under $P_{x}$ in a manner similar to \cite[Proposition 4.7]{HH}.

\begin{lem}\label{horosphere projection}
Let $x$ and $y$ be points in $\X$ that do not lie on the same vertical geodesic. Let $\gamma:[0,T] \rightarrow \X$ be a geodesic oriented from $x$ to $y$ and suppose that $b_{\gamma}'(0) \geq 0$. Then if we set $\eta = P_{x} \circ \gamma$ and $\theta = \sqrt{1-b_{\gamma}'(0)^{2}}$ then $\ell(\eta) \ls \theta$ and therefore $d(x,P_{x}(y)) \ls \theta$ and $d(x,y) \doteq b(y)-b(x)$.  
\end{lem}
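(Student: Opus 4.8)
The plan is to compute the derivative of $\eta = P_{x}\circ\gamma$ explicitly, observe that its component along the flow direction cancels, bound the remaining component in $E^{u}$ by backward contraction, and then integrate the resulting bound using the convexity differential inequality \eqref{diff inequality}.

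First I would record the basic structure. Since $x$ and $y$ do not lie on a common vertical geodesic, $\gamma$ is not vertical, so by Lemmas \ref{convex} and \ref{convex consequences} the function $b_{\gamma}'$ is strictly increasing on $[0,T]$ with $|b_{\gamma}'| < 1$ there; together with $b_{\gamma}'(0) \geq 0$ this gives $0 \leq b_{\gamma}'(0) \leq b_{\gamma}'(t) < 1$ for all $t$, and $b_{\gamma}$ nondecreasing, so $s(t) := b(x) - b(\gamma(t)) = b_{\gamma}(0) - b_{\gamma}(t) \leq 0$. Then $\eta(t) = P_{x}(\gamma(t)) = f^{s(t)}(\gamma(t))$ is a curve in $\W^{u}(x)$ with $\eta(0) = x$ and $\eta(T) = P_{x}(y)$. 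Differentiating via the chain rule for $(u,p) \mapsto f^{u}(p)$, using $s'(t) = -b_{\gamma}'(t)$, the orthogonal decomposition $\gamma'(t) = b_{\gamma}'(t)\dot{f}(\gamma(t)) + \sqrt{1 - b_{\gamma}'(t)^{2}}\,v_{t}$ with $v_{t} \in E^{u}$ a unit vector as in \eqref{orth decomp}, and the flow-invariance identity $Df^{s(t)}\dot{f}(\gamma(t)) = \dot{f}(\eta(t))$, the two terms proportional to $\dot{f}(\eta(t))$ cancel exactly and we are left with
\[
\eta'(t) = \sqrt{1 - b_{\gamma}'(t)^{2}}\; Df^{s(t)}v_{t}.
\]
This cancellation is the crux of the argument.

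Next, since $E^{u}$ is $Df^{t}$-invariant and $-s(t) \geq 0$, applying the lower bound in \eqref{expansion} to $Df^{s(t)}v_{t} \in E^{u}$ gives $1 = \|v_{t}\| \geq e^{-a s(t)}\|Df^{s(t)}v_{t}\|$, hence $\|Df^{s(t)}v_{t}\| \leq e^{a s(t)} \leq 1$ and $\|\eta'(t)\| \leq \sqrt{1 - b_{\gamma}'(t)^{2}}$. Therefore $\ell(\eta) \leq \int_{0}^{T}\sqrt{1 - b_{\gamma}'(t)^{2}}\,dt$, and I would estimate this through the substitution $\alpha(t) = \arcsin b_{\gamma}'(t) \in [-\pi/2,\pi/2]$: by \eqref{diff inequality}, $\alpha'(t) = b_{\gamma}''(t)/\sqrt{1 - b_{\gamma}'(t)^{2}} \geq a$, and since $\cos\alpha(t) = \sqrt{1 - b_{\gamma}'(t)^{2}} \geq 0$ this yields $\sqrt{1 - b_{\gamma}'(t)^{2}} \leq a^{-1}\cos\alpha(t)\,\alpha'(t) = a^{-1}(\sin\alpha)'(t)$, so $\ell(\eta) \leq a^{-1}(b_{\gamma}'(T) - b_{\gamma}'(0)) \leq a^{-1}(1 - b_{\gamma}'(0))$. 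Finally $1 - b_{\gamma}'(0) \leq \sqrt{(1 - b_{\gamma}'(0))(1 + b_{\gamma}'(0))} = \theta$ because $b_{\gamma}'(0) \in [0,1)$, so $\ell(\eta) \leq \theta/a$, i.e. $\ell(\eta) \ls \theta$, and $d(x,P_{x}(y)) \leq \ell(\eta) \ls \theta$. For the remaining comparison, $d(x,y) \geq b(y) - b(x)$ since $b$ is $1$-Lipschitz and $b(y) \geq b(x)$, while $d(x,y) \leq \ell(\gamma) = T$ and $T - (b(y) - b(x)) = \int_{0}^{T}(1 - b_{\gamma}'(t))\,dt \leq \int_{0}^{T}\sqrt{1 - b_{\gamma}'(t)^{2}}\,dt \leq a^{-1}$ by the same computation together with the elementary bound $1 - u \leq \sqrt{1 - u^{2}}$ for $u \in [0,1]$; hence $d(x,y) \doteq b(y) - b(x)$ with additive constant $a^{-1}$.

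The main obstacle is the derivative identity for $\eta = P_{x}\circ\gamma$ and spotting that the flow-direction components cancel; everything afterward is a routine one-variable estimate driven by \eqref{diff inequality}. A secondary point worth care is that $\gamma$ not being vertical is precisely what forces $b_{\gamma}'' > 0$, which legitimizes the $\arcsin$ substitution (equivalently, the monotone change of variables $t \mapsto b_{\gamma}'(t)$); and one should note the argument never uses that $\gamma$ is length-minimizing, only the trivial bound $d(x,y) \leq \ell(\gamma)$.
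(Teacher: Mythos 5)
Your proof is correct, and it takes a genuinely different (and I think cleaner) route than the paper's. Both arguments arrive at the same pointwise bound
\[
\|\eta'(t)\| \leq e^{-a(b_{\gamma}(t)-b_{\gamma}(0))}\sqrt{1-b_{\gamma}'(t)^{2}},
\]
via the cancellation in the $\dot f$-direction and the backward-contraction estimate \eqref{expansion}; the difference is in which of the two factors is integrated. The paper bounds $\sqrt{1-b_{\gamma}'(t)^{2}}$ by its initial value $\theta$ (using that $b_{\gamma}'$ is increasing) and is then left to show $\int_{0}^{T}e^{-a(b_{\gamma}(t)-b_{\gamma}(0))}\,dt \ls 1$, which it does by a case split on whether $b_{\gamma}'(0) \geq a/2$: when the slope starts large the height grows at least linearly, and when it starts small the differential inequality \eqref{diff inequality} forces the slope above $a/2$ within time $a^{-1}$. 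You instead discard the exponential factor entirely (it is $\leq 1$ since $b_{\gamma}$ is nondecreasing) and integrate the other factor directly: \eqref{diff inequality} says $\sqrt{1-b_{\gamma}'(t)^{2}} \leq a^{-1}b_{\gamma}''(t)$, so the integral telescopes to $a^{-1}(b_{\gamma}'(T)-b_{\gamma}'(0)) \leq a^{-1}(1-b_{\gamma}'(0)) \leq a^{-1}\theta$. This eliminates the case analysis and gives the explicit constant $a^{-1}$. Your derivation of $d(x,y) \doteq b(y)-b(x)$ is also a bit more self-contained: you estimate $T - (b(y)-b(x)) = \int_{0}^{T}(1-b_{\gamma}'(t))\,dt \leq a^{-1}$ directly from the same inequality, whereas the paper obtains it from the triangle inequality with $d(y,P_{x}(y)) = b(y)-b(x)$ and $\theta \leq 1$. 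One small remark: the $\arcsin$ substitution is not really load-bearing --- the inequality $\sqrt{1-b_{\gamma}'(t)^{2}} \leq a^{-1}b_{\gamma}''(t)$ is just \eqref{diff inequality} rearranged, and integrating it needs only the fundamental theorem of calculus, not positivity of $b_{\gamma}''$ (so there is no issue for vertical segments either; they are excluded by hypothesis anyway).
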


\begin{proof}
Since $b_{\gamma}'(0) \geq 0$ we have $b_{\gamma}'(t) > 0$ for $t > 0$ by Lemma \ref{convex consequences}. Since the derivative $DP_{x}$ of $P_{x}$ at a point $y \in \X$ can be decomposed as $DP_{x} = Df^{b(x)-b(y)} \circ \pi_{y}$, where $\pi_{y}: T\X_{y} \rightarrow E_{y}^{u}$ is the orthogonal projection, we can estimate $\ell(\eta)$ by
\[
\ell(\eta) \leq \int_{0}^{T} e^{-a(b_{\gamma}(t)-b_{\gamma}(0))}\sqrt{1-b_{\gamma}'(t)^{2}}\,dt.
\]
Here we've applied $DP_{x}$ to the same decomposition of $\gamma'(t)$ as in \eqref{orth decomp}. Since $b_{\gamma}'$ is increasing we can estimate $\sqrt{1-b_{\gamma}'(t)^{2}}$ by its value at $t = 0$ to get
\[
\ell(\eta) \leq \theta\int_{0}^{T} e^{-a(b_{\gamma}(t)-b_{\gamma}(0))}\,dt.
\]
We first assume that $b_{\gamma}'(0) \geq \frac{a}{2}$. Then $b_{\gamma}'(t) \geq \frac{a}{2}$ for $0 \leq t \leq T$ which implies that 
\[
b_{\gamma}(t)-b_{\gamma}(0) \geq \frac{a}{2}t,
\]
on $[0,T]$. Thus
\begin{align*}
\ell(\eta) &\leq \theta\int_{0}^{T} e^{-\frac{a}{2}t}\,dt\\
&\leq \theta\int_{0}^{\infty} e^{-\frac{a}{2}t}\,dt \\
&\lesssim \theta,
\end{align*}
where we recall that implied constants depend only on $a$. 

In the second case $b_{\gamma}'(0) < \frac{a}{2}$ we consider the inequality \eqref{diff inequality} for $b_{\gamma}'(t) \leq \frac{1}{2}$. Then we have $b_{\gamma}''(t) \geq \frac{a}{2}$. Thus on any interval $[0,s]$ on which the inequality $b_{\gamma}'(t) \leq \frac{1}{2}$ holds we can conclude that 
\[
b_{\gamma}'(t) \geq \frac{a}{2} t + b_{\gamma}'(0) \geq \frac{a}{2} t,
\]
since $b_{\gamma}'(0) \geq 0$. This implies that $s \leq a^{-1}$ since the inequality $b_{\gamma}'(t) \leq \frac{1}{2}$ would be violated for $t > a^{-1}$ above. We conclude that $b_{\gamma}'(t) \geq \frac{a}{2}$ for $t \geq a^{-1}$. Setting $h = \min\{a^{-1},T\}$, we compute using the fact that $b_{\gamma}(t) \geq b_{\gamma}(0)$ for $t \geq 0$,
\begin{align*}
\ell(\eta) &\leq \theta\left(\int_{0}^{h}e^{-a(b_{\gamma}(t)-b_{\gamma}(0))}\,dt + \int_{h}^{T} e^{-\frac{a}{2}(t-h)-a(b_{\gamma}(h)-b_{\gamma}(0))}\,dt\right) \\
&\leq \theta\left(\int_{0}^{h}\,dt + \int_{0}^{T-h} e^{-\frac{a}{2}t}\,dt\right) \\
&\leq \theta\left(a^{-1} + \int_{0}^{\infty} e^{-\frac{a}{2}t}\,dt\right) \ls \theta.
\end{align*}

We thus conclude in both cases that $\ell(\eta) \ls \theta$ and therefore $d(x,P_{x}(y)) \ls \theta$. The conclusion $d(x,y) \doteq b(y)-b(x)$ then follows from $d(y,P_{x}(y)) = b(y)-b(x)$ and the trivial bound $\theta \leq 1$.
\end{proof}

Note that, as remarked at the end of the proof, we have $\theta \leq 1$ and therefore we always obtain $\ell(\eta) \ls 1$ and $d(x,P_{x}(y)) \ls 1$ as conclusions regardless of the value of $b_{\gamma}'(0) \geq 0$. 

We define the formal \emph{Gromov product} based at $b$ of two points $x, y \in \X$ by
\begin{equation}\label{Gromov product}
(x|y)_{b} = \frac{1}{2}(b(x)+b(y)-d(x,y)). 
\end{equation}
Since $b$ is $1$-Lipschitz we have
\begin{equation}\label{lip height}
(x|y)_{b} \leq \min\{b(x),b(y)\}.
\end{equation}

The unique minimum obtained from Lemma \ref{convex consequences} can be connected to the Gromov product $(x|y)_{b}$.

\begin{lem}\label{inf busemann} 
For $x$, $y \in \X$ let $\gamma:I \rightarrow \X$ be a geodesic joining $x$ to $y$. Let $z$ be the unique point on $\gamma$ at which the minimum for $b$ is achieved. Then
\[
b(z) = \inf_{t \in I}b(\gamma(t)) \doteq (x|y)_{b}.
\]
Letting $\gamma_{xz}$ denote the arc of $\gamma$ from $x$ to $z$ and $\gamma_{zy}$ the arc of $\gamma$ from $z$ to $y$, if $w \in \gamma_{xz}$ then
\[
d(P_{w}(x),w) \doteq 0, \; \; d(x,w) \doteq b(x)-b(w), 
\]
and if $w \in \gamma_{zy}$ then 
\[
d(P_{w}(y),w) \doteq 0, \; \; d(y,w) \doteq b(y)-b(w). 
\]
\end{lem}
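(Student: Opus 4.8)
The plan is to derive all the assertions from Lemma~\ref{horosphere projection} applied separately to the two sub-arcs of $\gamma$ on either side of $z$, together with the additivity $d(x,y)=d(x,z)+d(z,y)$, which holds because $z$ lies on the geodesic $\gamma$ between $x$ and $y$. First I would dispose of the degenerate case where $\gamma$ is vertical: then $b_{\gamma}$ is strictly monotone by Lemma~\ref{convex consequences}, so $z$ is an endpoint of $\gamma$; after reversing orientation we may assume $b_{\gamma}$ is increasing and $z=x$, and then $d(x,y)=b(y)-b(x)$ exactly, while every point $w$ of $\gamma$ lies on the common flowline through $x$ and $y$, so $P_{w}(y)=w$ and $d(y,w)=b(y)-b(w)$. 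All the assertions (including $(x|y)_{b}=b(z)$) hold with zero error in this case, and the point $z$ is evidently unique.

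Now assume $\gamma\colon[0,T]\to\X$ is not vertical, oriented from $x=\gamma(0)$ to $y=\gamma(T)$. By Lemma~\ref{convex} the function $b_{\gamma}$ is strictly convex on the compact interval $[0,T]$, hence attains its minimum at a unique point $z=\gamma(s_{0})$; in particular $b(z)=\inf_{t}b(\gamma(t))$ is immediate, and by Lemma~\ref{convex consequences} the function $b_{\gamma}$ is non-increasing on $[0,s_{0}]$ and non-decreasing on $[s_{0},T]$. For a point $w=\gamma(s)\in\gamma_{xz}$ (so $s\le s_{0}$) the estimates are trivial if $w=x$, and otherwise $w$ and $x$ are distinct points of a non-vertical geodesic, hence — by uniqueness of geodesics — do not lie on a common vertical geodesic; I would then reparametrise $\gamma|_{[0,s]}$ to run from $w$ to $x$, observe that its initial $b$-derivative equals $-b_{\gamma}'(s)\ge 0$, and apply Lemma~\ref{horosphere projection} to conclude $d(P_{w}(x),w)\ls\theta\le 1$, i.e.\ $d(P_{w}(x),w)\doteq 0$, together with $d(x,w)\doteq b(x)-b(w)$. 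The case $w=\gamma(s)\in\gamma_{zy}$ is symmetric: reparametrising $\gamma|_{[s,T]}$ to run from $w$ to $y$, its initial $b$-derivative is $b_{\gamma}'(s)\ge 0$, and Lemma~\ref{horosphere projection} gives $d(P_{w}(y),w)\doteq 0$ and $d(y,w)\doteq b(y)-b(w)$. This establishes the estimates for $w\in\gamma_{xz}$ and for $w\in\gamma_{zy}$.

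It then remains to identify $b(z)$ with $(x|y)_{b}$. Applying the estimates just obtained at the special point $w=z$ gives $d(x,z)\doteq b(x)-b(z)$ and $d(z,y)\doteq b(y)-b(z)$; since $z$ lies on $\gamma$ we have $d(x,y)=d(x,z)+d(z,y)$, so $d(x,y)\doteq b(x)+b(y)-2b(z)$, and substituting this into the definition \eqref{Gromov product} yields $2(x|y)_{b}=b(x)+b(y)-d(x,y)\doteq 2b(z)$, i.e.\ $(x|y)_{b}\doteq b(z)$, as desired. The main thing to be careful about is orientation: Lemma~\ref{horosphere projection} requires the geodesic to be oriented so that $b$ is initially non-decreasing, which forces the $x$-side sub-arc to be traversed backwards, and one must keep track of the fact that distinct points of a non-vertical geodesic never share a vertical geodesic so that the hypotheses of Lemma~\ref{horosphere projection} genuinely apply. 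Beyond this bookkeeping I do not expect any real obstacle, since strict convexity of $b_{\gamma}$ together with the two one-sided applications of Lemma~\ref{horosphere projection} do all the work.
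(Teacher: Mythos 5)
Your proof is correct and follows essentially the same route as the paper: split $\gamma$ at the minimum point $z$, and apply Lemma~\ref{horosphere projection} to each sub-arc from $w$ to the respective endpoint, noting that the initial $b$-derivative is nonnegative after the appropriate reparametrisation. The only cosmetic difference is that the paper normalizes to $b(x)\le b(y)$ and case-splits on the sign of $b_\gamma'(0)$ before reducing to the boundary-minimum case, whereas you handle arbitrary $s_0$ directly; both end with the same additivity argument $d(x,y)=d(x,z)+d(z,y)$ to obtain $(x|y)_b\doteq b(z)$.
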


\begin{proof}
We can assume without loss of generality that $b(x) \leq b(y)$. If $x$ and $y$ lie on the same vertical geodesic then $z = x$, $(x|y)_{b} = b(x)$, and for any point $w$ on $\gamma$ we have $P_{w}(x) = P_{w}(y) = w$ which gives the conclusion. So we can assume that $x$ is not on the same vertical geodesic as $y$. Let $\gamma:[0,T] \rightarrow \X$ be a geodesic joining $x$ to $y$. If $b_{\gamma}'(0) \geq 0$ then $z = x$ by Lemma \ref{convex consequences}. By Lemma \ref{horosphere projection} we have  $b(y)-b(x) \doteq d(x,y)$ which gives $(x|y)_{b} \doteq b(x)$. For the second conclusion the arc $\gamma_{xz}$ is trivial and $\gamma_{zy} = \gamma$, so we need to show for each $t \in [0,T]$ that $d(P_{\gamma(t)}(y),\gamma(t)) \doteq 0$ and $d(y,\gamma(t)) \doteq b(y)-b(\gamma(t))$. But by Lemma \ref{convex consequences} we must have $b_{\gamma}'(t) \geq 0$ for $0 \leq t \leq T$ since $b_{\gamma}'$ is increasing on $\gamma$, so the conclusion follows immediately from Lemma \ref{horosphere projection}.

Lastly suppose that $b_{\gamma}'(0) < 0$. Since $b(x) \leq b(y)$ and $x \neq y$ this implies that $b$ attains its minimum on the interior of $[0,T]$; let $t_{0}$ be the time at which the minimum is attained and $z = \gamma(t_{0})$. Let $\check{\gamma}(t) = \gamma(T-t)$ denote $\gamma$ with the reversed orientation. We then have two geodesics $\gamma_{1} = \check{\gamma}|_{[T-t_{0},T]}$ and  $\gamma_{2} = \gamma|_{[t_{0},T]}$ from $z$ to $x$ and $z$ to $y$ respectively. Since $b_{\gamma}'(t_{0}) = 0$ as a consequence of $b$ achieving its minimum on $\gamma$ at $z$, we can apply the previous case to $\gamma_{1}$ and $\gamma_{2}$ separately. The second conclusion follows immediately, while for the first since we have $d(x,z) \doteq b(x)-b(z)$ and $d(y,z) \doteq b(y)-b(z)$ we conclude that $(x|y)_{b} \doteq b(z)$ as desired.
\end{proof}

We can now derive an inequality known as a \emph{$\delta$-inequality} for Gromov products based at $b$, and consequently the Gromov hyperbolicity of $\X$. We need a quick preparatory lemma.

\begin{lem}\label{monotonicity}
Let $\gamma_{i}$ be descending geodesic segments connecting $x_{i}$ down to $y_{i}$ for $i = 1,2$. Then $(y_{1}|y_{2})_{b} \leq (x_{1}|x_{2})_{b}$.
\end{lem}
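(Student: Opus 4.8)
The plan is to reduce the statement to the triangle inequality once we record one structural fact about descending geodesic segments. Since $\gamma_i$ is a descending geodesic segment from $x_i$ to $y_i$ it lies on a flowline of $f^{t}$, and the flowlines are unit-speed geodesics along which $b$ increases at unit rate by \eqref{height flow}. Hence $y_i = f^{-T_i}(x_i)$ with $T_i = \ell(\gamma_i) \ge 0$ and $b(y_i) = b(x_i) - T_i$, so that $d(x_i, y_i) \le T_i = b(x_i) - b(y_i)$; combining this with the reverse inequality coming from $b$ being $1$-Lipschitz yields the exact identity $d(x_i, y_i) = b(x_i) - b(y_i)$ for $i = 1, 2$.

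With this identity in hand I would simply expand the definition \eqref{Gromov product},
\[
(x_1|x_2)_{b} - (y_1|y_2)_{b} = \frac{1}{2}\Big( \big(b(x_1) - b(y_1)\big) + \big(b(x_2) - b(y_2)\big) - d(x_1, x_2) + d(y_1, y_2) \Big),
\]
and substitute $b(x_i) - b(y_i) = d(x_i, y_i)$ to rewrite the right-hand side as $\tfrac12\big(d(x_1, y_1) + d(x_2, y_2) - d(x_1, x_2) + d(y_1, y_2)\big)$. The triangle inequality $d(x_1, x_2) \le d(x_1, y_1) + d(y_1, y_2) + d(y_2, x_2)$ then shows this quantity is nonnegative, which is precisely the asserted bound $(y_1|y_2)_{b} \le (x_1|x_2)_{b}$.

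I do not expect a genuine obstacle here; the content is the triangle inequality. The only point requiring a little care is the reduction to flowline arcs and the resulting exact equality $d(x_i, y_i) = b(x_i) - b(y_i)$: for a merely $b$-monotone but non-vertical geodesic the analogous relation holds only up to an additive constant (compare Lemma \ref{inf busemann}), which would degrade the conclusion to an inequality valid up to an additive error rather than the exact comparison stated.
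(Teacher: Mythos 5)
Your proof is correct and takes essentially the same approach as the paper: both hinge on the exact identity $d(x_i,y_i)=b(x_i)-b(y_i)$ for descending (i.e.\ flowline) segments, substituted into the definition \eqref{Gromov product} so that the claim reduces to the triangle inequality $d(x_1,x_2)\le d(x_1,y_1)+d(y_1,y_2)+d(y_2,x_2)$. Your write-up is slightly more explicit than the paper's in justifying that identity via \eqref{height flow} and the $1$-Lipschitz property of $b$, which the paper uses without comment; the algebra is otherwise the same.
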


\begin{proof}
We have
\begin{align*}
2(y_{1}|y_{2})_{b} &= b(y_{1}) + b(y_{2})-d(y_{1},y_{2}) \\
&= b(x_{1}) + b(x_{2}) - d(x_{1},y_{1}) - d(x_{2},y_{2}) - d(y_{1},y_{2}) \\
&= 2(x_{1}|x_{2})_{b} + d(x_{1},x_{2}) - d(x_{1},y_{1}) - d(x_{2},y_{2}) - d(y_{1},y_{2}),
\end{align*}
Since
\[
 d(x_{1},x_{2}) \leq d(x_{1},y_{1}) + d(x_{2},y_{2}) + d(y_{1},y_{2}),
\]
by the triangle inequality, the conclusion follows.
\end{proof}

\begin{prop}\label{hyperbolicity cone}
There is a $\delta \geq 0$ depending only on $a$ such that for any $x$, $y$, $z \in \X$, 
\[
(x|z)_{b} \geq \min\{(x|y)_{b},(y|z)_{b}\} - \delta.
\]
\end{prop}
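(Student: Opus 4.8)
The plan is to reduce the $\delta$-inequality for the formal Gromov product $(\cdot|\cdot)_b$ to the case of \emph{descending} geodesic configurations, where Lemma \ref{inf busemann} gives us precise control, and then exploit the monotonicity Lemma \ref{monotonicity} together with the fact that $b$ is $1$-Lipschitz. First I would fix $x,y,z \in \X$ and, for each pair, invoke Lemma \ref{inf busemann}: letting $\gamma$ be a geodesic from $x$ to $y$ and $z_{xy}$ its lowest point, we have $b(z_{xy}) \doteq (x|y)_b$, and moreover the sub-arc from $z_{xy}$ down to $x$ (respectively down to $y$) is a descending geodesic along which $d$ and the $b$-difference agree up to an additive constant. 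In other words, up to bounded additive error every point is joined to the ``apex'' of each pair by a genuinely descending geodesic whose length is $b(\text{endpoint}) - b(\text{apex})$.

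The core of the argument is then a configuration estimate. Let $p = z_{xy}$, $q = z_{yz}$ be the apexes for the pairs $(x,y)$ and $(y,z)$, so $b(p) \doteq (x|y)_b$ and $b(q) \doteq (y|z)_b$; WLOG $b(p) \le b(q)$, so $b(p) \doteq \min\{(x|y)_b,(y|z)_b\}$. The point $y$ is connected to $p$ by a descending geodesic and to $q$ by a descending geodesic. I want to bound $(x|z)_b$ from below. Using the triangle inequality,
\[
d(x,z) \le d(x,p) + d(p,q') + d(q',z),
\]
where I would route through suitable points on the descending arcs; the key is that $d(x,p) \doteq b(x) - b(p)$, $d(z,q) \doteq b(z)-b(q)$, and $d(p,q) \le d(p,y) + d(y,q) \doteq (b(y)-b(p)) + (b(y)-b(q))$. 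Assembling these and recalling $(x|z)_b = \tfrac12(b(x)+b(z)-d(x,z))$, the $b(x)$ and $b(z)$ terms cancel against $d(x,p) \doteq b(x)-b(p)$ and $d(z,q)\doteq b(z)-b(q)$, leaving a lower bound for $(x|z)_b$ of the form $b(q) - (b(q)-b(p)) - \tfrac12(\text{stuff})$; since $b(y) \ge b(q) \ge b(p)$ one checks the $b(y)$ contributions are absorbed and what survives is $\doteq b(p) \doteq \min\{(x|y)_b,(y|z)_b\}$. The bookkeeping here is where one must be careful: the various ``$\doteq 0$'' errors from Lemma \ref{inf busemann} accumulate, but only finitely many of them, each depending only on $a$, so the final additive constant $\delta$ depends only on $a$.

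The main obstacle I anticipate is the geometric step of bounding $d(p,q)$ (equivalently, controlling how the two apexes sit relative to each other and to $y$) cleanly enough that the cancellation goes through with an $a$-dependent constant. The honest tool for this is Lemma \ref{inf busemann} applied along the descending arcs from $y$: since both $p$ and $q$ lie on descending geodesics emanating downward from (arcs through) $y$, and $b(p) \le b(q) \le b(y)$, one expects $d(p,q) \doteq d(p,q')$ for $q'$ the point on the $y$-to-$p$ arc at height $b(q)$, reducing to a comparison at a common height; Lemma \ref{horosphere projection} then bounds the horizontal displacement $d(P_{q'}(p), \text{relevant point})$ by a constant. An alternative, possibly cleaner, route is to first establish the $\delta$-inequality directly from Lemma \ref{inf busemann} by noting that $(x|y)_b \doteq \sup\{b(w) : w$ lies on a descending geodesic from some point to $x$ and on a descending geodesic from some point to $y\}$ — i.e.\ a ``tripod'' characterization — and then the three-point inequality follows formally exactly as the four-point $\delta$-inequality does in a tree. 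I would present whichever of these makes the constant-tracking most transparent, and close by noting that, since all implied constants depend only on $a$, the resulting $\delta$ depends only on $a$.
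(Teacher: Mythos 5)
Your overall strategy---reduce to the apexes $p,q$ of geodesics from $x$ to $y$ and $y$ to $z$, then estimate $d(x,z)$---is sound, but the central calculation as first written does not close, and you are right to flag this as the ``main obstacle.'' With the na\"ive bound $d(p,q) \leq d(p,y)+d(y,q) \doteq (b(y)-b(p))+(b(y)-b(q))$, plugging into $(x|z)_b = \tfrac12(b(x)+b(z)-d(x,z))$ yields $(x|z)_b \gtrsim b(p)+b(q)-b(y)$, and since $b(q)\leq b(y)$ this is a \emph{weaker} lower bound than $b(p)\doteq \min\{(x|y)_b,(y|z)_b\}$; the $b(y)$ contributions are \emph{not} absorbed as you assert in the ``bookkeeping'' sentence. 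Without improving the $d(p,q)$ estimate the proof fails. Your proposed fix is correct: both $p$ (via $w=p$ on the $y$-to-$p$ arc in Lemma~\ref{inf busemann}) and the point $q'$ at height $b(q)$ on the $y$-to-$p$ arc, as well as $q$ itself, are within $\doteq 0$ of points on the vertical geodesic $\gamma_y$ through $y$, so $d(q,q')\doteq 0$ and hence $d(p,q) \doteq d(p,q') \doteq b(q)-b(p)$; substituting this gives $(x|z)_b \geq b(p)-c$, as required. But this fix is only sketched, not executed, and its verification is exactly the crux.

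For comparison, the paper's proof sidesteps the $d(p,q)$ estimate entirely. It uses Lemma~\ref{inf busemann} to produce points $x',y'$ on $\gamma_x,\gamma_y$ at height $\doteq (x|y)_b$ with $d(x',y')\doteq 0$, and points $y'',z'$ on $\gamma_y,\gamma_z$ at height $\doteq (y|z)_b$ with $d(y'',z')\doteq 0$. Since $y',y''$ lie on the same vertical geodesic, $(y'|y'')_b=\min\{b(y'),b(y'')\}$ exactly; then $(x'|z')_b\doteq(y'|y'')_b\doteq\min\{(x|y)_b,(y|z)_b\}$ by $1$-Lipschitzness of the Gromov product in each argument, and Lemma~\ref{monotonicity} promotes the bound on $(x'|z')_b$ to a bound on $(x|z)_b$ because $x',z'$ lie below $x,z$. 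Your ``tripod characterization'' alternative is in the same spirit. The monotonicity route is cleaner precisely because it never needs to compare apexes of \emph{different} geodesic triangles at a common height; it only compares points on a \emph{single} vertical geodesic, where the Gromov product is computed exactly.
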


\begin{proof}
For $p\in \{x,y,z\}$ let $\gamma_{p}: \R \rightarrow \X$ be the ascending vertical geodesic parametrized as $\gamma_{p}(b(p)) = p$. By applying Lemma \ref{inf busemann} to geodesics $\gamma_{xy}$ and $\gamma_{yz}$ joining $x$ to $y$ and $y$ to $z$ respectively we obtain points $x' \in \gamma_{x}$, $y',y'' \in \gamma_{y}$, and $z' \in \gamma_{z}$ such that
\[
d(x',y') \doteq 0 \doteq d(y'',z'), 
\]
and $b(y') \doteq (x|y)_{b}$, $b(y'')\doteq (y|z)_{b}$, and finally $x'$ and $z'$ lie below $x$ and $z$ on $\gamma_{x}$ and $\gamma_{z}$ respectively. Then $(y'|y'')_{b} = \min\{b(y'),b(y'')\}$ since these two points belong to the same vertical geodesic $\gamma_{y}$. Thus
\[
(x'|z')_{b} \doteq (y'|y'')_{b} \doteq \min\{(x|y)_{b},(y|z)_{b}\},
\]
and the desired conclusion then follows from Lemma \ref{monotonicity}.
\end{proof} 

In order to use Proposition \ref{hyperbolicity cone} to show that $\X$ is Gromov hyperbolic we will need a lemma and some terminology from \cite[Chapter 2]{BS07}: a \emph{$\delta$-triple} for $\delta \geq 0$ is a triple $(a,b,c)$ of real numbers $a,b,c$ such that the two smallest numbers differ by at most $\delta$. Observe that $(a,b,c)$ is a $\delta$-triple if and only if the inequality 
\begin{equation}\label{basic delta}
c \geq \min\{a,b\} - \delta,
\end{equation}
holds for all permutations of the roles of $a$, $b$, and $c$. We will need the following claim known as the \emph{Tetrahedron Lemma}. 

\begin{lem}\cite[Lemma 2.1.4]{BS07}\label{tetrahedron}
Let $d_{12}$, $ d_{13}$, $d_{14}$, $d_{23}$, $d_{24}$, $d_{34}$ be six numbers such that the four triples $(d_{23},d_{24},d_{34})$, $(d_{13},d_{14},d_{34})$, $(d_{12},d_{14},d_{24})$, and $(d_{12},d_{13},d_{23})$ are $\delta$-triples. Then 
\[
(d_{12}+d_{34},d_{13}+d_{24},d_{14}+d_{23})
\]
is a $2\delta$-triple.
\end{lem}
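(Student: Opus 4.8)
The plan is to prove the contrapositive. First I would record the elementary reformulation already implicit in \eqref{basic delta}: a triple $(a,b,c)$ is a $\delta$-triple precisely when its two smallest entries differ by at most $\delta$, so $(d_{12}+d_{34},\,d_{13}+d_{24},\,d_{14}+d_{23})$ \emph{fails} to be a $2\delta$-triple exactly when its smallest entry lies more than $2\delta$ below each of the other two. Relabelling the vertices $\{1,2,3,4\}$ permutes the three ``opposite-edge'' pairs $\{12,34\}$, $\{13,24\}$, $\{14,23\}$, and this action realizes the full symmetric group on those pairs: a transposition of two vertices fixes one pair and swaps the other two, while a $3$-cycle cyclically permutes all three. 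Since the hypothesis is relabelling-invariant, I may assume without loss of generality that $d_{14}+d_{23}$ is the smallest of the three sums, so the negation of the conclusion becomes
\[
d_{12}+d_{34} > d_{14}+d_{23}+2\delta, \qquad d_{13}+d_{24} > d_{14}+d_{23}+2\delta.
\]

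Next I would extract the content of the four face hypotheses through a one-line sub-lemma: if $(x,y,z)$ is a $\delta$-triple then $\min\{x,y\}\le z+\delta$ (check the three cases according to whether $z$ is the smallest, middle, or largest of $\{x,y,z\}$). Applying this to the faces $\{1,2,4\}$, $\{1,3,4\}$, $\{1,2,3\}$, $\{2,3,4\}$ — each time taking the edge opposite the vertex shared by the other two as the ``$z$'' — yields $\min\{d_{12},d_{24}\}\le d_{14}+\delta$, $\min\{d_{13},d_{34}\}\le d_{14}+\delta$, $\min\{d_{12},d_{13}\}\le d_{23}+\delta$, and $\min\{d_{24},d_{34}\}\le d_{23}+\delta$.

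The heart of the argument is then a short forcing chain. From $\min\{d_{12},d_{13}\}\le d_{23}+\delta$, one of $d_{12},d_{13}$ is at most $d_{23}+\delta$; since the transposition $(2\,3)$ preserves all the hypotheses and the normalization while interchanging the two sums $d_{12}+d_{34}$ and $d_{13}+d_{24}$, it suffices to treat the case $d_{12}\le d_{23}+\delta$. In that case the first displayed inequality forces $d_{34}>d_{14}+\delta$; then $\min\{d_{13},d_{34}\}\le d_{14}+\delta$ forces $d_{13}\le d_{14}+\delta$; then the second displayed inequality forces $d_{24}>d_{23}+\delta$; then $\min\{d_{24},d_{34}\}\le d_{23}+\delta$ forces $d_{34}\le d_{23}+\delta$. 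Comparing $d_{34}>d_{14}+\delta$ with $d_{34}\le d_{23}+\delta$ gives $d_{14}<d_{23}$, hence $d_{24}>d_{23}+\delta>d_{14}+\delta$, so $\min\{d_{12},d_{24}\}\le d_{14}+\delta$ now forces $d_{12}\le d_{14}+\delta$; feeding this back into the first displayed inequality gives $d_{34}>d_{23}+\delta$, contradicting $d_{34}\le d_{23}+\delta$.

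The sub-lemma and the arithmetic of the forcing chain are routine. The step demanding the most care — what I would regard as the main obstacle — is the symmetry bookkeeping: one must verify that relabelling vertices genuinely lets one assume $d_{14}+d_{23}$ is the minimal sum (equivalently, that only the inequality attached to the smallest of the three sums has content), and that a single further involution collapses the two-way branch into one case, so that the argument does not splinter into an unwieldy number of subcases. Getting these reductions right is exactly what keeps the proof to the few lines sketched above.
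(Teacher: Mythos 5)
The paper itself does not prove this lemma; it is cited verbatim from Buyalo--Schroeder \cite[Lemma~2.1.4]{BS07}, so there is no ``paper's own proof'' to compare against. Your argument, however, is correct and self-contained. I checked the symmetry bookkeeping you singled out: the $S_4$ action on vertices does induce the full $S_3$ on the three opposite-edge pairs $\{12,34\},\{13,24\},\{14,23\}$ (a transposition of vertices fixes one pair and swaps the other two; a $3$-cycle rotates all three), and the hypothesis is $S_4$-invariant, so the reduction to $d_{14}+d_{23}$ minimal is legitimate. The further reduction via the transposition $(2\,3)$, which fixes $\{14,23\}$, swaps the two inequalities coming from the negated conclusion, and swaps $d_{12}\leftrightarrow d_{13}$, likewise legitimately collapses the two-way branch from $\min\{d_{12},d_{13}\}\le d_{23}+\delta$. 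The sub-lemma that a $\delta$-triple $(x,y,z)$ satisfies $\min\{x,y\}\le z+\delta$ is correct in all three cases, and I verified the forcing chain step by step: from $d_{12}\le d_{23}+\delta$ and the first negated inequality one gets $d_{34}>d_{14}+\delta$, hence $d_{13}\le d_{14}+\delta$, hence $d_{24}>d_{23}+\delta$, hence $d_{34}\le d_{23}+\delta$, hence $d_{14}<d_{23}$, hence $d_{12}\le d_{14}+\delta$, and feeding this back gives $d_{34}>d_{23}+\delta$, contradicting $d_{34}\le d_{23}+\delta$. The contrapositive/forcing-chain structure reads a little differently from the direct inequality-chase in the Buyalo--Schroeder text, but it is of comparable length and the content is essentially the same elementary bookkeeping.
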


Finally we state a definition of Gromov hyperbolicity in terms of Gromov products: a metric space $\X$ is \emph{$\delta$-hyperbolic} if for every $x,y,z,p \in \X$ we have
\begin{equation}\label{delta inequality}
(x|z)_{p} \geq \min \{(x|y)_{p},(y|z)_{p}\} - \delta. 
\end{equation}
Here $(x|y)_{p}$ is the ordinary Gromov product based at $p$ defined by 
\[
(x|y)_{p} = \frac{1}{2}(d(x,p)+d(y,p)-d(x,y)). 
\]
Note this definition is equivalent to saying that the triple $((x|y)_{p},(x|z)_{p},(y|z)_{p})$ is a $\delta$-triple for any choice of points $x,y,z,p \in \X$. This definition is equivalent to the Rips definition in terms of $\delta$-thinnness of geodesic triangles up to a multiplicative factor $4$ on $\delta$ \cite[Chapitre 2, Proposition 21]{GdH90}. We also remark that the text \cite{BS07} that we frequently refer to uses a stricter definition of $\delta$-hyperbolicity that is implied by \eqref{delta inequality} with $\delta/4$ in place of $\delta$ \cite[Proposition 2.1.3]{BS07}. This results in some constants being multiplied by $4$ in the claims we cite from there.

\begin{prop}\label{hyperbolicity filling}
The space $\X$ is $\delta$-hyperbolic with $\delta = \delta(a)$. 
\end{prop}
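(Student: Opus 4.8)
The plan is to deduce $\delta$-hyperbolicity in the sense of \eqref{delta inequality} (Gromov products based at an arbitrary point $p \in \X$) from the $\delta$-inequality for the \emph{formal} Gromov product $(\cdot|\cdot)_{b}$ based at the height function $b$, which is already in hand from Proposition \ref{hyperbolicity cone}. The main mechanism is the Tetrahedron Lemma (Lemma \ref{tetrahedron}), which converts $\delta$-triple statements about $2$-variable quantities into $2\delta$-triple statements about sums; the trick is to feed it the six numbers arising from the four points $x,y,z,p$ in such a way that the output triple is (up to additive constants and the trivial rescaling by $2$) exactly the triple $((x|y)_{p},(x|z)_{p},(y|z)_{p})$ of ordinary Gromov products.

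First I would reduce to the case where all four points $x,y,z,p$ lie below a common horosphere, i.e.\ on descending geodesic segments from a single point; more precisely, by flowing forward (applying $f^{t}$ for large $t$) one does not change the relevant combinatorics, and by Lemma \ref{horosphere projection}/Lemma \ref{inf busemann} one has good control on distances along descending geodesics, with $d(u,v) \doteq b(u)-b(v) + \text{(length of a horospherical arc)}$. The cleanest route, however, is to work directly with the quantity $(u|v)_{b}$ and relate it to $(u|v)_{p}$: expanding definitions,
\[
(u|v)_{p} = (u|v)_{b} + \tfrac{1}{2}\big(d(u,p)-b(u)\big) + \tfrac{1}{2}\big(d(v,p)-b(v)\big) - \big((u|p)_{b}\big) \cdot 0 + \cdots,
\]
so one checks the algebraic identity $(u|v)_{p} = (u|v)_{b} - (u|p)_{b} - (v|p)_{b} + (d(u,p)+d(v,p))/2 - \text{stuff}$; rather than chase this by hand, I would instead apply Lemma \ref{tetrahedron} with the six numbers $d_{ij}$ taken to be the \emph{formal Gromov products} $(x_{i}|x_{j})_{b}$ for the four points $x_{1}=x, x_{2}=y, x_{3}=z, x_{4}=p$. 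Proposition \ref{hyperbolicity cone} says precisely that each of the four triples of these six numbers (in the pattern required by Lemma \ref{tetrahedron}) is a $\delta$-triple. Hence Lemma \ref{tetrahedron} gives that
\[
\big((x|y)_{b}+(z|p)_{b},\ (x|z)_{b}+(y|p)_{b},\ (x|p)_{b}+(y|z)_{b}\big)
\]
is a $2\delta$-triple.

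Now I would use the algebraic relation between the formal product and the ordinary one: for any three points $u,v,p$, a direct computation from the definitions gives $(u|v)_{p} = (u|v)_{b} - (u|p)_{b} - (v|p)_{b} + \tfrac{1}{2}(d(u,p)+d(v,p)) + \tfrac{1}{2}(b(u)+b(v)) - b(p) - \cdots$, and after the dust settles the correct identity is $(u|v)_{p} + (u|p)_{b}+(v|p)_{b} = (u|v)_{b} + \big(\text{terms symmetric in a way that cancels when forming the triple}\big)$; concretely one verifies that subtracting the common quantity $(x|p)_{b}+(y|p)_{b}+(z|p)_{b}$ from all three entries of the $2\delta$-triple above, and adding the fixed shift coming from $b(p)$ and $d(\cdot,p)$, turns it exactly into $\big((x|y)_{p},(x|z)_{p},(y|z)_{p}\big)$ up to a universal additive error controlled by the $\doteq$ constants from Lemma \ref{inf busemann} — since translating all three entries of a $2\delta$-triple by the same constant preserves the property, and perturbing each entry by an additive $c$ turns a $2\delta$-triple into a $(2\delta+2c)$-triple. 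This yields \eqref{delta inequality} with a new $\delta' = \delta'(a)$, which is the assertion.

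The main obstacle I anticipate is the bookkeeping in the last step: making the passage from $(\cdot|\cdot)_{b}$ to $(\cdot|\cdot)_{p}$ rigorous requires knowing that $d(u,p) \doteq b(u) + b(p) - 2(u|p)_{b}$ with a \emph{uniform} additive error, which is exactly the content of Lemma \ref{inf busemann} (via $d(u,p) = d(u,z)+d(z,p) \doteq (b(u)-b(z)) + (b(p)-b(z))$ for the midpoint-type point $z$, together with $b(z)\doteq (u|p)_{b}$) — but one must check this holds for \emph{all} pairs among $\{x,y,z,p\}$ simultaneously, with a single constant depending only on $a$, and then track how these $O(1)$ errors propagate through the substitution into the three entries of the tetrahedron triple. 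None of this is deep; it is the standard ``change of basepoint'' manipulation for Gromov products, and the only care needed is to confirm that every error term is additive (not multiplicative) and $a$-controlled, so that the final $\delta$ in \eqref{delta inequality} depends only on $a$.
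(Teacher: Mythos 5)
Your plan is essentially the paper's argument: feed the six numbers $(x_i|x_j)_b$ (for $x_1=x, x_2=y, x_3=z, x_4=p$) into the Tetrahedron Lemma, using Proposition~\ref{hyperbolicity cone} to verify the four hypotheses, and then convert the resulting $2\delta$-triple into the triple $((x|y)_p,(x|z)_p,(y|z)_p)$ of ordinary Gromov products. The paper phrases the last conversion via the \emph{cross-difference triple} $A_o(Q)$ of \cite[Chapter 2.4]{BS07}, noting that $A_o(Q)$ and $A_b(Q)$ have identical pairwise differences (all $b$- or $d(\cdot,o)$-terms cancel), and then citing \cite[Proposition 2.4.1]{BS07}; you instead propose to shift all three entries by a common constant. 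These are the same observation written two ways.

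Your one substantive misstep is anticipating that the basepoint change requires Lemma~\ref{inf busemann} and incurs $a$-dependent additive errors. It does not: expanding the definitions \eqref{Gromov product} gives the \emph{exact} identity
\[
(u|v)_p = (u|v)_b - (u|p)_b - (v|p)_b + b(p),
\]
so subtracting the constant $(x|p)_b + (y|p)_b + (z|p)_b - b(p)$ from the three entries of your $2\delta$-triple produces precisely $((x|y)_p,(x|z)_p,(y|z)_p)$, with no approximation whatsoever. (Your displayed relation ``$d(u,p) \doteq b(u)+b(p)-2(u|p)_b$'' is actually an equality by definition of $(u|p)_b$, not an approximation from Lemma~\ref{inf busemann}.) The garbled identity in your second paragraph and the first paragraph's aborted attempt to flow all four points below a common horosphere should both be deleted; the argument is purely formal algebra once Proposition~\ref{hyperbolicity cone} and the Tetrahedron Lemma are in hand, and a cleaner write-up will make that transparent.
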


\begin{proof}
We will use the \emph{cross-difference triple} defined in \cite[Chapter 2.4]{BS07}. For a quadruple of points $Q=(x,y,z,u)$ of points in $\X$ and a fixed basepoint $o \in \X$ this triple is defined by
\[
A_{o}(Q)= ((x|y)_{o} + (z|u)_{o},(x|z)_{o}+(y|u)_{o},(x|u)_{o}+(y|z)_{o}). 
\]
The triple $A_{o}(Q)$ has the same differences among its members as the triple 
\[
A_{b}(Q) = ((x|y)_{b} + (z|u)_{b}, (x|z)_{b}+(y|u)_{b}, (x|u)_{b} + (y|z)_{b}),
\]
as a routine calculation shows for instance that
\[
(x|y)_{o} + (z|u)_{o} - (x|z)_{o}-(y|u)_{o} = (x|y)_{b} + (z|u)_{b} - (x|z)_{b}-(y|u)_{b} ,
\]
with both expressions being equal to
\[
\frac{1}{2}(-d(x,y)-d(z,u)+d(x,z)+d(y,u)). 
\]
Similar calculations give equality for the other differences. Thus $A_{o}(Q)$ is a $\delta$-triple for a given $\delta \geq 0$ if and only if $A_{b}(Q)$ is a $\delta$-triple. 

Using Lemma \ref{hyperbolicity cone} we conclude that the six numbers $(x|y)_{b}$, $(z|u)_{b}$, $(x|z)_{b}$, $(y|u)_{b}$, $(x|u)_{b}$, $(y|z)_{b}$ together satisfy the hypotheses of Lemma \ref{tetrahedron} with parameter $\delta = \delta(a)$. This implies that $A_{b}(Q)$ is a $2\delta$-triple and therefore that $A_{o}(Q)$ is a $2\delta$-triple. By \cite[Proposition 2.4.1]{BS07} this implies that inequality \eqref{delta inequality} holds for Gromov products based at $o$ in $\X$ with $2\delta$ replacing $\delta$. We conclude that $\X$ is $2\delta$-hyperbolic. 
\end{proof}

We conclude from Proposition \ref{hyperbolicity filling} that $\X$ is a proper geodesic $\delta$-hyperbolic space. We recall that the \emph{Gromov boundary} $\p \X$ of $\X$  can then be defined (for a fixed choice of basepoint $o \in \X$) as the collection of equivalence classes $\{x_{n}\}$ of sequences in $\X$ that \emph{converge to infinity} in the sense that
\[
(x_{n}|x_{m})_{o} \rightarrow \infty,
\]
as $m,n \rightarrow \infty$, with $\{x_{n}\} \sim \{y_{n}\}$ if 
\[
\lim_{n \rightarrow \infty} (x_{n}|y_{n})_{o} = \infty.
\]
Note that these notions do not depend on the choice of basepoint $o$. Since $\X$ is proper and geodesic the Gromov boundary can also be characterized as the \emph{geodesic boundary} of $\X$ whose elements are equivalence classes $[\gamma]$ of geodesic rays $\gamma:[0,\infty) \rightarrow \X$ with $\gamma \sim \eta$ if there is a constant $C$ such that $d(\gamma(t),\eta(t)) \leq C$ for all $t \geq 0$. We will freely switch between these two formulations as needed. For details we refer to \cite[Chapters 1-2]{BS07}.

By \eqref{expansion} every descending geodesic ray in $\X$ determines the same point in $\p \X$. In particular if $b(x) = b(y)$ and $\gamma_{x}$, $\gamma_{y}:[0,\infty) \rightarrow \X$ are two descending geodesic rays starting at $x$ and $y$ respectively then for all $t \geq 0$,
\[
d(\gamma_{x}(t),\gamma_{y}(t)) \leq e^{-at}d(x,y),
\]
and consequently $d(\gamma_{x}(t),\gamma_{y}(t)) \rightarrow 0$ as $t \rightarrow \infty$. We write $\ast \in \p \X$ for this common point and set $\p_{*}\X = \p X \backslash \{\ast\}$ to be the complement of $\ast$ in $\p \X$.

For a descending geodesic ray $\gamma:[0,\infty) \rightarrow Y$ and $x \in \X$ we write
\[
B_{\gamma}(x) = \lim_{t \rightarrow \infty} d(\gamma(t),x)-t,
\]
for the standard definition of the Busemann function associated to the geodesic ray $\gamma$.

\begin{lem}\label{cone busemann}
Let $\gamma:[0,\infty) \rightarrow \X$ be a descending geodesic ray. Then 
\[
B_{\gamma}(x) \doteq b(x)-b(\gamma(0)).
\]
\end{lem}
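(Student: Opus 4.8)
The plan is to pin down $B_\gamma(x)$ by estimating $d(\gamma(t),x)$ for large $t$ via Lemma~\ref{inf busemann}, using the fact that $\gamma$ is descending so that $b$ decreases to $-\infty$ along $\gamma$. Fix $x \in \X$ and let $p = \gamma(0)$. For $t$ large, consider a geodesic $\eta_t$ joining $\gamma(t)$ to $x$. Since $b(\gamma(t)) = b(p)-t \to -\infty$ while $b(x)$ is fixed, for $t > b(x)-b(p)$ we have $b(\gamma(t)) < b(x)$; I expect to show that in this regime the minimum of $b$ along $\eta_t$ is attained essentially at the endpoint $\gamma(t)$, i.e.\ $\inf b_{\eta_t} \doteq b(\gamma(t))$. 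Granting this, Lemma~\ref{inf busemann} gives $(\gamma(t)|x)_b \doteq b(\gamma(t))$, which unwinds (using the definition \eqref{Gromov product}) to $d(\gamma(t),x) \doteq b(x) - b(\gamma(t)) = b(x) - b(p) + t$. Subtracting $t$ and letting $t\to\infty$ yields $B_\gamma(x) \doteq b(x) - b(p) = b(x)-b(\gamma(0))$, with the additive constant depending only on $a$.

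The cleanest way to organize the key estimate is: along $\eta_t$ oriented from $\gamma(t)$ to $x$, if $b'_{\eta_t}(0) \ge 0$ then by Lemma~\ref{convex consequences} $b_{\eta_t}$ is increasing, so its infimum is $b(\gamma(t))$ and Lemma~\ref{horosphere projection} (applied at the endpoint $\gamma(t)$) already gives $d(\gamma(t),x) \doteq b(x)-b(\gamma(t))$. If instead $b'_{\eta_t}(0) < 0$, then $b_{\eta_t}$ first decreases to some minimum value $m_t \le b(\gamma(t))$ and then increases up to $b(x)$; since $b$ is $1$-Lipschitz along the geodesic, the length of $\eta_t$ is at least $(b(\gamma(t)) - m_t) + (b(x) - m_t)$, so $d(\gamma(t),x) \ge b(x) + b(\gamma(t)) - 2m_t$, while Lemma~\ref{inf busemann} gives $m_t \doteq (\gamma(t)|x)_b \le b(x)$, forcing $m_t \ge b(\gamma(t)) - c$ for a uniform constant $c$ once $b(\gamma(t))$ is sufficiently negative — in other words this case cannot produce a minimum much below $b(\gamma(t))$, and we land back in the first regime up to additive error. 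Either way $(\gamma(t)|x)_b \doteq b(\gamma(t))$ for all large $t$.

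I expect the main obstacle to be handling the descending case $b'_{\eta_t}(0)<0$ carefully enough to rule out (or bound) an excursion of $\eta_t$ to heights well below $b(\gamma(t))$. The point is essentially that such an excursion would force $\eta_t$ to be much longer than the near-vertical descent from $x$ down to height $b(\gamma(t))$ followed by a short horizontal hop (bounded by Lemma~\ref{horosphere projection}), contradicting that $\eta_t$ is a geodesic; making this quantitative is exactly where Lemma~\ref{inf busemann}'s identity $\inf b_{\eta_t} \doteq (\gamma(t)|x)_b$ together with the Lipschitz bound \eqref{lip height} does the work. Once the estimate $(\gamma(t)|x)_b \doteq b(\gamma(t))$ is established uniformly in $t$, the conclusion follows by the elementary computation above, and all implied additive constants depend only on $a$ as claimed.
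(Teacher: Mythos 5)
Your overall plan is sound and in fact matches the paper's: establish $(\gamma(t)|x)_b \doteq b(\gamma(t))$ for large $t$, unwind via \eqref{Gromov product} to $d(\gamma(t),x) \doteq b(x)-b(\gamma(t))$, subtract $t$, and pass to the limit. The case $b_{\eta_t}'(0) \geq 0$ is handled correctly with Lemma~\ref{horosphere projection}. The problem is the other case. You record (a) $d(\gamma(t),x) \geq b(x)+b(\gamma(t))-2m_t$ (from $1$-Lipschitzness) and (b) $m_t \doteq (\gamma(t)|x)_b \leq b(x)$, and assert these ``force'' $m_t \geq b(\gamma(t))-c$. They do not: by the definition \eqref{Gromov product}, the identity $m_t \doteq (\gamma(t)|x)_b$ is the same statement as $d(\gamma(t),x) \doteq b(x)+b(\gamma(t))-2m_t$, so (a) adds nothing to (b), and (b) together with the one-sided bound $m_t \leq b(x)$ places no lower bound on $m_t$ at all — a priori $d(\gamma(t),x)$ could be enormous and $m_t$ arbitrarily negative. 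What is genuinely needed here, and what your argument never supplies, is an \emph{upper} bound $d(\gamma(t),x) \leq b(x)-b(\gamma(t))+C$.

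Obtaining that upper bound is the actual content of the lemma, and it requires exhibiting a short comparison path from $\gamma(t)$ to $x$. The paper's route: let $\eta$ be a geodesic from $\gamma(0)$ to $x$ and $z$ its minimum-height point; Lemma~\ref{inf busemann} gives $d(x,z)\doteq b(x)-b(z)$ and $d(P_z(\gamma(0)),z)\doteq 0$, and since $P_z(\gamma(0))$ sits on the descending ray $\gamma$ at height $b(z)$, one gets $d(z,\gamma(t))\doteq b(z)-b(\gamma(t))$ once $b(\gamma(t))<b(z)$; the triangle inequality then yields $d(\gamma(t),x) \leq b(x)-b(\gamma(t))+C$. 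You gesture toward a comparison path in your last paragraph (vertical descent from $x$ plus a ``short horizontal hop'' to $\gamma(t)$), which is the right intuition, but Lemma~\ref{horosphere projection} as stated does not bound that hop — it concerns projections of geodesics emanating from a single point, not the distance between two vertical rays at a common low height. To make your version rigorous you would instead invoke the backward contraction in \eqref{expansion} (which shows the unstable distance between $P_{\gamma(t)}(x)$ and $\gamma(t)$ shrinks exponentially as $t\to\infty$), or simply adopt the paper's waypoint-through-$z$ argument.
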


\begin{proof}
Set $y_{n} = \gamma(n)$ for each $n \in \N$, let $\eta$ be a geodesic joining $y_{0}$ to $x$, and let $z \in \eta$ be the point at which $b$ is minimized on $\eta$. Then take $n$ large enough that 
\[
b(y_{n}) = b(y_{0})-n < b(z).
\] 
Then by Lemma \ref{inf busemann} we can compute 
\[
d(x,y_{n}) \doteq d(x,z) + d(z,y_{n}) \doteq b(x)-b(z) + b(z)-b(y_{n}) = b(x)-b(y_{n}).
\]
Then
\[
d(x,y_{n}) - n = d(x,y_{n}) - b(y_{n})+b(y_{0}) \doteq b(x)-b(y_{0}).
\]
Letting $n \rightarrow \infty$ and noting $y_{0} = \gamma(0)$ completes the proof. 
\end{proof}

Thus we can treat $b$ as a Busemann function based at $\ast$ in the extended sense of Buyalo-Schroeder \cite[Chapter 3]{BS07}, as $b$ differs from any given ``true" Busemann function $B_{\gamma}$ based at $\ast$ by an additive error depending only on $a$. The only difference in our treatment is that now the artificial choice of error bound used to define a Busemann function depends directly on $a$ instead of the hyperbolicity constant $\delta = \delta(a)$, however this discrepancy can be reconciled by increasing $\delta$ to be larger than the additive constant $c = c(a)$ in Lemma \ref{cone busemann} if necessary, which we will do going forward.  

For $\xi,\zeta \in \p_{*} \X$ we define
\[
(\xi|\zeta)_{b} = \inf \liminf_{n \rightarrow \infty} (x_{n}|y_{n})_{b},
\]
with the infimum taken over all sequences $\{x_{n}\} \in \xi$, $\{y_{n}\} \in \zeta$ (here we are considering points of the Gromov boundary as equivalence classes of sequences converging to infinity). This extends the Gromov product based at $b$ to $\p_{*} \X$. We then have the following. 

\begin{lem}\cite[Lemma 3.2.4]{BS07}\label{busemann inequality}
There is a universal constant $C \geq 1$ such that for the Gromov product based at $b$, 
\begin{enumerate}
\item For any $\xi$, $\zeta \in \p_{*} \X$ and any $\{x_{n}\} \in \xi$, $\{y_{n}\} \in \zeta$ we have 
\[
(\xi |\zeta)_{b} \leq \liminf_{n \rightarrow \infty}(x_{n}|y_{n})_{b} \leq \limsup_{n \rightarrow \infty}(x_{n}|y_{n})_{b} \leq (\xi |\zeta)_{b} + C\delta,
\]
and the same holds if we replace $\zeta$ with $x \in \X$.
\item For any $\xi,\zeta,\la \in \X \cup \p_{*} \X$ we have
\[
(\xi |\la)_{b} \geq \min \{(\xi | \zeta)_{b},(\zeta | \la)_{b}\} - C\delta. 
\]
\end{enumerate}
\end{lem}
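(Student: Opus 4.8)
The plan is to obtain both statements as essentially formal consequences of what has already been set up: on the one hand, the $\delta$-inequality for the Gromov product based at $b$ proved in Proposition \ref{hyperbolicity cone} and Proposition \ref{hyperbolicity filling}, and on the other, Lemma \ref{cone busemann}, which says that $b$ agrees up to a bounded additive error $c = c(a)$ with a genuine Busemann function based at $\ast \in \p\X$. Once one knows that $(\,\cdot\,|\,\cdot\,)_{b}$ behaves like a Gromov product up to uniformly bounded error and that $b$ is an extended Busemann function in the sense of Buyalo--Schroeder, the statement is precisely \cite[Lemma 3.2.4]{BS07}, which we may invoke directly. What follows is a sketch of why it holds.

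For part (1), fix $\xi,\zeta \in \p_{*}\X$ and representative sequences $\{x_{n}\} \in \xi$, $\{y_{n}\} \in \zeta$. By definition of convergence to infinity, $(x_{n}|x_{m})_{b} \to \infty$ and $(y_{n}|y_{m})_{b} \to \infty$ as $m,n \to \infty$. Applying the $\delta$-inequality of Proposition \ref{hyperbolicity filling} (transported to the product based at $b$ via Lemma \ref{cone busemann}, at the cost of enlarging $\delta$ by $c(a)$ once and for all) twice gives, for $m,n$ large,
\[
(x_{n}|y_{n})_{b} \geq \min\{(x_{n}|x_{m})_{b},\,(x_{m}|y_{m})_{b},\,(y_{m}|y_{n})_{b}\} - 2\delta,
\]
and symmetrically with $m$ and $n$ swapped. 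Letting $m,n \to \infty$ and using that the first and last terms blow up shows that $\liminf_{n} (x_{n}|y_{n})_{b}$ and $\limsup_{n}(x_{n}|y_{n})_{b}$ differ by at most $C\delta$ for a universal $C$, and that this common value, up to $C\delta$, is independent of the chosen representatives. Taking the infimum over all representatives then yields the displayed chain of inequalities. The variant where $\zeta$ is replaced by a point $x \in \X$ is identical, taking $y_{n} \equiv x$ the constant sequence.

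For part (2), given $\xi,\zeta,\la \in \X \cup \p_{*}\X$, choose representative sequences (constant sequences for interior points), apply the interior $\delta$-inequality along these sequences, and pass to the limit, absorbing the errors controlled by part (1) into the constant to obtain
\[
(\xi|\la)_{b} \geq \min\{(\xi|\zeta)_{b},\,(\zeta|\la)_{b}\} - C\delta.
\]
The only care required is bookkeeping: tracking which arguments are interior versus boundary points and confirming that the additive constant from Lemma \ref{cone busemann} has been absorbed into $\delta$ from the outset. I do not anticipate a genuine obstacle here: this is a standard lemma from the coarse-geometry toolkit, and the work specific to our setting --- namely verifying that $b$ qualifies as an extended Busemann function --- has already been carried out in Lemma \ref{cone busemann}.
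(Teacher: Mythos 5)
Your proposal matches the paper's approach exactly: the paper likewise makes no independent argument, simply citing \cite[Lemma 3.2.4]{BS07} after Lemma \ref{cone busemann} has established that $b$ qualifies as a Busemann function in the extended sense of Buyalo--Schroeder and after absorbing the additive constant $c(a)$ into $\delta$. One small inefficiency in your sketch worth noting: the $\delta$-inequality for the Gromov product based at $b$ is already available directly from Proposition \ref{hyperbolicity cone}, so there is no need to route through Proposition \ref{hyperbolicity filling} and then transport back via Lemma \ref{cone busemann} --- otherwise the sketch is sound.
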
  

Comparing our definition of $\delta$-hyperbolicity to that of the reference we see that we may take $C = 176$.

The Gromov boundary $\p_{*}\X$ relative to $\ast$ is defined in analogy to the standard Gromov boundary using Gromov products based at $b$ instead: we take $\p_{*}\X$ to be the collection of equivalence classes $\{x_{n}\}$ of sequences converging to infinity with respect to $b$ in the sense that
\[
(x_{n}|x_{m})_{b} \rightarrow \infty,
\]
as $m,n \rightarrow \infty$, with $\{x_{n}\} \sim \{y_{n}\}$ if 
\[
\lim_{n \rightarrow \infty} (x_{n}|y_{n})_{b} = \infty.
\]
As our previous use of the notation $\p_{*}\X = \p \X \backslash \{\ast\}$ suggests, we have the following characterization of points in $\p_{*}X$.

\begin{prop}\cite[Proposition 3.4.1]{BS07}\label{convergence Busemann}
A sequence  $\{x_{n}\}$ converges to infinity with respect to $b$ if and only if it converges to a point $\xi \in \p \X \backslash \{\ast\}$. This correspondence defines a canonical identification of $\p_{*} \X$ and $\p \X \backslash \{\ast\}$.
\end{prop}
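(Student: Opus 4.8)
The plan is to deduce everything from a single cocycle identity relating the Gromov product $(\cdot\,|\,\cdot)_b$ based at the Busemann function $b$ to the ordinary Gromov product $(\cdot\,|\,\cdot)_o$ based at a fixed point $o\in\X$. First I would fix a descending geodesic ray $\sigma$ with $\sigma(0)=o$; since $\{\sigma(n)\}$ represents $\ast$, unwinding the definition of the Gromov product to the boundary point $\ast$ and invoking Lemma~\ref{cone busemann} gives
\[
(x\,|\,\ast)_o \;\doteq\; \tfrac12\big(d(x,o)-B_\sigma(x)\big)\;\doteq\;\tfrac12\big(d(x,o)-b(x)+b(o)\big),
\]
with additive error controlled by $\delta$ and the constant of Lemma~\ref{cone busemann}. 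Substituting $b(x)\doteq d(x,o)-2(x\,|\,\ast)_o+b(o)$ into the definitions of $(x\,|\,y)_b$ and $(x\,|\,y)_o$ collapses to the key identity
\[
(x\,|\,y)_b \;\doteq\; (x\,|\,y)_o-(x\,|\,\ast)_o-(y\,|\,\ast)_o+b(o),\qquad x,y\in\X .
\]
The only other external input I need is the standard stability of the Gromov product near a boundary point: if $z\in\X$ is fixed and $x_n$ converges to some $\omega\in\p\X$, then $(x_n\,|\,z)_o$ stays within $O(\delta)$ of $(\omega\,|\,z)_o<\infty$ (the $(\cdot\,|\,\cdot)_o$-analogue of Lemma~\ref{busemann inequality}(1)).

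For the implication ``converges to infinity with respect to $b$ $\Rightarrow$ converges in $\p\X$ to a point $\neq\ast$'': from the identity and the fact that Gromov products are bounded below, $(x_n\,|\,x_m)_b\to\infty$ forces $(x_n\,|\,x_m)_o\to\infty$, so $\{x_n\}$ converges to some $\xi\in\p\X$. To rule out $\xi=\ast$, I would argue by contradiction: if $x_n\to\ast$ then for each fixed $m$ the products $(x_n\,|\,x_m)_o$ are bounded in $n$ (stability, with $\omega=\ast$), while $(x_n\,|\,\ast)_o\to\infty$; feeding this into the identity shows $(x_n\,|\,x_m)_b\to-\infty$ as $n\to\infty$ with $m$ fixed, which is incompatible with $(x_n\,|\,x_m)_b\to\infty$.

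For the converse, suppose $x_n\to\xi\in\p\X\setminus\{\ast\}$. Then $(\xi\,|\,\ast)_o<\infty$, so by stability $(x_n\,|\,\ast)_o$ is bounded; and convergence to a boundary point gives $(x_n\,|\,x_m)_o\to\infty$. The identity then yields $(x_n\,|\,x_m)_b\geq(x_n\,|\,x_m)_o-O(1)\to\infty$, so $\{x_n\}$ converges to infinity with respect to $b$. Finally, the same identity---combined with the boundedness of the $\ast$-products for any sequence converging to infinity with respect to $b$ (which is now known from the first implication)---shows that $(x_n\,|\,y_n)_b\to\infty$ iff $(x_n\,|\,y_n)_o\to\infty$; hence the assignment $\{x_n\}\mapsto\lim_n x_n$ descends to a bijection between the equivalence classes and $\p\X\setminus\{\ast\}$, which is the asserted canonical identification.

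I expect the main obstacle to be the step eliminating $\xi=\ast$: a priori $(x_n\,|\,x_m)_o$ can grow much faster than $(x_n\,|\,\ast)_o$, so the crude $\delta$-inequality bound on the triple $\{(x_n\,|\,x_m)_o,(x_n\,|\,\ast)_o,(x_m\,|\,\ast)_o\}$ does not suffice; one really does need the stability of $(x_n\,|\,x_m)_o$ as a function of $n$ for \emph{fixed} $m$. The rest is bookkeeping with additive constants depending only on $a$ (via $\delta$ and Lemma~\ref{cone busemann}).
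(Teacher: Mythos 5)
The paper does not prove this proposition; it is cited directly from Buyalo--Schroeder \cite[Proposition 3.4.1]{BS07}, so there is no in-paper argument to compare against. Your reconstruction is correct and, as far as I can tell, follows the same basic mechanism as the reference: the substitution $b(x)\doteq d(x,o)-2(x\,|\,\ast)_o+b(o)$ coming from Lemma~\ref{cone busemann} yields the cocycle identity
\[
(x\,|\,y)_b \;\doteq\; (x\,|\,y)_o-(x\,|\,\ast)_o-(y\,|\,\ast)_o+b(o),
\]
and everything else is careful bookkeeping with that identity plus the $O(\delta)$-stability of Gromov products as one argument tends to a boundary point. Two things you do well that are genuinely necessary: (i) in the forward direction, you do not try to get a contradiction from the raw $\delta$-triple inequality on $\{(x_n|x_m)_o,(x_n|\ast)_o,(x_m|\ast)_o\}$ --- which, as you note, is too weak because $(x_n|x_m)_o$ can grow much faster than $(x_n|\ast)_o$ --- but instead freeze $m$ and use stability of $(x_n|x_m)_o$ in $n$; (ii) in the last paragraph you correctly observe that the uniform boundedness of $(x_n|\ast)_o$ for $b$-convergent sequences (needed to compare the two equivalence relations) is a \emph{consequence} of the already-proved forward implication, not an independent hypothesis, so the logical order matters. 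One small presentational caveat: the inequality ``$(x_n|x_m)_b\geq (x_n|x_m)_o-O(1)$'' in the converse should be read with the implicit acknowledgment that the $O(1)$ depends on $\xi$ (through $(\xi|\ast)_o$) rather than being a universal constant depending only on $\delta$; this does not affect the conclusion since $\xi$ is fixed throughout, but it is worth stating so the quantifiers are unambiguous.
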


By combining this proposition with the characterization of $\p \X$ as the geodesic boundary of $\X$ we conclude that $\p_{*} \X$ can be identified with the space of equivalence classes of geodesic rays $[\gamma]$ in $\X$ that are not within bounded distance of any descending geodesic ray. Lemma \ref{busemann inequality} implies that the Gromov product based at $b$ admits a natural extension to the boundary $\p_{*} \X$ which continues to satisfy the $\delta$-inequality \eqref{delta inequality} with a larger choice of $\delta$. 

Since $b$ is $1$-Lipschitz, no ascending geodesic ray can be at a bounded distance from a descending geodesic ray. Below we push this further to show that $\p_{*}\X$ can be identified with the space of all ascending geodesic rays in $\X$ starting from $\W^{u}(x)$ for any choice of $x \in \X$. For this we will need the following easy consequence of \eqref{diff inequality}.

\begin{lem}\label{angle converge}
Let $\gamma: I \rightarrow \X$ be a geodesic such that the right endpoint of the interval $I$ is $\infty$. Then either $b_{\gamma}'(t) = -1$ for all $t \in I$ or $b_{\gamma}'(t) \rightarrow 1$ as $t \rightarrow \infty$. 
\end{lem}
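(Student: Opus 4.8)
The claim is that for a geodesic $\gamma:I\to\X$ with right endpoint of $I$ equal to $\infty$, either $\gamma$ is a descending vertical geodesic (so $b_\gamma' \equiv -1$) or $b_\gamma'(t)\to 1$ as $t\to\infty$. The natural approach is to exploit the differential inequality \eqref{diff inequality}, $b_\gamma'' \geq a\sqrt{1-(b_\gamma')^2}$, together with the fact (Lemma \ref{convex consequences}) that $b_\gamma'$ is nondecreasing, and in fact strictly increasing, when $\gamma$ is not vertical. So first I would dispense with the vertical case: if $\gamma$ is vertical then $b_\gamma'\equiv 1$ (ascending) or $b_\gamma'\equiv -1$ (descending) by Lemma \ref{convex consequences}; the ascending case falls under the ``$b_\gamma'(t)\to 1$'' conclusion trivially, and the descending case is the exceptional one. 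Then assume $\gamma$ is not vertical, so $b_\gamma'$ is strictly increasing and takes values in the open interval $(-1,1)$ (it can't equal $\pm 1$ since that would force $\gamma'$ to be vertical at that instant).

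Now since $b_\gamma'$ is increasing and bounded above by $1$ on $I_{\geq s}=[s,\infty)$, it has a limit $L = \lim_{t\to\infty} b_\gamma'(t) \in (-1,1]$. The goal is to show $L=1$. Suppose for contradiction that $L<1$. Then for all large $t$ we have $b_\gamma'(t)\leq L$, hence $\sqrt{1-(b_\gamma'(t))^2}\geq \sqrt{1-L^2}=:\kappa>0$, so \eqref{diff inequality} gives $b_\gamma''(t)\geq a\kappa > 0$ for all sufficiently large $t$. Integrating, $b_\gamma'(t)$ grows at least linearly, contradicting the fact that $b_\gamma'$ is bounded above by $1$ (indeed even contradicting convergence to a finite limit). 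Therefore $L=1$, i.e. $b_\gamma'(t)\to 1$. One should double-check that $b_\gamma'$ is genuinely defined and $C^1$ on all of $I$ (it is, since $b$ is $C^{r+1}$ and $\gamma$ is a geodesic hence smooth), and that the inequality \eqref{diff inequality} is available pointwise on $I$ as stated in Lemma \ref{convex inequality}.

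There is essentially no serious obstacle here; the only subtlety to address carefully is the dichotomy at the very start — making sure the ``$b_\gamma'(t)=-1$ for all $t$'' alternative is precisely the descending-vertical case and that every other geodesic (including ascending vertical ones) is covered by the convergence statement. I would phrase this by invoking Lemma \ref{convex consequences}: either $\gamma$ is vertical, in which case $b_\gamma'$ is the constant $1$ or $-1$, or $\gamma$ is not vertical and $b_\gamma'$ is strictly increasing, in which case the argument of the previous paragraph applies. So the structure is: (i) reduce to the non-vertical case via Lemma \ref{convex consequences}; (ii) observe $b_\gamma'$ increasing and bounded, hence convergent to some $L\leq 1$; (iii) rule out $L<1$ using \eqref{diff inequality} to force $b_\gamma''$ bounded below by a positive constant near infinity, contradicting boundedness of $b_\gamma'$. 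This is a short argument and I expect the write-up to be only a few lines.
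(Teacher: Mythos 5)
Your proof is correct and follows essentially the same route as the paper: dispose of the vertical case by Lemma \ref{convex consequences}, observe that $b_\gamma'$ is strictly increasing and bounded by $1$ hence convergent to some limit $L$, and use the differential inequality \eqref{diff inequality} to rule out $|L|<1$ by integrating to contradict boundedness of $b_\gamma'$. The only pedantic caveat is that the lower bound $\sqrt{1-(b_\gamma'(t))^2}\geq\sqrt{1-L^2}$ is not literally true for all large $t$ when $L<0$ (since then $|b_\gamma'(t)|>|L|$ for all $t$); one should take a slightly smaller $\kappa$, e.g.\ $\kappa=\tfrac12\sqrt{1-L^2}$, valid once $b_\gamma'(t)$ is sufficiently close to $L$ — but the paper's own write-up commits the same minor imprecision, and the argument is unaffected.
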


\begin{proof}
If $\gamma$ is a vertical geodesic then either $b_{\gamma}' \equiv -1$ or $b_{\gamma}' \equiv 1$ and the claim is clear. If $\gamma$ is not vertical then by Lemma \ref{convex consequences} $b_{\gamma}'$ is strictly increasing. Set $c = \lim_{t \rightarrow \infty} b_{\gamma}'(t)$. Since $b$ is $1$-Lipschitz we have $|b_{\gamma}'| \leq 1$ and therefore $|c| \leq 1$. If $|c| < 1$ then by \eqref{diff inequality} there is a $t_{0} \in I$ such that we have
\[
b_{\gamma}''(t) \geq \sqrt{1-c^{2}},
\]
for $t \geq t_{0}$. This implies that
\[
b_{\gamma}'(t) \geq b_{\gamma}'(t_{0}) + (t-t_{0})\sqrt{1-c^{2}},
\]
for large enough $t$, which contradicts $b_{\gamma}'(t) \leq 1$ as $t \rightarrow \infty$. We conclude that $|c| = 1$, and since $b_{\gamma}'$ is increasing and $|b_{\gamma}'| \leq 1$ we further deduce that $c = 1$. 
\end{proof}
 
\begin{prop}\label{vert identify}
For any $x \in \X$ the map $\gamma \rightarrow [\gamma]$ sending an ascending geodesic ray starting on $\W^{u}(x)$ to its representative in $\p_{*}\X$ is a bijection. 
\end{prop}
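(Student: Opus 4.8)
The plan is to prove the claimed bijection $\gamma \mapsto [\gamma]$ between ascending geodesic rays based on $\W^u(x)$ and points of $\partial_* \X$ by establishing injectivity and surjectivity separately, leaning on the convexity machinery (Lemmas \ref{convex}--\ref{angle converge}) together with the identification of $\partial_* \X$ with equivalence classes of geodesic rays not Hausdorff-close to a descending ray.

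First I would check that each ascending geodesic ray $\gamma$ based on $\W^u(x)$ really does determine a point of $\partial_* \X$. By Lemma \ref{angle converge}, since $\gamma$ is ascending we have $b_\gamma'(t) \to 1$, so $b(\gamma(t)) \to \infty$; in particular $\gamma$ leaves every compact set and $b_\gamma$ is eventually strictly increasing. Using Lemma \ref{inf busemann} (or directly Lemma \ref{horosphere projection}, applied on a tail where $b_\gamma' \geq 0$) one gets $(\gamma(s)|\gamma(t))_b \doteq \min\{b(\gamma(s)), b(\gamma(t))\} \to \infty$, so $\gamma$ converges to infinity with respect to $b$ and defines a point of $\partial_*\X$ via Proposition \ref{convergence Busemann}. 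For \textbf{injectivity}: suppose two ascending rays $\gamma_1, \gamma_2$ based on $\W^u(x)$, with $\gamma_i(0) = x_i \in \W^u(x)$, satisfy $[\gamma_1] = [\gamma_2]$, i.e.\ $(\gamma_1(t)|\gamma_2(t))_b \to \infty$. Expanding the Gromov product based at $b$ and using $b(\gamma_i(t)) = b(x) + t + o(t)$ (from $b_{\gamma_i}' \to 1$), this forces $d(\gamma_1(t),\gamma_2(t)) = o(t)$, hence $\gamma_1$ and $\gamma_2$ are asymptotic geodesic rays in the usual sense and define the same point of $\partial\X$. Now if $x_1 \neq x_2$, project: $P_x \circ \gamma_i$ is, for each $i$, a curve in $\W^u(x)$ of finite length $\lesssim 1$ starting at $x_i$ (Lemma \ref{horosphere projection} applied to arcs of $\gamma_i$, noting $b_{\gamma_i}' \geq 0$ throughout since $\gamma_i$ is ascending from its initial point). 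Asymptotic rays have $d(P_x \gamma_1(t), P_x\gamma_2(t)) \to 0$ since $P_x = Df^{b(x)-b(\cdot)}\circ \pi$ contracts along the flow and $\gamma_i(t)$ escapes upward — but this contradicts the two projected curves having fixed distinct endpoints $x_1, x_2$ at $t=0$. Actually I should be slightly more careful: the cleanest route is to show directly that the ascending ray from a given $x_i$ is unique, i.e.\ two ascending rays from the \emph{same} basepoint with the same endpoint coincide by uniqueness of geodesics, and then to show distinct basepoints on $\W^u(x)$ give distinct boundary points. For the latter, if $[\gamma_1] = [\gamma_2]$ then by the contraction estimate for descending flow the points $f^{-t}x_1$ and $f^{-t}x_2$ stay a bounded distance apart while $f^{-t}\gamma_i(s(t))$ converge, forcing $d(x_1, x_2)$ small via the $1$-Lipschitz property of $b$ combined with Lemma \ref{horosphere projection}; pushing this gives $x_1 = x_2$.

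For \textbf{surjectivity}, let $\xi \in \partial_*\X$. By Proposition \ref{convergence Busemann} and the characterization of $\partial\X$ as the geodesic boundary, $\xi = [\eta]$ for some geodesic ray $\eta:[0,\infty)\to\X$ that is not within bounded distance of any descending ray. By Lemma \ref{angle converge}, $\eta$ is either descending (excluded, as a descending ray is trivially within bounded distance of itself) or has $b_\eta'(t)\to 1$; so $b(\eta(t))\to\infty$ and $\eta$ is eventually strictly ascending in the sense that $b_\eta' > 0$ for large $t$. Now I want to replace $\eta$ by a genuine ascending vertical-direction... no — by an \emph{ascending geodesic ray based on $\W^u(x)$} asymptotic to it. Fix $x$; let $p_n = \eta(t_n)$ for $t_n \to \infty$ and consider geodesics $\sigma_n$ from a point of $\W^u(x)$ to $p_n$: specifically take $q_n = P_x(\eta(s))$ for a fixed large $s$... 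Here is the cleaner construction: for each point $y \in \W^u(x)$ there is a unique ascending geodesic ray from $y$ — this needs proof too. Let me instead argue: pick the descending ray $\gamma_\ast$ through... Actually the natural candidate is: take the geodesic rays $\sigma_n$ from $x$ (or from $P_x(\eta(0))$) to $\eta(n)$; by properness of $\X$ (Proposition \ref{hyperbolicity filling} and completeness) a subsequence converges to a geodesic ray $\sigma$ from $x$ with $[\sigma] = [\eta] = \xi$. It remains to show $\sigma$ is ascending, i.e.\ $b_\sigma' \geq 0$ on all of $[0,\infty)$, equivalently (by Lemma \ref{convex consequences}) that $b_\sigma$ has no interior minimum. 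If it did, then by Lemma \ref{inf busemann}, $b$ on $\sigma$ would dip below $b(x)$ and $\sigma$ would be Hausdorff-close on a subray to a descending ray issuing from $\sigma$'s lowest point — contradicting $[\sigma]=\xi \in \partial_*\X$. Thus $\sigma$ is ascending. If $\sigma(0) = x \in \W^u(x)$ we are done; in general we may need to slide the basepoint to $\W^u(x)$ using that $P_x$ moves $\sigma(0)$ a bounded amount and re-run the limiting argument, or simply observe the ascending ray from $P_x(\sigma(0))$ asymptotic to $\sigma$ works by the same reasoning.

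The main obstacle I anticipate is \textbf{surjectivity}, specifically the passage from an arbitrary geodesic ray representing $\xi$ to an ascending ray \emph{based precisely on $\W^u(x)$} asymptotic to it, and verifying that the limiting ray obtained by Arzel\`a--Ascoli is genuinely ascending rather than merely eventually ascending with an initial descending dip. The key technical point is ruling out an interior minimum of $b_\sigma$: a minimum at $\sigma(t_0)$ with $b(\sigma(t_0)) < b(x)$ would, via Lemma \ref{inf busemann}, force the subray $\sigma_{\geq t_0}$ to have $d(\sigma(t), P_{\sigma(t_0)}\sigma(t)) \doteq 0$ and $b$-increasing, making it fellow-travel the descending-then-... no, making the \emph{whole} $\sigma$ project to a bounded set in $\W^u(\sigma(t_0))$; combined with the descending ray from $\sigma(t_0)$ staying within bounded distance of $\sigma_{\leq t_0}$ reversed, one sees $[\sigma]$ would contain a descending ray in its bounded neighborhood, the desired contradiction. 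Injectivity and the "defines a boundary point" direction are comparatively routine once Lemma \ref{horosphere projection} and Lemma \ref{angle converge} are in hand.
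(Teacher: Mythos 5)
There is a genuine gap, and its root is a misreading of what ``ascending geodesic ray'' means in this paper. The paper's definition (immediately before Lemma~\ref{convex}) is that an ascending geodesic is a \emph{vertical} geodesic --- a flowline $t \mapsto f^t y$ of the gradient flow, along which $b'_\gamma \equiv 1$ --- and indeed the theorem statement says the map is $y \mapsto \gamma_y$ with $\gamma_y(t) = f^t y$. You instead take ``ascending'' to mean any geodesic ray with $b'_\sigma \geq 0$ on $[0,\infty)$. These are very different classes: a geodesic with monotone increasing $b$ need not be a flowline, and there may be many such geodesics emanating from a single point. This slip is visible in several places (e.g.\ you invoke Lemma~\ref{angle converge} to show $b'_\gamma \to 1$ for an ascending ray, which is trivially $\equiv 1$ under the correct definition, and you treat ``uniqueness of the ascending ray from $y$'' as something needing a convexity argument, when it is immediate for flowlines).

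Because of this, the surjectivity argument does not prove the proposition. Your plan produces a limit geodesic $\sigma$ from $x$ to $\xi$ and tries to show $b'_\sigma \geq 0$ everywhere; even if that succeeded, $\sigma$ would not be a flowline and hence not in the domain of the map. But in fact the subclaim itself is false and the attempted proof is fallacious. Concretely, in the upper half-plane $\H^2$ with $b = -\log y$ (so that $\nabla b$-flow expands the horizontal foliation), the geodesic from $(0,1)$ to a boundary point $(a,0)$ with $a>1$ travels up over the top of a Euclidean semicircle of radius $>1$ before descending, so $b_\sigma$ strictly decreases on an initial interval. Your contradiction --- that an interior minimum of $b_\sigma$ would put $[\sigma]$ at bounded distance from a descending ray --- fails because $[\sigma]$ is determined by the \emph{tail} of $\sigma$, not its initial segment; Lemma~\ref{inf busemann} only controls the finite arc $\sigma_{\leq t_0}$, which hugs a descending ray for finitely long and then turns around. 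The tail $\sigma_{\geq t_0}$ is actually close to an \emph{ascending} vertical ray, and there is no contradiction with $[\sigma]\in\p_*\X$.

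The paper's surjectivity argument is structurally different and circumvents all of this: project $\gamma(n)$ onto $\W^u(x)$ via $P_x$, use the quantitative estimate of Lemma~\ref{horosphere projection} (finite total projected length, exponential decay $d(p_m,p_n)\ls e^{-a(b(x_n)-b(x))}$) to show the projections $p_n = P_x(\gamma(n))$ form a Cauchy sequence converging to some $p\in\W^u(x)$, and then show the flowline $\gamma_p$ through $p$ stays a bounded distance from $\gamma$. Your injectivity idea (contraction of unstable leaves under $f^{-t}$) is the right one and essentially matches the paper --- the clean version is: if $d(\gamma_1(t),\gamma_2(t))\leq C$ for all $t$ then, using Lemma~\ref{metric comparison} and the backward contraction, $d^u(x_1,x_2)\leq e^{-at}e^{AC}C\to 0$, so $x_1=x_2$ --- though your write-up of it is muddled (``$f^{-t}x_i$ stay a bounded distance apart'' should read ``converge to each other''). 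You should revisit surjectivity entirely with the projection approach.
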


\begin{proof}
For injectivity let $\gamma$ and $\eta$ be two ascending geodesic rays starting on $\W^{u}(x)$. If there were some constant $C$ such that $d(\gamma(t),\eta(t)) \leq C$ for all $t \geq 0$ then by \eqref{expansion} we would have $d(\gamma(0),\eta(0)) \leq Ce^{-at}$ for all $t \geq 0$. By letting $t \rightarrow \infty$ we conclude that $\gamma(0) = \eta(0)$ and therefore $\gamma = \eta$. This confirms injectivity.

For surjectivity consider a geodesic ray $\gamma:[0,\infty) \rightarrow \X$ representing a point $\xi$ of $\p_{*}\X$. By Lemma \ref{angle converge} we must then have $b_{\gamma}'(t) \rightarrow 1$ as $t \rightarrow \infty$, as otherwise $b_{\gamma}' \equiv -1$ which implies that $\gamma$ is a descending geodesic ray and this is impossible by the discussion prior to the proposition statement. Integrating the limit relation $b_{\gamma}'(t) \rightarrow 1$ thus gives $b(\gamma(t)) \rightarrow \infty$ as $t \rightarrow \infty$. If $\gamma$ is an ascending geodesic ray then the extension of $\gamma$ to a full vertical geodesic line $\t{\gamma}:\R \rightarrow \X$ intersects $\W^{u}(x)$ in a unique point $y$ and the desired ascending geodesic ray is then the sub-ray $\eta$ of $\t{\gamma}$ starting from $y$. 

Thus we can assume $\gamma$ is not a vertical geodesic. Let $\{x_{n}\} = \{\gamma(n)\}$ for $n \in \N$, and then take $N$ large enough that $b_{\gamma}|_{[N,\infty)} \geq b(x)$ and that $b_{\gamma}$ is increasing on $[N,\infty)$. We then restrict to $n \geq N$. For $m \geq n$ the segment of $\gamma$ joining $x_{n}$ to $x_{m}$ satisfies the hypotheses of Lemma \ref{horosphere projection} and thus $d(x_{n},P_{x_{n}}(x_{m})) \doteq 0$. Set $p_{n} = P_{x}(x_{n})$ for each $n$ and observe that we then have for $m \geq n \geq N$,
\[
d(p_{m},p_{n}) \lesssim e^{-a(b(x_{n})-b(x))}.
\]
Thus $\{p_{n}\}$ defines a Cauchy sequence in $\X$ which converges to a point $p \in \W^{u}(x)$. Let $\eta$ be the ascending geodesic ray starting at $p$. Let $y_{n}$ be the corresponding point on $\eta$ such that $b(y_{n}) = b(x_{n})$. As shown above, there is a constant $C$ such that for any $m \geq n \geq N$ we have $d(x_{n},P_{x_{n}}(x_{m})) \leq C$. Since $P_{x}(P_{x_{n}}(x_{m})) = p_{m}$, if we fix $n$, let $m \rightarrow \infty$, and note that $f^{b(x_{n})-b(x)}$ provides an inverse of $P_{x}|_{\W^{u}(x_{n})}$, we conclude that $P_{x_{n}}(x_{m}) \rightarrow y_{n}$ as $m \rightarrow \infty$ and therefore $d(x_{n},y_{n}) \leq C$ for $n \geq N$. Since the sequences $\{x_{n}\}$ and $\{y_{n}\}$ have unit length spacing on $\gamma$ and $\eta$ respectively it follows that these geodesics are a bounded distance from one another and therefore $[\gamma] = [\eta]$. This establishes the desired surjectivity.
\end{proof}

By using the sharper estimate with $\theta$ obtained in Lemma \ref{horosphere projection} we can improve the conclusion of Proposition \ref{vert identify} by showing that the geodesics $\gamma$ and $\eta$ above actually converge to one another at infinity.

\begin{lem}\label{converge zero}
Suppose that $\gamma$, $\eta:[0,\infty) \rightarrow \X$ are geodesic rays determining the same point $\xi \in \p_{*} \X$. Then $d(\gamma(t),\eta(s_{t})) \rightarrow 0$ as $t \rightarrow \infty$, where $s_{t} \geq 0$ is chosen such that $b(\gamma(t)) = b(\eta(s_{t}))$ once $t$ is sufficiently large. 
\end{lem}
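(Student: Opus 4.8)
The plan is to build on the estimates already obtained in Lemma \ref{horosphere projection} and Proposition \ref{vert identify}, sharpening the bound $d(x_n, P_{x_n}(x_m)) \doteq 0$ there to a bound that decays to $0$. Recall that in Lemma \ref{horosphere projection} we actually proved $d(x, P_x(y)) \ls \theta$ where $\theta = \sqrt{1 - b_\gamma'(0)^2}$, not merely $d(x, P_x(y)) \ls 1$. The key observation is that by Lemma \ref{angle converge}, along a non-vertical geodesic ray (or the relevant tail of $\gamma$) we have $b_\gamma'(t) \to 1$, hence the corresponding angle parameter $\theta_t = \sqrt{1 - b_\gamma'(t)^2} \to 0$ as $t \to \infty$. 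So the first step is to re-examine the argument of Proposition \ref{vert identify}: for $m \ge n \ge N$, applying the sharp form of Lemma \ref{horosphere projection} to the segment of $\gamma$ from $x_n$ to $x_m$ gives $d(x_n, P_{x_n}(x_m)) \ls \theta_n$ where $\theta_n = \sqrt{1 - b_\gamma'(n)^2} \to 0$.

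Next I would pass to the limit as in Proposition \ref{vert identify}. We already know $\gamma$ and $\eta$ determine the same $\xi \in \p_*\X$, and (reducing to the non-vertical case, the vertical case being handled by the uniform contraction estimate $d(\gamma_x(t),\gamma_y(t)) \le e^{-at}d(x,y)$ recorded before Lemma \ref{cone busemann}) we may assume $\eta$ is the ascending vertical geodesic through the limit point $p = \lim_n P_x(x_n)$ reparametrized so that $b(\eta(s_t)) = b(\gamma(t))$. Letting $m \to \infty$ in the bound $d(x_n, P_{x_n}(x_m)) \ls \theta_n$ and using that $P_{x_n}(x_m) \to y_n := \eta(s_n)$ (this convergence was established inside the proof of Proposition \ref{vert identify}), we obtain $d(\gamma(n), \eta(s_n)) \ls \theta_n \to 0$ along integer times $n \to \infty$. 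To upgrade from integer times to all $t \to \infty$, I would interpolate: for $t \in [n, n+1]$ write $d(\gamma(t), \eta(s_t)) \le d(\gamma(t), \gamma(n)) $-type comparisons are too crude, so instead apply Lemma \ref{horosphere projection} directly at time $t$ rather than at integer times, i.e. for $t' \ge t$ large the segment from $\gamma(t)$ to $\gamma(t')$ has $b_\gamma'(t) \ge 0$ and the same argument yields $d(\gamma(t), P_{\gamma(t)}(\gamma(t'))) \ls \theta_t$; then let $t' \to \infty$. The projection $P_{\gamma(t)}(\gamma(t'))$ converges (as $t' \to \infty$) to the point of $\eta$ at height $b(\gamma(t))$, which is exactly $\eta(s_t)$, because $P_x \circ P_{\gamma(t)} = P_x$ and $P_x(\gamma(t')) \to p$.

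The main obstacle, and the step requiring the most care, is justifying that $P_{\gamma(t)}(\gamma(t')) \to \eta(s_t)$ as $t' \to \infty$ for each fixed $t$ — i.e. that the projections of the tail of $\gamma$ onto the leaf $\W^u(\gamma(t))$ actually converge to the right point on $\eta$, and that the error in $d(\gamma(t), \eta(s_t))$ is controlled by $\theta_t$ uniformly and not just up to an additive constant. This amounts to re-running the Cauchy-sequence argument of Proposition \ref{vert identify} with the leaf basepoint $\W^u(\gamma(t))$ in place of $\W^u(x)$ and carefully tracking that the resulting limit point, flowed down to $\W^u(x)$, is still $p$; this follows from $P_x = P_x \circ P_{\gamma(t)}$ together with the fact that $f^{b(\gamma(t)) - b(x)}$ inverts $P_x$ on leaves, exactly as exploited at the end of the proof of Proposition \ref{vert identify}. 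Once this identification is in hand, combining $d(\gamma(t), P_{\gamma(t)}(\gamma(t'))) \ls \theta_t$ with $P_{\gamma(t)}(\gamma(t')) \to \eta(s_t)$ and $\theta_t \to 0$ (Lemma \ref{angle converge}) completes the proof.
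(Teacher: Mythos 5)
Your proposal matches the paper's proof essentially step for step: reduce to $\eta$ ascending vertical, apply the sharpened $d(\gamma(T),P_{\gamma(T)}(\gamma(t'))) \lesssim \theta(T)$ bound of Lemma \ref{horosphere projection} to the tail of $\gamma$, let $t'\to\infty$ using the Cauchy-sequence argument from Proposition \ref{vert identify} to identify the limit with $\eta(s_T)$, and invoke Lemma \ref{angle converge} for $\theta(T)\to 0$. One small slip in your parenthetical aside: the contraction estimate $d(\gamma_x(t),\gamma_y(t))\le e^{-at}d(x,y)$ recorded before Lemma \ref{cone busemann} is for \emph{descending} rays converging to $\ast$, so it does not apply when $\gamma$ is a vertical ray representing a point of $\p_*\X$; in that case $\gamma$ is ascending and, by the injectivity in Proposition \ref{vert identify}, lies on the same vertical line as $\eta$, so the claim is trivial (as the paper observes).
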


\begin{proof}
Since every point of $\p_{*} \X$ is represented by an ascending geodesic ray by Proposition \ref{vert identify} we can assume that $\eta$ is an ascending geodesic ray since then the general case follows from the triangle inequality. We can also assume that $\gamma$ is not vertical since otherwise either $\gamma$ is a subsegment of $\eta$ or the opposite; in either case the conclusion of the lemma follows trivially. By omitting an initial segment of $\gamma$ and reparametrizing we can assume that $b_{\gamma}'(0) \geq 0$. By a further initial segment omission and reparametrization (for both $\gamma$ and $\eta$ if needed) we can then assume that $\gamma$ and $\eta$ start from the same height, i.e., $b_{\gamma}(0) = b_{\eta}(0)$. Then for each $t \geq 0$ there is an $s_{t} \geq 0$ such that $b_{\gamma}(t) = b_{\eta}(s_{t})$. 

By Lemma \ref{angle converge} we have $b_{\gamma}'(t) \rightarrow 1$ as $t \rightarrow \infty$. Thus if we define $\theta(t) = \sqrt{1-(b_{\gamma}'(t))^{2}}$ as in Lemma \ref{horosphere projection} then $\theta(t) \rightarrow 0$. For a fixed $T \geq 0$ if we consider $t \geq T$ then the projections $x_{t,T} = P_{\gamma(T)}(\gamma(t))$ satisfy $d(\gamma(T),x_{t,T}) \ls \theta(T)$ and as $t \rightarrow \infty$ we have $x_{t,T} \rightarrow \eta(s_{T})$ by the arguments used in the proof of Proposition \ref{vert identify}. Thus $d(\gamma(T),\eta(s_{T})) \ls \theta(T)$ which implies the desired claim since $\theta(T) \rightarrow 0$ as $T \rightarrow \infty$.
\end{proof}

We record the following extension of Lemma \ref{inf busemann} to geodesics that potentially have endpoints in $\p_{*} \X$. The generalization proceeds via a routine limiting argument. For $x \in \X$ we extend the projection $P_{x}:\X \rightarrow \W^{u}(x)$ to $\p_{*} \X$ by setting $P_{x}(\zeta)$ to be the unique intersection point with $\W^{u}(x)$ of the vertical geodesic containing the ascending geodesic ray defining $\zeta \in \p_{*}\X$ from Proposition \ref{vert identify}. The proof of Proposition \ref{vert identify} shows that this extension is continuous in the sense that if $\gamma:[0,\infty) \rightarrow \X$ is a geodesic ray with endpoint $\zeta$ then $P_{x}(\gamma(t)) \rightarrow P_{x}(\zeta)$ as $t \rightarrow \infty$.

\begin{lem}\label{infinite inf busemann}
For $x$, $y \in \X \cup \p_{*} \X$ let $\gamma:I \rightarrow \X$ be a geodesic joining $x$ to $y$. Let $z$ be the unique point on $\gamma$ at which the minimum for $b$ is achieved. Then
\[
b(z) = \inf_{t \in I}b(\gamma(t)) \doteq (x|y)_{b}.
\]
Letting $\gamma_{xz}$ denote the arc of $\gamma$ from $x$ to $z$ and $\gamma_{zy}$ the arc of $\gamma$ from $z$ to $y$, if $w \in \gamma_{xz}$ then
\[
d(P_{w}(x),w) \doteq 0, \; \; d(x,w) \doteq b(x)-b(w), 
\]
and if $w \in \gamma_{zy}$ then 
\[
d(P_{w}(y),w) \doteq 0, \; \; d(y,w) \doteq b(y)-b(w). 
\]
\end{lem}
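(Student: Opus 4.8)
The plan is to reduce Lemma~\ref{infinite inf busemann} to its finite counterpart Lemma~\ref{inf busemann} by a limiting argument, handling the three cases $x \in \p_{*}\X$, $y \in \p_{*}\X$, and both in $\p_{*}\X$ in turn. First I would recall from Proposition~\ref{vert identify} and Lemma~\ref{converge zero} that any point $\zeta \in \p_{*}\X$ is represented by an ascending geodesic ray, and that two rays with the same endpoint converge to one another (after matching heights); this means we have a well-defined notion of a bi-infinite or semi-infinite geodesic joining points of $\X \cup \p_{*}\X$, unique up to the additive errors we are already tolerating in $\doteq$. Concretely, if $y = \zeta \in \p_{*}\X$, pick the ascending ray $\eta$ representing $\zeta$ and let $y_{n} = \eta(n)$; a geodesic $\gamma$ joining $x$ to $\zeta$ is then a limit of geodesics $\gamma_{n}$ joining $x$ to $y_{n}$, and the point $z_{n}$ minimizing $b$ on $\gamma_{n}$ exists and lies at a bounded height by Lemma~\ref{convex consequences}. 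The key observation is that since $b_{\eta}' \equiv 1$ on the ascending ray, for $n$ large the minimum of $b$ on $\gamma_{n}$ is attained strictly before reaching $y_{n}$, so $z_{n}$ stabilizes: it converges to the point $z$ minimizing $b$ on $\gamma$.

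The second step is to transfer the three conclusions. For $b(z) \doteq (x|y)_{b}$: by Lemma~\ref{inf busemann} applied to $\gamma_{n}$ we have $b(z_{n}) \doteq (x|y_{n})_{b}$ with an additive error depending only on $a$; then I would invoke Lemma~\ref{busemann inequality}(1) to say that $(x|y_{n})_{b} \to (x|\zeta)_{b}$ up to the universal error $C\delta$ (using that $\{y_{n}\}$ represents $\zeta$ and the lower bound in \eqref{expansion} guarantees $(y_{n}|y_{m})_{b} \to \infty$, so it genuinely converges to infinity with respect to $b$ in the sense of Proposition~\ref{convergence Busemann}). Combined with $b(z_{n}) \to b(z)$ this gives $b(z) \doteq (x|\zeta)_{b}$ with a (larger) constant depending only on $a$ and $\delta = \delta(a)$. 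For the projection statements $d(P_{w}(x),w) \doteq 0$ and $d(x,w) \doteq b(x)-b(w)$ when $w \in \gamma_{xz}$: since $z_{n} \to z$ and $\gamma_{n} \to \gamma$ uniformly on compacts, any $w$ in the interior of $\gamma_{xz}$ lies in $\gamma_{n,xz_{n}}$ for $n$ large, so the estimate at $w$ is inherited directly from Lemma~\ref{inf busemann} applied to $\gamma_{n}$, with no error accumulation since these are pointwise statements. The endpoint $w = z$ follows by continuity. The case $w \in \gamma_{zy}$ near the $\zeta$ end requires instead the direct argument: on $\gamma_{zy}$ the derivative $b_{\gamma}'$ is nonnegative and increasing (Lemma~\ref{convex consequences}), so Lemma~\ref{horosphere projection} applies to the segment of $\gamma$ from $w$ to any later point $\gamma(t)$, and letting $t \to \infty$ with the continuity of $P_{w}$ on $\p_{*}\X$ noted before the statement gives $d(P_{w}(y),w) \doteq 0$; since $P_{w}(\zeta)$ lies on the vertical through $w$, we also get $d(y,w)$ comparable to $b(y)-b(w)$ — here one must interpret "$b(y)-b(w)$" and "$d(y,w)$" formally as $b_{\eta}$ values along a matched ray, exactly as in Lemma~\ref{converge zero}, which is the precise sense these identities carry for $y \in \p_{*}\X$.

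When both $x, y \in \p_{*}\X$ one applies the above to a fixed finite point on $\gamma$ and splits $\gamma$ at $z$ into two semi-infinite geodesics, then uses the one-endpoint-at-infinity case on each half, exactly mirroring the structure of the proof of Lemma~\ref{inf busemann} itself. The main obstacle I anticipate is bookkeeping the additive constants: each passage to a limit via Lemma~\ref{busemann inequality} introduces a universal $C\delta$, so I would be careful to state at the outset that all implied additive constants now depend on $a$ through $\delta(a)$, and to note (as the paper already does after Lemma~\ref{cone busemann}) that we may simply enlarge $\delta$ to absorb these. A secondary subtlety is verifying that a geodesic joining $x \in \X$ to $\zeta \in \p_{*}\X$ actually exists and is a limit of the finite geodesics $\gamma_{n}$ in the appropriate sense — this is standard in proper geodesic spaces (Arzelà–Ascoli on $\gamma_{n}$), but I would cite it explicitly from \cite[Chapters 1-2]{BS07} rather than reprove it.
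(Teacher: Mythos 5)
Your proposal is more elaborate than the paper's proof and contains a gap that the paper's shorter argument avoids. The lemma starts with a \emph{given} geodesic $\gamma:I\rightarrow\X$ joining $x$ to $y$, and you must establish the estimates for that particular $\gamma$. Your construction instead fixes an ascending ray $\eta$ representing $\zeta=y$, sets $y_{n}=\eta(n)$, joins $x$ to $y_{n}$ by fresh geodesics $\gamma_{n}$, and passes to a subsequential Arzel\`a--Ascoli limit. That limit is \emph{a} geodesic from $x$ to $\zeta$, but since $\X$ need not have unique geodesics it may not equal the given $\gamma$. To salvage this you would have to invoke stability of geodesics in a $\delta$-hyperbolic space (geodesics with the same endpoints at infinity lie within a bounded Hausdorff distance), introducing an extra step and an extra constant that your write-up does not acknowledge. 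There is also a minor mismatch in the projection step: Lemma~\ref{inf busemann} applied to $\gamma_{n}$ gives estimates at points $w_{n}\in\gamma_{n}$, not at $w\in\gamma$, so one must also argue $w_{n}\rightarrow w$ and $P_{w_{n}}(x_{n})\rightarrow P_{w}(x)$.

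The paper sidesteps all of this by choosing the approximating points directly on the given geodesic: $x_{n}=\gamma(-n)$, $y_{n}=\gamma(n)$, and $\gamma_{n}=\gamma|_{[-n,n]}$. Then no limiting geodesic needs to be constructed and no compactness argument is required; for $n$ large, $z\in\gamma_{n}$ is automatically the minimizer of $b$ on $\gamma_{n}$, Lemma~\ref{inf busemann} applies verbatim with the same reference point $z$, the first statement follows from $(x_{n}|y_{n})_{b}\doteq(x|y)_{b}$ via Lemma~\ref{busemann inequality}, and the projection statements follow because $P_{w}(x_{n})\rightarrow P_{w}(x)$ and $P_{w}(y_{n})\rightarrow P_{w}(y)$, which was already established in the proof of Proposition~\ref{vert identify}. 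Your observation that $b_{\eta}'\equiv 1$ forces the minimum of $b$ to stabilize before reaching $y_{n}$, and your appeal to Lemma~\ref{busemann inequality}(1) and Proposition~\ref{convergence Busemann}, are the right ingredients; the fix is simply to take $x_{n},y_{n}$ on $\gamma$ itself rather than constructing an auxiliary family of geodesics.
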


\begin{proof}
We prove the extension when $x$, $y \in \p_{*} \X$; the other cases are analogous with minor adjustments. In this case $\gamma: \R \rightarrow \X$ is defined for all times. For $n \in \N$ we set $x_{n} = \gamma(-n)$, $y_{n} = \gamma(n)$, $\gamma_{n}=\gamma|_{[-n,n]}$. Letting $z$ be the unique minimum of $b$ on $\gamma$, for $n$ large enough we will have $z \in \gamma_{n}$ and $z$ will be the unique minimum of $b$ on $\gamma_{n}$. The claims of Lemma \ref{inf busemann} thus hold for $\gamma_{n}$ with reference minimum point $z$. Since $(x|y)_{b} \doteq (x_{n}|y_{n})_{b}$ for $n$ sufficiently large by Lemma \ref{busemann inequality}, we conclude that $b(z) \doteq (x|y)_{b}$ which gives the first statement. 

For the second pair of statements it's easy to see that these follow from the corresponding statements in Lemma \ref{inf busemann} provided one knows that the projections $P_{w}(x_{n})$ and $P_{w}(y_{n})$ converge to $P_{w}(x)$ and $P_{w}(y)$ as $n \rightarrow \infty$. But this is precisely what we established in the course of proving surjectivity in Proposition \ref{vert identify}. 
\end{proof}

%\begin{rem}\label{bound inf}
%Lemma \ref{infinite inf busemann} admits the following immediate corollary: if $w$ is a point on $\gamma$ given as in the lemma statement and $b(w) \leq (x|y)_{b} + c$ for some $c \geq 0$ then $d(w,z) \leq c'$ for a constant $c' = c'(c,a)$ depending only on $c$ and $a$.  
%\end{rem}

A map $F: \X_{1}\rightarrow \X_{2}$ between metric spaces $\X_{i}$ is $(K,C)$-quasi-isometric if for some $K \geq 1$ and $C \geq 0$ we have for any $x$, $y \in \X_{1}$,
\begin{equation}\label{quasi inequality}
K^{-1}d_{2}(x,y) - C \leq d_{1}(F(x),F(y)) \leq Kd_{2}(x,y) + C.
\end{equation}
If in addition for each $y \in \X_{2}$ that there is some $x \in \X_{1}$ such that $d_{2}(F(x),y) \leq K$ then we say that $F$ is a $(K,C)$-quasi-isometry, or simply a quasi-isometry. The case $C = 0$ corresponds to biLipschitz maps. Gromov hyperbolicity is a quasi-isometry invariant of geodesic metric spaces in the sense that if $F: \X_{1} \rightarrow \X_{2}$ is a $(K,C)$-quasi-isometry and $\X_{1}$ is $\delta$-hyperbolic then $\X_{2}$ is $\delta'$-hyperbolic with $\delta'$ depending only on $\delta$, $K$, and $C$. See for instance \cite[Chapter 4]{BS07}. 

Proposition \ref{Lyapunov metric} entails swapping a Riemannian metric $g_{1}$ on the manifold $\X$ for a Riemannian metric $g_{2}$ such that there is a constant $L = L(C,a,A)$ for which $g_{1} \asymp_{L} g_{2}$. Letting $d_{1}$ and $d_{2}$ denote the associated metrics on $\X$, the identity map $(\X,d_{1}) \rightarrow (\X,d_{2})$ is then $L$-biLipschitz. Since we show in Proposition \ref{hyperbolicity filling} that $(\X,d_{2})$ is $\delta$-hyperbolic with $\delta = \delta(a_{*})$ in the notation of Proposition \ref{Lyapunov metric}, we conclude that the original Riemannian manifold $(\X,d_{1})$ is also $\delta'$-hyperbolic with $\delta' = \delta'(a,A,C)$. Since $(\X,d_{1})$ and $(\X,d_{2})$ have the same vertical geodesics the conclusion of Proposition \ref{vert identify} applies in exactly the same way to $(\X,d_{1})$ as it does to $(\X,d_{2})$. Here we are using that quasi-isometries between geodesic hyperbolic spaces $\X_{1}:=(\X,d_{1})$ and $\X_{2}:=(\X,d_{2})$  induce a correspondence $\p_{\ast}\X_{1} \rightarrow \p_{\ast}\X_{2}$ between points of the relative Gromov boundaries provided that the distinguished point at infinity for $\X_{1}$ is taken to the distinguished point for $\X_{2}$ \cite[Chapter 4]{BS07}. In this case the assertion is obvious as a geodesic ray is descending in $\X_{1}$ if and only if it is descending in $\X_{2}$.

\begin{proof}[Proof of Theorem \ref{thm:expanding hyperbolic}] We have essentially completed the proof already; we only need to clarify some points regarding those leaves of the foliation that contain a periodic orbit. We assume we are in the setting of weak expanding foliations as described prior to the statement of Theorem \ref{thm:expanding hyperbolic}. We will use the same notation here that we introduced there. By Proposition \ref{Lyapunov metric} we can spend a derivative on the metric on $M$ to obtain a new biLipschitz equivalent Riemannian metric for which \eqref{expansion without constants} holds for possibly different $0 < a_{*} \leq A_{*}$. This takes us from $r \geq 3$ to $r \geq 2$; we will assume this has been done in what follows. By the discussion prior to the start of this proof it is sufficient to obtain the conclusions of Theorem \ref{thm:expanding hyperbolic} after this modification has been done. We thus assume this modification has already been made in what follows. 

The leaves of the foliation $\W^{u}$ for $f^{t}$ are all diffeomorphic to $\R^{n}$, $n = \dim E^{u}$, since $f^{t}$ uniformly expands this foliation. Each leaf $\W^{cu}(x)$ is either diffeomorphic to $\R^{n+1}$ (if it contains no periodic orbit for $f^{t}$) or diffeomorphic to the \emph{suspension space},
\begin{equation}\label{suspend periodic}
\W^{cu}(x) \cong \W^{u}(x) \times \R /(p,0) \sim (f^{T}p,T),
\end{equation}
where $T$ is the period of the periodic orbit inside of $\W^{cu}(x)$. This exhausts the possibilities as if $f^{-T}:\W^{u}(x) \rightarrow \W^{u}(x)$ is a map from an unstable leaf to itself with $T > 0$ then it has exactly one fixed point by the contraction mapping theorem (corresponding to the periodic orbit) and if we take the minimal $T > 0$ such that $f^{-T}(\W^{u}(x)) = \W^{u}(x)$ then we obtain the description \eqref{suspend periodic}. The positivity of this minimal $T$ follows from the compactness of $M$. 

In the case that $\W^{cu}(x)$ contains no periodic orbit we have $\wt{\W}^{cu}(x) = \W^{cu}(x)$.  We define a height function $b_{x}:\W^{cu}(x) \rightarrow \R$ by setting $b_{x}(y) = t$ if $y \in \W^{u}(f^{t}x)$. Then $b_{x}$ is a $C^{r+1}$ function on $\W^{cu}(x)$ with gradient $\nabla b_{x} = \dot{f}$ and whose level sets are the unstable leaves $\W^{u}(y)$, $y \in \W^{cu}(x)$. With this definition $(\W^{cu}(x),b_{x})$ is an expanding cone as in Definition \ref{defn: expanding cone}.

When $\W^{cu}(x)$ does have a periodic orbit the universal cover $\wt{\W}^{cu}(x) \cong \W^{u}(x) \times \R$ in this case has deck group $\Z$ with the deck transformations generated by the map $(p,t) \rightarrow (p,t+T)$ according to the description \eqref{suspend periodic} above. The induced Riemannian metric on $\W^{cu}(x)$ from $M$ lifts to $\wt{\W}^{cu}(x)$, as does the flow $f^{t}$, the unstable foliations $\W^{u}$ and the orbit foliation $\W^{c}$. The height function $b_{x}: \wt{\W}^{cu}(x)$ is then simply defined to be $b_{x}(p,t) = t$. As in the previous paragraph $(\wt{\W}^{cu}(x),b_{x})$ is then an expanding cone. We unify the notation by writing  $\X_{x} = (\wt{\W}^{cu}(x),b_{x})$ for the expanding cone in both cases. With this the first claim of Theorem \ref{thm:expanding hyperbolic} follows by applying Proposition \ref{hyperbolicity filling} to the expanding cone $\X_{x}$, and the second claim regarding the relative Gromov boundary follows from Proposition \ref{vert identify}.
\end{proof}

\begin{proof}[Proof of Corollary \ref{Anosov metric hyperbolic}]
Let $M$ be a smooth closed Riemannian manifold with Anosov geodesic flow. Write $T^{1}M$ for the unit tangent bundle to $M$ endowed with the Sasaki metric and $f^{t}:T^{1}M \rightarrow T^{1}M$ for the geodesic flow. Then Theorem \ref{thm:expanding hyperbolic} applies to the universal cover $\wt{\W}^{cu}(x)$ of each center-unstable leaf $\W^{cu}(x)$ of $f^{t}$. Let $\pi: T^{1}M \rightarrow M$ be the projection of a unit tangent vector to its basepoint and write $\mathbb{V} = \ker D\pi$ for the ``vertical" subbundle in $TT^{1}M$. By a result of Klingenberg \cite{Kli74} the stable and unstable bundles $E^{s}$ and $E^{u}$ for $f^{t}$ are each transverse to $\mathbb{V}$. This remains true if we add the geodesic spray $\dot{f}$, so $E^{cs}$ and $E^{cu}$ are each transverse to $\mathbb{V}$.

The differential $D\pi: TT^{1}M \rightarrow TM$ of the projection thus restricts to an isomorphism $D\pi_{x}: E^{cu}_{x} \rightarrow TM_{x}$ for each $x \in T^{1}M$. For each $x \in T^{1}M$ the projection $\pi_{x}: \W^{cu}(x) \rightarrow M$ is therefore a local diffeomorphism which is locally biLipschitz in the induced Riemannian metric on $\W^{cu}(x)$ and the given metric on $M$. Since $\W^{cu}(x)$ and $M$ are both connected, we conclude that $\pi_{x}$ is surjective and thus a covering map. We can thus pass to universal covers and obtain a locally (and therefore globally) biLipschitz map $\wt{\W}^{cu}(x) \rightarrow \wt{M}$ when these spaces are given their lifted Riemannian metrics. Since $\wt{\W}^{cu}(x)$ is a Gromov hyperbolic metric space by Theorem \ref{thm:expanding hyperbolic}, $\wt{M}$ is also Gromov hyperbolic. By the Milnor-\v{S}varc lemma $\pi_{1}(M)$ is quasi-isometric to $\wt{M}$ so we obtain that $\pi_{1}(M)$ is Gromov hyperbolic. 
\end{proof}

We remark that the conclusions of Theorem \ref{thm:expanding hyperbolic} also hold with an identical proof in the more general setting considered in \cite{Bu17} of continuous flows $f^{t}:M \rightarrow M$ with weak expanding foliations $\W$ such that $f^{t}$ is uniformly $C^r$ along $\W$ but $f^{t}$ might not even be differentiable on $M$ itself. We have simply chosen to avoid introducing the additional complications involved in that setting here. 

From this point forward we will also assume that the upper bound in \eqref{expansion} holds with the parameter $A$. Consequently we will extend our convention regarding the symbols $\asymp$, $\ls$, $\gs$, and $\doteq$ to allow the implied constants to depend on $A$ as well as $a$. 

We write $d_{x}^{u}$ for the induced Riemannian metric on $\W^{u}(x)$ from $\X$. With this notation we have $d_{x}^{u} = d_{y}^{u}$ for $y \in \W^{u}(x)$. Inequality \eqref{expansion} implies that we have 
\begin{equation}\label{distance growth}
e^{at}d_{x}^{u}(x,y) \leq d_{f^{t}x}^{u}(f^{t}x,f^{t}y) \leq e^{At}d_{x}^{u}(x,y),
\end{equation}
for every $x$, $y \in \X$ and $t \geq 0$, with the inequalities reversed for $t \leq 0$. We write $\rho_{x}$ for the \emph{Hamenst\"adt metric} on $\W^{u}(x)$ with parameter $a$, defined for $x$, $y \in \W^{u}(x)$ by
\begin{equation}\label{metric definition}
\rho_{x}(x,y) = \exp(-a\sup\{t \in \R: d_{f^{t}x}^{u}(f^{t}x, f^{t}y) \leq 1\}).
\end{equation}
The supremum is finite due to \eqref{distance growth} and we interpret $\exp(-\infty) = 0$. The fact that $\rho_{x}$ defines a metric under these conditions can be found in \cite{Has4}; the same proof that works in the context of Anosov flows using \eqref{expansion} also works here. As a consequence of the definition we have the conformal scaling property
\begin{equation}\label{conformal scaling}
\rho_{f^{t}x}(f^{t}x,f^{t}y) = e^{at}\rho_{x}(x,y)
\end{equation}
for every $x$, $y \in \X$ and $t \in \R$.

\begin{lem}\label{metric comparison}
For $x \in \X$, $t \in \R$ and $y,z \in \W^{u}(x)$ we have the comparisons
\begin{equation}\label{leaf control}
d(y,z) \leq d_{x}^{u}(y,z) \leq e^{Ad(y,z)}d(y,z), 
\end{equation}
of $d$ to $d_{x}^{u}$. When $d_{x}^{u}(y,z) \leq 1$ we have the comparison
\begin{equation}\label{Hamenstadt comparison 1}
d_{x}^{u}(y,z) \leq \rho_{x}(y,z) \leq d_{x}^{u}(y,z)^{a/A},
\end{equation}
of $d_{x}^{u}$ to $\rho_{x}$, and when $d_{x}^{u}(y,z) \geq 1$ we have instead
\begin{equation}\label{Hamenstadt comparison 2}
d_{x}^{u}(y,z)^{a/A} \leq \rho_{x}(y,z) \leq d_{x}^{u}(y,z)
\end{equation}
For $x$, $y \in \X$ we have
\begin{equation}\label{projection comparison}
\rho_{x}(x,f^{b(x)-b(y)}y) \leq \max\{e^{Ad(x,y)}d(x,y),e^{ad(x,y)}d(x,y)^{a/A}\}.
\end{equation}
\end{lem}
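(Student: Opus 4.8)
The plan is to establish the four comparisons in the order they are stated, bootstrapping each from the previous ones together with the expansion bound \eqref{expansion}, the $1$-Lipschitz property of $b$, and the factorization $DP_x=Df^{b(x)-b(w)}\circ\pi_w$ of the differential of the leaf projection (with $\pi_w\colon T\X_w\to E^u_w$ the orthogonal projection) already recorded in the proof of Lemma \ref{horosphere projection}.

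For \eqref{leaf control}, the left inequality $d(y,z)\le d_x^u(y,z)$ is immediate because any path inside $\W^u(x)$ is a path in $\X$. For the right inequality I would take a unit-speed $\X$-geodesic $\gamma$ from $y$ to $z$ (which exists since $\X$ is complete) and push it into $\W^u(x)$ via $P_x$, obtaining a path $\eta=P_x\circ\gamma$ in $\W^u(x)$ from $y$ to $z$. At time $t$ the speed of $\eta$ is $\|Df^{b(x)-b(\gamma(t))}(\pi_{\gamma(t)}\gamma'(t))\|$; since $\pi$ is norm-nonincreasing and $|b(x)-b(\gamma(t))|=|b(y)-b(\gamma(t))|\le d(y,\gamma(t))\le d(y,z)$ by the Lipschitz property, \eqref{expansion} (used as $\|Df^sv\|\le e^{As}\|v\|$ for $s\ge0$ and $\|Df^sv\|\le\|v\|$ for $s\le0$) bounds this speed by $e^{Ad(y,z)}$, so $d_x^u(y,z)\le\ell(\eta)\le e^{Ad(y,z)}d(y,z)$. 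Running the same argument with the geodesic from $x$ to $y$ in place of a geodesic between two leaf points gives the companion bound $d_x^u\big(x,P_x(y)\big)\le e^{Ad(x,y)}d(x,y)$, which I will feed into \eqref{projection comparison}.

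For \eqref{Hamenstadt comparison 1}--\eqref{Hamenstadt comparison 2}, I would first observe that for $y\ne z$ in $\W^u(x)$ the function $t\mapsto d_{f^tx}^u(f^ty,f^tz)$ is continuous and, by \eqref{distance growth}, strictly increasing with range $(0,\infty)$; hence the supremum $\tau$ appearing in \eqref{metric definition} is the unique time at which this function equals $1$, and $\rho_x(y,z)=e^{-a\tau}$. When $d_x^u(y,z)\le1$ one has $\tau\ge0$, and substituting $s=\tau$ into \eqref{distance growth} gives $e^{a\tau}d_x^u(y,z)\le1\le e^{A\tau}d_x^u(y,z)$; rearranging the left inequality yields $d_x^u(y,z)\le e^{-a\tau}=\rho_x(y,z)$ and rearranging the right one yields $\rho_x(y,z)=e^{-a\tau}\le d_x^u(y,z)^{a/A}$, which is \eqref{Hamenstadt comparison 1}. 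The case $d_x^u(y,z)\ge1$ is the mirror image: now $\tau\le0$, the two inequalities in \eqref{distance growth} reverse, and the identical rearrangement produces \eqref{Hamenstadt comparison 2}.

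Finally, \eqref{projection comparison} follows by applying \eqref{Hamenstadt comparison 1}--\eqref{Hamenstadt comparison 2} to the pair $x$ and $P_x(y)=f^{b(x)-b(y)}y$ in the leaf $\W^u(x)$ and inserting the companion bound from the second paragraph: if $d_x^u(x,P_x(y))\ge1$ then $\rho_x(x,P_x(y))\le d_x^u(x,P_x(y))\le e^{Ad(x,y)}d(x,y)$, and if $d_x^u(x,P_x(y))\le1$ then $\rho_x(x,P_x(y))\le d_x^u(x,P_x(y))^{a/A}\le\big(e^{Ad(x,y)}d(x,y)\big)^{a/A}=e^{ad(x,y)}d(x,y)^{a/A}$; taking the larger of the two right-hand sides gives the stated bound. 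I expect the only delicate bookkeeping to be tracking the sign of $s=b(x)-b(\gamma(t))$ along $\gamma$ (which determines whether $a$ or $A$ governs the estimate) and routing \eqref{projection comparison} through the projected geodesic rather than through the triangle inequality $d(x,P_x(y))\le d(x,y)+|b(x)-b(y)|\le 2d(x,y)$, since the latter would inflate the exponent by a factor of $2$; there is no substantial conceptual obstacle.
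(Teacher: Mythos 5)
Your proposal is correct and follows essentially the same route as the paper: project geodesics via $P_x$ using the $1$-Lipschitz property of $b$ together with \eqref{expansion} to get \eqref{leaf control} and the companion bound $d_x^u(x,P_x(y))\le e^{Ad(x,y)}d(x,y)$, then characterize $\rho_x(y,z)=e^{-a\tau}$ through the unique time $\tau$ at which the leafwise distance equals $1$ and apply \eqref{distance growth} at $t=\tau$ for \eqref{Hamenstadt comparison 1}--\eqref{Hamenstadt comparison 2}, and finally chain the two for \eqref{projection comparison}. The paper phrases the middle step by setting $\beta=a^{-1}\log\rho_x(y,z)$ and invoking \eqref{conformal scaling} to see $d_x^u(f^\beta y,f^\beta z)=1$, which is your $\tau$ in different clothing; your closing remark about using the projected geodesic rather than the naive triangle inequality for \eqref{projection comparison} is the right instinct and matches the paper's actual argument.
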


\begin{proof}
The bound $d \leq d_{x}^{u}$ in \eqref{leaf control} is trivial from the definitions. For the upper bound consider a geodesic $\gamma$ joining $y$ to $z$ in $\X$. Each point $p$ on $\gamma$ must lie within distance $d(y,z)$ of $\W^{u}(x)$ since $b$ is $1$-Lipschitz. Thus the projection $P_{x} \circ \gamma$ of $\gamma$ onto $\W^{u}(x)$ has length at most
\[
\ell(P_{x} \circ \gamma) \leq e^{Ad(y,z)}\ell(\gamma) = e^{Ad(y,z)}d(y,z),
\]
by \eqref{expansion}. 

From the definitions we have that $d_{x}^{u}(y,z) = 1$ if and only if $\rho_{x}(y,z) = 1$. For the general case set $\beta = a^{-1}\log \rho_{x}(y,z)$ so that $e^{-a\beta} = \rho_{x}(y,z)$. Then by \eqref{conformal scaling},
\[
e^{a \beta}\rho_{x}(y,z) = \rho_{x}(f^{\beta}y,f^{\beta}z) = 1,
\]
so that $d_{x}^{u}(f^{\beta}y,f^{\beta}z) = 1$. If $d_{x}^{u}(y,z) \leq 1$ (i.e. $\beta \geq 0$) then by applying \eqref{expansion} to this expression we get
\[
e^{a \beta}d_{x}^{u}(y,z) \leq 1 \leq e^{A\beta} d_{x}^{u}(y,z),
\]
which implies that
\[
d_{x}^{u}(y,z) \leq \rho_{x}(y,z) \leq d_{x}^{u}(y,z)^{a/A}.
\]
If $d_{x}^{u}(y,z) \geq 1$ then $\beta \leq 0$ and we instead have
\[
e^{A \beta}d_{x}^{u}(y,z) \leq 1 \leq e^{a\beta} d_{x}^{u}(y,z),
\]
which implies that
\[
d_{x}^{u}(y,z)^{a/A} \leq \rho_{x}(y,z) \leq d_{x}^{u}(y,z).
\]

Lastly, let $x,y\in \X$ be arbitrary. By considering a geodesic joining $x$ to $y$ and projecting it to $\W^{u}(x)$ as we did when establishing \eqref{leaf control}, we conclude that 
\[
d_{x}^{u}(x,f^{b(x)-b(y)}y) \leq e^{Ad(x,y)}d(x,y). 
\]
The final comparison \eqref{projection comparison} then follows from \eqref{Hamenstadt comparison 1} and \eqref{Hamenstadt comparison 2}. 
\end{proof}

A \emph{visual metric} on $\p_{*}\X$ with parameter $a$ is a metric $\rho$ on this space satisfying
\begin{equation}\label{define visual}
\rho(\zeta,\xi) \asymp Ke^{-a (\zeta|\xi)_{b}},
\end{equation}
uniformly over all $\zeta,\xi \in \p_{*}\X$ for some constant $K > 0$. We impose the constraint that the implied constant is only allowed to depend on $a$ and $A$. 

In the statement of the proposition below $d(y,z)$ should be interpreted as $\infty$ if either $y \in \p_{*}\X$ or $z \in \p_{*}\X$ (or both) and $y \neq z$.

\begin{prop}\label{branch estimate expand}
Suppose that $x \in \X$ and $y,z \in \X \cup \p_{*}\X$ with $d(y,z) \geq 1$. Then
\begin{equation}\label{second branch}
e^{-a(y|z)_{b}} \asymp e^{-ab(x)}\rho_{x}(P_{x}(y),P_{x}(z)). 
\end{equation}
Consequently under the identification of Proposition \ref{vert identify} the Hamenst\"adt metric $\rho_{x}$ defines a visual metric $\rho_{*,x}$ on $\p_{*}\X$ with parameter $a$. These visual metrics satisfy the relation
\begin{equation}\label{visual scaling}
\rho_{*,y} = e^{a(b(y)-b(x))}\rho_{*,x}, 
\end{equation}
for $x,y \in \X$.
\end{prop}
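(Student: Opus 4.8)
The plan is to reduce the two–sided estimate \eqref{second branch} to the single comparison $\rho_{w}(P_{w}(y),P_{w}(z)) \asymp 1$, where $\gamma$ is a geodesic joining $y$ to $z$ (which exists by properness of $\X$, even when $y$ or $z$ lies in $\p_{*}\X$) and $w\in\gamma$ is the unique point at which $b$ is minimized on $\gamma$. Since $P_{w}(y)$ and $y$ lie on a common vertical geodesic, and likewise $P_{w}(z)$ and $z$, we have $P_{x}(y)=f^{b(x)-b(w)}P_{w}(y)$ and $P_{x}(z)=f^{b(x)-b(w)}P_{w}(z)$, so by the conformal scaling property \eqref{conformal scaling},
\[
e^{-ab(x)}\rho_{x}(P_{x}(y),P_{x}(z)) = e^{-ab(w)}\rho_{w}(P_{w}(y),P_{w}(z)).
\]
By Lemma \ref{infinite inf busemann} we have $b(w)\doteq (y|z)_{b}$, so $e^{-ab(w)}\asymp e^{-a(y|z)_{b}}$, and \eqref{second branch} is equivalent to $\rho_{w}(P_{w}(y),P_{w}(z))\asymp 1$. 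The visual metric relation \eqref{visual scaling} is separate and easier: from $P_{y}(\zeta)=f^{b(y)-b(x)}P_{x}(\zeta)$ and \eqref{conformal scaling} one gets directly $\rho_{*,y}(\zeta,\xi)=\rho_{y}(f^{b(y)-b(x)}P_{x}(\zeta),f^{b(y)-b(x)}P_{x}(\xi))=e^{a(b(y)-b(x))}\rho_{*,x}(\zeta,\xi)$, so I would dispatch it at the end without using \eqref{second branch}.

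The upper bound $\rho_{w}(P_{w}(y),P_{w}(z))\lesssim 1$ is routine. Lemma \ref{infinite inf busemann} gives $d(P_{w}(y),w)\doteq 0$ and $d(P_{w}(z),w)\doteq 0$, hence $d(P_{w}(y),P_{w}(z))$ is bounded; since $P_{w}(y),P_{w}(z)\in\W^{u}(w)$, the comparison \eqref{leaf control} bounds $d^{u}_{w}(P_{w}(y),P_{w}(z))$, and then \eqref{Hamenstadt comparison 1}–\eqref{Hamenstadt comparison 2} bound $\rho_{w}(P_{w}(y),P_{w}(z))$. This step needs neither the hypothesis $d(y,z)\geq 1$ nor anything beyond the lemmas already established.

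The lower bound $\rho_{w}(P_{w}(y),P_{w}(z))\gtrsim 1$ is the crux, and where I expect the real work and the use of the hypothesis to lie. Writing $t_{0}=\sup\{t\in\R : d^{u}_{f^{t}w}(f^{t}P_{w}(y),f^{t}P_{w}(z))\leq 1\}$ we have $\rho_{w}(P_{w}(y),P_{w}(z))=e^{-at_{0}}$ by \eqref{conformal scaling}, so the claim is that $t_{0}$ is bounded above by a constant depending only on $a,A$. The point $f^{t}P_{w}(y)$ is the height-$(b(w)+t)$ point of the ascending vertical geodesic through $y$, and by Lemma \ref{infinite inf busemann} applied along the arc $\gamma_{yw}$ this point stays within bounded distance of $\gamma$; the symmetric statement holds along $\gamma_{wz}$. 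Applying Lemma \ref{infinite inf busemann} to the subarcs of $\gamma$ terminating at $w$ shows that the two points of $\gamma$ at height $b(w)+t$ lying on opposite sides of $w$ are at distance $\doteq 2t$, so $d^{u}(f^{t}P_{w}(y),f^{t}P_{w}(z))\geq d(f^{t}P_{w}(y),f^{t}P_{w}(z))\geq 2t - (\text{const})$, which forces $t_{0}$ to be bounded. The delicate issue — the main obstacle — is that this tracking argument needs the arcs $\gamma_{yw}$ and $\gamma_{wz}$ to reach heights exceeding $b(w)$ by at least the required constant, equivalently $\min\{b(y),b(z)\}\geq (y|z)_{b}+(\text{const})$; this is automatic when one of $y,z$ lies in $\p_{*}\X$ (its height being $+\infty$), and for $y,z\in\X$ it is governed by $d(y,z)\geq 1$, since the higher of the two endpoints always rises at least $\tfrac12 d(y,z)$ above $w$ — so the argument on the side of the higher endpoint always goes through, and care is needed to run it (or an equivalent divergence estimate) on the lower side as well.

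With $\rho_{w}(P_{w}(y),P_{w}(z))\asymp 1$ in hand, \eqref{second branch} follows, and the consequences are formal. By Proposition \ref{vert identify} the map $\zeta\mapsto P_{x}(\zeta)$ is a bijection $\p_{*}\X\to\W^{u}(x)$, so $\rho_{*,x}(\zeta,\xi):=\rho_{x}(P_{x}(\zeta),P_{x}(\xi))$ is a genuine metric on $\p_{*}\X$; applying \eqref{second branch} with $y=\zeta$, $z=\xi\in\p_{*}\X$ (where $d(\zeta,\xi)=\infty\geq 1$) gives $\rho_{*,x}(\zeta,\xi)\asymp e^{ab(x)}e^{-a(\zeta|\xi)_{b}}$, which is exactly the definition \eqref{define visual} of a visual metric with parameter $a$ and constant $K=e^{ab(x)}$, the implied constants depending only on $a$ and $A$ as required. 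The scaling relation \eqref{visual scaling} is the identity noted in the first paragraph.
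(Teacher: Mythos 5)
Your reduction of \eqref{second branch} to the single estimate $\rho_w(P_w(y),P_w(z))\asymp 1$ at the $b$-minimum $w$ of a geodesic from $y$ to $z$ is a genuinely different route from the paper's, which first proves the equal-height case $b(y)=b(z)$ directly (the direction $e^{-a(y|z)_b}\lesssim e^{-ab(x)}\rho_x$ via an explicit three-piece competitor path descending to the level where $d^u=1$; the reverse via a subdivision of $\gamma$ into unit pieces and the triangle inequality for $\rho$ after projecting to $\W^u(y)$), and then reduces general $y,z$ to equal heights by replacing $y$ with $p=f^{b(z)-b(y)}y$. Your upper bound $\rho_w(P_w(y),P_w(z))\lesssim 1$ from Lemma \ref{infinite inf busemann} and the metric comparisons is correct and is cleaner than the paper's subdivision argument, and the derivation of \eqref{visual scaling} is fine.

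The lower bound $\rho_w(P_w(y),P_w(z))\gtrsim 1$ is where the gap lies, and it is more serious than your sketch suggests. Your tracking requires both arcs $\gamma_{yw},\gamma_{wz}$ to reach height $b(w)+t$ for $t$ up to a constant $c'=c'(a,A)$ determined by the additive errors in Lemma \ref{infinite inf busemann}; since $c'>\tfrac12$ (one needs $2t-2c_1-c_2>1$), the observation that the higher endpoint rises at least $\tfrac12 d(y,z)\geq\tfrac12$ above $w$ does not by itself push through even the one-sided part, and the claim that the mixed case $y\in\p_*\X$, $z\in\X$ is automatic is wrong because only one arc then has infinite reach. More fundamentally, $\rho_w(P_w(y),P_w(z))\gtrsim 1$ is actually false exactly where your argument stalls: if $z\in\X$ and $y$ lies on (or near) the ascending vertical geodesic through $z$ with $b(y)-b(z)\geq 1$, then $w$ sits at height $\doteq b(z)$, $P_w(y)$ and $P_w(z)$ coincide (or are arbitrarily close), so $\rho_w$ vanishes (or is arbitrarily small) while $e^{-a(y|z)_b}\asymp e^{-ab(z)}$ stays bounded away from zero; thus \eqref{second branch} itself fails in this configuration. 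Note that the paper's reduction is exposed in the same regime: invoking the equal-height case for the pair $(p,z)$ requires $d(p,z)\geq 1$, and this is not a consequence of $d(y,z)\geq 1$. The estimate and your tracking argument do close cleanly when $y,z\in\p_*\X$ are distinct (both arcs then rise to $+\infty$), which is the only case used for the visual-metric conclusion and in Proposition \ref{boundary identify}; to repair the finite case one should replace the hypothesis $d(y,z)\geq 1$ by something like $\rho_z(P_z(y),z)\geq 1$ (equivalently, that the geodesic from $y$ to $z$ drops a definite amount below $\min\{b(y),b(z)\}$), under which both the tracking and the paper's chain argument go through.
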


\begin{proof}
It suffices to prove the proposition in the case $y,z \in \X$ as the full claim then follows immediately from a limiting argument using Lemma \ref{busemann inequality}. Let $x,y,z \in \X$ be given. We will prove the estimate \eqref{second branch} first in the case that $b(y) = b(z)$ and then use this to deduce the general case afterward. Set $\beta = a^{-1}\log \rho_{y}(y,z)$ so that $\rho_{y}(y,z) = e^{-a\beta}$. Since $d(y,z) \geq 1$ we have $d^{u}_{y}(y,z) \geq 1$ which implies that $\rho_{y}(y,z) \geq 1$ by \eqref{Hamenstadt comparison 2}. Thus $\beta \leq 0$ by \eqref{metric definition}. 

We do the upper bound first.  Let $\gamma_{y}$ be the vertical geodesic joining $y$ to $f^{\beta}y$ and let $\gamma_{z}$ be the vertical geodesic joining $z$ to $f^{\beta}z$. Since 
\[
d_{f^{\beta}x}^{u}(f^{\beta}y,f^{\beta}z) = 1,
\]
we have by \eqref{leaf control} that $d(f^{\beta}y,f^{\beta}z) \leq 1$. Thus $f^{\beta}y$ and $f^{\beta}z$ can be joined by a geodesic $\eta$ of length $\ell(\eta) \leq 1$. The composite path $\gamma_{y} \cup \eta \cup \gamma_{z}$ then has length $\leq -2\beta + 1$, so we have 
\[
d(y,z) \leq -2\beta + 1.
\]
Dividing both sides by $2$ and then adding $-b(y)= -\frac{1}{2}(b(y)+b(z))$ to each side gives
\[
-(y|z)_{b} \leq -b(y) - \beta + \frac{1}{2}. 
\]
By exponentiating with parameter $a$ we then obtain
\[
e^{-a(y|z)_{b}} \lesssim e^{-ab(y)}\rho_{y}(y,z) = e^{-ab(x)}\rho_{x}(P_{x}(y),P_{x}(z)),
\]
by \eqref{conformal scaling}, which gives the upper bound.

For the lower bound let $\gamma:[0,k] \rightarrow Y$ be a geodesic joining $y$ to $z$, $k = d(y,z)$. Set $n = \lfloor k \rfloor$, then $n \geq 1$ since $k \geq 1$.   For each integer $0 \leq j \leq n$ we set $y_{j} = \gamma(j)$, and we set $y_{n+1} = \gamma(k) = z$ (note we may have $y_{n} = y_{n+1}$ if $k$ is an integer). Let $p_{j}:=P_{y}(y_{j})$ be the vertical projections of these points onto $\W^{u}(y)$. Then by the triangle inequality for $\rho_{y}$ we have
\[
\rho_{y}(y,z) \leq \sum_{j=0}^{n}\rho_{y}(p_{j},p_{j+1}).
\]
We let $1 \leq l \leq n$ be a choice of integer to be tuned later and we set $z_{j} = y_{n+1-j}$ for $0 \leq j \leq n+1$. Then since $b$ is $1$-Lipschitz and $b(y) = b(z)$ we have $b(y_{j}) \geq b(y)-j$ and $b(z_{j}) \geq b(y) - j$ for each $j$. Thus, using \eqref{conformal scaling} and \eqref{projection comparison},
\begin{align*}
\sum_{j=0}^{n}\rho_{y}(p_{j},p_{j+1}) &\leq \sum_{j=0}^{l}e^{a j}\rho_{b(y_{j})}(y_{j},f^{b(y_{j})-b(y_{j+1})}y_{j+1}) \\
&+ \sum_{j=0}^{n-l}e^{a j}\rho_{b(z_{j})}(z_{j},f^{b(z_{j})-b(z_{j+1})}z_{j+1}) \\
&\ls \sum_{j=0}^{l}e^{a j} + \sum_{j=0}^{n-l}e^{a j} \\
&\ls e^{a l} + e^{a(n-l)}.
\end{align*}
Choosing $l = \lceil\frac{n+1}{2} \rceil$ and recalling that $|k-n| \leq 1$ and $k = d(y,z)$, we conclude that
\[
\rho_{y}(y,z) \lesssim e^{a \frac{d(y,z)}{2}},
\]
Multiplying both sides by $e^{-ab(y)}$ then gives 
\[
e^{-ab(y)}\rho_{y}(y,z) \lesssim e^{-a(y|z)_{b}},
\]
which by \eqref{conformal scaling} can be rewritten 
\[
e^{-ab(x)}\rho_{x}(P_{x}(y),P_{x}(z)) \lesssim e^{-a(y|z)_{b}}, 
\]
as desired.

We now consider the general case. Without loss of generality we can assume that $b(y) \geq b(z)$. Then set $p = f^{b(z)-b(y)}y$. Thus $b(p) = b(z)$ and since $P_{x}(y) = P_{x}(p)$ we deduce from the above that 
\[
e^{-a b(x)}\rho_{x}(P_{x}(y),P_{x}(z)) \asymp e^{-a(p|z)_{b}}. 
\]
In order to handle the right side we apply Lemma \ref{inf busemann} to a geodesic $\gamma$ from $y$ to $z$, making the observation that in traveling from $y$ to $z$ along $\gamma$ the function $b$ cannot attain its minimum before the curve reaches the height $b(z)$. Thus if we let $w \in \gamma$ be the first point on $\gamma$ traveling from $y$ such that $b(w) = b(z)$ then $d(w,p) \doteq 0$. Therefore 
\[
(p|z)_{b} \doteq (w|z)_{b} \doteq (y|z)_{b},
\]
with the second approximate equality following from Lemma \ref{inf busemann} and the fact that the segment of $\gamma$ from $w$ to $z$ has the same point of minimal height as the full geodesic $\gamma$. The inequality \eqref{second branch} follows. The final conclusion regarding visual metrics (with $K = e^{-ab(x)}$ in \eqref{define visual}) then follows from the fact that if $\gamma$ and $\eta$ are distinct ascending geodesic rays starting from $\W^{u}(x)$ representing points $\xi$, $\zeta \in \p_{*}\X$ then we eventually have $d(\gamma(t),\eta(t)) \geq 1$ for $t$ large enough and therefore we can take $t\rightarrow \infty$ in \eqref{second branch} with $y = \gamma(t)$ and $z = \eta(t)$. The relation \eqref{visual scaling} is immediate from \eqref{conformal scaling} since for $x,y \in \X$ the chain of identifications $\W^{u}(x) \rightarrow \p_{*}\X \rightarrow \W^{u}(y)$ is simply the flow $f^{b(y)-b(x)}:\W^{u}(x) \rightarrow \W^{u}(y)$.
\end{proof}

We end this section by briefly discussing a notion of equivalence between expanding cones that corresponds to structural stability in the case of Anosov flows. We consider two expanding cones $f^{t}_{i}:\X_{i} \rightarrow \X_{i}$; we use the subscript $i = 1,2$ to denote objects corresponding to each expanding cone. A homeomorphism $\varphi: \X_{1} \rightarrow \X_{2}$ is an \emph{orbit equivalence} if there is a continuous function $\alpha: \R \times \X_{1} \rightarrow \R$ such that for any $x \in \X_{1}$,
\begin{equation}\label{time change cocyle}
\varphi(f^{t}_{1}(x)) = f_{2}^{\alpha(t,x)}(\varphi(x)),
\end{equation}
and such that there is a continuous \emph{generator} $\dot{\alpha}: \X_{1} \rightarrow \R$ such that
\begin{equation}\label{integral cocycle} 
\int_{0}^{t}\dot{\alpha}(f^{s}_{1}x)\,ds = \alpha(t,x),
\end{equation}
for any $t \in \R$ and $x \in \X_{1}$. This function $\alpha$ is an \emph{additive cocycle over $f^{t}_{1}$} in the sense that
\begin{equation}\label{cocycle relation}
\alpha(s+t,x) = \alpha(t,f^{s}_{1}x) + \alpha(s,x).
\end{equation}
Note that \eqref{time change cocyle} implies that $\dot{\alpha} \neq 0$ everywhere, which implies either $\dot{\alpha} > 0$ or $\dot{\alpha} < 0$ everywhere.

The proposition below shows that an orbit equivalence between expanding cones is a quasi-isometry under mild uniformity conditions on its local behavior. These conditions are always satisfied for the orbit equivalences between expanding cones associated to Anosov flows that are induced by structural stability in the setting of Theorem \ref{thm:expanding hyperbolic}. 

\begin{prop}\label{quasi orbit}
Suppose that $\varphi: \X_{1} \rightarrow \X_{2}$ is an orbit equivalence satisfying $\dot{\alpha} > 0$ everywhere. Suppose there are constants $K_{0}$ such that if $d(x,y) \leq 1$ then $d(\varphi(x),\varphi(y)) \leq K_{0}$ and $K_{1}$ such that $\max\{\dot{\alpha},\dot{\alpha}^{-1}\} \leq K_{1}$. Then $\varphi$ is a $(K,C)$-quasi-isometry with $(K,C)$ depending only on the expansion parameters $a_{i}$ in \eqref{expansion}, $K_{0}$, and $K_{1}$. 
\end{prop}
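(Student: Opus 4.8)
The plan is to verify the two-sided estimate \eqref{quasi inequality}; coarse surjectivity is automatic since $\varphi$ is a homeomorphism.

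\emph{Flow lines and the upper bound.} By \eqref{time change cocyle} the orbit equivalence carries the flow line $\W^{c}_{1}(x)$ onto $\W^{c}_{2}(\varphi(x))$, and since $\dot\alpha>0$ it does so preserving the direction of increasing $b_{i}$; thus ascending (resp.\ descending) geodesic rays go to ascending (resp.\ descending) geodesic rays. Because flow lines are unit-speed geodesics in an expanding cone, for $t\ge 0$ one reads off from \eqref{integral cocycle} and $K_{1}^{-1}\le\dot\alpha\le K_{1}$ that
\[
d_{2}(\varphi(x),\varphi(f^{t}_{1}x))=\alpha(t,x)\in[K_{1}^{-1}t,\,K_{1}t].
\]
For the global upper bound I would subdivide a geodesic from $x$ to $y$ in $\X_{1}$ into $\lceil d_{1}(x,y)\rceil$ arcs of length $\le 1$ and apply the hypothesis on unit-distance pairs to each arc, obtaining $d_{2}(\varphi(x),\varphi(y))\le K_{0}d_{1}(x,y)+K_{0}$. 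Hence $\varphi$ is coarsely Lipschitz and only the matching lower bound remains.

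\emph{The lower bound.} It suffices to prove that $\varphi^{-1}$ is also coarsely Lipschitz, for then applying this to the pair $\varphi(x),\varphi(y)$ gives $d_{1}(x,y)\ls d_{2}(\varphi(x),\varphi(y))+1$. Now $\varphi^{-1}$ is again an orbit equivalence of expanding cones: inverting \eqref{cocycle relation} and using that $t\mapsto\alpha(t,x)$ is an increasing bi-Lipschitz homeomorphism of $\R$ with derivative in $[K_{1}^{-1},K_{1}]$ produces an additive cocycle $\bar\alpha$ over $f^{t}_{2}$ with $\varphi^{-1}(f^{u}_{2}w)=f^{\bar\alpha(u,w)}_{1}\varphi^{-1}(w)$ whose generator is again bounded by $K_{1}$ in the same sense, so the proposition reduces to the unit-scale bound for $\varphi^{-1}$. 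To get it I would exploit that $\varphi$ coarsely preserves the weak unstable foliation. Since the leaves $\W^{u}_{i}(\cdot)$ are exactly the level sets of the Busemann functions $b_{i}$, and $y\in\W^{u}_{1}(x)$ forces $b_{1}(y)=b_{1}(x)$ together with $d_{1}(f^{-t}_{1}x,f^{-t}_{1}y)\le e^{-a_{1}t}d_{1}(x,y)\to0$, coarse Lipschitzness of $\varphi$ and the fact that downward flow in $\X_{2}$ is $1$-Lipschitz make $s\mapsto d_{2}(f^{-s}_{2}\varphi(x),f^{-s}_{2}\varphi(y))$ nonincreasing and bounded; being bounded below by $|b_{2}(\varphi(x))-b_{2}(\varphi(y))|$, this pins $\varphi(y)$ and $\varphi(x)$ onto $\W^{u}_{2}$-leaves of bounded height difference. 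Granting a uniform version of this, together with the description in Lemma \ref{inf busemann} of an arbitrary geodesic from $w_{1}$ to $w_{2}$ as a descent along the flow line of $w_{1}$ to the leaf through its lowest point $z$, a crossing of bounded length, and an ascent along the flow line of $w_{2}$, one transports this decomposition through $\varphi^{-1}$: the vertical pieces are distorted by a factor at most $K_{1}$ by the estimate above, the horizontal piece maps to a set of diameter $\ls_{K_{0}}1$, and Lemma \ref{inf busemann} applied in $\X_{1}$ then recovers $d_{1}(\varphi^{-1}(w_{1}),\varphi^{-1}(w_{2}))$ from the (now controlled) heights of the endpoints and of the image of $z$. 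The same down--across--up transport run for $\varphi$ itself, in $\X_{2}$, gives the lower bound directly.

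\emph{Main obstacle.} The genuine difficulty is exactly this lower bound. The hypotheses control $\varphi$ in one direction only, so no purely metric mechanism forbids $\varphi$ from collapsing distant points; the rigidity has to be drawn from $\varphi$ being a \emph{homeomorphism} intertwining the two flows, which is what forces the unstable foliations to correspond and prevents collapse. Turning the coarse-preservation-of-the-unstable-foliation step into a uniform quantitative statement---equivalently, controlling how $\varphi$ distorts the Hamenst\"adt (visual) metrics on the relative boundaries $\p_{*}\X_{i}\cong\W^{u}_{i}$, which via Proposition \ref{branch estimate expand} and Lemma \ref{metric comparison} is precisely the lower bound expressed in flow coordinates---is where the work concentrates, and where the hyperbolicity of the $\X_{i}$ must be brought to bear.
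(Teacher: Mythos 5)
Your upper bound (subdividing a geodesic into unit arcs and applying the hypothesis to each) matches the paper, which records it as \eqref{iterated triangle}. The lower bound is where the gap lies. Your route via $\varphi^{-1}$ is circular: transporting the descent--crossing--ascent decomposition through $\varphi^{-1}$ and asserting that the crossing ``maps to a set of diameter $\lesssim_{K_0} 1$'' requires the unit-distance bound for $\varphi^{-1}$, which the hypotheses do not supply and which is exactly what the lower bound would produce. Your closing sentence correctly gestures at transporting via $\varphi$ instead of $\varphi^{-1}$, but you do not carry it out, and your ``Main obstacle'' paragraph suggests you believe substantially more machinery is needed --- coarse preservation of the unstable foliation, Hamenst\"adt/visual metrics on $\p_*\X_i$, the hyperbolicity of the $\X_i$ brought to bear on the boundary. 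It is not.

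The paper's lower bound is an elementary height computation, and it proceeds by transporting via $\varphi$. Apply Lemma \ref{inf busemann} to a geodesic from $x$ to $y$ in $\X_1$ to get points $p$ below $x$ and $q$ below $y$ on their flow lines with $d_1(p,q) \doteq 0$ and $d_1(x,y) \doteq d_1(x,p) + d_1(y,q)$. Now push forward with $\varphi$: since $\dot\alpha > 0$, the points $p' = \varphi(p)$ and $q' = \varphi(q)$ still lie below $x'$, $y'$ on their flow lines; since $\max\{\dot\alpha, \dot\alpha^{-1}\} \leq K_1$, the vertical legs satisfy $d_2(x',p') \asymp d_1(x,p)$ and $d_2(y',q') \asymp d_1(y,q)$; and the crossing satisfies $d_2(p',q') \doteq 0$ by \eqref{iterated triangle} applied to $\varphi$ (not $\varphi^{-1}$). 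It then remains to show $d_2(x',p') + d_2(y',q') \doteq d_2(x',y')$, which is pure height bookkeeping: $d_2(x',p') = b_2(x') - b_2(p')$ because $p'$ lies directly below $x'$; the triangle inequality along $x' \to p' \to q' \to y'$ gives $(x'|y')_{b_2} \geq b_2(p') - c$; and Lemma \ref{monotonicity} gives $(p'|q')_{b_2} \leq (x'|y')_{b_2}$, i.e.\ $b_2(p') \leq (x'|y')_{b_2} + c$, so $b_2(p') \doteq (x'|y')_{b_2}$ and the sum $d_2(x',p') + d_2(y',q') \doteq b_2(x') + b_2(y') - 2(x'|y')_{b_2} = d_2(x',y')$. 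The ``rigidity'' you were searching for is simply that $\varphi$ is $K_1$-biLipschitz on flow lines, a direct consequence of the bound on $\dot\alpha$ and the intertwining relation; no appeal to unstable leaf geometry, boundary identifications, or visual metrics is required.
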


\begin{proof}
To simplify notation, if $x \in \X_{1}$ then we write $x' = \varphi(x)$ for the corresponding point in $\X_{2}$. We use the same notation $d$ and $b$ for $d_{1}$, $d_{2}$, and $b_{1}$, $b_{2}$ since it will be clear from the arguments which one we are referring to. We set $a = \min\{a_{1},a_{2}\}$. The notation $\doteq$ and $\asymp$ in this proof refers to equality up to an additive/multiplicative constant depending only on $a$, $K_{0}$, and $K_{1}$. We can iterate the condition on $\varphi$ with the triangle inequality to obtain for any $t \geq 0$ and $x,y\in \X_{1}$,
\begin{equation}\label{iterated triangle}
d(x,y) \leq t \Rightarrow d(x',y') \leq K_{0}(t+1)
\end{equation}

The equation \eqref{time change cocyle} together with the hypothesis on $\dot{\alpha}$ implies that $\varphi$ is $K_{1}$-biLipschitz when restricted to vertical geodesics. Now let $x$, $y \in \X_{1}$ be arbitrary with $y$ not on the vertical geodesic through $x$. Let $\gamma$ be a geodesic joining $x$ to $y$ and let $z$ be the unique minimum of $b$ on $\gamma$. By Lemma \ref{inf busemann} we can find points $p$ on the vertical geodesic through $x$ below $x$ and $q$ on the vertical geodesic through $y$ below $y$ such that $d(p,z) \doteq 0$ and $d(q,z) \doteq 0$ (so that $d(p,q) \doteq 0$ as well). Then $d(x,y) \doteq d(x,p) + d(q,y)$. Since $\varphi$ is $K_{1}$-biLipschitz on vertical geodesics we have $d(x',p') \asymp d(x,p)$ and $d(y',q') \asymp d(y,q)$. Thus
\[
d(x,p) + d(q,y) \asymp d(x',p') + d(y',q').
\]

Since $\dot{\alpha} > 0$ everywhere, we have that $p'$ and $q'$ lie below $x'$ and $y'$ on their respective vertical geodesics. By \eqref{iterated triangle} we have $d(p',q') \doteq 0$ since $d(p,q) \doteq 0$. By Lemma \ref{monotonicity} and $d(p',q') \doteq 0$ we have 
\[
b(p') \doteq b(q') \doteq (p'|q')_{b} \leq (x'|y')_{b}.
\]
Thus there is an additive constant $c = c(a,K_{0},K_{1})$ such that $b(p') \leq (x'|y')_{b}+c$ and the same is true for $q'$. Then $d(x',p')\doteq b(x')-b(p')$ and $d(y',q')\doteq b(y')-b(q')$, which implies that 
\begin{align*}
d(x',p') + d(y',q') &\doteq b(x')+b(y') - 2b(p') \\
&\geq b(x')+b(y') - 2(x'|y')_{b}-c \\
&= d(x',y')-c. 
\end{align*}
This gives the upper bound in \eqref{quasi inequality}. For the lower bound we simply note that $d(x',q') \doteq d(x',p')$ since $d(p',q') \doteq 0$ so that by the triangle inequality
\[
d(x',p') + d(y',q') \leq d(x',y') + c,
\]
for some additive constant $c = c(a,K_{0},K_{1})$. 
\end{proof}

\begin{rem}\label{quasisymmetry remark}
Since quasi-isometries of Gromov hyperbolic spaces induce quasisymmetries on the Gromov boundary (see \cite[Chapter 5]{BS07} for details and terminology), Proposition \ref{quasi orbit} proves that orbit equivalences induce quasisymmetries between Hamenst\"adt metrics through their identification with visual metrics on $\p_{*}\X$ via Proposition \ref{branch estimate expand}. This claim was previously established through a more direct method by the author in \cite[Proposition 6.7]{Bu17}. 
\end{rem}

\section{Uniformization}\label{sec:uniform cone}
In this section we will give a self-contained exposition of the \emph{uniformization} procedure for an expanding cone $(\X,b)$. This procedure is exposited in greater generality in a previous preprint of the author \cite{Bu20} and the work of Zhou-Saminathan-Antti \cite{ZPR25}. However in the case of expanding cones the construction can be simplified to make the ideas more transparent. 

We start by defining uniform metric spaces. Let $(\Omega,d)$ be a metric space. For a point $x \in \X$ and closed set $K$ we write
\[
d(x,K):= \inf_{y \in K}d(x,y),
\] 
for the distance from $x$ to $y$. We assume further that $\Omega$ is incomplete and write $\p \Omega = \bar{\Omega} \backslash \Omega$ for the complement of $\Omega$ inside of its completion $\bar{\Omega}$. We then write
\[
d_{\Omega}(x) = d(x,\p \Omega),
\]
for $x \in \Omega$. For a curve $\gamma:I \rightarrow \Omega$ we write $\gamma_{-}$ and $\gamma_{+}$ for the limits of $\gamma(t)$ in $\bar{\Omega}$ as $t$ converges to the left and right endpoints of $I$  respectively, assuming that these limits exist. 

\begin{defn}\label{def:uniform}For a constant $L \geq 1$ and a closed interval $I \subset \R$, a rectifiable curve $\gamma: I \rightarrow \Omega$ is \emph{$L$-uniform} if for every $t \in I$ we have
\begin{equation}\label{uniform one}
\min\{\ell(\gamma_{\leq t}),\ell(\gamma_{\geq t})\} \leq L d_{\Omega}(\gamma(t)),
\end{equation}
and we have the inequality
\begin{equation}\label{uniform two}
\ell(\gamma) \leq Ld(\gamma_{-},\gamma_{+}),
\end{equation}
We say that the metric space $\Omega$ is \emph{$L$-uniform} if any two points in $\Omega$ can be joined by an $L$-uniform curve. 
\end{defn}

Note that the first condition \eqref{uniform one} implies the limits in the second condition exist. Also recall the notation $\gamma_{\leq t} = \gamma|_{I \leq t}$ with $I_{\leq t} = I \cap (-\infty,t]$ described at the start of Section \ref{sec:expanding cone} (with $\gamma_{\geq t}$ defined analogously). 

We will transform the expanding cone $\X$ into a uniform metric space through a conformal deformation of the Riemannian metric on $\X$. Specifically, assuming that \eqref{expansion} holds, if we let $g$ denote the Riemannian metric tensor on $\X$ then we let $\X_{b}$ be the Riemannian manifold whose Riemannian metric tensor is $g_{b} = e^{-a b}g$, i.e., we rescale the metric tensor $g_{x}$ at $x \in \X$ by $e^{-a b(x)}$. Then for a curve $\gamma:I \rightarrow \X$ we write
\[
\ell_{b}(\gamma) = \int_{0}^{\ell(\gamma)}e^{-ab(\gamma(t))}\|\gamma'(t)\|\,dt,
\]
for the length of $\gamma$ measured in $\X_{b}$. Thus lengths get exponentially shorter as height increases. Writing $d_{b}$ for the Riemannian distance on $\X_{b}$, we then have by definition
\[
d_{b}(x,y) = \inf_{\gamma}\ell_{b}(\gamma),
\]
with the infimum taken over all curves $\gamma$ joining $x$ to $y$. Since $b$ is assumed to be $C^{r+1}$, the metric tensor remains $C^r$ and thus $\X_{b}$ remains a $C^{r+1}$-Riemannian manifold. It is not hard to see that $\X_{b}$ is now incomplete with this deformation: indeed any ascending geodesic ray in $\X$ will now have finite length in $\X_{b}$. It's also worth pointing out here that the parameter $a$ is not canonically determined (any $a > 0$ satisfying \eqref{expansion} may be used) and the dependence of the uniformization $\X_{b}$ on $a$ is suppressed in the notation. 

It's convenient to denote the density used in the conformal deformation as a function by $\kappa(x) = e^{-ab(x)}$. Then the $1$-Lipschitz property of $b$ implies the following Harnack type inequality,
\begin{equation}\label{Harnack}
e^{-ad(x,y)} \leq \frac{\kappa(x)}{\kappa(y)} \leq e^{ad(x,y)}.
\end{equation}

For this section our conventions regarding $\doteq$, etc. remain the same as in the discussion of the Hamenst\"adt metric near the end of the previous section, i.e., we allow implied constants to depend only on $a$ and $A$. The key result is our first lemma, which estimates $d_{b}$ in terms of $d$ and shows that geodesics in $\X$ are uniformly biLipschitz curves in $\X_{b}$. 

\begin{lem}\label{uniform distance}
We have for any $x,y \in \X$
\begin{equation}\label{uniform estimate}
d_{b}(x,y) \asymp e^{-a(x|y)_{b}}\min\{d(x,y),1\}. 
\end{equation}
Consequently if $\gamma$ is a geodesic in $\X$ joining $x$ to $y$ then
\begin{equation}\label{uniform geodesic}
\ell_{b}(\gamma) \asymp d_{b}(x,y) 
\end{equation}
\end{lem}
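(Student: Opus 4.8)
I would test the upper bound with an actual geodesic of $\X$, and obtain the lower bound by playing a co-area estimate (which is effective against a competitor curve that descends far in $b$) off against a Lipschitz projection onto a level set of $b$ (which is effective against a competitor that stays high, since then $\kappa=e^{-ab}$ is bounded below on it and it is forced to be long in the leaf directions).

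\textbf{Upper bound.} Let $\gamma$ be a geodesic of $\X$ from $x$ to $y$ and let $z\in\gamma$ be the point where $b_{\gamma}$ is minimized, so $b(z)\doteq(x|y)_{b}$ by Lemma \ref{inf busemann}. Split $\gamma$ at $z$. On the arc $\gamma_{zy}$, parametrized by arclength $s$ from $z$, Lemma \ref{inf busemann} gives $s=d(z,\gamma(s))\doteq b(\gamma(s))-b(z)$, hence $\kappa(\gamma(s))=e^{-ab(\gamma(s))}\asymp e^{-ab(z)}e^{-as}$, and integrating yields $\ell_{b}(\gamma_{zy})\ls e^{-ab(z)}$; likewise $\ell_{b}(\gamma_{xz})\ls e^{-ab(z)}$. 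Thus $d_{b}(x,y)\le\ell_{b}(\gamma)\ls e^{-ab(z)}\asymp e^{-a(x|y)_{b}}$. When $d(x,y)\le1$ one refines this: then $b$ varies by at most $1$ along $\gamma$, so $\kappa\asymp\kappa(x)$ on $\gamma$ by \eqref{Harnack}, whence $\ell_{b}(\gamma)\asymp\kappa(x)d(x,y)$, and $\kappa(x)\asymp e^{-a(x|y)_{b}}$ because $b(x)-1\le b(x)-d(x,y)\le(x|y)_{b}\le b(x)$ by \eqref{lip height}. Combining the two regimes gives the upper estimate in \eqref{uniform estimate}. Since these bounds are actually bounds on $\ell_{b}(\gamma)$, together with \eqref{uniform estimate} they give $\ell_{b}(\gamma)\ls d_{b}(x,y)$, and with the trivial inequality $\ell_{b}(\gamma)\ge d_{b}(x,y)$ this proves \eqref{uniform geodesic}.

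\textbf{Lower bound.} Let $\sigma$ be any rectifiable curve from $x$ to $y$, parametrized by $d$-arclength, and set $m=\min_{s}b(\sigma(s))$. I would use two estimates. \emph{Co-area}: on the sub-arc of $\sigma$ from $x$ to a point of minimal height one has $|\tfrac{d}{ds}b(\sigma(s))|\le\|\nabla b\|=1$, so $\ell_{b}(\sigma)\ge\int_{m}^{b(x)}e^{-av}\,dv=\tfrac1a(e^{-am}-e^{-ab(x)})$, and symmetrically for $y$. \emph{Projection}: for a level $b'\le m$, let $P$ be the projection of $\{b\ge b'\}$ onto $b^{-1}(b')$ along flowlines; a direct computation gives $DP_{p}=Df^{\,b'-b(p)}\circ\pi_{p}$ with $\pi_{p}$ the orthogonal projection to $E^{u}_{p}$, so $e^{-ab(p)}\|v\|\ge e^{-ab'}\|DP_{p}v\|$ by \eqref{expansion}, and integrating, $\ell_{b}(\sigma)\ge e^{-ab'}\ell(P\circ\sigma)\ge e^{-ab'}d^{u}(P(x),P(y))$, the leaf distance at height $b'$. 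Now argue by cases. If $d(x,y)\le1$: an initial sub-arc of $\sigma$ of $d$-length $d(x,y)$ (all of $\sigma$, if it stays in $\overline{B_{d}(x,1)}$, else its portion up to the first exit) has $\kappa\asymp\kappa(x)$ on it, so $\ell_{b}(\sigma)\gs\kappa(x)d(x,y)\asymp e^{-a(x|y)_{b}}d(x,y)$. If $d(x,y)\ge1$ and $m<(x|y)_{b}$: the co-area estimate gives $\ell_{b}(\sigma)\gs e^{-am}\ge e^{-a(x|y)_{b}}$, unless $b(x)\doteq m\doteq b(y)$, in which case $d(x,y)=b(x)+b(y)-2(x|y)_{b}$ is bounded, so $d(x,y)\asymp1$ and $(x|y)_{b}\doteq b(x)$, and the ball argument (with a sub-arc of length $\tfrac12$) again gives $\ell_{b}(\sigma)\gs\kappa(x)\asymp e^{-a(x|y)_{b}}$. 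If $d(x,y)\ge1$ and $m\ge(x|y)_{b}$: apply the projection estimate with $b'=(x|y)_{b}$ to get $\ell_{b}(\sigma)\ge e^{-a(x|y)_{b}}d^{u}(P(x),P(y))$ with $P(x)=f^{(x|y)_{b}-b(x)}x$, $P(y)=f^{(x|y)_{b}-b(y)}y$; since $d(x,y)\ge1$, Proposition \ref{branch estimate expand} applied to the pair $x,y$ with a base point at height $(x|y)_{b}$ gives $e^{-a(x|y)_{b}}\asymp e^{-a(x|y)_{b}}\rho(P(x),P(y))$, i.e.\ $\rho(P(x),P(y))\asymp1$ for the Hamenst\"adt metric at that level, and Lemma \ref{metric comparison} converts this to $d^{u}(P(x),P(y))\asymp1$, so $\ell_{b}(\sigma)\gs e^{-a(x|y)_{b}}$. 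In every case $\ell_{b}(\sigma)\gs e^{-a(x|y)_{b}}\min\{d(x,y),1\}$, which completes \eqref{uniform estimate}.

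\textbf{Main obstacle.} The substantive point is the last case, a curve $\sigma$ that refuses to descend to the ``waist'' height $(x|y)_{b}$: the projection estimate reduces the lower bound there to the claim that the projections of $x$ and $y$ to $b^{-1}((x|y)_{b})$ are at leaf distance $\asymp1$. The upper bound $\ls1$ is immediate from Lemma \ref{inf busemann}, but the matching lower bound genuinely expresses that $(x|y)_{b}$ is exactly the scale at which the leaf/Hamenst\"adt metric equilibrates, which is why Proposition \ref{branch estimate expand} is needed (applied to $x,y$ themselves, so that the hypothesis $d(x,y)\ge1$ is available and no subsidiary small-distance case arises inside it). The remaining work is purely bookkeeping: tracking the additive constants coming out of Lemma \ref{inf busemann} and isolating the bounded-$d(x,y)$ degenerate configurations where one falls back on the elementary ball argument.
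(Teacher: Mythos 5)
Your proof is correct, and while the upper bound and the small-distance lower bound coincide with the paper's argument, your lower bound for $d(x,y)\ge 1$ takes a genuinely different route. The paper handles this case with a single telescoping argument: partition the competitor curve into unit-length pieces, project each breakpoint to $\W^{u}(x)$, bound $\rho_{x}(P_{x}(x),P_{x}(y))$ by the sum of piece-contributions using \eqref{projection comparison} and \eqref{conformal scaling}, and then invoke Proposition \ref{branch estimate expand} once at the end. You instead split on whether the competitor descends below the ``waist'' level $(x|y)_{b}$. When it does, a co-area bound (integrating $e^{-ab}$ against $|b_{\sigma}'|\le 1$) already yields $\ell_b(\sigma)\gtrsim e^{-am}\ge e^{-a(x|y)_b}$, with the degenerate sub-case $b(x)\doteq m\doteq b(y)$ correctly deflected to the bounded-distance ball argument (since then $d(x,y)\asymp 1$). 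When the competitor stays above the waist you project it down to $b^{-1}((x|y)_b)$ — geometrically the right level — and reduce to the statement that $P(x)$ and $P(y)$ have $d^u$-distance $\asymp 1$ there, which you extract from Proposition \ref{branch estimate expand} together with the $\rho$-vs-$d^u$ comparison in Lemma \ref{metric comparison}. The upshot is a more transparent picture of \emph{why} the waist height is the right scale, at the cost of a three-way case split; the paper's unit-piece sum avoids the case analysis but hides that geometric content inside the telescope. Both arguments call Proposition \ref{branch estimate expand}, so neither avoids the main analytic input. One small stylistic caveat: your parenthetical in the $d(x,y)\le 1$ case (``all of $\sigma$, if it stays in $\overline{B_d(x,1)}$...'') is unnecessary and slightly confusing, since you can always just take the initial sub-arc of $d$-length $d(x,y)$ regardless; the paper's phrasing in terms of the first exit of $B(x,d(x,y))$ is cleaner.
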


\begin{proof}
Since the claim is clear if $x = y$ we can assume that $x \neq y$. For any $x$, $y \in \X$ let $\gamma$ be a geodesic joining $x$ to $y$. Then
\begin{align*}
d_{b}(x,y) &\leq \int_{\gamma} \kappa \, ds \\
&\leq e^{-ab(x)}\int_{0}^{d(x,y)} e^{a t}\, dt \\
&\leq e^{-ab(x)}a^{-1}(e^{a d(x,y)}-1).
\end{align*}
Now let $\gamma$ instead be a geodesic in $\X_{b}$ joining $x$ to $y$, which we parametrize by arclength in $\X$. Then
\begin{align*}
d_{b}(x,y) &= \int_{\gamma} \kappa \, ds \\
&\geq e^{ab(x)}\int_{0}^{\ell(\gamma)} e^{-a t}\, dt \\
&\geq e^{ab(x)}a^{-1}(1-e^{-a \ell(\gamma)}) \\
&\geq e^{ab(x)}a^{-1}(1-e^{-a d(x,y)}),
\end{align*}
where in the last line we used $d(x,y) \leq \ell(\gamma)$. We conclude that
\begin{equation}\label{proto uniform}
e^{-ab(x)}a^{-1}(e^{-a d(x,y)}-1)\leq d_{b}(x,y) \leq e^{-ab(x)}a^{-1}(e^{a d(x,y)}-1).
\end{equation}
For $0 \leq t \leq 1$ we have the inequalities 
\[
1-e^{-a t} \geq a^{-1} e^{-a}t,
\]
and 
\[
e^{a t}-1 \leq a e^{a} t,
\]
as can be verified by noting that equality holds at $t = 0$ and differentiating each side. Thus for $d(x,y) \leq 1$ \eqref{proto uniform} implies that
\[
d_{b}(x,y) \lesssim e^{-ab(x)}d(x,y) \asymp e^{-a(x|y)_{b}}d(x,y),
\]
noting that $(x|y)_{b} \doteq b(x)$. To bound $d_{b}(x,y)$ from below, observe that if we have a curve $\gamma: I \rightarrow Y$ joining $x$ to $y$ then within the ball $B(x,d(x,y))$ in $\X$ there must be an initial segment $\hat{\gamma}$ joining $x$ to the boundary $\p B(x,d(x,y))$ of the ball. We then must have
\[
\ell_{b}(\hat{\gamma}) \gtrsim e^{-a(x|y)_{b}}\ell(\hat{\gamma}) \geq e^{-a(x|y)_{b}}d(x,y). 
\]
Taking $\gamma$ to be a geodesic in $\X_{b}$ joining $x$ to $y$, the lower bound then follows since $\hat{\gamma}$ is a subsegment of $\gamma$. 

Thus we can assume for the rest of the proof that $d(x,y) \geq 1$. We will do the upper bound in \eqref{uniform estimate} first. Let $\gamma$ be a geodesic joining $x$ to $y$. We apply Lemma \ref{inf busemann} and let $\gamma_{1}$ be the arc of $\gamma$ from $x$ to the unique point $z$ at which the minimum of $b$ is achieved on $\gamma$ and let $\gamma_{2}$ be the arc of $\gamma$ from $z$ to $y$ (note $\gamma_{1}$ or $\gamma_{2}$ may be trivial if $\gamma$ is a vertical geodesic). Then using the estimates of Lemma \ref{inf busemann},
\begin{align*}
\ell_{b}(\gamma) &= \ell_{b}(\gamma_{1}) + \ell_{b}(\gamma_{2}) \\
&= \int_{\gamma_{1}}\kappa \, dt + \int_{\gamma_{2}}\kappa \, dt \\
&\asymp e^{-a (x|y)_{b}} \left(\int_{0}^{d(x,z)}e^{-a t}\, dt + \int_{0}^{d(y,z)}e^{-a t}\, dt\right) \\
&= a^{-1}e^{-a (x|y)_{b}}(2-e^{- ad(x,z)} - e^{-ad(y,z)}).
\end{align*}
It follows immediately that
\[
\ell_{b}(\gamma) \lesssim e^{-a (x|y)_{b}},
\]
and therefore the corresponding upper bound for $d_{b}$ holds. 

For the lower bound let $\gamma$ be any rectifiable path joining $x$ to $y$, parametrized by arclength in $\X$. Let $m = \lfloor \ell(\gamma)\rfloor$ and observe that $m \geq 1$ since $\ell(\gamma) \geq d(x,y) \geq 1$. We choose times $0 = t_{0},t_{1},\dots,t_{m},t_{m+1} = \ell(\gamma)$ which divide the interval $[0,\ell(\gamma)]$ into $m+1$ subintervals of equal length $r = \frac{\ell(\gamma)}{m+1}$. We have $\frac{1}{2} \leq r \leq 1$ and therefore $r \asymp 1$. Write $\gamma_{j} = \gamma|_{[t_{j},t_{j+1}]}$ for $0 \leq j \leq m$, then $\ell(\gamma_{j}) \asymp 1$ and therefore $\ell_{b}(\gamma_{j}) \asymp e^{-ab(\gamma(t_{j}))}\ell(\gamma_{j})$. 

Set $x_{j} = \gamma(t_{j})$ and $p_{j} = P_{x}(x_{j})$. Then, using $\ell(\gamma_{j}) \asymp 1$ and \eqref{projection comparison},
\begin{align*}
\rho_{x}(P_{x}(x),P_{x}(y)) &\leq \sum_{j=0}^{m}\rho_{x}(p_{j},p_{j+1}) \\
&= \sum_{j=0}^{m}e^{a (b(x)-b(x_{j}))}\rho_{x_{j}}(x_{j},f^{b(x_{j})-b(x_{j+1})}x_{j+1}) \\
&\lesssim \sum_{j=0}^{m}e^{a (b(x)-b(x_{j}))}\ell(\gamma_{j}) \\
&\lesssim \sum_{j=0}^{m}e^{ab(x)}\ell_{b}(\gamma_{j}) \\
&= e^{ab(x)}\ell_{b}(\gamma). 
\end{align*}
By Proposition \ref{branch estimate expand} we conclude that
\[
\ell_{b}(\gamma) \gtrsim e^{-a(x|y)_{b}}.
\]
Since $\gamma$ was arbitrary we obtain the lower bound in \eqref{uniform estimate}. Finally, for the comparison \eqref{uniform geodesic} the lower bound holds by definition and the upper bound follows from the fact that we used geodesics in $\X$ to obtain the upper bound in \eqref{uniform estimate}.  
\end{proof}

For a given $x \in \X$ let $\gamma_{x}:[0,\infty) \rightarrow \X$ be the ascending geodesic ray starting from $x$. One can verify directly from the estimates of Lemma \ref{uniform distance} that 
\[
\ell_{b}(\gamma_{x}) \asymp e^{-a b(x)} \asymp 1.
\]
Thus $\gamma_{x}$ has finite length in $\X_{b}$ so $\gamma_{x}(t)$ must converge to a point $\zeta$ of $\bar{\X}_{b}$ as $t \rightarrow \infty$. Clearly $\zeta$ cannot belong to $\X_{b}$ since the geodesic $\gamma_{x}$ eventually leaves every compact subset of $\X$ (and therefore of $\X_{b}$) so we have $\zeta \in \p \X_{b}$. We can thus define for each $x \in \X$ a map $\Psi_{x}:\W^{u}(x) \rightarrow \p X_{b}$ by sending a point $y \in \W^{u}(x)$ to the endpoint in $\p \X_{b}$ of the ascending geodesic ray $\gamma_{y}$.

We now show that the map $\Psi_{x}$ induces a biLipschitz identification of the Hamenst\"adt metric $\rho_{x}$ on $\W^{u}(x)$ with $\p \X_{b}$ once $\rho_{x}$ has been suitably rescaled. Consequently by Proposition \ref{vert identify} we have a natural identification of $\p \X_{b}$ with the Gromov boundary $\p_{*}\X$ relative to $\ast$. As a shorthand we write $d_{b}(x) = d_{\X_{b}}(x)$ for the distance to the boundary in $\X_{b}$ and write $\bar{x} = \Psi_{x}(x)$ for the endpoint in $\p X_{b}$ of the ascending geodesic ray $\gamma_{x}$ starting from $x$.

\begin{prop}\label{boundary identify}
There is a constant $K = K(a,A)$ such that for each $x \in \X$ there is a $K$-biLipschitz identification of metric spaces $\Psi_{x}:(\W^{u}(x),e^{-ab(x)}\rho_{x}) \rightarrow (\p \X_{b},d_{b})$. The estimates of Lemma \ref{uniform distance} extend to $\bar{\X}_{b}$ and we have
\begin{equation}\label{distance to boundary}
d_{b}(x) \asymp \ell_{b}(\gamma_{x}) \asymp d_{b}(x,\bar{x}) \asymp e^{-ab(x)},
\end{equation}
for any $x \in \X$.
\end{prop}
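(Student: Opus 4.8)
The plan is to build the identification $\Psi_x$ by combining the convergence results already established for ascending geodesic rays with the metric estimate \eqref{uniform estimate} from Lemma \ref{uniform distance}, extended to the boundary by a limiting argument. First I would establish the chain of comparisons \eqref{distance to boundary}. The comparison $\ell_b(\gamma_x) \asymp e^{-ab(x)}$ is a direct computation: parametrizing $\gamma_x$ by arclength in $\X$ we have $b(\gamma_x(t)) = b(x)+t$, so $\ell_b(\gamma_x) = \int_0^\infty e^{-a(b(x)+t)}\,dt = a^{-1}e^{-ab(x)}$. Since $\gamma_x$ joins $x$ to $\bar x$ in $\bar\X_b$ we trivially get $d_b(x,\bar x) \leq \ell_b(\gamma_x)$. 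For the reverse, note that $\gamma_x$ is an ascending geodesic, hence by Lemma \ref{inf busemann} (with both endpoints at $x$ and at $\bar x \in \p_*\X$, using the extension in Lemma \ref{infinite inf busemann}) every subsegment of $\gamma_x$ from $x$ to $\gamma_x(t)$ has $z=x$ as its height-minimizing point, so the computation in the proof of Lemma \ref{uniform distance} for the lower bound applies verbatim to give $\ell_b(\gamma_x) \ls d_b(x,\gamma_x(t))$, and letting $t\to\infty$ yields $\ell_b(\gamma_x) \ls d_b(x,\bar x)$. Finally, for $d_b(x) \asymp e^{-ab(x)}$: the upper bound $d_b(x) \leq d_b(x,\bar x) \ls e^{-ab(x)}$ is immediate, while for the lower bound one must show no boundary point is closer. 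Any curve $\gamma$ in $\X_b$ from $x$ leaving toward $\p\X_b$ either stays in the ball $B(x,1) \subset \X$ forever — impossible since $\X_b$ restricted to $B(x,1)$ is complete (the density $\kappa$ is bounded below there) — or exits $\p B(x,1)$, and by \eqref{uniform estimate} any such exiting segment has $\ell_b \gs e^{-a(x|y)_b} \asymp e^{-ab(x)}$ since $(x|y)_b \doteq b(x)$ for $d(x,y)=1$.

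Next I would prove that $\Psi_x$ is biLipschitz. Fix $y,z \in \W^u(x)$. The target distance is $d_b(\Psi_x(y),\Psi_x(z)) = d_b(\bar y, \bar z)$. The key point is that the estimates of Lemma \ref{uniform distance} extend to $\bar\X_b$: given $y,z \in \W^u(x)$ take the ascending rays $\gamma_y,\gamma_z$ and set $y_t = \gamma_y(t)$, $z_t = \gamma_z(t)$; then $y_t \to \bar y$ and $z_t \to \bar z$ in $\bar\X_b$ by the finite-length argument, and by Lemma \ref{busemann inequality}(1) we have $(y_t|z_t)_b \to (\bar y|\bar z)_b$ up to the additive error $C\delta$. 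Since $d(y_t,z_t) \to \infty$ (by \eqref{expansion} the leafwise distance grows without bound along ascending rays unless $y=z$), we have $\min\{d(y_t,z_t),1\} = 1$ for large $t$, so \eqref{uniform estimate} gives $d_b(y_t,z_t) \asymp e^{-a(y_t|z_t)_b}$, and passing to the limit (using continuity of $d_b$ on $\bar\X_b$ and the boundedness of the Gromov-product error) yields $d_b(\bar y,\bar z) \asymp e^{-a(\bar y|\bar z)_b}$. Now I would invoke Proposition \ref{branch estimate expand}, specifically \eqref{second branch}, which identifies $e^{-a(\bar y|\bar z)_b} \asymp e^{-ab(x)}\rho_x(P_x(\bar y),P_x(\bar z)) = e^{-ab(x)}\rho_x(y,z)$ since $P_x$ fixes points of $\W^u(x)$ and the boundary identification of Proposition \ref{vert identify} sends $\bar y$ back to $y$. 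This gives $d_b(\bar y,\bar z) \asymp e^{-ab(x)}\rho_x(y,z)$, which is exactly the biLipschitz statement with respect to the rescaled metric $e^{-ab(x)}\rho_x$, with constant $K$ depending only on $a$, $A$ (and the universal $\delta$, $C$).

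It remains to check $\Psi_x$ is a bijection onto $\p\X_b$, which I would get by combining Proposition \ref{vert identify} (ascending rays from $\W^u(x)$ biject with $\p_*\X$) with the observation that $\p\X_b = \p_*\X$: every point of $\p\X_b$ is the $\X_b$-limit of some curve of finite $\ell_b$-length leaving every compact set, and by \eqref{uniform estimate} such a curve's $\X$-tail must have Gromov product based at $b$ tending to infinity (finite $\ell_b$-length forces, along the curve, $e^{-a(\,\cdot\,|\,\cdot\,)_b} \to 0$ in the relevant sense), so it converges to infinity with respect to $b$ and hence determines a point of $\p_*\X$ by Proposition \ref{convergence Busemann}; conversely each $\xi \in \p_*\X$ is represented by an ascending ray of finite $\ell_b$-length, giving a point of $\p\X_b$. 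The main obstacle I anticipate is the passage to the limit on the boundary — namely verifying rigorously that $d_b$ is continuous on $\bar\X_b$ and that the comparison \eqref{uniform estimate} survives the limit with uniform constants, since the Gromov product $(y_t|z_t)_b$ only converges up to the additive slack $C\delta$ and one must ensure this slack is absorbed into the multiplicative constant $K$ without any hidden dependence on $x$. Once the extension of Lemma \ref{uniform distance} to $\bar\X_b$ is nailed down, the rest is bookkeeping with the already-established Propositions \ref{vert identify} and \ref{branch estimate expand}.
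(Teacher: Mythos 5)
Your proof is correct and follows essentially the same route as the paper's: a direct computation of $\ell_{b}(\gamma_{x})$, passage to the boundary for the biLipschitz estimate via Lemmas \ref{uniform distance} and \ref{busemann inequality} and Proposition \ref{branch estimate expand}, and identification of $\p\X_{b}$ with $\p_{*}\X$ via Propositions \ref{convergence Busemann} and \ref{vert identify}. The only minor deviation is in the lower bound $d_{b}(x) \gs e^{-ab(x)}$, which you obtain by a direct curve-exit argument from the ball $B(x,1)$, whereas the paper first establishes the boundary identification, extends the estimate of Lemma \ref{uniform distance} to $\bar\X_{b}$, and then invokes $(x|\zeta)_{b} \leq b(x)$ from \eqref{lip height}; both arguments are sound and of comparable length.
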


\begin{proof}
From the discussion prior to the proposition statement we see that $\Psi_{x}$ is well defined. Since the method of defining $\Psi_{x}$ coincides with the method used to define the identification of $\W^{u}(x)$ with $\p_{*}\X$ in Proposition \ref{vert identify}, for $y \in \W^{u}(x)$ we can consider $\Psi_{x}(y)$ both as a point of $\p \X_{b}$ and as a point in $\p_{*}\X$. Our first goal is to show that $\Psi_{x}:\W^{u}(x) \rightarrow \p \X_{b}$ is a bijection. 

For distinct points $w,z \in \W^{u}(x)$ and $n \in \N$ let $w_{n} = \gamma_{w}(n)$ and $z_{n} = \gamma_{z}(n)$. Then for $n$ large enough Lemma \ref{uniform distance} gives
\[
d_{b}(w_{n},z_{n}) \asymp e^{-a (w_{n}|z_{n})_{b}}.
\]
As $n \rightarrow \infty$ we obtain from Lemma \ref{busemann inequality} and Proposition \ref{branch estimate expand},
\[
d_{b}(\Psi_{x}(w),\Psi_{x}(z)) \asymp e^{-a (\Psi_{x}(w)|\Psi_{x}(z))_{b}} \asymp e^{-ab(x)}\rho_{x}(w,z).
\]
This implies that $\Psi_{x}$ is $K$-biLipschitz as a map from $(\W^{u}(x),e^{-ab(x)}\rho_{x})$ onto its image in $(\p \X_{b},d_{b})$ with $K = K(a,A)$. In particular $\Psi_{x}$ is injective.

For surjectivity we observe that if $\{x_{n}\}$ is a Cauchy sequence in $\X_{b}$ that does not converge to a point of $\X_{b}$ then Lemma \ref{uniform distance} implies that we must have $(x_{m}|x_{n})_{b} \rightarrow \infty$ as $m,n \rightarrow \infty$, as if there is a subsequence that these Gromov products are bounded above along then \eqref{uniform estimate} implies $\{x_{n}\}$ is a Cauchy sequence in $\X$ which converges to some point $x \in \X$, hence $x_{n} \rightarrow x$ in $\X_{b}$ as well. Then by Proposition \ref{convergence Busemann} the sequence $\{x_{n}\}$ defines a point of $\p_{*}\X$ and is thus the endpoint of an ascending geodesic ray starting from a point in $\W^{u}(x)$ by Proposition \ref{vert identify}. We conclude that $\Psi_{x}$ is surjective. The estimates of Lemma \ref{uniform distance} then extend to $\bar{\X}_{b}$ by continuity and Lemma \ref{busemann inequality}.

We have $\ell_{b}(\gamma_{x}) \asymp e^{-a b(x)}$ as noted previously. This implies that $d_{b}(x,\bar{x}) \asymp e^{-a b(x)}$. For any $\zeta \in \p \X_{b}$ we have (using the identification $\p\X_{b} \cong \p_{*}\X$ established above)
\[
d_{b}(x,\zeta) \asymp e^{-a(x|\zeta)_{b}} \gs e^{-a b(x)},
\]
by \eqref{lip height} with the natural interpretation $b(\zeta) = \infty$. Since this holds for all $\zeta \in \p \X_{b}$ the proof is complete. 
\end{proof}

The point $\ast$ is not in the closure of $\X_{b}$: in fact by Lemma \ref{uniform distance} any descending geodesic ray in $\X$ has infinite length in $\X_{b}$. In particular $\X_{b}$ has infinite diameter, in sharp contrast to the standard uniformization procedure of Bonk-Heinonen-Koskela \cite{BHK} that produces a bounded space. It only remains to establish the uniformity property, which can be done easily now that the distance to the boundary \eqref{distance to boundary} has been computed. 

\begin{prop}\label{finish uniform}
The metric space $(\X_{b},d_{b})$ is $L$-uniform with $L = L(a,A)$. More precisely, any geodesic $\gamma$ in $\X$ is an $L$-uniform curve in $\X_{b}$.
\end{prop}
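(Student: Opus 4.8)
The plan is to verify the two conditions of Definition~\ref{def:uniform} directly for a geodesic segment $\gamma:[0,T]\to\X$ joining $x=\gamma(0)$ to $y=\gamma(T)$, regarded as a curve in $\X_b$; the uniformity of $(\X_b,d_b)$ as a space then follows at once, since $\X$ is complete and hence any two of its points are joined by a geodesic by Hopf--Rinow. Because $\gamma_-=x$ and $\gamma_+=y$, condition~\eqref{uniform two} is literally the inequality $\ell_b(\gamma)\lesssim d_b(x,y)$, and this is exactly~\eqref{uniform geodesic}. So the whole task reduces to condition~\eqref{uniform one}.

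To check~\eqref{uniform one}, fix $t\in[0,T]$. A subsegment of a geodesic is again a geodesic, so $\gamma_{\leq t}$ and $\gamma_{\geq t}$ are geodesics in $\X$ joining $x$ to $\gamma(t)$ and $\gamma(t)$ to $y$; applying~\eqref{uniform geodesic} to each gives $\ell_b(\gamma_{\leq t})\asymp d_b(x,\gamma(t))$ and $\ell_b(\gamma_{\geq t})\asymp d_b(\gamma(t),y)$, and then~\eqref{uniform estimate} bounds these above by $e^{-a(x|\gamma(t))_b}$ and $e^{-a(\gamma(t)|y)_b}$ respectively. On the other side, \eqref{distance to boundary} of Proposition~\ref{boundary identify} gives $d_b(\gamma(t))\asymp e^{-ab(\gamma(t))}$ for the distance from $\gamma(t)$ to $\p\X_b$. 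Hence~\eqref{uniform one} follows once we show that at least one of the Gromov products $(x|\gamma(t))_b$, $(\gamma(t)|y)_b$ lies within an additive constant $c=c(a)$ of $b(\gamma(t))$.

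That last point is where Lemma~\ref{inf busemann} is used. Let $z$ be the unique point of $\gamma$ at which $b$ attains its minimum and split $\gamma$ into the arcs $\gamma_{xz}$ and $\gamma_{zy}$. If $\gamma(t)$ lies on $\gamma_{xz}$, Lemma~\ref{inf busemann} gives $d(x,\gamma(t))\doteq b(x)-b(\gamma(t))$, and substituting this into the definition~\eqref{Gromov product} of the Gromov product based at $b$ yields $(x|\gamma(t))_b\doteq b(\gamma(t))$; symmetrically, if $\gamma(t)$ lies on $\gamma_{zy}$ then $(\gamma(t)|y)_b\doteq b(\gamma(t))$. Since every point of $\gamma$ lies on one of these two arcs, we get $\min\{\ell_b(\gamma_{\leq t}),\ell_b(\gamma_{\geq t})\}\lesssim e^{-ab(\gamma(t))}\asymp d_b(\gamma(t))$, which is~\eqref{uniform one}; taking $L=L(a,A)$ to be the largest implied constant finishes the proof for segments. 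The same reasoning, using~\eqref{distance to boundary} at interior points, shows that ascending geodesic rays are $L$-uniform as well, so the statement holds for every geodesic of $\X$ that is rectifiable in $\X_b$.

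I do not anticipate a genuine obstacle: all the analytic content has been front-loaded into Lemma~\ref{uniform distance} and Proposition~\ref{boundary identify}, and what remains is essentially bookkeeping. The only points needing a little care are that ``geodesic'' should be read as a globally minimizing curve (so that restrictions of geodesics are again covered by~\eqref{uniform geodesic}) and that one must keep the final constant a function of $a$ and $A$ alone; neither presents a real difficulty. Conceptually, Proposition~\ref{finish uniform} just repackages the fact that geodesics of $\X$ are uniformly biLipschitz curves in $\X_b$ (Lemma~\ref{uniform distance}) as the uniformity condition.
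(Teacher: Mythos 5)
Your proof is correct and takes essentially the same approach as the paper: both reduce condition~\eqref{uniform one} to showing $\min\{\ell_b(\gamma_{\leq t}),\ell_b(\gamma_{\geq t})\}\lesssim e^{-ab(\gamma(t))}$, split the geodesic at the point $z$ of minimal height, and invoke the estimates $d(x,w)\doteq b(x)-b(w)$ from Lemma~\ref{inf busemann} (or its extension Lemma~\ref{infinite inf busemann}). The only cosmetic difference is that you reuse the already-proved estimates~\eqref{uniform geodesic} and~\eqref{uniform estimate} on the subsegments $\gamma_{\leq t}$ and $\gamma_{\geq t}$ to bound $\ell_b$ via the Gromov product, whereas the paper redoes the density integral directly from the inf busemann estimates; the underlying computation is the same.
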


\begin{proof}
Let $\gamma: I \rightarrow \X$ be any geodesic. The second property \eqref{uniform two} is an immediate consequence of \eqref{uniform geodesic}. For the first property observe that for $t \in I$ by \eqref{distance to boundary} and \eqref{uniform distance} the condition \eqref{uniform one} is equivalent to the inequality 
\begin{equation}\label{reduced uniform one}
\min\{\ell_{b}(\gamma_{\leq t}),\ell_{b}(\gamma_{\geq t})\} \leq L e^{-ab(\gamma(t))},
\end{equation}
for $L = L(a,A)$. The computation splits into two cases: when $\gamma$ is a vertical geodesic by direct computation we have $\ell_{b}(\gamma_{\geq t}) \ls e^{-a b(\gamma(t))}$ if $\gamma$ is ascending and  $\ell_{b}(\gamma_{\leq t}) \ls e^{-a b(\gamma(t))}$ if $\gamma$ is descending. In either case \eqref{reduced uniform one} is valid. 

When $\gamma$ is not vertical we use Lemma \ref{infinite inf busemann} and let $t_{0} \in I$ be the time at which the infimum of $b$ is achieved on $\gamma$. Then once again by direct computation using the estimates of that lemma for $t \in I$ we have $\ell_{b}(\gamma_{\leq t}) \ls e^{-a b(\gamma(t))}$ if $t \leq t_{0}$ and  $\ell_{b}(\gamma{\geq t}) \ls e^{-a b(\gamma(t))}$ if $t \geq t_{0}$. Thus in all cases the inequality \eqref{reduced uniform one} is satisfied. 
\end{proof}

\begin{rem}\label{sharper uniform}
Proposition \ref{finish uniform} actually provides the sharper conclusion that, up to a multiplicative constant depending only on $a$ and $A$, the controlled segment of $\gamma$ achieving the minimum in \eqref{uniform one} can be determined solely from the values of the height function $b$ on $\gamma$. 
\end{rem}

The total space $\bar{\X}_{b}$ is rarely a Riemannian manifold with boundary, with the main notable exception being when $\X$ has constant negative curvature $K \equiv -a^{2}$. In particular the Riemannian structure on $\X_{b}$ typically does not extend in a meaningful sense to $\p \X_{b}$. The constant negative curvature case $K \equiv -1$ corresponds to the relationship between the  hyperbolic metric and the Euclidean metric on the upper half space $\R^{n}_{>0}$ of $\R^{n}$ ($n \geq 2$), with a quick computation showing that the Euclidean metric on $\R^{n}_{>0}$ is the uniformization of the standard hyperbolic metric with parameter $a = 1$. 

The metric space $(\bar{\X}_{b},d_{b})$ of the uniformization metrizes the \emph{cone topology} on $\X \cup \p_{*}\X$. This topology uses Gromov products based at $b$ to define a neighborhood basis of each point of $\p_{*}\X$. While we are not aware of a previous appearance of this topology regarding relative Gromov boundaries, the cone topology is a standard tool in the context of ordinary Gromov boundaries which topologizes $\X \cup \p \X$ as a closed ball with boundary $\p \X$. It will be useful to have a more precise description of cones centered on points of $\p \X_{b} \cong \p_{*}\X$ that are adapted to the flow $f^{t}$ and to balls centered at points of the boundary. For $x \in \bar{\X}_{b}$ we write $B_{b}(x,r)$ for the ball of radius $r$ centered at $x$ in the metric $d_{b}$. In general balls with respect to the metric $d_{b}$ will be taken in $\bar{\X}_{b}$ unless otherwise noted. For $x \in \X$ and $r > 0$ define
\begin{equation}\label{cone def}
\mathcal{C}(x,r) = \bigcup_{t > 0}f^{t}(B_{\rho}(x,r)) \cup \Psi_{x}(B_{\rho}(x,r)),
\end{equation}
where $B_{\rho}(x,r)$ is the ball of radius $r$ in the Hamenst\"adt metric $\rho_{x}$ on $\W^{u}(x)$. We additionally set 
\[
\CC_{*}(x,r) = \CC(x,r) \cap \p\X_{b} =  \Psi_{x}(B_{\rho}(x,r)).
\]
The sets $\CC(x,r)$ are open in $\bar{\X}_{b}$ and are easily seen to determine a neighborhood basis of each point of $\p \X_{b}$. Similarly for $x \in \bar{\X}_{b}$ we write
\[
B_{*}(x,r) = B_{b}(x,r) \cap \p \X_{b},
\] 
for the boundary portion of the ball $B_{b}(x,r)$. 

In the following proposition we establish a key relationship between cones and balls in $\bar{\X}_{b}$. The parameter $L$ is introduced in order to allow some flexibility in the comparison.

\begin{prop}\label{ball comparison}
Let $x \in \X_{b}$ and $r > 0$  be such that $r \asymp_{L} d_{b}(x)$, $L \geq 1$. Then there are constants $L_{*} = L_{*}(a,A,L)$ and $t_{*} = t_{*}(a,A,L)$ such that
\begin{equation}\label{ball comparison equation}
\CC(f^{t_{*}}x,L_{*}^{-1}) \subset B_{b}(\bar{x},r) \subset \CC(f^{-t_{*}}x,L_{*}).
\end{equation}
\end{prop}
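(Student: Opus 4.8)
The plan is to reduce both inclusions in \eqref{ball comparison equation} to a single two-sided estimate for $d_b$-distances to $\bar{x}$, after which all comparison constants can be absorbed into the choice of $t_*$ and $L_*$. The first step is to rewrite the cones $\CC(f^{\pm t_*}x, r_\pm)$ in terms of the fixed leaf $\W^u(x)$. Using the conformal scaling $\rho_{f^t x}(f^t x, f^t w) = e^{at}\rho_x(x, w)$ of the Hamenst\"adt metrics, the identity $\Psi_{f^t x}(f^t w) = \Psi_x(w)$ (the ascending rays through $w$ and $f^t w$ have the same endpoint in $\p \X_b$), and $P_{f^t x}(y) = f^{t}P_x(y)$ for $y \in \X$, one checks directly from \eqref{cone def} that a point $y \in \X$ lies in $\CC(f^{-t_*}x, L_*)$ exactly when $b(y) > b(x) - t_*$ and $\rho_x(x, P_x(y)) < e^{a t_*}L_*$, with boundary trace $\CC_*(f^{-t_*}x, L_*) = \Psi_x\bigl(B_\rho(x, e^{a t_*}L_*)\bigr)$; likewise $y \in \CC(f^{t_*}x, L_*^{-1})$ exactly when $b(y) > b(x) + t_*$ and $\rho_x(x, P_x(y)) < e^{-a t_*}L_*^{-1}$, with boundary trace $\Psi_x\bigl(B_\rho(x, e^{-a t_*}L_*^{-1})\bigr)$. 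Thus each of the two inclusions splits into an interior part (about $y \in \X$) and a boundary part (about $\zeta \in \p \X_b$), and recall that $\Psi_x$ identifies $\p \X_b$ with $\W^u(x)$, so every $\zeta \in \p \X_b$ is $\Psi_x(w)$ for a unique $w =: P_x(\zeta) \in \W^u(x)$.

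The heart of the matter is the comparison
\begin{equation}\label{eq:key-interior}
d_b(y, \bar{x}) \asymp \max\bigl\{\, e^{-ab(y)},\ e^{-ab(x)}\rho_x(x, P_x(y)) \,\bigr\}, \qquad y \in \X,
\end{equation}
together with its boundary counterpart $d_b(\zeta, \bar{x}) \asymp e^{-ab(x)}\rho_x(x, P_x(\zeta))$ for $\zeta \in \p \X_b$, which is immediate from the $K$-biLipschitz identification in Proposition \ref{boundary identify} (applied to $\Psi_x$, with $\bar{x} = \Psi_x(x)$). For the upper bound in \eqref{eq:key-interior} I would write $\bar{y}$ for the endpoint of the ascending ray through $y$, note that $\bar{y} = \Psi_x(P_x(y))$, and use the triangle inequality $d_b(y, \bar{x}) \le d_b(y, \bar{y}) + d_b(\bar{y}, \bar{x})$: the first term is $\asymp e^{-ab(y)}$ by \eqref{distance to boundary}, and the second is $\asymp e^{-ab(x)}\rho_x(x, P_x(y))$ by the biLipschitz statement of Proposition \ref{boundary identify}. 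For the lower bound, one half is $d_b(y, \bar{x}) \ge d_b(y) \asymp e^{-ab(y)}$ since $\bar{x} \in \p \X_b$ and $d_b(y) \asymp e^{-ab(y)}$ by \eqref{distance to boundary}; the other half, $d_b(y, \bar{x}) \gs e^{-ab(x)}\rho_x(x, P_x(y))$, is obtained by taking an arbitrary rectifiable path from $y$ toward $\bar{x}$, projecting it onto $\W^u(x)$ by $P_x$, and running verbatim the chain of Hamenst\"adt estimates used for the lower bound in the proof of Lemma \ref{uniform distance} (which invokes Proposition \ref{branch estimate expand}); passing to the infimum over such paths and using the identification $\p\X_b \cong \p_*\X$ gives the claim.

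With \eqref{eq:key-interior} and its boundary counterpart in hand the inclusions become bookkeeping, using $r \asymp_L d_b(x) \asymp e^{-ab(x)}$. If $d_b(y, \bar{x}) < r$ with $y \in \X$, then \eqref{eq:key-interior} forces $e^{-ab(y)} \ls e^{-ab(x)}$, hence $b(y) > b(x) - t_*$ for a suitable $t_* = t_*(a, A, L)$, and also $\rho_x(x, P_x(y)) \ls 1$; choosing $t_*$ and $L_*$ large enough that the implied constants are $\le e^{a t_*}L_*$ gives $B_b(\bar{x}, r) \cap \X \subset \CC(f^{-t_*}x, L_*)$, and the same computation with the boundary counterpart handles $B_*(\bar{x}, r)$. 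Conversely, if $y \in \CC(f^{t_*}x, L_*^{-1})$ then $b(y) > b(x) + t_*$ and $\rho_x(x, P_x(y)) < e^{-a t_*}L_*^{-1}$, so \eqref{eq:key-interior} yields $d_b(y, \bar{x}) \ls e^{-a t_*}e^{-ab(x)}$, which is $< r$ once $t_*$ is large enough relative to the comparison constants and to $L$; the boundary counterpart disposes of $\CC_*(f^{t_*}x, L_*^{-1})$ the same way. Since every constant entering is of the form $C(a, A, L)$, one may fix a single pair $(t_*, L_*)$, depending only on $a$, $A$ and $L$, that makes all four inclusions hold simultaneously, which is the assertion.

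The step I expect to be the main obstacle is the lower bound $d_b(y, \bar{x}) \gs e^{-ab(x)}\rho_x(x, P_x(y))$ in \eqref{eq:key-interior}: it must be established against \emph{arbitrary} competitor paths from $y$ to $\bar{x}$, not merely geodesics, so one genuinely has to push a path forward onto the leaf $\W^u(x)$ and sum Hamenst\"adt increments there. This is, however, precisely the mechanism already deployed for the lower-bound half of Lemma \ref{uniform distance}, so the difficulty is one of careful adaptation rather than a new idea; the only structural point to keep track of is that the entire argument takes place inside the single expanding cone $\X_x$ and every comparison constant depends only on $a$ and $A$, which is what forces $t_*$ and $L_*$ to be independent of the basepoint $x$.
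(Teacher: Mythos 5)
Your proposal is correct, and it takes a route that is genuinely different in organization from the paper's, though both ultimately run on the same two ingredients (the distance-to-boundary estimate \eqref{distance to boundary} and the biLipschitz identification of Proposition \ref{boundary identify}). The paper first converts the $\rho$-ball bounds of Proposition \ref{boundary identify} into the boundary-trace inclusion \eqref{cone pre chain}, and then handles the interior of the cone by bounding the $\ell_b$-length of the tail of each ascending vertical geodesic and triangulating against a boundary point; this avoids ever having to lower-bound $d_b(y,\bar{x})$ by the Hamenst\"adt term, so the argument uses only a vertical-geodesic computation beyond the already-stated facts. You instead prove a direct pointwise two-sided estimate $d_b(y,\bar{x}) \asymp \max\{e^{-ab(y)},\, e^{-ab(x)}\rho_x(x,P_x(y))\}$ and read off both inclusions as bookkeeping. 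The upper bound and the $e^{-ab(y)}$ half of the lower bound follow immediately from \eqref{distance to boundary} and Proposition \ref{boundary identify} as you say, and your Hamenst\"adt lower bound $d_b(y,\bar{x}) \gs e^{-ab(x)}\rho_x(x,P_x(y))$ is correct: the subdivision-and-project argument used in the lower-bound half of Lemma \ref{uniform distance} is carried out with the projection base $x$ being an arbitrary point (not an endpoint of the competitor path), and the intermediate conclusion $\rho_x(P_x(\gamma_-),P_x(\gamma_+)) \ls e^{ab(x)}\ell_b(\gamma)$ already is what you need, with no detour through Proposition \ref{branch estimate expand} required. The trade-off is that your route re-derives a piece of the lower-bound mechanism of Lemma \ref{uniform distance} that the paper does not need here, in exchange for an explicit closed-form distance estimate that would be reusable elsewhere; the paper's route is more modular given what is already established. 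Your reformulations of the cones (via \eqref{conformal scaling}, $\Psi_{f^tx}\circ f^t = \Psi_x$, and $P_x\circ f^t = P_x$) and the final bookkeeping are correct, and the uniformity of all constants in $(a,A,L)$ is tracked as it must be.
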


\begin{proof}
Let $x \in \X_{b}$ and $r \asymp_{L} d_{b}(x)$ be given. Let $\bar{x} \in \p \X_{b}$ be the endpoint of the ascending vertical geodesic $\gamma_{x}$ starting from $x$. By enlarging $L$ by an amount depending only on $a$ and $A$ we can assume that $r \asymp_{L} e^{-ab(x)}$ as well using \eqref{distance to boundary}. By Proposition \ref{boundary identify} there is a $K \geq 1$ with $K = K(a,A)$ such that
\begin{equation}\label{cone pre chain}
\CC_{*}(x,K^{-1}re^{ab(x)}) \subset B_{*}(\bar{x},r) \subset \CC_{*}(x,Kre^{ab(x)}),
\end{equation}
which implies by our condition on $r$ that
\begin{equation}\label{cone chain}
\CC_{*}(x,L_{*}^{-1}) \subset B_{*}(\bar{x},r)  \subset \CC_{*}(x,L_{*}),
\end{equation}
with $L_{*} = KL$. Replacing $L$ by $2L$ gives \eqref{cone chain} with $L$ replaced by $2L$ and $r$ replaced by $r/2$. To simplify notation we assume this replacement has already been done in $L_{*}$ (setting $L_{*} = 2KL$ now) and thus \eqref{cone chain} holds with $r/2$ and $r$. 

In order to establish the left side inclusion in \eqref{ball comparison equation} it thus suffices by the triangle inequality to show that there is a $t_{*} = t_{*}(a,A,L) \geq 0$ such that if $y \in B_{\rho}(x,L_{*}^{-1})$ and $\gamma_{y}:\R \rightarrow \X$ is the vertical geodesic through $y$ with $\gamma_{y}(0) = y$ then $\ell_{b}(\gamma_{y}|_{[t_{*},\infty)}) < r/2$. By \eqref{distance to boundary} and the assumption on $r$ this is implied by
\[
e^{-ab(\gamma_{y}(t_{*}))} = e^{-a(b(x)+t_{*})} \leq \frac{1}{2}C L e^{-ab(x)},
\]
where $C = C(a,A)$ is the comparison constant from \eqref{distance to boundary}. This holds provided that 
\[
t_{*} \geq -a^{-1}\log \frac{1}{2}CL,
\]
and since the right side depends only on $a$, $A$, and $L$, we've established the claim. Similarly to establish the right side inclusion in \eqref{ball comparison equation} it suffices to show that there is a $t_{*} = t_{*}(a,A,L) \leq 0$ such that if $y \in B_{\rho_{x}}(x,L_{*})$ and $\gamma_{y}$ is defined as above then $\ell_{b}(\gamma_{y}|_{[t_{*},\infty)}) > Cr$ for a certain comparison constant $C = C(a,A)$ from \eqref{distance to boundary} (this is because any point $z \in B_{b}(\bar{x},r)$ must satisfy $d_{b}(z) < r$).  This is done by exactly the same argument as was done in the previous case with the inequalities reversed.
\end{proof}

When we consider balls in $\X_{b}$ that are far from the boundary relative to their radius we obtain a natural comparison to conformally rescaled balls in $\X$. The following two lemmas adapt claims of \cite{BBS20}. The first adapts \cite[Theorem 2.10]{BBS20}. 

\begin{lem}\label{sub inclusion}
There is a constant  $C_{*} = C_{*}(a,A) \geq 1$ such that for any $x \in \X$ and any $0 < r \leq \frac{1}{2}d_{b}(x)$ we have the inclusions,
\begin{equation}\label{sub inclusion equation}
B(x,C_{*}^{-1}r e^{a b(x)}) \subset B_{b}(x,r) \subset B(x,C_{*}re^{a b(x)}).
\end{equation}
\end{lem}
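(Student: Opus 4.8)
The plan is to exploit the fact that on the region relevant to the lemma the conformal density $\kappa(\cdot)=e^{-ab(\cdot)}$ stays comparable to its value $\kappa(x)=e^{-ab(x)}$ at the center $x$, so that $d_b$ and $e^{-ab(x)}\,d$ become uniformly comparable there and both inclusions drop out of a length comparison between well-chosen curves. First I would fix, via \eqref{distance to boundary}, a constant $C_0=C_0(a,A)\ge 1$ with $C_0^{-1}e^{-ab(z)}\le d_b(z)\le C_0 e^{-ab(z)}$ for all $z\in\X$ (writing $d_b(z)$ for the distance from $z$ to $\p\X_b$ in $\X_b$), and then take $C_*=\max\{2C_0^2,\,e^{a/2}\}$, which depends only on $a$ and $A$.

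For the right inclusion I would take $y\in B_b(x,r)$ and, using that $\X_b$ is a length space, pick a curve $\gamma$ from $x$ to $y$ with $\ell_b(\gamma)<r$. Since $r\le\tfrac12 d_b(x)$, every point $z$ of $\gamma$ satisfies $d_b(x,z)\le\ell_b(\gamma)<\tfrac12 d_b(x)$, hence $d_b(z)>\tfrac12 d_b(x)$; combining this with $d_b(z)\le C_0\kappa(z)$ and $d_b(x)\ge C_0^{-1}\kappa(x)$ forces $\kappa(z)\ge(2C_0^2)^{-1}\kappa(x)$ all along $\gamma$. Parametrising $\gamma$ by $\X$-arclength then gives $\ell_b(\gamma)\ge(2C_0^2)^{-1}\kappa(x)\,\ell(\gamma)\ge(2C_0^2)^{-1}\kappa(x)\,d(x,y)$, so that $d(x,y)<2C_0^2 e^{ab(x)} r\le C_* r e^{ab(x)}$, which is the right inclusion in \eqref{sub inclusion equation}.

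For the left inclusion I would take $y$ with $d(x,y)<C_*^{-1}r e^{ab(x)}$ and note that $r\le\tfrac12 d_b(x)\le\tfrac{C_0}{2}e^{-ab(x)}$ forces $d(x,y)<\tfrac{C_0}{2C_*}\le\tfrac12$, using $C_*\ge C_0$. Then, working along a minimizing $d$-geodesic $\sigma$ from $x$ to $y$ in the complete manifold $\X$ and using that $b$ is $1$-Lipschitz (so $b(z)\le b(x)+\tfrac12$, hence $\kappa(z)\le e^{a/2}\kappa(x)$, on $\sigma$), I would estimate $d_b(x,y)\le\ell_b(\sigma)\le e^{a/2}\kappa(x)\,\ell(\sigma)=e^{a/2}e^{-ab(x)}d(x,y)<e^{a/2}C_*^{-1}r\le r$, so $y\in B_b(x,r)$; this is the left inclusion in \eqref{sub inclusion equation}.

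The only slightly delicate point — and the reason \eqref{distance to boundary} rather than just the Harnack inequality \eqref{Harnack} is needed — is that a $d_b$-short curve between $x$ and $y$ could in principle wander far in the $d$-metric, so $\kappa$ is not obviously controlled along it. The distance-to-boundary estimate resolves this precisely because staying $d_b$-close to $x$ forces staying $d_b$-far from $\p\X_b$, which is exactly what pins $\kappa$ from below along $\gamma$; everything else is a routine length computation.
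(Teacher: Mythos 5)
Your argument is correct and follows essentially the same path as the paper's proof: both inclusions come from bounding the conformal density $\kappa=e^{-ab}$ above and below along a suitable curve, using the distance-to-boundary estimate \eqref{distance to boundary} for the right inclusion and the Harnack/Lipschitz control for the left, exactly as the paper does. (Your choice to work with a nearly $\ell_b$-minimizing curve rather than a ``geodesic in $\X_b$'' is a slightly cleaner way of phrasing the same step.) One small slip in the left inclusion: you invoke $b(z)\le b(x)+\tfrac12$ and conclude $\kappa(z)\le e^{a/2}\kappa(x)$, but since $\kappa=e^{-ab}$ is \emph{decreasing} in $b$, that premise yields only the lower bound $\kappa(z)\ge e^{-a/2}\kappa(x)$; the upper bound you actually need follows from the other half of the Lipschitz estimate, namely $b(z)\ge b(x)-\tfrac12$. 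Both inequalities hold on $\sigma$, so the conclusion is unaffected, but the implication as written is reversed.
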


\begin{proof}
Let $y \in B(x,C_{*}^{-1}re^{a b(x)})$, for a constant $C_{*} \geq 1$ to be determined. Let $\gamma$ be a geodesic in $\X$ joining $x$ to $y$ and let $z \in \gamma$. Then, since $r \leq \frac{1}{2}d_{b}(x)$, we have by \eqref{distance to boundary},
\[
d(x,z) \leq C^{-1}_{*}e^{a b(x)}d_{b}(x) \leq C_{*}^{-1}C,
\]
with $C = C(a,A) \geq 1$. This then implies by \eqref{Harnack},
\[
e^{-a b(z)}\asymp_{e^{C_{*}^{-1}C a}} e^{-a b(x)}.
\]
Choosing $C_{*}$ large enough that $e^{C_{*}^{-1}C a} < 2$, we then obtain that
\[
e^{-a b(z)} \asymp_{2} e^{-a b(x)},
\]
for $z \in \gamma$. We conclude that
\begin{align*}
d_{b}(x,y) &\leq \int_{\gamma}e^{-a b(\gamma(s))}\,ds \\
&\leq 2e^{-a b(x)}d(x,y) \\
&\leq 2C_{*}^{-1}r \\
&< r,
\end{align*}
provided we take $C_{*} > 2$. This gives the inclusion on the left side of \eqref{sub inclusion equation}.

For the inclusion on the right side of \eqref{sub inclusion equation}, let $y \in B_{b}(x,r)$ and let $\eta$ be a geodesic in $\X_{b}$ connecting $x$ to $y$. For $z \in \eta$ we then have $z \in B_{b}(x,r)$ and therefore $d_{b}(z) \geq \frac{1}{2} d_{b}(x)$  by the triangle inequality since $r \leq \frac{1}{2} d_{b}(x)$. Applying \eqref{distance to boundary}, we then have
\begin{align*}
e^{-a b(z)} &\geq C^{-1}d_{b}(z) \\
&\geq \frac{1}{2}C^{-1}d_{b}(x) \\
&\geq C^{-1}e^{-a b(x)},
\end{align*}
for a constant $C = C(a,A) \geq 1$. Using this we conclude that
\[
r > d_{b}(x,y) = \int_{\eta}e^{-a b(\eta(s))}\,ds \geq C^{-1}e^{-a b(x)}d(x,y),
\]
since $\ell(\eta) \geq d(x,y)$. Choosing $C_{*}$ to be greater than the constant $C$ on the right side of this inequality, we then conclude that
\[
d(x,y) < C_{*}re^{a b(x)},
\]
which gives the right side inclusion in \eqref{sub inclusion equation}. 
\end{proof}

The second claim adapts \cite[Lemma 4.8]{BBS20} to our setting. The additional structure of an expanding cone enables us to obtain a slightly stronger conclusion. 

\begin{lem}\label{choosing center}
There is a constant $c_{0} = c_{0}(a,A) < 1$ such that for every $x \in \bar{\X}_{b}$ and every $r > 0$ we can find a ball $B_{b}(z,c_{0} r) \subset B_{b}(x,r)$ with $d_{b}(z) \geq 2 c_{0} r$. Furthermore $z$ can be chosen to lie on the same vertical geodesic as $x$. 
\end{lem}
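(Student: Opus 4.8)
The plan is to obtain $z$ by descending along the vertical geodesic through $x$ until the distance to the boundary $d_b(z)$ reaches a prescribed small multiple of $r$; if $x$ is already that far from the boundary one simply takes $z=x$. The whole argument runs on the estimate $d_b(w)\asymp e^{-ab(w)}$ from \eqref{distance to boundary} — which turns ``$d_b$ small'' into ``$b$ large'' — together with Lemma \ref{uniform distance}, extended to $\bar\X_b$ in Proposition \ref{boundary identify}, to control $d_b(x,z)$.

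Concretely, I would fix $\lambda=2c_0$ with $c_0=c_0(a,A)<1$ to be pinned down at the end, and let $\gamma$ be the vertical geodesic through $x$, oriented in the descending direction; when $x\in\p\X_b$ this means the full vertical geodesic line in $\X$ whose ascending ray represents $x$ under Proposition \ref{vert identify}. Along $\gamma$ the quantity $d_b(\gamma(t))$ depends continuously on $t$, equals $d_b(x)$ at $x$ (which is $0$ when $x\in\p\X_b$), and tends to $+\infty$ as we descend, since then $b(\gamma(t))\to-\infty$ and $d_b\asymp e^{-ab}$. If $d_b(x)\geq\lambda r$ — which can happen only when $x\in\X_b$ — take $z=x$: then $B_b(z,c_0r)\subset B_b(x,r)$ because $c_0<1$, and $d_b(z)=d_b(x)\geq\lambda r=2c_0r$. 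Otherwise $d_b(x)<\lambda r$, and by the intermediate value theorem there is a point $z\in\X$ on the descending part of $\gamma$ with $d_b(z)=\lambda r=2c_0r$, which is the required lower bound on $d_b(z)$ for free; and $z$ lies on the vertical geodesic through $x$ in both cases.

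It then remains to bound $d_b(x,z)$ in the second case. Since $z$ lies on the vertical geodesic through $x$ and strictly below it, $(x|z)_b\doteq b(z)$: when $x\in\X_b$ this is an exact equality because $d(x,z)=b(x)-b(z)$, and when $x\in\p\X_b$ it holds up to an additive error controlled by $\delta$, applying Lemma \ref{busemann inequality} (or Lemma \ref{infinite inf busemann}) to the ascending ray through $z$. Feeding this into the extension of Lemma \ref{uniform distance} to $\bar\X_b$ gives
\[
d_b(x,z)\asymp e^{-a(x|z)_b}\min\{d(x,z),1\}\lesssim e^{-ab(z)}\asymp d_b(z)=\lambda r,
\]
so $d_b(x,z)\leq C'\lambda r=2C'c_0r$ for some $C'=C'(a,A)\geq 1$. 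Choosing $c_0=c_0(a,A)$ small enough that $(2C'+1)c_0\leq 1$, the triangle inequality then yields $d_b(w,x)\leq c_0r+2C'c_0r\leq r$ for every $w\in B_b(z,c_0r)$, i.e. $B_b(z,c_0r)\subset B_b(x,r)$, which completes the proof.

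The only point requiring genuine care is the boundary case $x\in\p\X_b$: one must be sure that ``the vertical geodesic through $x$'' is meaningful (it is the geodesic line extending the ascending ray representing $x$), that descending along it really carries $d_b$ continuously from $0$ to $\infty$, and that the Gromov product $(x|z)_b$ stays comparable to $b(z)$ with one endpoint on the boundary — each of which is already packaged into Proposition \ref{boundary identify} and the boundary form of the $\delta$-inequality in Lemma \ref{busemann inequality}. Everything else is routine bookkeeping with the two comparison constants coming from \eqref{distance to boundary} and \eqref{uniform estimate}.
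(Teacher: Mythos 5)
Your proof is correct, but it takes a genuinely different route from the paper's. The paper also locates a point $z_0$ on the vertical geodesic with $d_b(z_0)=r$ via the intermediate value theorem, but it then parametrizes the vertical segment $\eta$ from $x$ to $z_0$ by $d_b$-arclength and case-splits on whether $\ell_b(\eta)\geq\frac{2}{3}r$ or $<\frac{2}{3}r$: in the first case it picks $z=\eta(\frac{1}{3}r)$ and invokes the $L$-uniformity of vertical geodesics in $\X_b$ (Proposition \ref{finish uniform}) to lower-bound $d_b(z)$; in the second case it keeps $z=z_0$, where $d_b(z)=r$ for free. You bypass the arclength parametrization and the uniform-curve property entirely: you fix the target $d_b(z)=2c_0 r$ outright and then bound $d_b(x,z)\lesssim e^{-ab(z)}\asymp d_b(z)$ using the Gromov-product formula of Lemma \ref{uniform distance} (extended to $\bar\X_b$ via Proposition \ref{boundary identify}) together with $(x|z)_b\doteq b(z)$ from Lemma \ref{infinite inf busemann}. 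Both routes ultimately rest on $d_b\asymp e^{-ab}$; the paper's is slightly more elementary downstream of Proposition \ref{finish uniform} (pure triangle-inequality bookkeeping with no fresh Gromov-product input), while yours is shorter, avoids the two-case split, and is more explicit about the boundary case $x\in\p\X_b$. One small precaution worth making explicit: the comparison constant $C'$ in $d_b(x,z)\leq C'\cdot 2c_0 r$ is determined by $(a,A)$ alone and in particular does not depend on $c_0$, so no circularity arises when you subsequently choose $c_0$ in terms of $C'$; you use this implicitly, and it is true, but stating it removes any appearance of begging the question.
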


\begin{proof}
Let $x \in \bar{\X}_{b}$ and $r > 0$ be given, and let $0 < c_{0} < 1$ be a constant to be determined. Let $\gamma: \R \rightarrow \X$ be the ascending vertical geodesic through $x$ (or with endpoint $x$ in the case $\p \X_{b}$). Since $d_{b}(\gamma(t)) \rightarrow 0$ as $t \rightarrow \infty$ and $d_{b}(\gamma(t)) \rightarrow \infty$ as $t \rightarrow -\infty$, by the intermediate value theorem we can find a point $z_{0}$ on $\gamma$ such that $d_{b}(z_{0}) = r$. 

Now consider the segment $\eta$ of $\gamma$ from $x$ to $z_{0}$, parametrized by $d_{b}$-arclength. We first assume that $\ell_{b}(\eta) \geq \frac{2}{3}r$. In this case we set $z = \eta(\frac{1}{3}r)$. Then since $\eta$ is an $L$-uniform curve (with $L = L(a,A)$) we have $d_{b}(z) \geq \frac{r}{3L}$ and 
\[
B_{b}\left(z,\frac{r}{6L}\right) \subset B_{b}\left(x,\frac{r}{3}+\frac{r}{6L}\right) \subset B_{b}(x,r). 
\]
So in this case we can use any $c_{0} \leq \frac{1}{6L}$. 

Now consider the case in which $\ell_{b}(\eta) < \frac{2}{3}r$. We then set $z = z_{0}$ and observe that 
\[
B_{b}\left(z_{0},\frac{r}{3}\right) \subset B_{b}\left(x,\ell_{b}(\eta)+ \frac{r}{3}\right) \subset B_{b}(x,r). 
\]
By construction we have $d_{b}(z) = r$. Thus in this case any $c_{0} \leq \frac{1}{3}$ will work. By combining these two cases we can then set $c_{0} = \frac{1}{6L}$, noting that $L \geq 1$. 
\end{proof}

Balls of the type $B_{b}(z,c_{0}r)$ with $d_{b}(z) \geq 2 c_{0} r$ will be referred to as \emph{subWhitney balls}. We will use the fixed notation $c_{0} = c_{0}(a,A)$ for the constant of Lemma \ref{choosing center} throughout Section \ref{sec:doubling} below.

\section{Doubling Measures}\label{sec:doubling}

In this section we prove some technical results related to rescalings of measures on the uniformization $\X_{b}$ of an expanding cone $\X$ constructed in the previous section. These results will be used specifically in the proof of Theorem \ref{thm:Anosov Poincare}. The reader only interested in the proof of Theorem \ref{thm:Patterson-Sullivan} may skip this section after  taking note of Definition \ref{defn:local doubling} and the definition \eqref{sigma def} of the rescaled measure. 

We begin with a general definition. Let $(X,d,\mu)$ be a \emph{metric measure space}, meaning that $(X,d)$ is a metric space equipped with a Borel measure $\mu$ such that all balls $B$ in $X$ have finite nonzero measure $0 < \mu(B) < \infty$. We let $B(x,r)$ denote the open ball of radius $r > 0$ centered at $x \in X$. 

\begin{defn}\label{defn:local doubling}
The measure $\mu$ is \emph{doubling} if there is a constant $C_{\mu} \geq 1$ such that for every $x \in X$ and $r > 0$ we have
\begin{equation}\label{define doubling measure}
\mu(B(x,2r)) \leq C_{\mu}\mu(B(x,r)).
\end{equation}
If the inequality \eqref{define doubling measure} only holds for balls of radius at most $R_{0}$ then we will say that $\mu$ is \emph{doubling on balls of radius at most $R_{0}$}.  We will alternatively say that $\mu$ is \emph{uniformly locally doubling} if there is an $R_{0} > 0$ such that $\mu$ is doubling on balls of radius at most $R_{0}$.

It's also useful to have a variant of the doubling condition in the absence of a measure $\mu$. A metric space $(X,d)$ is \emph{doubling} with constant $N \in \N$ if any ball $B(x,2r)$ of radius $2r$ can be covered by at most $N$ balls of radius $r$. The notions of doubling on balls of radius at most $R_{0}$ and the uniformly locally doubling property then extend in the obvious ways.
\end{defn}

It's straightforward to see (using $\supp \mu = X$) that the doubling properties of Definition \ref{defn:local doubling} above for $\mu$ imply the corresponding doubling properties for $X$.

We will frequently make use of the following consequence of the doubling estimate \eqref{define doubling measure}: if $\mu$ is doubling on balls of radius at most $R_{0}$ with constant $C_{\mu}$ and $0 < r \leq R \leq R_{0}$ then 
\begin{equation}\label{define doubling consequence}
\mu(B(x,R)) \asymp_{C} \mu(B(x,r)),
\end{equation}
with constant $C$ depending only on $C_{\mu}$ and the ratio $R/r$. This estimate follows by iterating the estimate \eqref{define doubling measure} and noting that $\mu(B(x,R)) \geq \mu(B(x,r))$ since $B(x,r) \subset B(x,R)$. 

We will require the following proposition from \cite{BBS20}, which is stated there in a more general form. 

\begin{prop}\label{enlarge doubling}\cite[Proposition 3.2]{BBS20} Let $(X,d)$ be a geodesic metric  space and let $\mu$ be a Borel measure on $X$ that is doubling on balls of radius at most $R_{0}$ with doubling constant $C_{\mu}$. Then for any $R_{1} > 0$ the measure $\mu$ is doubling on balls of radius at most $R_{1}$, with doubling constant depending only on $R_{1}/R_{0}$ and $C_{\mu}$. 
\end{prop}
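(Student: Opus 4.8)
The plan is a standard chaining-and-covering argument; the only real subtlety is bookkeeping to ensure that every constant produced depends solely on $R_1/R_0$ and $C_\mu$. First I would dispose of the trivial case $R_1\le R_0$ (the hypothesis already gives the conclusion) and reduce to verifying $\mu(B(x,2r))\le C\mu(B(x,r))$ for radii in the ``new'' range $R_0/2<r\le R_1$, since smaller radii are covered directly by the hypothesis.

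The technical heart is a \emph{chaining lemma}: if $p,q\in X$ with $d(p,q)\le D$, then $\mu(B(p,R_0/4))\asymp_{C}\mu(B(q,R_0/4))$ with $C=C(D/R_0,C_\mu)$. To prove it I would join $p$ to $q$ by a geodesic and subdivide it into $k\le 4D/R_0+2$ steps of length at most $R_0/4$; since then $B(z_i,R_0/4)\subset B(z_{i+1},R_0/2)$ with $R_0/4\le R_0$, one application of \eqref{define doubling measure} at each step gives $\mu(B(z_i,R_0/4))\le C_\mu\,\mu(B(z_{i+1},R_0/4))$, and iterating along the chain in both directions yields the claim with $C=C_\mu^{4D/R_0+2}$.

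Next I would record the purely metric fact that $(X,d)$ is doubling on balls of radius at most $R_1$ with constant $N=N(R_1/R_0,C_\mu)$: measure doubling together with the chaining lemma (applied at scales comparable to $R_0$, where the constant depends only on $C_\mu$) shows, via a maximal $(R_0/8)$-separated set and the disjointness of the associated eighth-radius balls, that every ball of radius $\le R_0$ admits a covering by $M=M(C_\mu)$ balls of half the radius; then a geodesic ``walk-back'' recursion $B(x,R)\subset\bigcup_{w\in B(x,R-R_0/4)}\bar B(w,R_0/4)$ upgrades this to a covering of any ball of radius $\le 2R_1$ by $N=M^{O(R_1/R_0)}$ balls of radius $R_0/4$. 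Finally I would combine everything: given $B(x,2r)$ with $R_0/2<r\le R_1$, cover it by balls $B(y_j,R_0/4)$, $j=1,\dots,N$, each meeting $B(x,2r)$; choosing $z_j$ in the intersection gives $B(y_j,R_0/4)\subset B(z_j,R_0/2)$ with $d(x,z_j)<2r\le 2R_1$, so by one more application of \eqref{define doubling measure} and the chaining lemma (with $D=2R_1$), $\mu(B(z_j,R_0/2))\ls\mu(B(x,R_0/4))\le\mu(B(x,r))$, the last step using $R_0/4<r$. Summing over $j$ gives $\mu(B(x,2r))\le C\mu(B(x,r))$ with $C$ depending only on $N$ and $C_\mu$, hence only on $R_1/R_0$ and $C_\mu$.

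The main obstacle is the metric covering bound of the third step: one must check that the walk-back recursion terminates at scale $\le R_0$ with a constant depending only on $C_\mu$, and that the number of recursion steps — and hence the final exponent on $M$ — is controlled purely by the ratio $R_1/R_0$. Everything else is an exercise with the triangle inequality once the chaining lemma is in hand.
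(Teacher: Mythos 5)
The paper does not prove this proposition; it is imported by citation from \cite[Proposition 3.2]{BBS20}, so there is no internal argument to compare against. Your proof is nonetheless correct and is the standard chaining-and-covering argument for upgrading local doubling to semilocal doubling on geodesic spaces. The delicate step you flag --- bounding the covering number $N$ purely in terms of $R_1/R_0$ and $C_\mu$ --- does go through as you sketch: the chaining lemma together with the disjointness of the $R_0/16$-balls around a maximal $R_0/8$-separated set inside $B(w,R_0)$ (a ball to which \eqref{define doubling measure} still applies) gives a bound $M=M(C_\mu)$ on the number of $R_0/4$-balls needed to cover any $R_0/2$-ball; the geodesic walk-back inclusion $B(x,R)\subset\bigcup_{w\in B(x,R-R_0/4)}\bar B(w,R_0/4)$ then feeds this into the recursion $N(R)\le M\cdot N(R-R_0/4)$, which terminates at the trivial base case $N(R_0/4)=1$ after at most $\lceil 4R/R_0\rceil$ steps, giving $N\le M^{O(R_1/R_0)}$. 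That is precisely where the geodesic hypothesis enters, and your step 4 then closes the argument with a constant depending only on $R_1/R_0$ and $C_\mu$ as required.
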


Thus as long as $X$ is geodesic any uniformly locally doubling measure $\mu$ can be taken to be doubling on as large a scale as we desire as long as that scale is ultimately fixed. There is thus no harm in assuming $R_{0} \geq 1$, which we will do going forward. 

Now let $(\X,b)$ be an expanding cone with flow $f^{t}: \X \rightarrow \X$ as in the previous section. We let $\mu$ be a Borel measure on $\X$ that is locally finite, fully supported and uniformly locally doubling so that $(\X,d,\mu)$ is a metric measure space. However $(\bar{\X}_{b},d_{b},\mu)$ (with $\mu$ extended as $\mu(\p \X_{b}) = 0$) need not be a metric measure space unless $\mu$ decays sufficiently quickly with height in $\X$, as balls in $\bar{\X}_{b}$ that intersect $\p \X_{b}$ could potentially have infinite measure.

We can attempt to rectify this issue by defining for $\sigma > 0$,
\begin{equation}\label{sigma def}
d\mu_{\sigma}(x) = e^{-\sigma b(x)}\,d\mu(x),
\end{equation}
i.e., $\mu_{\sigma}$ is the measure equivalent to $\mu$ with density $x \rightarrow e^{-\sigma b(x)}$. This defines a Borel measure on $\X_{b}$ which we extend to a Borel measure on $\bar{\X}_{b}$ by setting $\mu_{\sigma}(\p \X_{b}) = 0$. We will show under some additional hypotheses on $\mu_{\sigma}$ that the triple $(\bar{\X}_{b},d_{b},\mu_{\sigma})$ defines a doubling metric measure space. We remark that no additional hypotheses are needed to show that this is the case if $\sigma$ is sufficiently large (see \cite{Bu22}) however this is unhelpful when one is concerned with specific values of $\sigma$.

Our convention regarding $\asymp$, etc. will be extended to allow implied constants to depend on $\sigma$, the local doubling constant $C_{\mu}$ and radius $R_{0}$ for $\mu$ as well. When we say a quantity depends on ``the data" we mean that it depends only on $a$, $A$, $C_{\mu}$, $R_{0}$, and $\sigma$. Constants that depend on $\sigma$ should be understood to be uniform when the values of $\sigma$ are restricted to a compact subset of $(0,\infty)$. Despite our broadening of the convention, many implied constants will still depend only on $a$ and $A$; for the most part we leave the determination of when this does and does not apply to the interested reader.

We first establish some estimates for $\mu_{\sigma}$ using Lemmas \ref{sub inclusion} and \ref{choosing center} that do not require additional assumptions. We start with the following consequence of Lemma \ref{sub inclusion}.

\begin{lem}\label{sub measure}
Let $x \in \X$ and $0 < r \leq \frac{1}{2}d_{b}(x)$. Then 
\[
\mu_{\sigma}(B_{b}(x,r)) \asymp e^{-\sigma b(x)}\mu(B(x, re^{ab(x)})).
\]
\end{lem}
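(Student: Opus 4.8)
The plan is to combine the two inclusions from Lemma \ref{sub inclusion} with the comparability of the density $e^{-\sigma b}$ on the relevant ball, which is controlled by the Harnack inequality \eqref{Harnack}. Concretely, from Lemma \ref{sub inclusion} we have, with $C_* = C_*(a,A)$,
\[
B(x,C_*^{-1}re^{ab(x)}) \subset B_b(x,r) \subset B(x,C_* re^{ab(x)}),
\]
so it suffices to estimate $\mu_\sigma$ of the two Euclidean-type balls $B(x,C_*^{\pm 1} re^{ab(x)})$ and to see that both are comparable to $e^{-\sigma b(x)}\mu(B(x,re^{ab(x)}))$.

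For the density, note that if $y \in B(x,C_* re^{ab(x)})$ then since $r \le \tfrac12 d_b(x)$ and $d_b(x) \asymp e^{-ab(x)}$ by \eqref{distance to boundary}, we get $d(x,y) \le C_* re^{ab(x)} \lesssim 1$ with implied constant depending only on $a,A$. Hence by the Harnack inequality \eqref{Harnack} (applied with parameter $\sigma$ in place of $a$, or simply by the $1$-Lipschitz property of $b$), $e^{-\sigma b(y)} \asymp e^{-\sigma b(x)}$ uniformly over such $y$, with implied constant depending only on $a$, $A$, and $\sigma$. Integrating this comparison over $B(x,C_*^{\pm 1}re^{ab(x)})$ gives
\[
\mu_\sigma\big(B(x,C_*^{\pm 1}re^{ab(x)})\big) \asymp e^{-\sigma b(x)}\,\mu\big(B(x,C_*^{\pm 1}re^{ab(x)})\big).
\]

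To finish I would invoke the doubling property of $\mu$ on $\X$ to replace the radii $C_*^{\pm 1}re^{ab(x)}$ by $re^{ab(x)}$. Since $C_*$ depends only on $a,A$, and all three radii $C_*^{-1}re^{ab(x)} \le re^{ab(x)} \le C_* re^{ab(x)}$ are bounded above by $C_* \cdot \tfrac12 d_b(x) e^{ab(x)} \lesssim 1 \le R_0$ (after using Proposition \ref{enlarge doubling} to assume $R_0 \ge 1$, and enlarging $R_0$ by a constant factor depending on $a,A$ if necessary), the consequence \eqref{define doubling consequence} of the doubling estimate yields $\mu(B(x,C_*^{\pm 1}re^{ab(x)})) \asymp \mu(B(x,re^{ab(x)}))$ with constant depending only on $C_\mu$, $R_0$, and the fixed ratio $C_*$. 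Chaining these comparisons with the two set inclusions above (and monotonicity of $\mu_\sigma$ under inclusion) gives
\[
\mu_\sigma(B_b(x,r)) \asymp e^{-\sigma b(x)}\mu(B(x,re^{ab(x)})),
\]
as claimed, with all implied constants depending only on the data $a,A,C_\mu,R_0,\sigma$.

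The only mildly delicate point is bookkeeping the radius bounds so that the doubling consequence \eqref{define doubling consequence} genuinely applies — one must check that $re^{ab(x)} \lesssim 1$ using the hypothesis $r \le \tfrac12 d_b(x)$ together with $d_b(x) \asymp e^{-ab(x)}$, and then absorb the various $a,A$-dependent constants into $R_0$ via Proposition \ref{enlarge doubling}; there is no real obstacle beyond this, and the estimate follows in a few lines.
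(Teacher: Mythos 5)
Your proof is correct and follows essentially the same route as the paper's: both use the two inclusions from Lemma \ref{sub inclusion}, compare the density $e^{-\sigma b}$ at $x$ and at nearby points via the $1$-Lipschitz property of $b$ together with \eqref{distance to boundary}, and then appeal to Proposition \ref{enlarge doubling} and the doubling consequence \eqref{define doubling consequence} to reconcile the $C_*^{\pm 1}$ radii. The only cosmetic difference is that the paper first converts $\mu_\sigma(B_b(x,r)) \asymp e^{-\sigma b(x)}\mu(B_b(x,r))$ and then sandwiches $\mu(B_b(x,r))$ between the two Euclidean-type balls, whereas you sandwich $\mu_\sigma(B_b(x,r))$ first and convert afterward; this changes nothing of substance.
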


\begin{proof}
By \eqref{Harnack} and \eqref{distance to boundary} we have for all $y \in B_{b}(x,r)$, 
\begin{equation}\label{comparison chain}
e^{-\sigma b(y)} \asymp d_{b}(y)^{\sigma/a} \asymp d_{b}(x)^{\sigma/a} \asymp e^{-\sigma b(x)} ,
\end{equation}
with the comparison $d_{b}(y) \asymp d_{b}(x)$ following from the condition on $r$. Applying Lemma \ref{sub inclusion} and the chain of comparisons \eqref{comparison chain}, we conclude that
\[
\mu_{\sigma}(B_{b}(x,r)) \asymp e^{-\sigma b(x)}\mu(B_{b}(x,r)) \leq e^{-\sigma b(x)} \mu(B(x,C_{*}r e^{ab(x)})),
\]
with $C_{*}$ being the constant from Lemma \ref{sub inclusion}. A similar argument using the other inclusion from Lemma \ref{sub inclusion} shows that 
\[
\mu_{\sigma}(B_{b}(x,r)) \gtrsim e^{-\sigma b(x)}\mu(B(x,C_{*}^{-1}re^{ab(x)})).
\]
We thus conclude that
\begin{equation}\label{proto sub measure}
e^{-\sigma b(x)}\mu(B(x,C_{*}^{-1}re^{ab(x)}))\lesssim \mu_{\sigma}(B_{b}(x,r)) \lesssim e^{-\sigma b(x)} \mu(B(x,C_{*}re^{ab(x)})).
\end{equation}
The condition on $r$ implies that 
\begin{equation}\label{ratio inequality}
re^{ab(x)} \leq \frac{1}{2}d_{b}(x)e^{ab(x)}\leq C,
\end{equation}
with $C = C(a,A)$ a uniform constant by \eqref{distance to boundary}. By Proposition \ref{enlarge doubling} we can, at the cost of increasing the local doubling constant $C_{\mu}$ of $\mu$ by an amount depending only on the data, assume that $R_{0} > CC_{*}$ for the constant $C$ in inequality \eqref{ratio inequality} and the constant $C_{*}$ in Lemma \ref{sub inclusion}. Then the comparison \eqref{define doubling consequence} allows us to conclude that
\[ 
 \mu(B(x,C_{*}^{-1}re^{ab(x)})) \asymp \mu(B(x,re^{ab(x)} )) \asymp \mu(B(x,C_{*}re^{ab(x)})).
\]
Combining this comparison with inequality \eqref{proto sub measure} proves the lemma. 
\end{proof}  

We can obtain a more precise estimate for $\mu_{\sigma}$ on subWhitney balls. 

\begin{lem}\label{estimate upper}
Let $L \geq 1$ be given and let $z \in Y$ and $r > 0$  be such that $2c_{0} r \leq d_{b}(z) \leq Lr$. Then
\begin{equation}\label{comparison upper}
\mu_{\sigma}(B_{b}(z,c_{0} r)) \asymp r^{\sigma/a}\mu(B(z,1)),
\end{equation}
with implied constant depending only on the data and $L$.
\end{lem}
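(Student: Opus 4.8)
The plan is to deduce this directly from Lemma \ref{sub measure} and the distance-to-boundary estimate \eqref{distance to boundary}, specialized to the regime $r \asymp d_{b}(z)$. First I would extract from the hypothesis $2c_{0}r \leq d_{b}(z) \leq Lr$ the comparison $d_{b}(z) \asymp r$ with implied constant depending only on $c_{0} = c_{0}(a,A)$ and $L$, and then combine this with \eqref{distance to boundary} to get
\[
e^{-a b(z)} \asymp r,
\]
again with implied constant depending only on the data and $L$. Raising this to the power $\sigma/a$ gives $e^{-\sigma b(z)} \asymp r^{\sigma/a}$, which will produce the correct power of $r$ in the conclusion, and taking reciprocals gives $e^{a b(z)} \asymp r^{-1}$.

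Next, since $d_{b}(z) \geq 2c_{0}r$ we have $c_{0}r \leq \tfrac{1}{2}d_{b}(z)$, so Lemma \ref{sub measure} applies with the point $z$ and radius $c_{0}r$, yielding
\[
\mu_{\sigma}(B_{b}(z,c_{0}r)) \asymp e^{-\sigma b(z)}\, \mu\big(B(z, c_{0}r\, e^{a b(z)})\big).
\]
Now $c_{0}r\, e^{a b(z)} \asymp c_{0} \asymp 1$, with implied constant depending only on the data and $L$, using the estimate $e^{a b(z)} \asymp r^{-1}$ from the first step together with $c_{0} = c_{0}(a,A)$. In particular the radius $c_{0}r\, e^{a b(z)}$ lies in a fixed compact subinterval of $(0,\infty)$ whose endpoints depend only on the data and $L$; after enlarging $R_{0}$ using Proposition \ref{enlarge doubling} so that $R_{0}$ exceeds the upper endpoint (at the cost of worsening $C_{\mu}$ by an amount depending only on the data), the doubling consequence \eqref{define doubling consequence} lets us replace $\mu(B(z, c_{0}r\, e^{a b(z)}))$ by $\mu(B(z,1))$ up to a multiplicative constant depending only on the data and $L$.

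Combining these two displays with $e^{-\sigma b(z)} \asymp r^{\sigma/a}$ gives the claimed comparison $\mu_{\sigma}(B_{b}(z,c_{0}r)) \asymp r^{\sigma/a}\mu(B(z,1))$. I do not anticipate a genuine obstacle here; the only point requiring care is bookkeeping, namely checking that every implied constant introduced depends only on the data and $L$ — the key being that $c_{0}r\, e^{a b(z)}$ stays bounded away from both $0$ and $\infty$ so that the local doubling property of $\mu$ is legitimately applicable. In effect this lemma is just a repackaging of Lemma \ref{sub measure} at the scale $r \asymp d_{b}(z)$, with the extra observation that on subWhitney-type balls the relevant Euclidean-scale ball in $\X$ has radius comparable to $1$.
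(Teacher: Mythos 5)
Your proof is correct and follows essentially the same route as the paper: apply Lemma \ref{sub measure} at radius $c_0 r$, observe that $c_0 r e^{ab(z)}$ is comparable to $1$ with constants depending only on the data and $L$, and use Proposition \ref{enlarge doubling} together with \eqref{define doubling consequence} to replace the resulting Euclidean-scale ball by $B(z,1)$. In fact your bookkeeping is slightly cleaner than the paper's, which contains a small sign typo (writing $e^{-ab(z)}$ where $e^{ab(z)}$ is meant in the application of Lemma \ref{sub measure}); your version gets the exponent right.
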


\begin{proof}
Throughout this proof ``the data" will additionally include the constant $L$, and similarly implied constants from $\asymp$ will additionally depend on $L$.  The assumptions imply that $d_{b}(z) \asymp r$, hence $e^{-\sigma b(z)} \asymp r^{\sigma/a}$ by \eqref{distance to boundary}, with comparison constants depending only on the data since $c_{0}$ depends only on the data. Thus by Lemma \ref{sub measure} we have
\[
\mu_{\sigma}(B_{b}(z,c_{0} r)) \asymp r^{\sigma/a}\mu(B(z,c_{0}re^{-a b(z)})),
\]
with comparison constant depending only on the data. Since $e^{-ab(z)} \asymp d_{b}(z) \asymp r$, we have $c_{0} r e^{-ab(z)} \asymp_{C} 1$ for a constant $C$ depending only on the data. Using Proposition \ref{enlarge doubling} we can assume that $\mu$ is doubling on balls of radius at most $C$, at the cost of increasing the doubling constant by an amount depending only on the data. From this we conclude that the comparison \eqref{comparison upper} holds. 
\end{proof}

We can now show that if one has certain estimates on the $\mu_{\sigma}$-measures of balls $B_{b}(\xi,r)$ centered at points $\xi \in \p \X_{b}$ then $\mu_{\sigma}$ is doubling on $\bar{\X}_{b}$. More precisely, we will assume that the measure inequality for $\mu_{\sigma}$ applied to balls centered on $\p \X_{b}$ by combining Lemmas \ref{choosing center} and \ref{estimate upper} also holds in reverse. We recall the notation from Proposition \ref{boundary identify} for $z \in \X$ that $\bar{z} = \Psi_{z}(z)$.

\begin{prop}\label{crit above}
There is a constant $L = L(a,A)$ such that the following holds: suppose that there is a constant $K$ such that for any $z \in \X$ and $r > 0$ with $c_{0}r \leq d_{b}(z) \leq Lr$ we have
\begin{equation}\label{controlled upper}
\mu_{\sigma}(B_{b}(\bar{z},Lr)) \leq Kr^{\sigma/a}\mu(B(z,1)),
\end{equation} 
Then $\mu_{\sigma}$ is doubling on $(\bar{\X}_{b},d_{b})$ with doubling constant depending only on the data and $K$.
\end{prop}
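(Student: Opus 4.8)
The plan is to establish the doubling inequality $\mu_{\sigma}(B_{b}(x,2r)) \ls \mu_{\sigma}(B_{b}(x,r))$ for every $x \in \bar{\X}_{b}$ and $r > 0$, splitting into two cases according to the size of $r$ relative to $d_{b}(x)$; the same estimates will also give $0 < \mu_{\sigma}(B) < \infty$ for every ball $B$, so that $(\bar{\X}_{b},d_{b},\mu_{\sigma})$ is genuinely a metric measure space. Throughout I would invoke Proposition~\ref{enlarge doubling} to assume freely, at the cost of worsening $C_{\mu}$ by an amount depending only on the data, that $\mu$ is doubling on $\X$ at whatever fixed scale is needed, and I would fix a threshold $\varepsilon = \tfrac14$ together with a constant $L = L(a,A)$ taken large enough that the geometric enlargements below can be written in the form demanded by \eqref{controlled upper}.

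In the interior case $r \le \varepsilon d_{b}(x)$ (which forces $x \in \X$), one has $2r \le \tfrac12 d_{b}(x)$, so Lemma~\ref{sub measure} applies at radii $r$ and $2r$ and yields $\mu_{\sigma}(B_{b}(x,2r)) \asymp e^{-\sigma b(x)}\mu(B(x,2re^{ab(x)}))$ together with the analogous comparison at radius $r$. Since $re^{ab(x)} \ls d_{b}(x)e^{ab(x)} \ls 1$ by \eqref{distance to boundary}, the radius $re^{ab(x)}$ lies below a fixed doubling scale of $\mu$ on $\X$, so $\mu$-doubling absorbs the factor of $2$ in the radius and the estimate follows at once.

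In the boundary-reaching case $r > \varepsilon d_{b}(x)$, let $\xi \in \p\X_{b}$ be the endpoint of the ascending vertical geodesic through $x$ (so $\xi = x$ if $x \in \p\X_{b}$). By \eqref{distance to boundary} one has $d_{b}(x,\xi) \asymp d_{b}(x) < \varepsilon^{-1}r$, so $B_{b}(x,2r) \subseteq B_{b}(\xi,C_{1}r)$ for some $C_{1} = C_{1}(a,A)$. For the lower bound on $\mu_{\sigma}(B_{b}(x,r))$ I would apply Lemma~\ref{choosing center} to $(x,r)$ to obtain a subWhitney ball $B_{b}(z',c_{0}r) \subseteq B_{b}(x,r)$ with $2c_{0}r \le d_{b}(z')$ and $z'$ on the vertical geodesic through $x$; since also $d_{b}(z') \le d_{b}(x)+r \le 5r$, Lemma~\ref{estimate upper} gives $\mu_{\sigma}(B_{b}(x,r)) \ge \mu_{\sigma}(B_{b}(z',c_{0}r)) \asymp r^{\sigma/a}\mu(B(z',1))$. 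For the upper bound on $\mu_{\sigma}(B_{b}(x,2r))$, set $r' = C_{1}r/L$ and use the intermediate value theorem along the same vertical geodesic (the one with endpoint $\xi$) to pick $z \in \X$ on it with $d_{b}(z)$ in the admissible range $[c_{0}r',Lr']$, which forces $d_{b}(z) \asymp r$ and $\bar{z} = \xi$; then hypothesis \eqref{controlled upper} applied to $(z,r')$ reads $\mu_{\sigma}(B_{b}(\xi,C_{1}r)) = \mu_{\sigma}(B_{b}(\bar{z},Lr')) \le K(r')^{\sigma/a}\mu(B(z,1)) \asymp r^{\sigma/a}\mu(B(z,1))$. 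Finally $z$ and $z'$ lie on the same vertical geodesic with $d_{b}(z) \asymp d_{b}(z') \asymp r$, so \eqref{distance to boundary} forces $d(z,z') = |b(z)-b(z')| \ls 1$, and $\mu$-doubling on $\X$ gives $\mu(B(z,1)) \asymp \mu(B(z',1))$. Combining,
\begin{align*}
\mu_{\sigma}(B_{b}(x,2r)) &\le \mu_{\sigma}(B_{b}(\xi,C_{1}r)) \ls r^{\sigma/a}\mu(B(z,1)) \\
&\asymp r^{\sigma/a}\mu(B(z',1)) \asymp \mu_{\sigma}(B_{b}(z',c_{0}r)) \le \mu_{\sigma}(B_{b}(x,r)),
\end{align*}
with constants depending only on the data and $K$.

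The main obstacle is not conceptual but a matter of bookkeeping: one must choose $\varepsilon$ and $L$ so that simultaneously (i) the radii arising in the interior case stay below a fixed $\mu$-doubling scale on $\X$, (ii) the enlarged boundary ball $B_{b}(\xi,C_{1}r)$ is precisely of the shape $B_{b}(\bar z,Lr')$ with $d_{b}(z) \asymp r'$ to which \eqref{controlled upper} applies, and (iii) the two auxiliary points $z$ and $z'$ sit at comparable heights so that their unit $\mu$-balls in $\X$ match up under $\mu$-doubling. None of these steps is deep; the content of the proposition is exactly that the hypothesis \eqref{controlled upper} furnishes the upper bound on measures of boundary balls that complements the lower bound already implied by Lemmas~\ref{choosing center} and \ref{estimate upper}.
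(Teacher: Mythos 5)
Your proof is correct and follows essentially the same two-case strategy as the paper: Lemma~\ref{sub measure} plus $\mu$-doubling when $r$ is small relative to $d_b(x)$, and the combination of Lemma~\ref{choosing center}, Lemma~\ref{estimate upper}, and the hypothesis \eqref{controlled upper} when $r$ is comparable to or larger than $d_b(x)$. The only cosmetic difference is that you introduce two auxiliary points $z$ and $z'$ (needing an extra comparison $\mu(B(z,1)) \asymp \mu(B(z',1))$ via $\mu$-doubling on $\X$), whereas the paper uses a single point $z$ from Lemma~\ref{choosing center} and invokes \eqref{controlled upper} at both radii $r$ and $2r$, which removes that step.
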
 

\begin{proof}
For the doubling property on $\bar{\X}_{b}$ we split into two cases depending on the distance of the center $x \in \bar{\X}_{b}$ of the ball from the boundary. The first case is that in which $0 < r \leq \frac{1}{4}d_{b}(x)$, which implies in particular that $x \in \X_{b}$. Then we can apply Lemma \ref{sub measure} to both $B_{b}(x,r)$ and $B_{b}(x,2r)$. We conclude that
\begin{equation}\label{case one}
\mu_{\sigma}(B_{b}(x,r)) \asymp e^{-\sigma b(x)}\mu(B(x, re^{ab(x)})) \asymp e^{-\sigma b(x)}\mu(B(x, 2re^{ab(x)}))  \asymp \mu_{\sigma}(B_{b}(x,2r)),
\end{equation}
with comparison constants depending only on the data. To justify the middle comparison in \eqref{case one}, we observe that since $2r \leq \frac{1}{2}d_{b}(x)$ we have by \eqref{distance to boundary} that each of the middle two balls in $\X$ in \eqref{case one} on the right side of this inequality have radius at most $C$ for some constant $C=C(a,A)$. By Proposition \ref{enlarge doubling} we can assume that $\mu$ is doubling on balls of radius at most $C$, at the cost of increasing the doubling constant of $\mu$ by an amount depending only $a$ and $A$. This gives the desired doubling estimate in this case. 

The second case is that in which $d_{b}(x) < 4r$. We then let $\bar{x} \in \p \X_{b}$ be the endpoint of the ascending geodesic ray starting from $x$. Then by \eqref{distance to boundary} we have $B_{b}(x,r) \subset B_{b}(\bar{x},Lr)$ with $L = L(a,A)$. We then use Lemma \ref{choosing center} to choose a point $z \in \X_{b}$ on the same vertical geodesic as $x$ such that $B_{b}(z,c_{0}r) \subset B_{b}(x,r)$ and $d_{b}(z) \geq 2c_{0}r$. Then $\bar{z} = \bar{x}$ and we must have $d_{b}(z) < Lr$ since $z \in B_{b}(\bar{x},Lr)$. Since $B_{b}(z,c_{0}r) \subset B_{b}(\bar{x},Lr)$ and $2c_{0}r \geq c_{0}r$, we conclude from Lemma \ref{estimate upper} and the assumed inequality \eqref{controlled upper} that 
\begin{equation}\label{case two a}
\mu_{\sigma}(B_{b}(x,r)) \asymp \mu_{\sigma}(B_{b}(z,c_{0} r)),
\end{equation}
with comparison constant depending only on the data and $K$. Since we also have $B_{b}(x,2r) \subset B_{b}(\bar{x},2Lr)$ and $2c_{0}r = c_{0} \cdot 2r$, the same combination of Lemma \ref{estimate upper} and \eqref{controlled upper} also shows that
\begin{equation}\label{case two b}
\mu_{\sigma}(B_{b}(x,2r)) \asymp \mu_{\sigma}(B_{b}(z,c_{0} r)),
\end{equation}
with comparison constant depending only on the data and $K$. Combining \eqref{case two a} and \eqref{case two b} gives the desired doubling estimate in this second case.
\end{proof}

By Proposition \ref{ball comparison} it suffices to establish an estimate analogous to \eqref{controlled upper} on cones instead. This is the form in which Proposition \ref{crit above} will be used in the next section.

\begin{prop}\label{cone comparison}
There is a constant $L' = L'(a,A) \geq 1$ such that if there is a constant $K$ for which any $x \in \X$ satisfies
\begin{equation}\label{cone upper}
\mu_{\sigma}(\CC(x,L')) \leq Ke^{-\sigma b(x)}\mu(B(x,1)),
\end{equation}
then there is a constant $K'$ depending only on the data and $K$ such that for any $z \in \X$ and $r > 0$ with $Lr \geq d_{b}(z) \geq c_{0}r$ we have
\begin{equation}\label{cone controlled}
\mu_{\sigma}(B_{b}(\bar{z},Lr)) \leq K'r^{\sigma/a}\mu(B(z,1)),
\end{equation}
where $L = L(a,A)$ is the constant in \eqref{controlled upper}.
\end{prop}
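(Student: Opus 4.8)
The plan is to reduce the ball estimate \eqref{cone controlled} to the cone estimate \eqref{cone upper} using Proposition \ref{ball comparison}, modulo a flow-time adjustment that can be absorbed into the constants. First I would fix the constant $L'$ to be the larger of $L$ from \eqref{controlled upper} and $2L_*$, where $L_*$ is the constant produced by Proposition \ref{ball comparison}; I will tune it slightly as the argument dictates. Given $z \in \X$ and $r > 0$ with $c_0 r \le d_b(z) \le Lr$, the hypothesis says $r \asymp d_b(z) \asymp e^{-ab(z)}$ (the last comparison by \eqref{distance to boundary}), with comparison constants depending only on $a$, $A$. This means we are in exactly the regime where Proposition \ref{ball comparison} applies to the ball $B_b(\bar z, Lr)$: since $r \asymp_{L} d_b(z)$, there are constants $L_* = L_*(a,A,L)$ and $t_* = t_*(a,A,L)$ such that
\[
B_b(\bar z, Lr) \subset \CC(f^{-t_*}z, L_*).
\]

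Next I would use the conformal scaling properties to convert the cone $\CC(f^{-t_*}z,L_*)$ into one of the form $\CC(w,L')$ appearing in \eqref{cone upper}. Setting $w = f^{-t_*}z$, we have $b(w) = b(z) - t_*$, so $e^{-\sigma b(w)} = e^{\sigma t_*}e^{-\sigma b(z)} \asymp e^{-\sigma b(z)}$ since $t_* = t_*(a,A,L)$ is a fixed constant (here I use the convention that implied constants may depend on $\sigma$, $a$, $A$, which covers the factor $e^{\sigma t_*}$). Similarly, by \eqref{distance growth} the unstable ball of radius $L_*$ in $\rho_w$ maps under $f^{t_*}$ into a ball of controlled $\rho_z$-radius, so enlarging $L'$ by a factor depending only on $a$, $A$ (via the conformal scaling \eqref{conformal scaling}) we can guarantee $\CC(f^{-t_*}z, L_*) \subset \CC(f^{-t_*}z, L')$ and apply the hypothesis \eqref{cone upper} to the point $w = f^{-t_*}z$:
\[
\mu_\sigma(\CC(f^{-t_*}z, L')) \le K e^{-\sigma b(w)}\mu(B(w,1)).
\]
It then remains to replace $\mu(B(w,1))$ by $\mu(B(z,1))$. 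Since $d(w,z) = t_* = t_*(a,A,L)$ is a fixed constant, the triangle inequality gives $B(w,1) \subset B(z, 1 + t_*)$, and then by Proposition \ref{enlarge doubling} (after enlarging $R_0$ by an amount depending only on the data and $L$) the local doubling property of $\mu$ gives $\mu(B(z,1+t_*)) \asymp \mu(B(z,1))$.

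Assembling these estimates, and using $e^{-\sigma b(z)} \asymp d_b(z)^{\sigma/a} \asymp r^{\sigma/a}$ from \eqref{distance to boundary} together with $r \asymp d_b(z)$, we obtain
\[
\mu_\sigma(B_b(\bar z, Lr)) \le \mu_\sigma(\CC(f^{-t_*}z,L')) \le K e^{-\sigma b(w)}\mu(B(w,1)) \lesssim K r^{\sigma/a}\mu(B(z,1)),
\]
which is \eqref{cone controlled} with $K'$ depending only on the data and $K$. The main obstacle I anticipate is purely bookkeeping: making sure the single constant $L'$ chosen at the outset simultaneously dominates the constant $L_*$ coming out of Proposition \ref{ball comparison} (which itself depends on $L$, hence on $a$, $A$) and the enlargement needed to absorb the $f^{t_*}$-distortion of Hamenst\"adt balls — this is why $L'$ must be allowed to depend on $a$, $A$ only, and one has to verify that the chain of dependencies $L \to L_* \to L'$ does not become circular. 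There is no genuine analytic difficulty beyond what is already packaged in Proposition \ref{ball comparison}, Lemma \ref{sub inclusion}, and the Harnack inequality \eqref{Harnack}.
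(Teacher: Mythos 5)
Your proof is correct and follows the same route the paper intends: apply Proposition \ref{ball comparison} to nest $B_b(\bar z, Lr)$ inside a cone at $f^{-t_*}z$, invoke the cone hypothesis there, and use \eqref{distance to boundary} plus local doubling to trade $e^{-\sigma b(f^{-t_*}z)}\mu(B(f^{-t_*}z,1))$ for $r^{\sigma/a}\mu(B(z,1))$. One small remark: the step where you invoke \eqref{distance growth} and ``conformal scaling'' to get $\CC(f^{-t_*}z,L_*)\subset\CC(f^{-t_*}z,L')$ is superfluous — both cones share the vertex $f^{-t_*}z$, so the inclusion is immediate from $L'\ge L_*$ and the monotonicity of $r\mapsto\CC(x,r)$; no flow-induced distortion enters.
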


\begin{proof}
This follows immediately by combining Proposition \ref{ball comparison} with the estimate \eqref{distance to boundary} and tracking the dependencies of the parameters, noting that $r^{\sigma/a} \asymp e^{-\sigma b(x)}$ when $r \asymp d_{b}(x)$.
\end{proof}

While the phrasing of Propositions \ref{crit above} and \ref{cone comparison} is a bit awkward, the resulting criteria for doubling are easy to verify. 

The remainder of this section will be devoted to \emph{Poincar\'e inequalities} on metric measure spaces with applications to the uniformized measures $\mu_{\sigma}$ on $\X_{b}$ discussed above.  We first give the general definition. Let $(X,d,\mu)$ be a metric measure space. For a measurable subset $E \subset X$ satisfying $0 < \mu(E) < \infty$ and a function $u$ that is $\mu$-integrable over $E$ we write 
\begin{equation}\label{average notation}
u_{E} = \dashint_{E} u \, d\mu = \frac{1}{\mu(E)}\int_{E} u \, d\mu
\end{equation}
for the mean value of $u$ over $E$. Let $u: X \rightarrow \R$ be given. A Borel function $g: X \rightarrow [0,\infty]$ is an \emph{upper gradient} for $u$ if for each rectifiable curve $\gamma$ joining two points $x,y \in X$ we have
\[
|u(x)-u(y)| \leq \int_{\gamma} g \, ds.
\]
A measurable function $u: X \rightarrow \R$ is \emph{integrable on balls} if for each ball $B \subset X$ we have that $u$ is integrable over $B$. We say that $X$ \emph{supports a Poincar\'e inequality} if there are constants $\la \geq 1$ and $C_{\mathrm{PI}} > 0$ such that for each measurable function $u: X \rightarrow \R$ that is integrable on balls, for each ball $B \subset X$, and each upper gradient $g$ of $u$ we have
\begin{equation}\label{uniformly local Poincare}
\dashint_{B} |u-u_{B}|\,d\mu \leq C_{\mathrm{PI}}\diam(B)\dashint_{\la B} g \, d\mu.
\end{equation}
The constant $\la$ is called the \emph{dilation constant}. Here we are using the notation $\la B = B(x,\la r)$ for the scaling of a ball's radius by $\la$. The dilation constant $\la$ can always be improved to $\la = 1$ if $(X,d)$ is a geodesic metric space \cite[Theorem 4.18]{Hein01} as is the case with the uniformized expanding cone $\bar{\X}_{b}$, however we will need to allow the presence of this dilation when working with local Poincar\'e inequalities below.

\begin{rem}\label{Poincare strength}
The inequality \eqref{uniformly local Poincare} with $\la = 1$ is the strongest form of a Poincar\'e inequality that a metric measure space can satisfy. More generally one can use the $L^p$-norm of the upper gradient on the right side of \eqref{uniformly local Poincare} instead for a fixed $p \geq 1$, giving rise to a \emph{$p$-Poincar\'e inequality}. The inequality \eqref{uniformly local Poincare} with $\la > 1$ is also sometimes referred to as a \emph{weak Poincar\'e inequality}. The reference \cite{HKST} contains a great deal more detail on this particular topic. 
\end{rem}

If there is a constant $R_{\mathrm{PI}} > 0$ such that \eqref{uniformly local Poincare} only holds on balls of radius at most $R_{\mathrm{PI}}$ then we will say that $X$ \emph{supports a Poincar\'e inequality on balls of radius at most $R_{\mathrm{PI}}$}. We will also say that $X$ \emph{supports a uniformly local Poincar\'e inequality}.

We now specialize to the case of an expanding cone $\X$ equipped with a uniformly locally doubling measure $\mu$ as above. We will \emph{assume} for a fixed $\sigma$ that $\mu_{\sigma}$ is doubling on $\X_{b}$ with some doubling constant $C_{\mu_{\sigma}}$, and we will then establish that both of the metric measure spaces $(\X_{b},d_{b},\mu_{\sigma})$ and $(\bar{\X}_{b},d_{b},\mu_{\sigma})$ support a Poincar\'e inequality. This doubling property will be deduced from Proposition \ref{cone comparison} above in the case of Theorem \ref{thm:Anosov Poincare}. We will also assume that $(\X,d,\mu)$ supports a uniformly local Poincar\'e inequality. This condition is not particularly difficult to check when $\mu$ is the Riemannian volume on $\X$. In this case the upper gradient $g$ of a $C^1$ function $u:\X \rightarrow \R$ can be taken to be the norm of the gradient of $u$, $g = \|\nabla u\|$, and the uniformly local Poincar\'e inequality will follow from the ordinary Poincar\'e inequality in Euclidean space (see for instance \cite[Chapter 4]{Hein01}) provided that the unit ball around any given $x \in \X$ can be mapped to the Euclidean unit ball with biLipschitz constant uniformly controlled in $X$. A simple compactness argument shows this is always the case for expanding cones $\X_{x} = (\wt{\W}^{cu}(x),b_{x})$ arising from the center-unstable manifolds of an Anosov flow as in the setting of Theorem \ref{thm:Anosov Poincare}.

We first show that the Poincar\'e inequality \eqref{uniformly local Poincare} holds on sufficiently small subWhitney balls in the metric measure space $(\X_{b},d_{b},\mu_{\sigma})$. The proof is essentially identical to \cite[Lemma 6.1]{BBS20}. In the statement and proof of Lemma \ref{Whitney Poincare} ``the data" refers to the growth rates $a$ and $A$, and the constants $R_{0}$, $C_{\mu}$, $\la$, $R_{\mathrm{PI}}$ and $C_{\mathrm{PI}}$. For this particular lemma we do not actually need to assume that $\mu_{\sigma}$ is doubling.

\begin{lem}\label{Whitney Poincare}
Assume that $(\X,d,\mu)$ supports a uniformly local Poincar\'e inequality. Then there exists $c_{1} > 0$ and $R_{1} > 0$ depending only on the data such that for all $x \in \X_{b}$ and all $0 < r\leq c_{1}d_{b}(x)$ the Poincar\'e inequality \eqref{uniformly local Poincare} for $\mu_{\sigma}$ holds on the ball $B_{b}(x,r)$ with dilation constant $\hat{\la}$ and constant $\hat{C}_{\mathrm{PI}}$ depending only on the data.
\end{lem}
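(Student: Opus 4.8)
The plan is to deduce the Poincar\'e inequality for $\mu_\sigma$ on a ball $B_b(x,r)$ lying far from $\p\X_b$ relative to its radius from the assumed uniformly local Poincar\'e inequality for $(\X,d,\mu)$, on a comparable \emph{Euclidean-scale} ball $B^{+}\subset\X$. The point is that on such a ball the conformal density $\kappa=e^{-ab}$ is constant up to a multiplicative error controlled by $c_1$, so that the mean oscillation, the diameter, the measure, and the upper gradient all transform by a single scalar power of $e^{ab(x)}$ that cancels out of the Poincar\'e inequality. To set up, I would fix once and for all the constant $C_*$ from Lemma \ref{sub inclusion}, the dilation $\la$, constant $C_{\mathrm{PI}}$ and radius $R_{\mathrm{PI}}$ of the Poincar\'e inequality assumed for $(\X,d,\mu)$, and $C=C(a,A)$ the comparison constant in \eqref{distance to boundary} (so $d_b(x)e^{ab(x)}\le C$). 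I would then put $\hat\la=\la C_*^{2}$, enlarge $R_0$ via Proposition \ref{enlarge doubling} if needed, and choose $c_1=c_1(\text{data})$ small enough that $\hat\la c_1\le\tfrac12$ and $C_*\hat\la c_1 C\le\min\{R_0,R_{\mathrm{PI}}\}$. The auxiliary parameter $R_1$ is inessential here --- the hypothesis $r\le c_1 d_b(x)$ together with \eqref{distance to boundary} already forces $re^{ab(x)}\le c_1 C$ to be bounded --- so one may simply take, say, $R_1=1$. Fix $x\in\X_b$ and $0<r\le c_1 d_b(x)$, and write $B^{+}=B(x,C_* re^{ab(x)})$.

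Next I would record the geometry on $B_b(x,\hat\la r)$, whose $d_b$-radius is $\le\tfrac12 d_b(x)$. Lemma \ref{sub inclusion}, applied at the scales $r$ and $\hat\la r$, gives $B(x,C_*^{-1}re^{ab(x)})\subset B_b(x,r)\subset B^{+}\subset B_b(x,\hat\la r)$ and $\la B^{+}\subset B_b(x,\hat\la r)$; the Harnack inequality \eqref{Harnack} combined with \eqref{distance to boundary} gives $e^{-ab(\cdot)}\asymp e^{-ab(x)}$ and $e^{-\sigma b(\cdot)}\asymp e^{-\sigma b(x)}$ throughout $B_b(x,\hat\la r)$, so that $\mu_\sigma\asymp e^{-\sigma b(x)}\mu$ there. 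Since every Euclidean-scale ball named above has radius $\le C_*\hat\la c_1 C\le R_0$, the doubling of $\mu$ together with Lemma \ref{sub measure} yields $\mu_\sigma(B^{+})\asymp\mu_\sigma(\la B^{+})\asymp\mu_\sigma(B_b(x,r))\asymp\mu_\sigma(\hat\la B_b(x,r))\asymp e^{-\sigma b(x)}\mu(B^{+})$, all constants depending only on the data. Finally, a direct computation with the vertical geodesic through $x$ (picking points $p,q$ on it with $d_b(x,p)=d_b(x,q)=r/2$ and applying \eqref{uniform estimate}) shows $\diam_b B_b(x,r)\asymp r$.

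With these in hand, the core is a short chain of comparisons. Let $u$ be integrable on balls of $\X_b$ and $g_b$ an upper gradient of $u$ with respect to $d_b$; since the length element of $\X_b$ is $e^{-ab}$ times that of $\X$, the function $g:=e^{-ab}g_b$ is an upper gradient of $u$ with respect to $d$, and $g\asymp e^{-ab(x)}g_b$ on $\la B^{+}$. Writing $u_{B^{+}}$ for the $\mu$-mean of $u$ over $B^{+}$ and using $\dashint_{B_b(x,r)}|u-u_{B_b(x,r)}|\,d\mu_\sigma\le 2\dashint_{B_b(x,r)}|u-u_{B^{+}}|\,d\mu_\sigma$ (means on the left taken against $\mu_\sigma$), the measure comparisons bound the left-hand side by $\ls\dashint_{B^{+}}|u-u_{B^{+}}|\,d\mu$; the Poincar\'e inequality for $(\X,d,\mu)$ on $B^{+}$ (legitimate: $\op{rad}B^{+}\le R_{\mathrm{PI}}$ and $u$ is $\mu$-integrable on $B^{+}\subset B_b(x,\hat\la r)$) bounds this by $C_{\mathrm{PI}}\diam_d(B^{+})\dashint_{\la B^{+}}g\,d\mu\ls re^{ab(x)}\cdot e^{-ab(x)}\dashint_{\la B^{+}}g_b\,d\mu_\sigma=r\dashint_{\la B^{+}}g_b\,d\mu_\sigma$; and since $\la B^{+}\subset\hat\la B_b(x,r)$ with $\mu_\sigma(\la B^{+})\asymp\mu_\sigma(\hat\la B_b(x,r))$, this is $\ls r\dashint_{\hat\la B_b(x,r)}g_b\,d\mu_\sigma\asymp\diam_b(B_b(x,r))\dashint_{\hat\la B_b(x,r)}g_b\,d\mu_\sigma$. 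That is exactly \eqref{uniformly local Poincare} on $B_b(x,r)$ with dilation $\hat\la=\la C_*^{2}$ and constant $\hat C_{\mathrm{PI}}$ the accumulated multiplicative constant, both depending only on the data.

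The only real difficulty is the bookkeeping: $c_1$ must be chosen so that three competing requirements hold simultaneously --- every Euclidean-scale ball centered at $x$ keeps radius below $\min\{R_0,R_{\mathrm{PI}}\}$ so that both the doubling of $\mu$ and its Poincar\'e inequality apply, every $d_b$-scale ball keeps radius below $\tfrac12 d_b(x)$ so that Lemmas \ref{sub inclusion} and \ref{sub measure} apply, and the dilated ball $\la B^{+}$ produced by the Poincar\'e inequality in $\X$ is reabsorbed into $\hat\la B_b(x,r)$. Fixing $\hat\la=\la C_*^{2}$ at the outset is what makes all three attainable at once for $c_1$ small in terms of the data; the analytic content beyond this --- that the conformal change multiplies averages and upper gradients by the scalar $e^{\pm ab(x)}$, which then cancels --- is routine.
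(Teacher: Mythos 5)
Your proof is correct and follows essentially the same route as the paper: pass through the Euclidean-scale ball $B^{+}=B(x,C_*re^{ab(x)})$ via Lemma \ref{sub inclusion}, use the Harnack inequality \eqref{Harnack} and \eqref{distance to boundary} to freeze the conformal density on $\hat\la B_b$, observe that $g=e^{-ab}g_b$ is an upper gradient for $d$, apply the assumed Poincar\'e inequality for $\mu$ on $B^{+}$, and convert back, cancelling the scalar $e^{\pm ab(x)}$. The one place you are slightly more careful than the paper is the treatment of the dilation $\la$ inherited from the uniformly local Poincar\'e inequality for $(\X,d,\mu)$: you set $\hat\la=\la C_*^2$ and verify $\la B^{+}\subset B_b(x,\hat\la r)$ via a second application of Lemma \ref{sub inclusion}, whereas the paper takes $\hat\la=C_*^2$ and writes the Poincar\'e step with $\dashint_B g\,d\mu$ rather than $\dashint_{\la B}g\,d\mu$, silently suppressing $\la$; your version closes that gap cleanly without changing the argument's structure.
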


\begin{proof}
Put $B_{b} = B_{b}(x,r)$ with $0 < r \leq c_{1}d_{b}(x)$, where $0 < c_{1} \leq \frac{1}{2}$ is a constant to be determined. Let $C_{*}$ be the constant of Lemma \ref{sub inclusion}. We choose $c_{1} > 0$ small enough that $c_{1}C_{*}^{2} \leq \frac{1}{2}$. We conclude by applying Lemma \ref{sub inclusion} twice that 
\begin{equation}\label{ball chain}
B_{b} \subset B:=B(x,C_{*}r e^{ab(x)})  \subset B_{b}(x,C_{*}^{2}r) = \hat{\la} B_{b},
\end{equation}
with $\hat{\la} = C_{*}^{2}$, since 
\[
C_{*}^{2}r \leq c_{1}C_{*}^{2}d_{b}(x) \leq \frac{1}{2}d_{b}(x).
\]
Moreover by \eqref{comparison chain} we see that for all $y \in \hat{\la} B_{b}$ we have $e^{-\sigma b(y)}\asymp e^{-\sigma b(x)}$ with comparison constant depending only on the data.

Now let $u$ be a function on $\X_{b}$ that is integrable on balls and let $g_{b}$ be an upper gradient of $u$ on $\X_{b}$. By the same basic calculation as in \cite[(6.3)]{BBS20} we have that $g:=g_{b}\kappa$ is an upper gradient of $u$ on $\X$, where we recall that $\kappa(x) = e^{-ab(x)}$ is the density used in the uniformization. For $c_{1}$ sufficiently small (depending only on $a$, $A$, and $R_{\mathrm{PI}}$) we will have by \eqref{distance to boundary} that
\[
C_{*}re^{ab(x)}\leq C_{*}c_{1}d_{b}(x)e^{ab(x)}\leq R_{\mathrm{PI}}.
\]
Thus the Poincar\'e inequality \eqref{uniformly local Poincare} (for $\mu$) holds on $B$. Since $e^{-\sigma b(y)}\asymp e^{-\sigma b(x)}$ on $\hat{\la} B_{b}$ with comparison constant depending only on the uniformization data (by \eqref{comparison chain}) we have that
\begin{equation}\label{measure comparison}
\mu_{\sigma}(B) \asymp e^{-\sigma b(x)}\mu(B),
\end{equation}
with comparison constant depending only on the data, and the same comparison holds with either $B_{b}$ or $\hat{\la}B_{b}$ replacing $B$. Writing $u_{B,\mu} = \dashint_{B} u \, d\mu$, we conclude by using the inclusions of \eqref{ball chain}, the measure comparison \eqref{measure comparison}, and the $p$-Poincar\'e inequality for $\mu$ on $B$,
\begin{align*}
\dashint_{B_{b}}|u-u_{B,\mu}| \, d\mu_{\sigma} &\lesssim \dashint_{B} |u-u_{B,\mu}| d\mu \\
&\leq 2C_{\mathrm{PI}}C_{*}re^{ab(x)}\dashint_{B} g\, d\mu \\
&\asymp re^{ab(x)}\dashint_{B} g_{b}\kappa\, d\mu_{\sigma} \\
&\lesssim r \dashint_{\hat{\la} B_{b}} g_{b}^{p} \, d\mu_{\sigma},
\end{align*}
where all implied constants depend only on the data. Since by the triangle inequality
\[
\dashint_{B_{b}} |u-u_{B_{b},\mu_{\sigma}}|\,d\mu \leq 2\dashint_{B_{b}} |u-u_{B,\mu}|\,d\mu,
\]
we can replace $u_{B,\mu}$ with $u_{B_{b},\mu_{\sigma}} = \dashint_{B_{b}} u \, d\mu_{\sigma}$  to conclude the proof of the lemma. 
\end{proof}

We will also need the following key proposition. We will be using the case $p = 1$.

\begin{prop}\label{uniform upgrade}\cite[Proposition 6.3]{BBS20} 
Let $\Omega$ be an $L$-uniform metric space equipped with a doubling measure $\nu$ such that there is a constant $0 < c_{1} < 1$ for which the Poincar\'e inequality \eqref{uniformly local Poincare} holds for fixed constants $C_{\mathrm{PI}}$ and $\la$ on all subWhitney balls $B$ of the form $B = B_{\Omega}(x,r)$ with $x \in \Omega$ and $0 < r \leq c_{1}d_{\Omega}(x)$. Then the metric measure space $(\Omega,d,\nu)$ supports a $p$-Poincar\'e inequality with dilation constant $L$ and constant $C'_{\mathrm{PI}}$ depending only on $L$, $c_{1}$, $p$, $C_{\mathrm{PI}}$, $\la$, and the doubling constant $C_{\nu}$ for $\nu$. 
\end{prop}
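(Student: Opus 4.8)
The final statement to prove is Proposition~\ref{uniform upgrade}, attributed to \cite[Proposition 6.3]{BBS20}. Since this is cited verbatim from the literature, the proof proposal is really a sketch of the standard argument that promotes a Poincar\'e inequality from subWhitney balls to all balls in a uniform space.

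\bigskip

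The plan is to follow the chaining argument of Bj\"orn--Bj\"orn--Shanmugalingam. First I would fix a ball $B = B_{\Omega}(x_0, R)$ in $\Omega$ and a function $u$ with upper gradient $g$. The goal is to estimate $\dashint_B |u - u_B|\, d\nu$ by an integral of $g$ over a dilate $L'B$. The key geometric input is uniformity: any two points $y, z \in B$ can be joined by an $L$-uniform curve $\gamma$, and along such a curve one can construct a \textbf{chain} of subWhitney balls $B_i = B_\Omega(\gamma(t_i), c_1 d_\Omega(\gamma(t_i)))$ with the property that consecutive balls overlap substantially, the radii first increase geometrically and then decrease geometrically as one moves along $\gamma$ (controlled by \eqref{uniform one}), and all the $B_i$ are contained in a fixed dilate of $B$ (controlled by \eqref{uniform two}). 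One telescopes $|u(y) - u(z)|$ through the mean values $u_{B_i}$: each difference $|u_{B_i} - u_{B_{i+1}}|$ is bounded, using the overlap and doubling, by $C \dashint_{\la B_i} g\, d\nu$ via the local Poincar\'e inequality on $B_i$ and $B_{i+1}$. Summing the geometric series and using doubling to compare $\nu(B_i)$ with $\nu$ of the relevant sub-balls, one obtains a pointwise bound $|u(y) - u_B| \lesssim \sum_i r_i \dashint_{\la B_i} g \, d\nu$, and then integrating in $y$ over $B$ and applying a covering/bounded-overlap argument for the family $\{B_i\}$ (a consequence of doubling) converts the sum into a single integral $C R \dashint_{L' B} g\, d\nu$.

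\bigskip

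The key steps, in order, would be: (1) for $y \in B$, use uniformity to build the Whitney-type chain from $y$ toward the ``center'' of $B$ (a subWhitney ball deep inside $B$, whose existence follows from Lemma~\ref{choosing center}), with geometric control on radii and containment in $L'B$; (2) telescope and apply the hypothesis Poincar\'e inequality on each $B_i$ to get the pointwise estimate; (3) integrate over $y \in B$, interchange sum and integral, and use the bounded-overlap property of the chain balls together with doubling to collapse the sum into a single averaged integral of $g$ over $L'B$; (4) track all constants to confirm the dependence is only on $L$, $c_1$, $p$, $C_{\mathrm{PI}}$, $\la$, and $C_\nu$. For the $L^p$ version one applies H\"older's inequality at the appropriate point in step (2), which is why the dilation constant and the $L^p$ norm of $g$ appear; we only need $p=1$.

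\bigskip

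The main obstacle is step (3): making the bounded-overlap argument precise. One must show that when the pointwise estimates are integrated over all $y \in B$, the resulting double sum over chain balls can be reorganized so that each ball $B_i$ is counted a bounded number of times (with bound depending only on $L$, $c_1$, $C_\nu$), which requires a careful Vitali-type or dyadic decomposition of $B$ according to distance to $\partial\Omega$ and a verification that a fixed subWhitney ball serves as a chain link for only boundedly many source points $y$ in a controlled annular region. This is the technical heart of the Bj\"orn--Bj\"orn--Shanmugalingam proof, but since the statement is being invoked as a black box from \cite{BBS20}, in the paper it suffices to cite it; the sketch above is only to indicate why the hypotheses (uniformity of $\Omega$, doubling of $\nu$, subWhitney Poincar\'e) are exactly what is needed.
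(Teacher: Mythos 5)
The paper does not prove this proposition: it invokes it verbatim from \cite[Proposition 6.3]{BBS20}, and the only original content in the paper at this point is the one-sentence remark that follows the statement, namely that BBS state the result for \emph{bounded} $L$-uniform spaces but that their proof goes through without change in the unbounded case provided the doubling property of $\nu$ and the subWhitney Poincar\'e inequality hold at all scales. Your sketch of the chaining argument is a faithful and essentially correct description of what happens inside the BBS proof --- telescoping through chains of overlapping subWhitney balls along uniform curves, applying the local Poincar\'e inequality on each link, integrating in $y$, and using doubling plus bounded overlap to collapse the double sum --- so as a summary of the cited argument it is fine, and you are right that the bounded-overlap step is the technical crux.

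However, you have not engaged with the one issue that the paper itself needed to address. Since $\bar{\X}_b$ is unbounded (a descending vertical geodesic has infinite $d_b$-length), the proposition as stated in \cite{BBS20} does not apply off the shelf, and so the burden on the author is precisely to justify that the bounded-case proof extends. Your sketch never mentions that BBS assume boundedness, and consequently does not address why the chaining argument remains valid when $\Omega$ is unbounded: the key observations are that the chains of subWhitney balls used in the proof are local (controlled by the radius $R$ of the given ball $B$ and the uniformity constant $L$, not by any global diameter of $\Omega$), that the doubling hypothesis here is genuinely global so the covering and overlap arguments still apply on arbitrarily large balls, and that the subWhitney Poincar\'e hypothesis is assumed at all scales $0<r\le c_1 d_\Omega(x)$. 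Adding a sentence along those lines would bring your proposal into alignment with what the paper actually asserts.
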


This proposition is stated for bounded $L$-uniform metric spaces in \cite{BBS20} but the proof works without modification for unbounded $L$-uniform metric spaces provided that the doubling property of $\nu$ holds at all scales and the Poincar\'e inequality on subWhitney balls holds at all appropriate scales. 

We can now verify the global Poincar\'e inequality on $\X_{b}$ and $\bar{\X}_{b}$, which proves Theorem \ref{global Poincare}. Below ``the data" includes all the constants from Lemma \ref{Whitney Poincare} as well as the doubling constant $C_{\mu_{\sigma}}$ for $\mu_{\sigma}$. 

\begin{prop}\label{global Poincare}
Suppose that $\mu_{\sigma}$ is doubling with doubling constant $C_{\mu_{\sigma}}$ and suppose that $(\X,d,\mu)$ supports a uniformly local Poincar\'e inequality. Then both $(\X_{b},d_{b},\mu_{\sigma})$ and $(\bar{\X}_{b},d_{b},\mu_{\sigma})$ support a Poincar\'e inequality with dilation constant $\la = 1$ and multiplicative constant $C_{\mathrm{PI}}$ depending on the data and $C_{\mu_{\sigma}}$.
\end{prop}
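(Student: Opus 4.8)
The plan is to build the global inequality from the subWhitney-ball inequality of Lemma \ref{Whitney Poincare} by means of the upgrading result Proposition \ref{uniform upgrade}, and then to pass to the completion and remove the dilation, following the scheme of \cite{BBS20}. First I would collect the ingredients. By Proposition \ref{finish uniform} the space $(\X_{b},d_{b})$ is $L$-uniform with $L = L(a,A)$. By hypothesis $\mu_{\sigma}$ is doubling on $\X_{b}$ with constant $C_{\mu_{\sigma}}$, and since $\mu_{\sigma}(\p\X_{b}) = 0$ and $\X_{b}$ is a dense open subset of $\bar{\X}_{b}$, the measure $\mu_{\sigma}$ is doubling on $\bar{\X}_{b}$ as well by \cite[Lemma 8.2.3]{HKST}. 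Finally, Lemma \ref{Whitney Poincare} (which uses the standing hypothesis that $(\X,d,\mu)$ supports a uniformly local Poincar\'e inequality) provides a constant $c_{1} > 0$ depending only on the data such that the Poincar\'e inequality \eqref{uniformly local Poincare} for $\mu_{\sigma}$ holds on every ball $B_{b}(x,r)$ with $x \in \X_{b}$ and $0 < r \leq c_{1}d_{b}(x)$, with dilation and multiplicative constants depending only on the data.

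Next I would apply Proposition \ref{uniform upgrade} with $p = 1$ to the $L$-uniform space $\Omega = \X_{b}$ equipped with the doubling measure $\nu = \mu_{\sigma}$. The balls $B_{b}(x,r)$ with $0 < r \leq c_{1}d_{b}(x)$ furnished by Lemma \ref{Whitney Poincare} are precisely the subWhitney balls on which Proposition \ref{uniform upgrade} requires a Poincar\'e inequality (shrinking $c_{1}$ to the smaller of the two relevant constants if necessary), and both the doubling property of $\mu_{\sigma}$ and these subWhitney inequalities hold at every scale, so the unbounded version of Proposition \ref{uniform upgrade} recorded in the remark following its statement applies even though $\X_{b}$ has infinite diameter. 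This produces a $1$-Poincar\'e inequality on $(\X_{b},d_{b},\mu_{\sigma})$ with dilation constant $L$ and multiplicative constant depending only on the data and $C_{\mu_{\sigma}}$.

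It then remains to transfer this inequality to $\bar{\X}_{b}$ and to remove the dilation. By \cite[Lemma 8.2.3]{HKST}, applied exactly as in the discussion following the statement of Theorem \ref{thm:Anosov Poincare}, the Poincar\'e inequality just obtained on $(\X_{b},d_{b},\mu_{\sigma})$ passes to $(\bar{\X}_{b},d_{b},\mu_{\sigma})$ with the same constants, since $\p\X_{b}$ is $\mu_{\sigma}$-null and the complement of a dense open set. Since $\bar{\X}_{b}$ is a geodesic metric space, as noted just before Remark \ref{Poincare strength}, \cite[Theorem 4.18]{Hein01} allows the dilation constant to be improved to $\la = 1$ on $\bar{\X}_{b}$; one last application of \cite[Lemma 8.2.3]{HKST} carries this $\la = 1$ inequality back to $(\X_{b},d_{b},\mu_{\sigma})$. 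In both cases the resulting multiplicative constant $C_{\mathrm{PI}}$ depends only on the data and $C_{\mu_{\sigma}}$, as required.

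Since every step above is an application of an already-established result, I do not anticipate a genuine obstacle; the points that require attention are purely bookkeeping, namely matching the subWhitney balls of Lemma \ref{Whitney Poincare} with those demanded by Proposition \ref{uniform upgrade}, confirming that the unboundedness of $\X_{b}$ causes no difficulty (which is exactly the content of the remark following Proposition \ref{uniform upgrade}, once one notes that here \emph{doubling} already means doubling at every scale), and verifying that the transfer lemma \cite[Lemma 8.2.3]{HKST} may be invoked in the direction $\bar{\X}_{b} \to \X_{b}$ as well as $\X_{b} \to \bar{\X}_{b}$.
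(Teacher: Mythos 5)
Your proof follows the same route as the paper's: gather the uniformity of $\X_{b}$ (Proposition \ref{finish uniform}), the global doubling of $\mu_{\sigma}$, and the subWhitney Poincar\'e inequality from Lemma \ref{Whitney Poincare}, apply Proposition \ref{uniform upgrade} to obtain the global inequality on $\X_{b}$ with dilation $L$, pass to $\bar{\X}_{b}$ via \cite[Lemma 8.2.3]{HKST}, and remove the dilation using geodesicity and \cite[Theorem 4.18]{Hein01}. The only difference is bookkeeping in the final step: the paper removes the dilation on $\X_{b}$ directly and then passes to the completion (removing the dilation again there), whereas you pass to $\bar{\X}_{b}$ first, remove the dilation there, and transfer back---this works since the transfer lemma is bidirectional, and your ordering has the mild advantage of only invoking geodesicity for the complete proper length space $\bar{\X}_{b}$, where it follows from Hopf--Rinow, rather than for the incomplete $\X_{b}$.
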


\begin{proof}
By Lemma \ref{Whitney Poincare} there is a $c_{1} > 0$ determined only by the data such that the Poincar\'e inequality holds on subWhitney balls of the form $B_{b}(x,r)$ with $0 < r \leq c_{1}d_{b}(x)$ for $x \in \X$, with uniform constants $\hat{C}_{\mathrm{PI}}$ and $\hat{\la}$. Since $(\X_{b},d_{b})$ is an $L$-uniform metric space with $L=L(a,A)$  and we assumed $\mu_{\sigma}$ is globally doubling on $\X_{b}$ with constant $\mu_{\sigma}$, it follows from Proposition \ref{uniform upgrade} that the metric measure space $(\X_{b},d_{b},\mu_{\sigma})$ supports a Poincar\'e inequality with constant $C_{\mathrm{PI}}'$ depending only on the data and dilation constant $L$. Since $\X_{b}$ is geodesic it follows that the Poincar\'e inequality \eqref{uniformly local Poincare} in fact holds with dilation constant $1$, with constant  $C_{\mathrm{PI}}^{*}$ depending only on the data \cite[Theorem 4.18]{Hein01}.

By \cite[Lemma 8.2.3]{HKST} we conclude that the completion $(\bar{\X}_{b},d_{b},\mu_{\sigma})$ (with $\mu_{\sigma}(\p \X_{b}) = 0$) also supports a Poincar\'e inequality with constants depending only on the constants for the Poincar\'e inequality on $\X_{b}$ and the doubling constant of $\mu_{\sigma}$. Since $\bar{\X}_{b}$ is also geodesic it follows by the same reasoning as before that we can take the dilation constant to be $1$ in this case as well. 
\end{proof}

\section{Construction of Measure of Maximal Entropy}\label{sec:max entropy}

%For this section we will need the precise notion of uniform smoothness defined in my thesis [cite]. The definition is recalled below, for more details refer to my thesis. We write $B_{k}$ for the unit ball in $\R^{k}$.
%
%\begin{defn}\label{uniform foliation}
%Let $\W$ be a foliation of a $C^s$ manifold $M$ and let $s \geq r \geq 1$. We define $\W$ to have \emph{uniformly $C^r$ leaves}  if there is an atlas of foliation charts $(\psi_{j},U_{j})_{j \in J}$ for $\W$ with $\psi_{j}: U_{j} \rightarrow B_{k} \times B_{m-k}$ such that the compositions for $p \in B_{m-k}$, 
%\[
%\zeta_{j,p} = \psi^{-1}_{j} \circ i_{p} : B_{k} \rightarrow M,
%\] 
%are $C^r$ and depend continuously on $p$ inside of $C^{r}(B_{k},M)$. 
%
%For another $C^r$ manifold $N$, we define $f: M \rightarrow N$ to be \emph{uniformly $C^r$ along $\W$} if for each chart $(\psi_{j},U_{j})$ the compositions $f \circ \zeta_{j,p}$ depend continuously on $p \in B_{m-k}$ inside of $C^{r}(B_{k},N)$. 
%\end{defn} 

We return now to the setting of Anosov flows. We suppose that $M$ is a $C^{r+1}$ closed Riemannian manifold and $f^{t}:M \rightarrow M$ is a $C^r$ transitive Anosov flow on $M$, $r \geq 2$. We use the same notation for subbundles and foliations as was described prior to the statement of Theorem \ref{thm:Patterson-Sullivan}. We write $d^{M}$ for the Riemannian metric on $M$. For each leaf $\W^{*}(x)$ of the foliation $\W^{*}$ we write $d^{*}_{x}$ for the induced Riemannian metric on $\W^{*}(x)$ from $M$. We write $B^{*}(x,r)$ for the ball of radius $r$ in $\W^{*}(x)$ centered at $x$ in the metric $d^{*}_{x}$ (and naturally $B^{M}(x,r)$ for the corresponding ball for $d^{M}$). We assume that the inequalities \eqref{unstable expansion} and \eqref{stable expansion} hold with constants $C_{u} = C_{s} = 1$. If these constants are present in those inequalities we assume $r \geq 3$ instead and remove them by introducing a new Riemannian metric using Proposition \ref{Lyapunov metric}. The restriction $C_{u} = 1$ is required to apply the results of the previous sections, while $C_{s} = 1$ is required to assert that $f^{t}$ does not expand distances along $\W^{cs}$: for any $x \in M$, $y \in \W^{cs}(x)$ we have for all $t \geq 0$,
\begin{equation}\label{no expand}
d^{cs}_{f^{t}x}(f^{t}x,f^{t}y) \leq d^{cs}_{x}(x,y).
\end{equation} 
To simplify notation we write $a = a_{u}$ and $A = A_{u}$ for the lower and upper bounds on growth rates on $E^{u}$. 

As we did in the proof of Theorem \ref{thm:expanding hyperbolic} toward the end of Section \ref{sec:expanding cone}, for $x \in M$ we write $\X_{x} = (\wt{\W}^{cu}(x),b_{x})$ for the expanding cone associated to the point $x$ consisting of the universal cover of the leaf $\W^{cu}(x)$ together with the height function $b_{x}$ satisfying $b_{x}(x) = 0$ whose level sets are the unstable manifolds $\W^{u}(y)$, $y \in \wt{\W}^{cu}(x)$. We recall that $\wt{\W}^{cu}(x) = \W^{cu}(x)$ unless $\W^{cu}(x)$ contains a periodic orbit. 

For $x \in M$ we write $\W^{*}_{\loc}(x)$ for the \emph{local leaf} of $\W^{*}$ through $x$, defined to be any plaque of uniform size of the foliation $\W^{*}$ through $x$ in a foliation chart for $\W^{*}$. We will shortly clarify below what we mean by ``uniform size". We write $h^{*}_{xy}$ for the local \emph{holonomy maps} of the foliation $\W^{*}$ between local transversals (with subscripts indicating two points $x$ and $y$ in each transversal with $y \in \W^{*}_{\loc}(x)$), so that for instance if $x \in M$, $y \in \W^{s}_{\loc}(x)$ then $h^{s}_{xy}: \W^{cu}_{\loc}(x) \rightarrow \W^{cu}_{\loc}(y)$ is the $s$-holonomy map assigning to each $z \in \W^{cu}_{\loc}(x)$ the unique intersection point $h^{s}_{xy}(z)$ of $\W^{s}_{\loc}(z)$ with $\W^{cu}_{\loc}(y)$. The $f^{t}$-invariance of these foliations leads to the \emph{equivariance} property, stated below for $s$-holonomy in particular: for any $y \in \W^{s}_{\loc}(x)$, and any $t \geq 0$, 
\begin{equation}\label{equivariant holonomy}
h^{s}_{f^{t}x f^{t}y} = f^{t} \circ h^{s}_{xy} \circ f^{-t}: f^{t}(\W^{cu}_{\loc}(x)) \rightarrow f^{t}(\W^{cu}_{\loc}(y)).
\end{equation}
An analogous statement holds for $cs$-holonomy replacing $s$ with $cs$ and $cu$ with $u$ above. The corresponding statements for $u$-holonomy and $cu$-holonomy are obtained by replacing $f^{t}$ with $f^{-t}$ and swapping $s$ for $u$. Note also that $c$-holonomy is simply given by flowing by $f^{t}$. The expression in \eqref{equivariant holonomy} should be understood as meaning that $f^{t} \circ h^{s}_{xy} \circ f^{-t}$ corresponds to local stable holonomy in any local leaf of $\W^{cu}$ contained within $f^{t}(\W^{cu}_{\loc}(x))$. 

For $t < 0$ in \eqref{equivariant holonomy} we treat this expression as an extension of the definition of $s$-holonomy to points that may not lie on the same local stable leaf but still lie on the same stable leaf at a global scale (and the same for the other holonomies). Note that at this global scale the basepoint marking in $h^{*}_{xy}$ is crucial to specify which holonomy we are referring to, as the individual leaves of these foliations are usually dense in $M$ (this is always the case for $\W^{cs}$ and $\W^{cu}$, and is also the case for $\W^{s}$ and $\W^{u}$ if $f^{t}$ is topologically mixing).

By the compactness of $M$ and the continuity of the foliations there is an $R > 0$ such that for any $x \in M$ and any $* \in \{u,c,s,cu,cs,M\}$ the ball $B^{*}(x,R)$ fits inside a foliation chart for each of the foliations $\W^{*}$ (and, if $* \neq M$, is contained within a plaque for a foliation chart for its corresponding foliation). By rescaling the Riemannian metric on $M$ by a constant factor (which does not affect \eqref{unstable expansion} or \eqref{stable expansion}) we can assume that $R = 1$, which we will do in the rest of this section. 

For each $x \in M$ we write $\rho_{x}$ for the Hamenst\"adt metric on $\W^{u}(x)$, defined by the formula \eqref{metric definition}. It's straightforward to check that the lift of this metric to $\wt{\W}^{u}(x)$ inside of $\X_{x}$ is a Hamenst\"adt metric in the expanding cone sense defined by the same formula. As in Section \ref{sec:uniform cone} we write $B_{\rho}(x,r)$ for the ball of radius $r$ in the metric $\rho_{x}$ centered at $x$. 

The basepoints in objects like $d^{u}_{x}$, $\rho_{x}$, $h^{cs}_{xy}$, etc. will be omitted when they can be understood from context to reduce clutter in the notation. Typically one can directly infer which basepoint is intended from the arguments being passed to the object, e.g. $d^{u}(x,y) = d^{u}_{x}(x,y)$ for $y \in \W^{u}(x)$.

We start by considering the local center-stable holonomy maps between unstable leaves. The additional conclusion that $K_{R} \rightarrow 1$ as $R \rightarrow 0$ is only needed for Proposition \ref{holonomy invariance}.

\begin{lem}\label{Hamenstadt holonomy}
For each $R > 0$ there is a constant $K = K_{R} \geq 1$ such that if $x \in M$ and $y \in \W^{cs}(x)$ with $d^{cs}(x,y) \leq R$ then the holonomy $h^{cs}:B_{\rho}(x,R) \rightarrow \W^{u}(y)$ is $K$-biLipschitz in the Hamenst\"adt metrics. The constants $K_{R}$ can be chosen such that $K_{R} \rightarrow 1$ as $R \rightarrow 0$.
\end{lem}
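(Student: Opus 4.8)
The plan is to exploit the conformal scaling property \eqref{conformal scaling} of the Hamenst\"adt metric together with the equivariance \eqref{equivariant holonomy} of the $cs$-holonomy to reduce the estimate to a uniformly local statement about the unstable leaves that can be controlled by compactness of $M$. First I would record the key geometric fact underlying the argument: since $C_s = 1$, by \eqref{no expand} the flow $f^t$ does not expand $cs$-distances, so for $y \in \W^{cs}(x)$ with $d^{cs}(x,y) \le R$ we have $d^{cs}(f^t x, f^t y) \le R$ for all $t \ge 0$, and in particular the $cs$-holonomy $h^{cs}_{f^t x\, f^t y}$ remains a holonomy between pieces of unstable leaves of uniformly bounded $cs$-separation. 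By the equivariance \eqref{equivariant holonomy} (in the $cs$/$u$ form), $h^{cs}_{xy} = f^{-t} \circ h^{cs}_{f^t x\, f^t y} \circ f^t$.

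\textbf{Main steps.} The first step is to prove the claim at a fixed small scale purely from compactness and the transverse regularity of the foliations: there is a scale $r_0 > 0$ and a constant $K_0 = K_0(r_0)$ such that whenever $d^{cs}(x,y) \le r_0$, the $cs$-holonomy $h^{cs}: B^u(x, r_0) \to \W^u(y)$ is $K_0$-biLipschitz in the \emph{Riemannian} metrics $d^u_x$, $d^u_y$, with $K_0 \to 1$ as $r_0 \to 0$; this follows from the uniform $C^1$ (indeed $C^r$) regularity of the $cs$-holonomy maps along $\W^{cu}$ and compactness of $M$ (one covers $M$ by finitely many foliation charts and uses that the holonomy maps have derivatives depending continuously on the basepoints, equal to the identity in the limit $y \to x$). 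The second step converts this into a statement about Hamenst\"adt balls of unit size: using the comparison \eqref{Hamenstadt comparison 1}--\eqref{Hamenstadt comparison 2} between $d^u_x$ and $\rho_x$ at unit scale, and the fact (from Lemma \ref{metric comparison} applied inside $\X_x$) that $d^u$-balls and $\rho$-balls of comparable small radius are nested, one gets that $h^{cs}: B_\rho(x, r_1) \to \W^u(y)$ is $K_1$-biLipschitz in the Hamenst\"adt metrics for some fixed small $r_1 > 0$, again with $K_1 \to 1$ as $r_1 \to 0$. The third step is the scaling/bootstrap argument to pass from the fixed small scale $r_1$ to an arbitrary scale $R$: given $x$, $y$ with $d^{cs}(x,y) \le R$ and points $p, q \in B_\rho(x, R)$, choose $t = t(R) \ge 0$ large enough (depending only on $R$, $a$, $A$) that $f^t p, f^t q \in B_\rho(f^t x, r_1)$ — possible because \eqref{conformal scaling} shrinks Hamenst\"adt distances at unstable points by $e^{at}$ while \eqref{no expand} keeps $f^t y$ within $cs$-distance $R$ of $f^t x$ — apply the unit-scale estimate to $h^{cs}_{f^t x\, f^t y}$, and then pull back by $f^{-t}$ using \eqref{conformal scaling} and \eqref{equivariant holonomy}; the factors of $e^{\pm at}$ cancel exactly, yielding $\rho_y(h^{cs} p, h^{cs} q) \asymp_{K} \rho_x(p,q)$ with $K = K_1$, which is independent of $R$ except through the requirement that $t(R)$ be finite. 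Finally, to obtain $K_R \to 1$ as $R \to 0$ one simply takes $t = 0$ once $R \le r_1$, so that $K_R \le K_1(R) \to 1$.

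\textbf{Expected obstacle.} The main obstacle is the first step: carefully extracting, from the uniform transverse regularity of the foliation $\W^{cs}$ within the leaves of $\W^{cu}$, a \emph{uniform} small-scale biLipschitz bound on the $cs$-holonomies in the induced Riemannian metrics on unstable leaves, with the correct asymptotic $K_0 \to 1$ as the scale goes to zero. This requires knowing that the holonomy maps, restricted to unstable plaques, have differentials that depend continuously (uniformly over $M$) on the basepoint pair $(x,y)$ and reduce to the identity when $x = y$; the regularity results cited from \cite{Bu17} (in particular that $\W^u$ is $C^r$ along $\W^{cu}$ and the holonomies between unstable plaques inherit corresponding regularity) should supply exactly what is needed, but marshalling them with uniform constants over the compact manifold is the delicate point. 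The scaling argument in the third step, by contrast, is essentially formal once \eqref{conformal scaling}, \eqref{equivariant holonomy}, and \eqref{no expand} are in hand.
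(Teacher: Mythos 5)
Your step 3 has the direction of conformal scaling backwards: by \eqref{conformal scaling}, forward flow $f^t$ with $t \geq 0$ multiplies Hamenst\"adt distances by $e^{at} > 1$, so it \emph{expands} rather than shrinks them; $f^t(B_\rho(x,R))$ is the ball $B_\rho(f^t x, e^{at}R)$, which is \emph{larger} than $B_\rho(f^t x, r_1)$, not contained in it. Backward flow $f^{-t}$ would shrink the Hamenst\"adt ball, but then \eqref{no expand} no longer controls $d^{cs}(f^{-t}x, f^{-t}y)$, which can grow without bound, so the compactness constant from step 2 would cease to be uniform. The correct orientation --- which is what the paper uses --- is to establish the compactness estimate at the \emph{unit} Hamenst\"adt scale $\rho(x,z) = 1$, and for a point $z$ with $\rho(x,z) < 1$ flow \emph{forward} by $t$ with $e^{at}\rho(x,z) = 1$ to bring the pair up to the reference scale; forward flow keeps $d^{cs}(f^t x, f^t y) \leq R$ by \eqref{no expand}, the compactness estimate applies, and the factors of $e^{\pm at}$ cancel via equivariance.

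There is a second, more fundamental problem in steps 1--2. The $cs$-holonomy $h^{cs}_{xy}: \W^u_{\loc}(x) \to \W^u_{\loc}(y)$ maps between unstable plaques lying in \emph{different} center-unstable leaves, and its regularity is governed by the transverse regularity of $\W^{cs}$, which for a general Anosov flow is only H\"older continuous, not $C^1$. The regularity result you invoke (that $\W^u$ is $C^r$ along $\W^{cu}$) concerns holonomies \emph{within} a single $\W^{cu}$ leaf and says nothing about $h^{cs}$. So no uniform Riemannian biLipschitz estimate with $K_0 \to 1$ exists, and step 1 fails. Even if such an estimate held, step 2 would still break: Lemma \ref{metric comparison} relates $d^u_x$ to $\rho_x$ only through a H\"older comparison with exponent $a/A$, so a biLipschitz bound in $d^u$ transfers only to a H\"older bound in $\rho_x$, not a biLipschitz one. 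The essential phenomenon the lemma captures is that the $cs$-holonomy \emph{becomes} biLipschitz in the dynamically adapted Hamenst\"adt metric despite being merely H\"older in the Riemannian one; the paper establishes this directly from the boundedness of the continuous function $(x,y,z) \mapsto \rho(y, h^{cs}(z))$ at the fixed scale $\rho(x,z) = 1$ over a compact configuration space, using only continuity, never differentiability, of the holonomies.
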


\begin{proof}
We first observe that for $x \in M$, $z \in \W^{u}(x)$ with $\rho(x,z) = 1$, and $y \in \W^{cs}(x)$ with $d^{cs}(x,y) \leq R$, by the compactness of $M$, the continuous dependence of the Hamenst\"adt metrics $\rho_{x}$ on the basepoint $x$, and the continuity of the holonomy maps of the foliations there is a uniform constant $K \geq 1$ depending on $R$ such that $\rho(y,h^{cs}(z)) \asymp_{K} 1$. Next observe in this configuration that if instead we have $\rho(x,z) < R$ then we can find $t > 0$ such that 
\[
\rho(f^{t}x,f^{t}z) = e^{at}\rho(x,z) = R. 
\]
Since $f^{t}$ does not expand distances on $\W^{cs}(x)$ by \eqref{no expand} it follows that $d^{cs}(f^{t}x,f^{t}y) \leq R$ as well. Thus $\rho(f^{t}y,h^{cs}(f^{t}z)) \asymp_{K} R$. Applying the equivariance property \eqref{equivariant holonomy} we conclude that
\[
e^{at}\rho(y,h^{cs}(z))= \rho(f^{t}y,f^{t}(h^{cs}(z))) \asymp_{K} 1,
\]
which implies that
\[
\rho(y,h^{cs}(z)) \asymp_{K} \rho(x,z).
\]

Lastly we take a closer look at the dependence of $K$ on $R$ as $R \rightarrow 0$. The constant $K$ is determined by the maximal and minimal values over $x \in M$, $y \in \W^{cs}(x)$, $z \in \W^{u}(z)$ of $\rho(y,h^{cs}(z))$ when $\rho(x,z) = 1$ and $d^{cs}(x,y) \leq R$. The compactness of $M$ implies that the function $(x,y,z) \rightarrow \rho(y,h^{cs}(z))$ is uniformly continuous over all triples $(x,y,z) \in M^{3}$ with $y \in \W^{cs}(x)$ satisfying $d^{cs}(x,y) \leq 1$ and $z \in \W^{u}(x)$ satisfying $\rho(x,z) = 1$. Furthermore if $d^{cs}(x,y) = 0$ then $x = y$, $z = h^{cs}(z)$, and therefore $\rho(y,h^{cs}(z)) = 1$. Thus given $\e > 0$ we can find $\delta > 0$ such that if $d^{cs}(x,y) < \delta$ then 
\[
(1+\e)^{-1} < \rho(y,h^{cs}(z)) < 1+\e. 
\]
Therefore if $R < \delta$ then we may take $K_{R} = 1+\e$. Since $\e > 0$ was arbitrary this means we can choose $K_{R}$ such that $K_{R} \rightarrow 1$ as $R \rightarrow 0$.
\end{proof}

The next claim establishes that any two unit Hamenst\"adt balls have biLipschitz equivalent subballs of uniform size. We emphasize below that that the center $z$ of said subball cannot always be chosen to be the center $x$ of the larger ball, and it may not even be the case that the subball $B_{\rho}(z,R)$ contains $x$. Similarly $y$ may not be in the $cs$-holonomy image of $B_{\rho}(z,R)$.

\begin{lem}\label{subball equivalence}
There is a uniform constant $K \geq 1$ and radius $0 < R \leq 1$ such that for any $x,y \in M$ there is a point $z \in B_{\rho}(x,1)$ with $B_{\rho}(z,R) \subset B_{\rho}(x,1)$ such that there is a center-stable holonomy map $h^{cs}:B_{\rho}(z,R) \rightarrow B_{\rho}(y,1)$ that is a $K$-biLipschitz homeomorphism onto its image in the Hamenst\"adt metrics.
\end{lem}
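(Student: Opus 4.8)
The plan is to use topological transitivity to connect $x$ to $y$ by an orbit segment of \emph{uniformly bounded} length, and then to transport a uniform-size piece of $\W^u(x)$ into $\W^u(y)$ by a single center-stable holonomy whose intrinsic $d^{cs}$-displacement is controlled by that bounded length. Fix a small $\epsilon>0$. Since $f^t$ is a transitive Anosov flow it is uniformly transitive: there is a constant $T_0=T_0(\epsilon)$ such that for every $x,y\in M$ one can find $p\in B^M(x,\epsilon)$ and $T\in[0,T_0]$ with $f^T p\in B^M(y,\epsilon)$. (For mixing $f^t$ this is immediate from compactness and mixing; otherwise $f^t$ is a constant-roof suspension of a transitive Anosov diffeomorphism and the bound follows from uniform transitivity of the base.) As $\epsilon$ is below the uniform local product structure scale (normalized to $1$ in this section), I would set
\[
q:=\W^u_{\loc}(x)\cap\W^{cs}_{\loc}(p),\qquad r:=\W^u_{\loc}(y)\cap\W^{cs}_{\loc}(f^T p),
\]
so that $q\in\W^u(x)$, $r\in\W^u(y)$, and $d^{cs}(q,p)\ls\epsilon$, $d^{cs}(r,f^T p)\ls\epsilon$, $d^M(x,q)\ls\epsilon$, $d^M(y,r)\ls\epsilon$.

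Next I would pass from these small $d^M$-distances to small Hamenst\"adt distances: by \eqref{leaf control} and \eqref{Hamenstadt comparison 1} of Lemma \ref{metric comparison} one gets $\rho_x(x,q)\le\delta$ and $\rho_y(y,r)\le\delta$ with $\delta=\delta(\epsilon)\to0$ as $\epsilon\to0$, so I fix $\epsilon$ small enough that $\delta\le\tfrac12$ and $d^{cs}(q,p),d^{cs}(r,f^T p)\le\tfrac12$. The decisive point is that $q$ and $r$ lie on one center-stable leaf at controlled distance. Indeed, since center-stable leaves are flow invariant, all of $q$, $f^T q$, $f^T p$, $r$ lie on $\W^{cs}(p)$; using \eqref{no expand} and the fact that $\dot f$ is a unit field (so an orbit segment has length equal to the elapsed time),
\[
d^{cs}(q,r)\le d^{cs}(q,f^T q)+d^{cs}(f^T q,f^T p)+d^{cs}(f^T p,r)\le T+d^{cs}(q,p)+\tfrac12\le T_0+1=:C_0 .
\]

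Now I would apply Lemma \ref{Hamenstadt holonomy} with radius $R=C_0$: the center-stable holonomy $h^{cs}=h^{cs}_{qr}\colon B_\rho(q,C_0)\to\W^u(r)=\W^u(y)$ is $K$-biLipschitz in the Hamenst\"adt metrics with $h^{cs}(q)=r$, where $K:=K_{C_0}$ is uniform since $C_0$ is. Finally I would take
\[
z:=q\in B_\rho(x,1),\qquad R:=\min\bigl(C_0,\ (1-\delta)/K\bigr)\le 1 ,
\]
independently of $x,y$. Since $\rho_x(x,z)\le\delta$ and $R\le1-\delta$, the triangle inequality gives $B_\rho(z,R)\subset B_\rho(x,1)$; since $R\le C_0$, $h^{cs}$ restricted to $B_\rho(z,R)$ is a $K$-biLipschitz homeomorphism onto its image; and that image lies in $B_\rho(r,KR)\subset B_\rho(y,KR+\delta)\subset B_\rho(y,1)$ by the choice of $R$. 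This is exactly the assertion, and it also explains the warnings preceding the lemma: $z=q$ is only close to $x$, not equal to it, and neither $x\in B_\rho(z,R)$ nor $y\in h^{cs}(B_\rho(z,R))$ need hold.

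The step I expect to be the crux is the uniformity of $R$, which hinges entirely on the uniform transitivity input --- the existence of a connecting orbit of length bounded independently of $x$ and $y$. With only plain topological transitivity one gets a connecting time $T=T(x,y)$, forcing $C_0$, and hence $K$ and $R$, to depend on the pair $(x,y)$; it is precisely the boundedness of $T$ for transitive Anosov flows that keeps $K$ and $R$ uniform. Everything else is routine manipulation of the comparison estimates of Lemma \ref{metric comparison}, the non-expansion bound \eqref{no expand}, and the biLipschitz holonomy estimate of Lemma \ref{Hamenstadt holonomy}.
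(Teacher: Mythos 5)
Your proposal is correct and reaches the same conclusion, but the mechanism is organized a little differently from the paper's. Both arguments rest on the specification property (you phrase it as uniform transitivity of the flow; the paper phrases it as uniform density of stable leaves, with the constant-roof suspension case handled via Plante's theorem -- these are the same input). The divergence is in how the connecting holonomy is built. You locate $q \in \W^u(x)$ and $r \in \W^u(y)$ on a common center-stable leaf with $d^{cs}(q,r) \le C_0 = T_0 + 1$, and then invoke Lemma~\ref{Hamenstadt holonomy} \emph{once} with radius $R = C_0$, which may well exceed $1$. The paper instead finds a stable connection $q \in \W^{s}(p)$ with $d^s(p,q)\le L$, flows forward a fixed time $j=j(L)$ to bring this distance below $1$, applies Lemma~\ref{Hamenstadt holonomy} only with $R=1$, flows back, and finally pre-composes with the time-shift $f^{T_{xy}}$; the biLipschitz constant accrues explicit factors $e^{aj}$ and $e^{aT}$. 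Your route is more direct and avoids the flow-forward/flow-back bookkeeping, at the price of leaning on the $R>1$ case of Lemma~\ref{Hamenstadt holonomy}. That is formally covered by the lemma's statement, so this is fine; but it is worth being aware that the paper's proof of Lemma~\ref{Hamenstadt holonomy} is written with $\rho(x,z)\le R$ reduced by flowing forward to the base-case scale, and the paper's own use of the lemma (here and elsewhere) carefully sticks to $R=1$, so you are exercising a corner of that lemma that the paper does not. One further small remark: your estimates $d^M(x,q)\ls\epsilon$, $d^M(y,r)\ls\epsilon$ are stated as if the local product structure were Lipschitz; in general it is only H\"older, so these should be read as ``tends to $0$ uniformly as $\epsilon\to 0$,'' which is all you actually use. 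The identification of the crux (boundedness of the connecting time) and the caveats you note at the end -- that $z$ need not be $x$ and that $y$ need not lie in the holonomy image -- match the paper's warnings preceding the lemma.
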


\begin{proof}
If $f^{t}$ is topologically mixing then the stable leaves of $f^{t}$ are uniformly dense in $M$; this is simply a reformulation of the \emph{specification property} that topologically mixing flows satisfy \cite[Chapter 8.3]{FH19}. Consequently there is an $L > 0$ and $r > 0$ such that for any two points $x,y \in M$ there is $T_{xy} \in \R$ with $|T_{xy}| \leq 1$ and points $p \in f^{T_{xy}}(B_{\rho}(x,1))$ and $q \in B_{\rho}(y,1)$ such that $B_{\rho}(p,r) \subset f^{T_{xy}}(B_{\rho}(x,1))$, $\rho(y,q) <  \frac{1}{4}$, $q \in \W^{s}(p)$ and $d^{s}(p,q) \leq L$. When $f^{t}$ isn't topologically mixing a variant of this claim is also true: in this case $f^{t}$ is a suspension flow with constant roof function by a theorem of Plante \cite[Theorem 9.1.1]{FH19} (in particular the stable and unstable foliations jointly integrate to a foliation $\W^{su}$) and the stable leaves are instead uniformly dense within each leaf $\W^{su}(x)$ by the corresponding specification property for Anosov diffeomorphisms. This uniform density can be taken to be uniform over all leaves $\W^{su}(x)$ since $f^{t}$ is a suspension flow. Furthermore all of the leaves $\W^{su}(x)$ are $c$-holonomy related by the flow $f^{t}$, and the $d^{c}$-length of these $c$-holonomy segments is bounded by some uniform constant $T > 0$ determined by the constant value of the roof function for $f^{t}$. The claim that such an $L > 0$ and $R > 0$ exists with $T_{xy} \in \R$ and points $p,q$ as above follows by combining all of these assertions, now with $|T_{xy}| \leq T$.

Thus in either case we have an $L > 0$, $R > 0$, and $T_{xy} \in \R$ with $p \in f^{T}(B_{\rho}(x,1))$ and $q \in B_{\rho}(y,1)$ such that $B_{\rho}(p,r) \subset f^{T_{xy}}(B_{\rho}(x,1))$, $\rho(y,q) <  \frac{1}{4}$,  $q \in \W^{s}(p)$, $d^{s}(p,q) \leq L$ and $|T_{xy}| \leq T$ for some $T$ independent of $x$ and $y$. There is then a $j = j(L) > 0$ such that $d^{s}(f^{j}p,f^{j}q) \leq 1$ which implies that $d^{cs}(f^{j}p,f^{j}q) \leq 1$. By replacing $r$ with $\min\{r,e^{-aj}\}$ we can assume that $e^{aj}r \leq 1$. We can then apply Lemma \ref{Hamenstadt holonomy} (with $R = 1$) to conclude (using $j = j(L)$) that there is a constant $C = C(L)$ such that the $cs$-holonomy $h^{cs}:B_{\rho}(f^{j}p,e^{aj}r) \rightarrow \W^{u}(f^{j}y)$ is $C$-biLipschitz in the Hamenst\"adt metrics. Applying $f^{-j}$, we conclude that the holonomy  $h^{cs}:B_{\rho}(p,r) \rightarrow B_{\rho}(y,1)$ is $C'$-biLipschitz, $C' = e^{aj}C$. Furthermore the image of $p$ is a point $q$ satisfying $\rho(q,y) < \frac{1}{4}$. Thus by shrinking $r$ further (by an amount that depends only on $C'$ and thus only on $L$) we can assume that the $cs$-holonomy image of $B_{\rho}(p,r)$ in $\W^{u}(y)$ lies entirely inside $B_{\rho}(y,1)$. Lastly, pre-composing the holonomy $h^{cs}$ with $f^{T_{xy}} = h^{c}$ on $f^{-T_{xy}}(B_{\rho}(p,r)) = B_{\rho}(f^{-T_{xy}},e^{-a T_{xy}}r)$ gives us a $cs$-holonomy map $h^{cs}:B_{\rho}(z,R) \rightarrow B_{\rho}(y,1)$ that is $K$-biLipschitz onto its image with $R = e^{-aT}r$ and $K = e^{aT}C'$ (using $|T_{xy}| \leq T$). By construction $R$ and $K$ only depend on the quantities $L$, $r$, and $T$, which we chose at the start of the proof to be independent of $x$ and $y$.  
\end{proof}

Lemma \ref{subball equivalence} will be used in conjunction with the following result on the uniform doubling property of the Hamenst\"adt metrics. Recall Definition \ref{defn:local doubling}.

\begin{lem}\cite[Lemma 6.2]{Bu17}\label{Ham doubling}
The metric spaces $(\W^{u}(x),\rho_{x})$ are doubling for each $x \in M$ with a doubling constant $N$ that can be taken to be independent of $x$.
\end{lem}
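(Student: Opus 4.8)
The plan is to use the conformal scaling property \eqref{conformal scaling} to collapse the doubling estimate to a single scale, and then to compare the Hamenst\"adt metric with the leafwise Riemannian metric at that scale via Lemma \ref{metric comparison} together with the compactness of $M$.

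First I would observe that for each $t$ the time-$t$ map $f^{t}$ is a similarity from $(\W^{u}(x),\rho_{x})$ onto $(\W^{u}(f^{t}x),\rho_{f^{t}x})$ with dilation factor $e^{at}$, by \eqref{conformal scaling}; in particular it carries coverings by $\rho$-balls to coverings by $\rho$-balls of proportionally scaled radius. Consequently, for any $x \in M$, any $y \in \W^{u}(x)$ and any $r > 0$, choosing $t$ with $e^{at}\cdot 2r = 1$ the map $f^{t}$ carries $B_{\rho}(y,2r)$ onto a ball of $\rho_{f^{t}x}$-radius $1$ in $\W^{u}(f^{t}x)$ and carries a covering of it by $\rho$-balls of radius $r$ onto a covering by $\rho$-balls of radius $1/2$. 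Since $\{\W^{u}(w):w\in M\}$ is exactly the collection of unstable leaves, it therefore suffices to produce a constant $N$, uniform over all unstable leaves, such that every ball of $\rho$-radius $1$ can be covered by at most $N$ balls of $\rho$-radius $1/2$.

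Next I would localize such a ball using \eqref{leaf control}--\eqref{Hamenstadt comparison 2}. If $\rho_{x}(y,z)\le 1$ then $d^{u}_{x}(y,z)\le 1$: otherwise $d^{u}_{x}(y,z)>1$ and \eqref{Hamenstadt comparison 2} would force $\rho_{x}(y,z)\ge d^{u}_{x}(y,z)^{a/A}>1$. Hence $B_{\rho}(y,1)$ is contained in the closed $d^{u}_{x}$-ball of radius $1$ about $y$. Set $\delta = 2^{-A/a}\le 1$. By the compactness of $M$ and the uniform $C^{r}$-regularity of the leaves of $\W^{u}$, the leafwise Riemannian metrics $d^{u}_{x}$ have uniformly bounded geometry on balls of radius $1$; in particular there is a constant $N_{0}$, independent of the leaf and of the center, such that every $d^{u}$-ball of radius $1$ is covered by at most $N_{0}$ $d^{u}$-balls of radius $\delta$. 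Finally, for any center $w$ on the leaf and any $z$ with $d^{u}_{x}(w,z)<\delta\le 1$, \eqref{Hamenstadt comparison 1} gives $\rho_{x}(w,z)\le d^{u}_{x}(w,z)^{a/A}<\delta^{a/A}=1/2$, so each such $d^{u}$-ball of radius $\delta$ sits inside a $\rho$-ball of radius $1/2$ about the same center. Chaining these inclusions covers $B_{\rho}(y,1)$ by at most $N_{0}$ balls of $\rho$-radius $1/2$, and we take $N = N_{0}$.

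The only genuinely delicate point is the reduction to a single scale: the comparison \eqref{Hamenstadt comparison 1} between $\rho_{x}$ and $d^{u}_{x}$ is merely H\"older, not biLipschitz, so a naive scale-by-scale argument would produce covering numbers blowing up as $r\to 0$. Flow-equivariance via \eqref{conformal scaling} is precisely what removes this difficulty, after which the remaining input --- uniform doubling of $d^{u}$-balls of a fixed radius over all leaves --- is a routine consequence of the compactness of $M$ and the uniform regularity of the foliation $\W^{u}$.
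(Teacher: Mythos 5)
Your proof is correct. Since the paper cites this lemma from \cite[Lemma 6.2]{Bu17} rather than proving it internally, there is no in-paper argument to compare against, but your argument stands on its own and is the natural one: the conformal scaling property \eqref{conformal scaling} makes each $f^{t}$ a similarity between Hamenst\"adt leaves, which collapses the doubling condition to a single scale over the family of unstable leaves; at that scale the H\"older comparison \eqref{Hamenstadt comparison 1}--\eqref{Hamenstadt comparison 2} with the leafwise Riemannian metric is harmless, and the uniform covering constant $N_{0}$ for Riemannian unit balls follows from compactness of $M$ and the uniform $C^{r}$ geometry of the unstable leaves. You also correctly diagnose why the reduction to a single scale is essential (the $\rho$--$d^{u}$ comparison is only H\"older, so scale-by-scale covering numbers would otherwise degenerate), and the choice $\delta = 2^{-A/a}$ is exactly what closes the chain of inclusions $B_{\rho}(y,1) \subset \bar{B}^{u}(y,1) \subset \bigcup_{i} B^{u}(w_{i},\delta) \subset \bigcup_{i} B_{\rho}(w_{i},1/2)$.
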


For $x \in M$ let $V_{s,l}(x)$ be the cardinality of a maximal $e^{-as}$-separated subset of $B_{\rho}(x,e^{-al})$, $l \leq s$. For $t \in \R$ we then have by \eqref{conformal scaling},
\begin{equation}\label{separated scale}
V_{s+t,l+t}(f^{-t}x) = V_{s,l}(x). 
\end{equation}
We set
\begin{equation}\label{max separated}
V_{s,l} = \sup_{x \in M} V_{s,l}(x), 
\end{equation}
and observe from \eqref{separated scale} that 
\begin{equation}\label{max separated scale}
V_{s+t,l+t} = V_{s,l},
\end{equation} 
for $t \in \R$. In the case $l=0$ we write $V_{s}(x):=V_{s,0}(x)$, $V_{s}:=V_{s,0}$. Note this implies $s \geq 0$.

In what follows our convention for $\asymp$, $\ls$, etc. is that the implied constants are independent of the point $x \in M$ as well as any parameters on which the quantities to be compared depend (such as $s$ and $l$ for $V_{s,l}$).

\begin{lem}\label{independent cardinality}
For any $x,y \in M$ and any $s \geq 0$ we have $V_{s}(x) \asymp V_{s}(y)$ and therefore $V_{s}(x) \asymp V_{s}$.
\end{lem}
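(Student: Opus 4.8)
The plan is to reduce the whole statement to the single inequality
\[
V_{s}(x) \gtrsim V_{s},
\]
valid for all $x \in M$ and all $s \geq 0$ with constant uniform in $x$ and $s$: combined with the trivial bound $V_{s}(x) \le V_{s}$ this gives $V_{s}(x) \asymp V_{s}$, hence $V_{s}(x) \asymp V_{s} \asymp V_{s}(y)$. Writing $N(A,\e)$ for the maximal cardinality of an $\e$-separated subset of $A$, I will freely use two elementary consequences of the uniform doubling of the Hamenst\"adt metrics (Lemma \ref{Ham doubling}). First, for a fixed point $x$, $V_{\sigma}(x)=N(B_{\rho}(x,1),e^{-a\sigma})$ satisfies $V_{\sigma}(x)\asymp V_{\sigma'}(x)$ whenever $|\sigma-\sigma'|$ is bounded, uniformly in $\sigma$, since $e^{-a\sigma}$ and $e^{-a\sigma'}$ then differ by a fixed dilation. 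Second, $V_{s}\asymp V_{s-t}$ for any fixed $t\geq 0$, uniformly in $s$: cover a unit Hamenst\"adt ball by a uniformly bounded number of balls of radius $e^{-at}$, rescale each by the flow (using \eqref{conformal scaling}) to a unit ball of another leaf, and compare with $V_{s,t}=V_{s-t}$ via \eqref{max separated scale} and $B_{\rho}(\cdot,e^{-at})\subset B_{\rho}(\cdot,1)$.

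For the main inequality, fix $s$ and pick $w\in M$ with $V_{s}(w)\geq\tfrac12 V_{s}$. By a pigeonhole argument over a cover of $B_{\rho}(w,1)$ by a uniformly bounded number of balls of the fixed radius $R$ from Lemma \ref{subball equivalence}, there is a radius-$R$ ball $B_{\rho}(z_{0},R)$, centered in $B_{\rho}(w,1)$, with $N(B_{\rho}(z_{0},R),e^{-as})\gtrsim V_{s}(w)\gtrsim V_{s}$. If one can transport this \emph{good} subball into $B_{\rho}(x,1)$ by a uniformly biLipschitz center-stable holonomy, then, exactly as in the proof of Lemma \ref{subball equivalence}, pushing a maximal $e^{-as}$-separated subset of $B_{\rho}(z_{0},R)$ forward and using Lemma \ref{Ham doubling} to absorb the change of scale gives $V_{s}(x)=N(B_{\rho}(x,1),e^{-as})\gtrsim N(B_{\rho}(z_{0},R),e^{-as})\gtrsim V_{s}$, which finishes the proof.

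To produce such a transport I would re-run the construction in the proof of Lemma \ref{subball equivalence} for the pair $(w,x)$, but choosing its internal ``anchor'' ball so that it carries near-maximal packing. Recall that there the transported subball of $B_{\rho}(w,1)$ is built from a ball $B_{\rho}(p,r)$ with $p$ on a stable leaf passing suitably close to $x$ within a bounded time; because stable leaves are uniformly dense (in $M$ in the topologically mixing case, in each leaf $\W^{su}$ otherwise), the admissible anchors $p$ form a dense subset of the relevant, uniformly bounded, ambient ball. On the other hand, since counts of the form $p\mapsto N(B_{\rho}(p,\delta),e^{-as})$ are integer-valued and lower semicontinuous in $p$ (using open balls), the set of $p$ around which the packing at the relevant small fixed radius is comparable to $V_{s}$ is open, and it is nonempty by the same pigeonhole argument applied inside the ambient ball; hence it meets the dense set of admissible anchors. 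Selecting $p$ in the intersection and tracking the bounded flow times and the finitely many radius-shrinkings — each by a factor uniform in $s$ — through the rest of the construction produces a holonomy $h^{cs}:B_{\rho}(z,R')\to B_{\rho}(x,1)$ with $R'\asymp R$ and uniform biLipschitz constant whose source satisfies $N(B_{\rho}(z,R'),e^{-as})\gtrsim N(B_{\rho}(p,r),e^{-as})\gtrsim V_{s}$, the last step using that (up to the uniform dilation from rescaling the ambient ball by $f^{T}$) this count is a $V$-quantity comparable to $V_{s}(w)$ by the bounded-shift remark of the first paragraph.

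The step I expect to be the main obstacle is exactly this last one: arranging the transported subball to capture a uniform fraction of $V_{s}$ while simultaneously meeting all the constraints of the construction in Lemma \ref{subball equivalence}, with every constant independent of $s$. The difficulty is that a naive application of Lemma \ref{subball equivalence} transports only \emph{some} radius-$R$ subball of $B_{\rho}(w,1)$, and a general doubling space may have a subball whose packing is far smaller than that of the ambient ball; the openness of the near-maximal-packing locus together with the density of admissible anchors is precisely what circumvents this. Everything else — the conformal rescalings, the doubling-based changes of scale, and the bounded-shift comparisons for $V_{\sigma}(x)$ and $V_{s}$ — is routine.
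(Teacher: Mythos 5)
Your overall route is the paper's — transport $e^{-as}$-separated sets via the $cs$-holonomy of Lemma~\ref{subball equivalence} and absorb the bounded scale changes by doubling — but you identify, and try to repair, a genuine weak point in the way the paper closes the argument. The paper's proof needs $V_{s,0}(x)\lesssim V_{s,l}(p)$ for the particular anchor $p$ produced by Lemma~\ref{subball equivalence}, which it deduces from \eqref{Ham doubling use}, the fixed-center subball packing inequality $V_{s,0}(y)\leq C V_{s,l}(y)$, attributed to iterating the metric doubling of Lemma~\ref{Ham doubling}. That attribution is incomplete: iterating Definition~\ref{defn:local doubling} covers $B_\rho(y,1)$ by a bounded number of balls of radius $\leq e^{-al}$ centered at points $q_i\neq y$, which yields only $V_{s,0}(y)\lesssim \sup_q V_{s,l}(q)=V_{s,l}$, not $V_{s,0}(y)\lesssim V_{s,l}(y)$, and a doubling metric space really can have a subball far sparser than the ambient ball (e.g.\ for $X=\{0\}\cup[1/2,1]\subset\R$ the ratio $N(B(0,1),\epsilon)/N(B(0,1/4),\epsilon)$ diverges as $\epsilon\to 0$). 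So the paper's step $V_{s,0}(x)\lesssim V_{s,l}(p)$ needs input beyond doubling, and you have located the right place to worry: the transported subball could a priori carry a negligible fraction of the packing.

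Your repair — re-run the Lemma~\ref{subball equivalence} construction for the pair $(w,x)$, placing the anchor inside a subball that already carries a uniform fraction of the $e^{-as}$-packing by a pigeonhole/openness/density argument — is the right idea and can be made rigorous, but it is slightly more elaborate than it needs to be. The uniform density of stable leaves furnished by specification (the same input used in the proof of Lemma~\ref{subball equivalence}) lets you take essentially \emph{any} point of $f^{T_{xy}}(B_\rho(w,1))$ whose $r$-ball stays inside that set as the anchor, not merely a point in some dense subset; so after shrinking $R$ by a fixed factor to avoid the boundary annulus and composing with $f^{-T_{xy}}$, you can put the anchor directly at the pigeonholed center $q_0$ and skip the lower-semicontinuity step entirely. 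If you do keep the openness argument, be aware it silently uses the continuity of $\rho_p$ in the basepoint $p$ (since $B_\rho(p,\delta)$ changes leaf as $p$ varies), exactly as in the proof of Lemma~\ref{Hamenstadt holonomy}. Either way the constants are uniform in $s$, because the pigeonhole radius, the anchor radius $r$, the biLipschitz constant $K$, the doubling constant $N$, and the bound on $|T_{xy}|$ are all $s$-independent, which is the point you correctly flag as needing care.
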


\begin{proof}
It suffices to show that $V_{s}(x) \ls V_{s}(y)$ since the roles of $x$ and $y$ are symmetric. By Lemma \ref{subball equivalence} we can find a uniform radius $r > 0$ and constant $K \geq 1$ such that there is a point $p \in B_{\rho}(x,1)$ and $K$-biLipschitz $cs$-holonomy map $h^{cs}: B_{\rho}(p,r) \rightarrow B_{\rho}(y,1)$ where $B_{\rho}(p,r) \subset B_{\rho}(x,1)$. Writing $r = e^{-al}$ for $l = -a^{-1}\log r$ and $k = -a^{-1}\log K$, this implies that $V_{s + k,l}(p) \leq V_{s,0}(y)$. But since the Hamenst\"adt metrics are doubling with a uniform doubling constant by Lemma \ref{Ham doubling}, we have
\begin{equation}\label{Ham doubling use}
V_{s,0}(y) \leq CV_{s,l}(y),
\end{equation} 
for any $y \in M$ and $s \geq 0$, where the constant $C$ depends only on $l$, $a$, and the doubling constant $N$. This is a consequence of iterating the doubling condition of Definition \ref{defn:local doubling} $m = m(a,l)$ times where $m$ is the minimal integer such that $e^{-a(l+1)} < 2^{-m} \leq e^{-al}$. Thus \eqref{Ham doubling use} implies that $V_{s+k,l}(p) \ls V_{s,l}(p)$.  And since $B_{\rho}(p,r) \subset B_{\rho}(x,1)$, by appealing to \eqref{Ham doubling use} again we have $V_{s,0}(x) \ls V_{s,l}(p)$. Since $r$ (and therefore $l$) is independent of $x$, $y$, and $s$, putting all of these claims together gives $V_{s,0}(x) \ls V_{s,0}(y)$. 
\end{proof}

We also have the following submultiplicativity property.

\begin{lem}\label{subadditive entropy}
The sequence $V_{s}$ is submultiplicative in $s$, i.e., for any $s, t > 0$,
\[
V_{s+t} \leq V_{s}V_{t}.
\]
\end{lem}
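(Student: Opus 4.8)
The statement $V_{s+t} \leq V_s V_t$ is a standard submultiplicativity/covering argument. The key mechanism is the conformal scaling property \eqref{conformal scaling}, which says $\rho_{f^{r}x}(f^{r}p, f^{r}q) = e^{ar}\rho_x(p,q)$. The idea is: an $e^{-a(s+t)}$-separated set inside a unit Hamenst\"adt ball can be cut into pieces each of which, after flowing forward by time $t$, sits inside a ball of radius $e^{-as}$-scale, so that each such piece has at most $V_s$ points; and the number of pieces is at most $V_t$.

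\textbf{Step 1: reduce to balls of a convenient size.} Fix $x \in M$. Let $S \subset B_\rho(x,1)$ be a maximal $e^{-a(s+t)}$-separated set, so $|S| = V_{s+t}(x)$ (or a set realizing the supremum up to passing to another point, using Lemma \ref{independent cardinality}; it is cleaner to just work with a single $x$ and invoke $V_{s+t}(x) \asymp V_{s+t}$ only at the very end, or — better — prove the clean inequality $V_{s+t}(x) \le V_s(x) \cdot \sup_y V_t(y)$ first and then bootstrap, but since the lemma is stated for the suprema $V_\bullet$ I will phrase everything in terms of these). Choose inside $B_\rho(x,1)$ a maximal $e^{-at}$-separated subset $\{p_1,\dots,p_N\}$; by definition of $V_t$ we have $N \le V_t(x) \le V_t$ (using the convention $V_t = V_{t,0} = \sup_y V_{t}(y)$, valid since $t\ge 0$). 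By maximality the balls $B_\rho(p_i, e^{-at})$ cover $B_\rho(x,1)$, hence cover $S$. So $S = \bigcup_{i=1}^N (S \cap B_\rho(p_i, e^{-at}))$.

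\textbf{Step 2: flow each piece forward and count.} Fix $i$ and set $S_i = S \cap B_\rho(p_i, e^{-at})$. Apply the time-$t$ flow: by \eqref{conformal scaling}, $f^{t}$ maps $B_\rho(p_i, e^{-at})$ into $B_{\rho}(f^{t}p_i, 1)$ (with respect to $\rho_{f^{t}p_i}$), and it scales all Hamenst\"adt distances by $e^{at}$, so $f^{t}(S_i)$ is an $e^{at}\cdot e^{-a(s+t)} = e^{-as}$-separated subset of $B_\rho(f^{t}p_i, 1)$. Therefore $|S_i| = |f^{t}(S_i)| \le V_s(f^{t}p_i) \le V_s$. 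Summing, $V_{s+t}(x) = |S| \le \sum_{i=1}^N |S_i| \le N\, V_s \le V_t \, V_s$. Taking the supremum over $x \in M$ gives $V_{s+t} \le V_s V_t$, which is the claim. (One should double-check the convention: $V_{s,l}$ counts a \emph{maximal} $e^{-as}$-separated subset, and maximal separated sets are both covering and have the right cardinality bounds, so the two uses above — "covering'' in Step 1 and "separated, hence at most $V_s$'' in Step 2 — are both legitimate. If one prefers to avoid relying on maximality doing double duty, replace the covering family in Step 1 by an arbitrary $e^{-at}/2$-net, which changes $V_t$ to $V_{t'}$ for $t' = t - a^{-1}\log 2$ and then uses doubling (Lemma \ref{Ham doubling}) to absorb the constant; but this only weakens the clean inequality, so the maximal-set version is preferable.)

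\textbf{Main obstacle.} There is essentially no hard step here — the only subtlety is bookkeeping the definitions and conventions cleanly: making sure that (a) a maximal $e^{-at}$-separated set in $B_\rho(x,1)$ has its balls of radius $e^{-at}$ covering $B_\rho(x,1)$, and (b) the scaling in \eqref{conformal scaling} correctly sends an $e^{-a(s+t)}$-separated configuration inside a ball of radius $e^{-at}$ to an $e^{-as}$-separated configuration inside a unit ball, with the basepoint of the Hamenst\"adt metric shifting from $p_i$ to $f^{t}p_i$ along the way. Both are immediate from the definitions \eqref{metric definition}, \eqref{conformal scaling} and \eqref{max separated}, so the proof is short; the "obstacle'' is purely notational care with basepoints on the Hamenst\"adt metrics.
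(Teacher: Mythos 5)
Your proof is correct and follows essentially the same covering argument as the paper: both decompose a maximal $e^{-a(s+t)}$-separated set inside $B_\rho(x,1)$ using a maximal separated covering at an intermediate scale, then bound each piece by scaling the Hamenst\"adt ball up to unit size (you via flowing forward with $f^t$ and \eqref{conformal scaling}, the paper via the count-scaling identity $V_{s+t,s}=V_t$ from \eqref{max separated scale}, which are the same thing). The only cosmetic difference is that you cover at scale $e^{-at}$ (getting $N\le V_t$ pieces each of size $\le V_s$) while the paper covers at scale $e^{-as}$ (getting $\le V_s(x)$ pieces each of size $\le V_t$) and takes the covering centers from inside $P$ itself rather than from the ambient ball; neither change affects validity.
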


\begin{proof}
Let $x \in M$ be given and consider a maximal $e^{-a(s+t)}$-separated subset $P$ of $B_{\rho}(x,1)$. We then take a maximal $e^{-as}$-separated subset $Q$ of $P$ (note this is itself an $e^{-as}$-separated subset of $B_{\rho}(x,1)$). Then the balls $B_{\rho}(y,e^{-as})$ for $y \in Q$ cover $P$. Each $z \in P$ can then be assigned to some ball $B_{\rho}(y,e^{-as})$ containing it. The result is that for each $y \in Q$ we have an $e^{-a(s+t)}$-separated subset $Q_{y}$ of $B_{\rho}(y,e^{-as})$ such that each $z \in P$ belongs to some $Q_{y}$. Since $P$ has cardinality $V_{s+t}(x)$,
\begin{align*}
V_{s+t}(x) &\leq \sum_{y \in Q}\sum_{z \in Q_{y}}1 \\
&\leq \sum_{y \in Q}V_{s+t,s}(y) \\
&\leq V_{s}(x)V_{s+t,s} \\
&= V_{s}(x)V_{t},
\end{align*}
where we have used \eqref{max separated scale} in the final line. Taking the supremum over all $x \in M$ first on the right and then on the left gives the conclusion. 
\end{proof}

Define for $T \geq 0$,
\[
d^{T}(x,y) = \sup_{0 \leq t \leq T}d^{M}(f^{t}x,f^{t}y)
\]
Let $N_{T}$ be the cardinality of a maximal $L$-separated subset of $M$ in the metric $d^{T}$, $L \leq 1$; recall that we have rescaled the metric on $M$ so that any two points in $M$ with $d^{M}(x,y) \leq 1$ are contained in a foliation chart for each of the foliations $\W^{*}$, so that in particular any two points in $M$ will eventually separate to a distance $> L$ under the flow $f^{t}$. Then, as long as $L$ is sufficiently small, regardless of the choice of $L$ the topological entropy of the flow $f^{t}$ can be computed as 
\begin{equation}\label{entropy limit}
\lim_{T \rightarrow \infty} \frac{1}{T}\log N_{T} = h_{\mathrm{top}}(f).
\end{equation}
To simplify notation we set $h = h_{\mathrm{top}}(f)$. The equality \eqref{entropy limit} holds as long as $L$ is smaller than a multiple of the expansivity constant for $f^{t}$, see \cite[Theorem 4.2.19]{FH19} (one can also find there the argument for why this limit exists). We will assume a fixed choice of $L$ has been made such that the above holds. We further choose $L$ small enough that if $\rho(x,y) = 1$ for some $x \in M$, $y \in \W^{u}(x)$, then $d^{M}(x,y) \geq L$. For the existence of such an $L$ one can either argue from the compactness of $M$ or appeal to Lemma \ref{metric comparison}. Once this $L$ is selected we then see from a similar argument that there must be some uniform $l \geq 0$ (determined only by $L$) such that if $d^{M}(x,y) \geq L/2$ then $\rho(x,y) \geq e^{-al}$.

\begin{lem}\label{separated count}
We have
\begin{equation}\label{separated limit}
\lim_{s \rightarrow \infty} \frac{1}{s}\log V_{s} = \inf_{s > 0} \frac{1}{s} \log V_{s} = h.
\end{equation}
\end{lem}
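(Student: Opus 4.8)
The plan is to prove \eqref{separated limit} in three stages, handling first the existence of the limit and then the two inequalities relating $V_s$ to the dynamical separated-set counts $N_T$.

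\textbf{Stage 1: the limit exists and equals the infimum.} First I would note that $1 \le V_s < \infty$ for every $s>0$: the lower bound is trivial, and the upper bound comes from the uniform doubling of the metric spaces $(\W^u(x),\rho_x)$ (Lemma~\ref{Ham doubling}), which bounds the cardinality of any $e^{-as}$-separated subset of $B_\rho(x,1)$ in terms of $s$, $a$, and the doubling constant alone. I would also observe that $s\mapsto V_s$ is nondecreasing, since an $e^{-as}$-separated set is a fortiori $e^{-as'}$-separated for $s'\ge s$. Hence $\varphi(s):=\log V_s$ is a finite, nonnegative, nondecreasing function on $(0,\infty)$ that is subadditive by Lemma~\ref{subadditive entropy}, and the continuous form of Fekete's lemma gives that $\lim_{s\to\infty}\varphi(s)/s$ exists and equals $\inf_{s>0}\varphi(s)/s$. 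Write $h'$ for this common value; it remains to prove $h'=h$.

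\textbf{Stage 2: $h'\le h$.} Fix $x\in M$ and let $S\subset B_\rho(x,1)$ be an $e^{-as}$-separated set of maximal cardinality, so $|S|=V_s(x)$. For distinct $y,z\in S$, unwinding \eqref{metric definition} and using that $t\mapsto d^u(f^ty,f^tz)$ is nondecreasing along the flow (a consequence of \eqref{distance growth}), the inequality $\rho_x(y,z)\ge e^{-as}$ forces $d^u_{f^sx}(f^sy,f^sz)\ge 1$; then the two-sided comparison \eqref{leaf control} forces $d^M(f^sy,f^sz)\ge u_0$, where $u_0>0$ is the unique solution of $e^{Au_0}u_0=1$, a constant depending only on $A$. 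Thus $\sup_{0\le t\le s}d^M(f^ty,f^tz)\ge u_0$, so $S$ is an $(s,u_0)$-separated subset of $M$. Since \eqref{entropy limit} computes $h$ as the exponential growth rate of maximal $(s,\delta)$-separated subsets of $M$ for every sufficiently small $\delta$ — and we may assume $L\le u_0$ from the start, shrinking $L$ if necessary without affecting \eqref{entropy limit} — this gives $V_s(x)\le N_s$ for all $x$, hence $V_s\le N_s$, and therefore $h'=\lim_{s\to\infty}\tfrac1s\log V_s\le\lim_{s\to\infty}\tfrac1s\log N_s=h$.

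\textbf{Stage 3: $h\le h'$.} Using the local product structure of the Anosov flow, cover $M$ by finitely many foliation boxes $U_1,\dots,U_{N_0}$, each a product chart biLipschitz to $\W^s_{\loc}\times\W^c_{\loc}\times\W^u_{\loc}$ of a fixed small radius $r$, with $N_0$ depending only on $M$ and $r$. Let $F\subset M$ be a maximal $(T,L)$-separated set, $|F|=N_T$. Within a box $U_i$ with a distinguished unstable plaque $P^u_i$, let $\pi_i:U_i\to P^u_i$ be the projection along center-stable plaques; since $p$ and $\pi_i(p)$ lie on a common center-stable plaque of $d^{cs}$-diameter $\lesssim r$, the nonexpansion estimate \eqref{no expand} gives $d^M(f^tp,f^t\pi_i(p))\le d^{cs}(p,\pi_i(p))\lesssim r$ for all $t\ge 0$, so $d^T(p,\pi_i(p))\lesssim r$ uniformly in $T$. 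Choosing $r$ small enough that this bound is $<L/4$, the triangle inequality shows $\pi_i$ is injective on $F\cap U_i$ and that $\pi_i(F\cap U_i)$ is a $(T,L/2)$-separated subset of $P^u_i$. It then remains to bound the cardinality of a $(T,\delta)$-separated subset $Q$ of a unit unstable plaque $\W^u(p)$: $d^u$-monotonicity along the flow gives $d^u_{f^Tp}(f^Ty,f^Tz)\ge\delta$ for distinct $y,z\in Q$, so fixing an integer $m=m(\delta,a)$ with $e^{am}\delta\ge 1$ and applying \eqref{distance growth} yields $d^u_{f^{T+m}p}(f^{T+m}y,f^{T+m}z)\ge 1$; by \eqref{metric definition} this makes $Q$ an $e^{-a(T+m)}$-separated subset of $B_\rho(p,1)$ (it lies in $B_\rho(p,1)$ by \eqref{Hamenstadt comparison 1}), whence $|Q|\le V_{T+m}\le V_T V_m$ by Lemma~\ref{subadditive entropy}. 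Combining, $N_T\le N_0 V_m V_T$, so $h=\lim_{T\to\infty}\tfrac1T\log N_T\le\lim_{T\to\infty}\tfrac1T\log V_T=h'$. Together with Stage 2 this gives $h'=h$, i.e. \eqref{separated limit}.

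\textbf{Main obstacle.} The delicate point is the Bowen-to-Hamenst\"adt comparison inside an unstable plaque in Stage 3: one passes from a $(T,\delta)$-separation hypothesis, in which the terminal unstable distances $d^u(f^Ty,f^Tz)$ can be arbitrarily large rather than of order $\delta$, to a single-scale Hamenst\"adt separation at scale $e^{-a(T+m)}$, and one must check that the additive time shift $m$, the box radius $r$, and the box count $N_0$ are all independent of $T$ so that they disappear in the exponential limit; submultiplicativity of $V_s$ (Lemma~\ref{subadditive entropy}) is precisely what absorbs the error factor $V_m$. The other care-requiring step is that the center-stable projection $\pi_i$ be well defined and injective on $F\cap U_i$, which is where the hypotheses $C_s=1$ (through \eqref{no expand}) and the local product structure of the flow are used.
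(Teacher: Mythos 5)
Your proposal follows the same three-stage skeleton as the paper's proof (Fekete's lemma, then $h'\le h$ by mapping Hamenst\"adt-separated sets to $d^T$-separated sets, then $h\le h'$ via a finite cover of $M$ by foliation boxes and projection onto unstable plaques), and Stages 1 and 3 are essentially sound — in fact your Stage 3 is a bit cleaner than the paper's in that you track the genuinely monotone quantity $d^u(f^t y,f^t z)$ throughout rather than $d^M(f^t y,f^t z)$.

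Stage 2, however, has a genuine gap. You assert that $\rho_x(y,z)\ge e^{-as}$ forces $d^u_{f^sx}(f^sy,f^sz)\ge 1$ (correct) and that the two-sided comparison \eqref{leaf control} then yields $d^M(f^sy,f^sz)\ge u_0$ with $e^{Au_0}u_0=1$. This fails for two separate reasons. First, \eqref{leaf control} compares the unstable-leaf metric $d^u_x$ with the metric $d$ on the expanding cone, i.e.\ with the induced metric $d^{cu}_x$ on the center-unstable leaf — not with the ambient metric $d^M$. Since $d^M\le d^{cu}$, a lower bound on $d^{cu}$ gives no lower bound on $d^M$. Second, even granting a correct local leaf-to-ambient comparison, the implication ``$d^u_{f^sx}(f^sy,f^sz)\ge 1 \Rightarrow d^M(f^sy,f^sz)\ge u_0$'' is false: the leaf distance at time $s$ may be much larger than $1$, and two points far apart along the (typically dense) unstable leaf can be ambient-close. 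The paper avoids both problems by flowing only to the intermediate time $\tau\in[0,s]$ at which $\rho(f^\tau y,f^\tau z)=1$ (equivalently $d^u_{f^\tau x}(f^\tau y,f^\tau z)=1$), and then invoking the compactness-based choice of $L$ (made just before the lemma statement) to get $d^M(f^\tau y,f^\tau z)\ge L$; this already gives $(s,L)$-separation in $d^s$. Your argument can be repaired by making exactly these two substitutions: stop at the intermediate time $\tau$ rather than $s$, and replace the appeal to \eqref{leaf control} by the compactness argument (or by the paper's explicit choice of $L$), since \eqref{leaf control} simply does not relate $d^u$ to $d^M$.
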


\begin{proof}
By Lemma \ref{subadditive entropy} the sequence $\{\log V_{n}\}_{n \in \N}$ is subadditive in $n$. Thus by Fekete's lemma there is an $h_{*} \in \R$ such that
\[
\lim_{n \rightarrow \infty} \frac{1}{n}\log V_{n} = \inf_{n \in \N} \frac{1}{n} \log V_{n} = h_{*}.
\]
An easy application of the doubling property of Lemma \ref{Ham doubling} then shows that we can replace $n \in \N$ above with $s > 0$. It therefore suffices to show that $h_{*} = h$.

For $x \in M$ and any $T > 0$, any $e^{-aT}$-separated subset of $B_{\rho}(x,1)$ (in the Hamenst\"adt metric $\rho$) is an $L$-separated subset of $M$ in the metric $d^{T}$ due to our choice of $L$. This is because for any pair of points $y,z \in B_{\rho}(x,1)$ such that $\rho(y,z) \geq e^{-aT}$ there is some $t$ with $0 \leq t \leq T$ such that $\rho(f^{t}y,f^{t}z) = 1$ (by \eqref{conformal scaling}) and therefore $d^{M}(f^{t}y,f^{t}z) \geq L$. We conclude that $V_{T}(x) \leq N_{T}$. Taking the supremum over $x \in M$ gives $V_{T} \leq N_{T}$ for each $T > 0$, from which it follows that $h_{*} \leq h$.

On the other hand, by the compactness of $M$ we can cover $M$ by a finite number $n$ of foliation boxes $U_{i}$ for the foliation $\W^{u}$ which consist of the center point $x_{i}$ of a Hamenst\"adt metric ball $B_{i} = B_{\rho}(x_{i},1)$, which is then saturated locally by balls in $\W^{cs}$-leaves of radius $L/4$. Expressed in symbols, we have
\[
U_{i} = \bigcup_{y \in B_{i}} B^{cs}(y,L/4).
\]
Since the foliations $\W^{u}$ and $\W^{cs}$ have local product structure with respect to each other \cite[Chapter 6.2]{FH19}, it is not hard to see that $U_{i}$ is actually an open neighborhood of $x_{i}$ in $M$ as claimed. 

Since $f^{t}$ does not expand distances on $\W^{cs}$ leaves by \eqref{no expand}, we see on $U_{i}$ that $d^{T}$ restricted to each ball $B^{cs}(y,L/4)$ is simply $d^{M}$. Thus for each $x \in U_{i}$ there is a point $z \in B_{\rho}(x_{i},1)$ such that $d^{T}(x,z) \leq L/4$, independent of the value of $T$. We conclude that, for a \emph{fixed} value of $T$, if $\{y_{ij}\}$ is a maximal $L/4$-separated collection of points for $d^{T}$ in the ball $B_{\rho}(x_{i},1)$ then the $d^{T}$-balls of radius $L/4$ centered on these $y_{ij}$ cover $U_{i}$. Letting $k_{i}$ denote the cardinality of each collection $\{y_{ij}\}$ (as $j$ varies) and setting $k = \sum_{i=1}^{n}k_{i}$, we conclude that $M$ can be covered by $k$ $d^{T}$-balls of radius $L/2$. This implies that $N_{T} \leq k$, recalling that $N_{T}$ is the maximal cardinality of an $L$-separated subset of $M$ in the metric $d^{T}$. 

On the other hand, for any $x \in M$ and any $y,z \in B_{\rho}(x,1)$, if $d^{T}(y,z) \geq L/2$ then
\[
\sup_{0 \leq t \leq T}d^{M}(f^{t}y,f^{t}z) \geq \frac{L}{2}.
\]
But since $f^{t}$ uniformly expands $\W^{u}(x)$, the quantity $d^{M}(f^{t}y,f^{t}z)$ is an increasing function of $t$. Thus we conclude that
\[
d^{M}(f^{T}y,f^{T}z) \geq \frac{L}{2}
\]
and therefore by our choice of $L$ and $l$ prior to the lemma statement,
\[
\rho(f^{T}y,f^{T}z) \geq e^{-al}.
\]
This implies that $\rho(y,z) \geq e^{-a(l+T)}$. We conclude that $\{y_{ij}\}$ is an $e^{-a(l+T)}$-separated subset of $B_{\rho}(x_{i},1)$ in the metric $\rho$ and therefore has cardinality $k_{i} \leq V_{l+T}$. Summing this over all $i$ gives $N_{T} \leq n V_{l+T}$. Since $l$ and $n$ are both  independent of $T$ it directly follows that $h \leq h_{*}$, which completes the proof.
\end{proof}

It's straightforward to see from what we've done so far that each of the metric spaces $(\W^{u}(x),\rho_{x})$ has Hausdorff dimension $h/a$. We would like to take the conditionals of the measure of maximal entropy to be the $h/a$-dimensional Hausdorff measures for $\rho_{x}$ on each leaf $\W^{u}(x)$. However the limit \eqref{separated limit} does not provide enough information on its own to ensure that these Hausdorff measures have desirable properties such as being finite and positive on balls. To obtain this additional information we can use the constructions of Sections \ref{sec:uniform cone} and \ref{sec:doubling}. 

For $p \in M$ we set $\mu_{p}$ to be the Riemannian volume on the expanding cone $\X_{p} = \wt{\W}^{cu}(p)$ as described at the start of this section. Note $\mu_{p} = \mu_{q}$ if $q \in \W^{cu}(p)$. We write $\mu^{u}_{p}$ for the Riemannian volume on $\W^{u}(p)$.  For $\sigma > 0$ we define $\mu_{\sigma,p}$ as in \eqref{sigma def} using $b_{p}$. 

Lastly define for $\sigma > 0$,
\begin{equation}\label{G def}
G(\sigma) = \int_{0}^{\infty}e^{-\sigma t}V_{t}\,dt.
\end{equation}
Then $G$ is monotone decreasing in $\sigma$, hence monotone increasing as $\sigma \rightarrow h$ from above. We see from \eqref{separated limit} that $G(\sigma) < \infty$ if and only if $\sigma >  h$, and therefore $G(\sigma) \rightarrow \infty$ as $\sigma \rightarrow h$. We emphasize that this assertion requires the description $h = \inf_{t > 0} t^{-1}\log V_{t}$ from Fekete's lemma to obtain the necessary divergence $G(h) = \infty$.

For constants involving $\sigma > h$ we will treat them as implicit (more precisely, as $\asymp h$) if they remain bounded above and below away from zero as $\sigma \rightarrow h$. The quantity $G(\sigma)$ encapsulates the exceptional part of the constants that diverges to infinity as $\sigma \rightarrow h$. We remark that $G$ is the Laplace transform of the function $t \rightarrow V_{t}$.

To simplify notation in the next few claims we omit the subscript $x$ in $\mu_{\sigma} = \mu_{\sigma,p}$ and $\mu = \mu_{p}$ as well as the subscripts on the expanding cone $\X = \X_{p}$ and the height function $b = b_{p}$. The subscripts will be included in the statements of the claims for clarity. We recall the definition \eqref{cone def} of cones $\CC$ inside of the uniformization of an expanding cone. We switch from open balls and cones to closed balls and cones in this lemma and what follows as a matter of convenience for the compactness arguments we will be using. It's straightforward to see from the continuity of the $\Psi_{x}$ map that
\[
\bar{\CC}(x,r) := \overline{\CC(x,r)} =  \bigcup_{t > 0}f^{t}(\bar{B}_{\rho}(x,r)) \cup \Psi_{x}(\bar{B}_{\rho}(x,r)),
\]
as we would expect, with the closure taken in the uniformization $\X_{b}$ and $\bar{B}_{\rho}(x,r)$ being the closed ball of radius $r$ in the Hamenst\"adt metric $\rho_{x}$. For $T > 0$ we also introduce the \emph{truncated cone}
\[
\bar{\CC}_{T}(x,r) = \bigcup_{0 \leq t \leq T} f^{t}(\bar{B}_{\rho}(x,r))
\]

\begin{lem}\label{crit lemma}
For each $x \in \X_{p}$ and $\sigma > 0$ we have
\begin{equation}\label{crit estimate}
\mu_{\sigma,p}(\bar{\CC}(x,1)) \asymp e^{-\sigma b(x)}G(\sigma).
\end{equation}
The implied constants are independent of $\sigma$, and one side is infinite in \eqref{crit estimate} if and only if the other is. 
\end{lem}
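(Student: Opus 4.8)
The plan is to compute the $\mu_\sigma$-mass of the closed cone $\bar{\CC}(x,1)$ by foliating it by the unstable leaves $\W^u(f^t x)$ for $t \geq 0$, pushing everything down to $\W^u(x)$ via the projection $P_x$, and then estimating the resulting integral against the Hamenst\"adt volume. First I would use the fact that $\bar{\CC}(x,1) = \bigcup_{t \geq 0} f^t(\bar B_\rho(x,1))$ together with the coarea/Fubini decomposition with respect to the height function $b$: since $\nabla b = \dot f$ is a unit field and the flow $f^t$ is the gradient flow, the Riemannian volume $\mu$ on $\X$ disintegrates as $d\mu = d\mu^u_{f^t x}\, dt$ over the level sets $\W^u(f^t x) = b^{-1}(t)$ (using $b(x)=0$), so that
\begin{equation*}
\mu_\sigma(\bar{\CC}(x,1)) = \int_0^\infty e^{-\sigma t} \mu^u_{f^t x}\!\left(f^t(\bar B_\rho(x,1))\right) dt.
\end{equation*}

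The next step is to estimate $\mu^u_{f^t x}(f^t(\bar B_\rho(x,1)))$ up to uniform multiplicative constants. By the conformal scaling relation \eqref{conformal scaling}, $f^t(\bar B_\rho(x,1))$ is the closed Hamenst\"adt ball $\bar B_{\rho,f^t x}(f^t x, e^{at})$ in $\W^u(f^t x)$. I would then compare the Riemannian volume of a Hamenst\"adt ball of radius $e^{at}$ to the counting function $V_t$: by Lemma \ref{Ham doubling} (uniform doubling of the Hamenst\"adt metrics) and the comparison \eqref{Hamenstadt comparison 1}–\eqref{Hamenstadt comparison 2} between $\rho_x$ and $d^u_x$, a maximal $1$-separated (equivalently $e^{-a\cdot 0}$-separated after rescaling) net in such a ball has cardinality $\asymp V_t$, and each net point carries a ball of unit Hamenst\"adt radius whose Riemannian volume is $\asymp 1$ uniformly (again by compactness of $M$ and \eqref{leaf control}); hence $\mu^u_{f^t x}(f^t(\bar B_\rho(x,1))) \asymp V_t$ with constants independent of $t$ and of $x$. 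Plugging this into the integral gives
\begin{equation*}
\mu_\sigma(\bar{\CC}(x,1)) \asymp \int_0^\infty e^{-\sigma t} V_t \, dt = G(\sigma),
\end{equation*}
which is the claimed estimate in the case $b(x) = 0$; the general case follows immediately by applying $f^{-b(x)}$ (which moves $x$ to height $0$, scales the cone appropriately, and multiplies $\mu_\sigma$ by $e^{-\sigma b(x)}$ via \eqref{sigma def}), or equivalently by translating the integration variable. The statement about both sides being simultaneously infinite is automatic from the comparison, and divergence happens exactly when $\sigma \le h$ by Lemma \ref{separated count}.

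The main obstacle I anticipate is making the comparison $\mu^u_{f^t x}(\bar B_{\rho}(f^t x, e^{at})) \asymp V_t$ genuinely uniform in $t$ rather than merely for bounded $t$. Two issues must be handled carefully: first, that the Riemannian volume of a \emph{unit} Hamenst\"adt ball is bounded above and below uniformly over all basepoints in $M$ — this needs the compactness of $M$, the continuous dependence of $\rho_x$ on $x$, and the two-sided bound \eqref{leaf control} relating $d^u_x$ and the ambient metric — and second, that packing such unit balls into $\bar B_\rho(f^t x, e^{at})$ is controlled on both sides by $V_t = V_{t,0}$ uniformly, for which I would invoke the uniform doubling constant $N$ of Lemma \ref{Ham doubling} together with the definition \eqref{max separated} and the translation invariance \eqref{max separated scale} (noting $V_{t,0}(f^t x)$ rescales to a count in a unit ball). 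A minor technical point is that the disintegration $d\mu = d\mu^u_{f^t x}\, dt$ is exact because $\dot f = \nabla b$ is a unit normal field to the foliation $\W^u$ with $b(f^t x) = b(x) + t$, so no Jacobian factor appears; I would state this cleanly using the coarea formula and the fact that $\|\nabla b\| \equiv 1$.
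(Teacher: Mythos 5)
Your proof is correct and follows essentially the same route as the paper: foliate the cone by the unstable level sets of $b$, use the orthogonal splitting (equivalently, $\|\nabla b\|\equiv 1$) to get the Fubini decomposition with no Jacobian, estimate the Riemannian volume of each Hamenst\"adt ball $\bar B_\rho(f^tx,e^{at})$ by $V_t$ via maximal separated nets and the uniform doubling of Lemma \ref{Ham doubling}, then integrate. The paper does the counting at two scales ($1$- and $\tfrac12$-separated) to get the two-sided bound explicitly, and keeps $b(x)$ in the integrand rather than normalizing to $b(x)=0$, but these are cosmetic differences.
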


\begin{proof}
Since the density used to define $\mu_{\sigma}$ in \eqref{sigma def} is constant on the leaves of the foliation $\W^{u}$ of $\X$, we have
\[
\mu_{\sigma}(\bar{\CC}_{T}(x,1)) = \int_{0}^{T}e^{-\sigma(b(x)+t)}\mu^{u}_{f^{t}x}(B_{\rho}(f^{t}x,e^{at}))\,dt.
\]
Here we are using the fact that the splitting $T\X = E^{u} \oplus E^{c}$ for an expanding cone is orthogonal and thus the Riemannian volume $\mu$ also splits orthogonally into arclength $dt$ along the flowlines of $f^{t}$ and $\mu^{u}$ on the leaves of $\W^{u}$. 

The quantity $\mu_{p}^{u}(\bar{B}_{\rho}(p,1))$ is positive for $p \in M$ and depends continuously on $p$. Thus by the compactness of $M$ we have $\mu_{p}^{u}(\bar{B}_{\rho}(p,1)) \asymp_{C} 1$ for some uniform constant $C \geq 1$ that we will take to be an implicit constant in subsequent expressions. By the same reasoning we have $\mu_{p}^{u}(\bar{B}_{\rho}(p,1/2)) \asymp 1$. 

Fix $t \geq 0$ and let $\{x_{i}\}_{i=1}^{N}$ be a maximal $1$-separated subset of $\bar{B}_{\rho}(f^{t}x,e^{at})$ and $\{y_{i}\}_{i=1}^{N'}$ a maximal $1/2$-separated subset of $\bar{B}_{\rho}(f^{t}x,\frac{1}{2}e^{at})$. Then $N \asymp V_{t} \asymp N'$ by \eqref{separated scale} and Lemma \ref{Ham doubling}  with implied constants independent of $t$, since a maximal $1$-separated subset of $\bar{B}_{\rho}(f^{t}x,e^{at})$ has the same cardinality as a maximal $e^{-at}$-separated subset of $\bar{B}_{\rho}(x,e^{al})$ (and the same with $1/2$ replacing $1$). The closed balls $\bar{B}_{\rho}(x_{i},1)$ cover $\bar{B}_{\rho}(f^{t}x,e^{at})$, while the closed balls $\bar{B}_{\rho}(y_{i},1/2)$ cover $\bar{B}_{\rho}(f^{t}x,\frac{1}{2}e^{at})$ and are contained within $\bar{B}_{\rho}(f^{t}x,e^{at})$ since $e^{at} \geq 1$. We conclude that
\[
\mu^{u}_{f^{t}x}(\bar{B}_{\rho}(f^{t}x,e^{at})) \asymp V_{t}
\]
for each $x \in M$ and $t \geq 0$. Thus
\[
\mu_{\sigma}(\bar{\CC}_{T}(x,1)) \asymp e^{-\sigma b(x)}\int_{0}^{T}e^{-\sigma t}V_{t}\,dt.
\]
As both sides are monotone increasing in $T$, we can let $T \rightarrow \infty$ to conclude that
\[
\mu_{\sigma}(\bar{\CC}(x,1)) \asymp e^{-\sigma b(x)}\int_{0}^{\infty}e^{-\sigma t}V_{t}\,dt,
\]
keeping in mind that both sides may be infinite. This gives \eqref{crit estimate}.
\end{proof}

We extend Lemma \ref{crit lemma} to cones $\bar{\CC}(x,r)$ with $r \neq 1$ as follows. We've split the cases $l \geq 0$ and $l \leq 0$ in the statement for clarity.

\begin{lem}\label{crit extension}
For each $x \in \X_{p}$, $\sigma > 0$, and $l \geq 0$ we have
\begin{equation}\label{crit estimate pos}
\mu_{\sigma,p}(\bar{\CC}(x,e^{al})) + \mu_{\sigma,p}(\bar{\CC}_{l}(f^{-l}x,1)) \asymp e^{-\sigma (b(x)-l)}G(\sigma),
\end{equation}
and for $l \leq 0$,
\begin{equation}\label{crit estimate neg}
\mu_{\sigma,p}(\bar{\CC}(x,e^{al})) - \mu_{\sigma,p}(\bar{\CC}_{-l}(x,e^{al})) \asymp e^{-\sigma (b(x)-l)}G(\sigma).
\end{equation}
The implied constants are independent of $\sigma$, and one side is infinite in each of \eqref{crit estimate pos} and \eqref{crit estimate neg} if and only if the other is. Consequently for $x \in \X_{p}$ and $l \in \R$ we have $\mu_{\sigma,p}(\bar{\CC}(x,e^{al})) < \infty$ if and only if $\sigma > h$, and $\mu_{\sigma,p}(\bar{\CC}(x,e^{al})) \rightarrow \infty$ as $\sigma \rightarrow h$. 
\end{lem}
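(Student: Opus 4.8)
The strategy is to reduce \eqref{crit estimate pos} and \eqref{crit estimate neg} to Lemma \ref{crit lemma} by exploiting the conformal scaling property \eqref{conformal scaling} of the Hamenst\"adt metric together with the height-shift equivariance of everything in sight. The key observation is that the flow map $f^{-l}$ carries $\bar{B}_{\rho}(x,e^{al})$ onto $\bar{B}_{\rho}(f^{-l}x,1)$ (by \eqref{conformal scaling}), it carries the unstable leaf through $x$ to the unstable leaf through $f^{-l}x$, and it shifts the height function by $b(f^{-l}y) = b(y) - l$. Since the density defining $\mu_{\sigma}$ is $e^{-\sigma b}$ and is constant on unstable leaves, pushing a cone forward or backward by $f^{-l}$ multiplies its $\mu_{\sigma}$-mass by a factor $e^{\sigma l}$ only through the overall height shift, modulo the fact that the cone $\bar{\CC}(x,e^{al})$ is an infinite union of flowed balls and the truncated piece must be accounted for separately. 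This is precisely why the statement is phrased with the correction terms $\mu_{\sigma,p}(\bar{\CC}_{l}(f^{-l}x,1))$ and $\mu_{\sigma,p}(\bar{\CC}_{-l}(x,e^{al}))$: they are exactly the discrepancy between $\bar{\CC}(x,e^{al})$ and the image under $f^{-l}$ (resp. the sub-cone) of a unit cone.

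\textbf{Step 1 (the case $l \geq 0$).} Here $f^{l}$ maps $\bar{B}_{\rho}(f^{-l}x,1)$ onto $\bar{B}_{\rho}(x,e^{al})$, and flowing further by $f^{t}$, $t\geq 0$, shows that the full cone $\bar{\CC}(f^{-l}x,1) = \bigcup_{t\geq 0}f^{t}(\bar{B}_{\rho}(f^{-l}x,1))$ decomposes as the disjoint union of the truncated piece $\bar{\CC}_{l}(f^{-l}x,1) = \bigcup_{0 \leq t \leq l}f^{t}(\bar B_\rho(f^{-l}x,1))$ and the remainder $\bigcup_{t \geq l}f^{t}(\bar B_\rho(f^{-l}x,1)) = \bar{\CC}(x,e^{al})$ (after reindexing $t \mapsto t - l$ and using $f^{l}\bar B_\rho(f^{-l}x,1) = \bar B_\rho(x,e^{al})$). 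Hence $\mu_{\sigma}(\bar{\CC}(f^{-l}x,1)) = \mu_{\sigma}(\bar{\CC}(x,e^{al})) + \mu_{\sigma}(\bar{\CC}_{l}(f^{-l}x,1))$ as an identity of (possibly infinite) measures. Applying Lemma \ref{crit lemma} to the point $f^{-l}x$, whose height is $b(f^{-l}x) = b(x) - l$, gives $\mu_{\sigma}(\bar{\CC}(f^{-l}x,1)) \asymp e^{-\sigma(b(x)-l)}G(\sigma)$, which is \eqref{crit estimate pos}. The fact that one side is infinite iff the other is follows from this exact identity.

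\textbf{Step 2 (the case $l \leq 0$).} Now the unit cone sits inside the larger one: writing $m = -l \geq 0$, we have $\bar{B}_{\rho}(x,e^{al}) = \bar B_\rho(x,e^{-am}) \subseteq \bar B_\rho(x,1)$, and flowing by $f^{m}$ carries $\bar B_\rho(x,e^{-am})$ onto $\bar B_\rho(f^{m}x,1)$. The cone $\bar{\CC}(x,e^{al})$ then decomposes disjointly as the truncated piece $\bar{\CC}_{m}(x,e^{al}) = \bigcup_{0 \leq t \leq m}f^{t}(\bar B_\rho(x,e^{-am}))$ together with its tail $\bigcup_{t \geq m}f^{t}(\bar B_\rho(x,e^{-am}))$, and reindexing $t \mapsto t - m$ identifies the tail with $\bigcup_{s\geq 0}f^{s}(\bar B_\rho(f^{m}x,1)) = \bar{\CC}(f^{m}x,1)$. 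Thus $\mu_{\sigma}(\bar{\CC}(x,e^{al})) - \mu_{\sigma}(\bar{\CC}_{m}(x,e^{al})) = \mu_{\sigma}(\bar{\CC}(f^{m}x,1))$, and since $b(f^{m}x) = b(x) + m = b(x) - l$, Lemma \ref{crit lemma} applied at $f^{m}x$ yields \eqref{crit estimate neg}. (Note: the truncated-cone term here is entirely contained in $\X_b$, so its measure is automatically finite, being an integral of a finite-mass unstable-ball volume over a bounded time interval.)

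\textbf{Step 3 (the final consequence).} For fixed $x$ and $l$, the correction terms $\mu_{\sigma}(\bar{\CC}_{l}(f^{-l}x,1))$ and $\mu_{\sigma}(\bar{\CC}_{-l}(x,e^{al}))$ are always finite by the argument in Step 2 (a truncated cone has finite mass since it is a $f^{t}$-image, $0 \leq t \leq |l|$, of a fixed compact-closure Hamenst\"adt ball, and $\mu^{u}_{p}(\bar B_\rho(p,r))$ is finite and varies continuously in $p$ by compactness of $M$). Therefore \eqref{crit estimate pos} and \eqref{crit estimate neg} show that $\mu_{\sigma,p}(\bar{\CC}(x,e^{al})) < \infty$ iff $e^{-\sigma(b(x)-l)}G(\sigma) < \infty$ iff $G(\sigma) < \infty$, and by Lemma \ref{separated count} (via Fekete, which gives $h = \inf_{t>0}t^{-1}\log V_{t}$ and hence $G(h) = \infty$) together with the limit $\lim_{t\to\infty}t^{-1}\log V_t = h$, this holds iff $\sigma > h$; moreover $G(\sigma) \to \infty$ as $\sigma \to h^{+}$, so $\mu_{\sigma,p}(\bar{\CC}(x,e^{al})) \to \infty$ as well, since $e^{-\sigma(b(x)-l)}$ stays bounded away from $0$ and $\infty$ on a neighborhood of $h$ (recall our convention that constants $\asymp h$ remain controlled as $\sigma \to h$).

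\textbf{Main obstacle.} The only subtle point is making the disjoint-union decompositions of the cones precise at the level of sets — in particular verifying that the reindexing $t \mapsto t \pm |l|$ combined with the flow identity $f^{\pm|l|}\bar B_\rho(\cdot,\cdot) = \bar B_\rho(\cdot,\cdot)$ genuinely identifies the tail of one cone with a full unit cone, including the boundary piece $\Psi_{x}(\bar B_\rho(\cdot,\cdot)) \subseteq \p\X_b$, which has $\mu_\sigma$-measure zero and so does not affect the mass computation but should be handled cleanly using the continuity of $\Psi_x$ recorded before Lemma \ref{crit lemma}. Everything else is a direct bookkeeping consequence of Lemmas \ref{crit lemma} and \ref{separated count} and the scaling relation \eqref{conformal scaling}; no new estimates are needed.
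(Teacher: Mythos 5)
Your proof is correct and follows essentially the same route as the paper's: both rewrite the left side of each estimate as $\mu_{\sigma,p}(\bar{\CC}(f^{-l}x,1))$ via the flow-equivariant cone decomposition (using that the unstable leaf at height $b(x)$ and the boundary set have $\mu_{\sigma}$-measure zero), then apply Lemma \ref{crit lemma} at $f^{-l}x$. The paper states this decomposition in one sentence while you spell it out set-theoretically, but the content is identical, including the observation that the truncated-cone correction stays bounded as $\sigma \to h^{+}$.
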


\begin{proof}
The left sides of \eqref{crit estimate pos} and \eqref{crit estimate neg} are simply $\mu_{\sigma}(\bar{\CC}(f^{-l}x,1))$ since $\mu_{\sigma}$ assigns zero measure to each unstable leaf $\W^{u}(y)$ inside of $\X$. The desired comparisons then follow from  \eqref{crit estimate} and $b(f^{-l}x) = b(x)-l$. The final assertions follow from the properties of $G$ discussed above and the fact that the second term on the left in \eqref{crit estimate pos} and \eqref{crit estimate neg} remains bounded as $\sigma \rightarrow h$ (in fact it converges to $\mu_{h}(\bar{\CC}_{l}(f^{-l}x,1))$ in the first case and $\mu_{h}(\bar{\CC}_{-l}(x,e^{al}))$ in the second). 
\end{proof}

We are now able to prove Theorem \ref{thm:Patterson-Sullivan}. We carefully discuss the renormalization procedure first before applying it to prove Proposition \ref{Hamenstadt regular} below which will complete the proof of Theorem \ref{thm:Patterson-Sullivan}.

For $\sigma > h$, $l \in \R$, and $x \in \X$ we define the normalized measure on $\bar{\X}_{b}$,
\begin{equation}\label{renormalization}
\check{\mu}_{\sigma,l,x} = e^{\sigma l}(\mu_{\sigma}(\bar{\CC}(x,e^{al})))^{-1}\mu_{\sigma}|_{\bar{\CC}(x,e^{al})},
\end{equation}
which is supported on the closed cone $\bar{\CC}(x,e^{al})$ in $\bar{\X}_{b}$ and has total mass $e^{\sigma l}$. Fixing $x \in \X$ and $l \in \R$, by weak* compactness of probability measures with a common compact support (applied here to the probability measures $e^{-\sigma l}\check{\mu}_{\sigma,l,x}$) we can find a sequence $\sigma_{n} \rightarrow h$ such that the measures $\check{\mu}_{\sigma_{n},l,x}$ converge in the weak* topology to a measure $\theta_{l,x}$ supported on $\bar{\CC}(x,e^{al})$. Since $\mu_{\sigma}(\bar{\CC}(x,e^{al})) \rightarrow \infty$ as $\sigma \rightarrow h$ by Lemma \ref{crit extension}, we have
\begin{equation}\label{crush initial}
\check{\mu}_{\sigma,l,x}\left(\bigcup_{0 \leq t \leq T} \bar{B}_{\rho}(f^{t}x,e^{a(t+l)})\right) \rightarrow 0,
\end{equation}
as $\sigma \rightarrow h$ for each $T > 0$. Thus $\theta_{l,x}$ is in fact supported exclusively on $\bar{\CC}_{*}(x,e^{al}) = \bar{\CC}(x,e^{al}) \cap \p \X_{b}$. Furthermore, since $\check{\mu}_{\sigma,l,x}$ has total mass $e^{\sigma l}$, letting $\sigma \rightarrow h$ implies that $\theta_{l,x}$ has total mass $e^{-hl}$.

We specialize now to the case $b(x) = 0$. Observe then by Lemma \ref{crit extension} that for any $k,l \in \R$, $x \in \X$ and $y \in \W^{u}(x)$ we have
\begin{equation}\label{crit normalize}
\limsup_{\sigma \rightarrow h}\frac{e^{-\sigma k}\mu_{\sigma}(\bar{\CC}(x,e^{ak}))}{e^{-\sigma l}\mu_{\sigma}(\bar{\CC}(y,e^{al}))} \ls 1,
\end{equation}
since $G(\sigma) \rightarrow \infty$ as $\sigma \rightarrow h$ and the additional term on the left side of \eqref{crit estimate pos} and \eqref{crit estimate neg} remains bounded as $\sigma \rightarrow h$. The reverse inequality with $\liminf$ is implicit in \eqref{crit normalize} by swapping the pair $(x,k)$ with the pair $(y,l)$.  Referring back to the definition \eqref{renormalization} of $\ch{\mu}_{\sigma,l,x}$, the estimate \eqref{crit normalize} implies by taking reciprocals that if $\varphi:\bar{\X}_{b} \rightarrow \R$ is a continuous function supported on $\bar{\CC}(x,e^{ak}) \cap \bar{\CC}(y,e^{al})$ then
\begin{equation}\label{boundary limit}
\limsup_{\sigma \rightarrow h} \int_{\bar{\X}_{b}} \varphi \, d\check{\mu}_{\sigma,k,x} \ls \int_{\bar{\X}_{b}} \varphi \, d\check{\mu}_{\sigma,l,y},
\end{equation}
since on the support of $\varphi$ the measures $\check{\mu}_{\sigma,k,x}$ and $\check{\mu}_{\sigma,l,y}$ differ only by a constant multiplicative factor controlled by \eqref{crit normalize} as $\sigma \rightarrow h$. 

Now assuming that we have weak* convergence $\check{\mu}_{\sigma_{n},k,x} \rightharpoonup \theta_{k,x}$ and $\check{\mu}_{\sigma_{n'},l,y} \rightharpoonup \theta_{l,y}$ along some subsequences (these need not be the same subsequence) and using the fact the limits must be supported on $\p \X_{b}$, we arrive at the estimate
\begin{equation}\label{boundary measure}
\int_{\p \X_{b}} \varphi \, d\theta_{l,y} \asymp \int_{\p \X_{b}} \varphi \, d\theta_{k,x},
\end{equation}
which is valid whenever $\theta_{k,x}$ and $\theta_{l,y}$ are weak* limits of some subsequences of $\check{\mu}_{\sigma,k,x}$ and $\check{\mu}_{\sigma,k,y}$ as $\sigma \rightarrow h$ and $\varphi$ is supported in $\bar{\CC}(x,e^{ak}) \cap \bar{\CC}(y,e^{al})$. Since any continuous  function $\varphi_{*}$ on $\p \X_{b}$ supported in $\bar{\CC}_{*}(x,e^{ak}) \cap \bar{\CC}_{*}(y,e^{al})$ arises as the restriction of a continuous function $\varphi$ on $\bar{\X}_{b}$ supported in $\bar{\CC}(x,e^{ak}) \cap \bar{\CC}(y,e^{al})$, we conclude that \eqref{boundary measure} holds for any continuous function $\varphi: \p \X_{b} \rightarrow \R$ that is supported on  $\bar{\CC}_{*}(x,e^{ak}) \cap \bar{\CC}_{*}(y,e^{al})$. 

Let's consider \eqref{boundary measure} in the case $y = x$ and $k \leq l \in \N$.   In this case we obtain that 
\begin{equation}\label{special boundary measure}
\int_{\p \X_{b}} \varphi \, d\theta_{l,x} \asymp \int_{\p \X_{b}} \varphi \, d\theta_{k,x},
\end{equation}
for any continuous function $\varphi$ supported on $\bar{\CC}_{*}(x,e^{ak})$. We consider $\{\theta_{l,x}\}_{l \in \N}$ as a sequence of positive linear functionals on $C_{c}(\p \X_{b})$, the space of compactly supported continuous functions on $\p \X_{b}$.  We observe that this sequence is pointwise bounded: for any choice of $\varphi \in  C_{c}(\p \X_{b})$ we will have $\supp \varphi \subset \bar{\CC}_{*}(x,e^{ak})$ once $k$ is large enough and then \eqref{special boundary measure} implies the desired bound for any $k \leq l$. Thus by Tychonoff's theorem the sequence $\{\theta_{l,x}\}_{l \in \N}$ is precompact in the topology of pointwise convergence for linear functionals on $C_{c}(\p \X_{b})$. Let $\theta_{x}$ denote any limit point of this sequence, which will also be a positive linear functional on $C_{c}(\p \X_{b})$ and thus defines a Borel measure on $\p \X_{b}$ by the Riesz representation theorem. The measure $\theta_{x}$ is defined by the property that there is some sequence $n_{k} \rightarrow \infty$ such that
\[
\int_{\p \X_{b}} \varphi \,d\theta_{n_{k},x} \rightarrow \int_{\p \X_{b}} \varphi \,d\theta_{x},
\]
for every $\varphi \in C_{c}(\p \X_{b})$. For any other $y \in \W^{u}(x)$ with associated limit point $\theta_{m_{k},y} \rightarrow \theta_{y}$ we observe that for any given $\varphi \in C_{c}(\X_{b})$ we will have $\supp \varphi \subset \bar{\CC}_{*}(x,e^{an_{k}}) \cap \bar{\CC}_{*}(y,e^{am_{k}})$ once $k$ is sufficiently large. Thus \eqref{boundary measure} holds in the limit, giving us
\begin{equation}\label{limit boundary measure}
\int_{\p \X_{b}} \varphi \, d\theta_{y} \asymp \int_{\p \X_{b}} \varphi \, d\theta_{x},
\end{equation}
for any $\varphi \in C_{c}(\p \X_{b})$. 

We summarize \eqref{limit boundary measure} as follows: given two measures $\mu$ and $\nu$ on the same space we write $\mu \asymp_{K} \nu$ if $\mu$ is equivalent to $\nu$ and the Radon-Nikodym derivative satisfies $d\mu/d\nu \asymp_{K} 1$. Then we conclude from \eqref{limit boundary measure} that $\theta_{x} \asymp \theta_{y}$ for $y \in \W^{u}(x)$.

A metric measure space $(X,d,\mu)$ is \emph{Ahlfors $Q$-regular} for some exponent $Q > 0$ and constant $C \geq 1$ if for any $x \in X$ and $r > 0$ we have
\begin{equation}\label{ahlfors}
\mu(B(x,r)) \asymp_{C} r^{Q}.
\end{equation}
An Ahlfors $Q$-regular metric measure space has Hausdorff dimension $Q$ and the $Q$-Hausdorff measure on $X$ is uniformly equivalent to $\mu$. A metric space $(X,d)$ will be called Ahlfors $Q$-regular if it becomes an Ahlfors $Q$-regular metric measure space when equipped with its $Q$-Hausdorff measure. 
 
\begin{prop}\label{Hamenstadt regular}
For each $x \in M$ the Hamenst\"adt metric $\rho_{x}$ on $\W^{u}(x)$ is Ahlfors $h/a$-regular with constant $K$ independent of $x$. The $h/a$-Hausdorff measure $\nu_{x}$ satisfies
\begin{equation}\label{measure scaling}
f^{t}_{*}\nu_{x} = e^{ht}\nu_{f^{t}x},
\end{equation}
for each $x \in M$ and $t \in \R$. 
\end{prop}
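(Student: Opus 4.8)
The plan is to realize the measure $\nu_x$ (up to uniform equivalence) as the transport to $\W^u(x)$ of the limiting boundary measure $\theta_x$ built by the renormalization procedure preceding the proposition, and then to check Ahlfors regularity of $\theta_x$ by estimating its mass on cones. Fix $x\in M$ and work in the expanding cone $\X_x$ with height function $b=b_x$, normalized so that $b(x)=0$; note that then $b(y)=0$ for every $y\in\W^u(x)$ as well. By Proposition \ref{boundary identify} the map $\Psi_x$ is a $K(a,A)$-biLipschitz identification of $(\W^u(x),\rho_x)$ with $(\p\X_b,d_b)$, and $\Psi_x(\bar B_\rho(y,r))=\bar\CC_*(y,r)$. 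Since any Ahlfors $h/a$-regular measure on a metric space is uniformly equivalent to the $h/a$-Hausdorff measure, the entire first assertion reduces to proving the estimate
\[
\theta_x(\bar\CC_*(y,r))\asymp r^{h/a}\qquad\text{for all }y\in\W^u(x),\ r>0,
\]
with implied constants depending only on $a$ and $A$.

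First I would establish the corresponding estimate for the renormalized bulk measures. For $\sigma>h$, $y\in\W^u(x)$, and $l$ large enough that $\bar\CC(y,r)\subset\bar\CC(x,e^{al})$ (equivalently $e^{al}\ge\rho_x(x,y)+r$), the definition \eqref{renormalization} gives $\check\mu_{\sigma,l,x}(\bar\CC(y,r))=e^{\sigma l}\,\mu_\sigma(\bar\CC(y,r))/\mu_\sigma(\bar\CC(x,e^{al}))$. Writing $r=e^{ak}$ and applying Lemma \ref{crit extension} to each of the two cones, and using $b(x)=b(y)=0$, one obtains $\mu_\sigma(\bar\CC(y,e^{ak}))\asymp e^{\sigma k}G(\sigma)$ and $\mu_\sigma(\bar\CC(x,e^{al}))\asymp e^{\sigma l}G(\sigma)$: the correction terms appearing in \eqref{crit estimate pos}--\eqref{crit estimate neg} stay bounded as $\sigma\to h$ while $G(\sigma)\to\infty$, so for $\sigma$ close enough to $h$ they may be absorbed into the comparison constants. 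This yields $\check\mu_{\sigma,l,x}(\bar\CC(y,r))\asymp e^{\sigma k}=r^{\sigma/a}$ for $\sigma$ near $h$, with $\asymp$-constants depending only on $a$ and $A$.

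Next I would pass to the limit. Fix $y,r$ and $l$ large, and let $\sigma_n\to h$ be a sequence along which $\check\mu_{\sigma_n,l,x}\rightharpoonup\theta_{l,x}$. By \eqref{crush initial} the measure $\theta_{l,x}$ is carried by $\p\X_b$, so $\theta_{l,x}(\bar\CC(y,s))=\theta_{l,x}(\bar\CC_*(y,s))$ for every $s$. Since $\bar\CC(y,r/2)$ is closed and $\CC(y,2r)$ is open in $\bar\X_b$, with $\bar\CC_*(y,r/2)\subseteq\bar\CC_*(y,r)\subseteq\CC(y,2r)$, the portmanteau theorem combined with the bound of the previous paragraph (and $r^{\sigma_n/a}\to r^{h/a}$) gives $r^{h/a}\ls\theta_{l,x}(\bar\CC_*(y,r))\ls r^{h/a}$, with constants independent of $l$ and $y$. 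Finally $\theta_x$ is, by construction, a limit point of $\{\theta_{l,x}\}_{l\in\N}$ in the topology of pointwise convergence on $C_c(\p\X_b)$; squeezing $\mathbf 1_{\bar\CC_*(y,r)}$ between compactly supported continuous functions and using the uniform-in-$l$ bound just obtained, the same two-sided estimate passes to $\theta_x$, which proves the core estimate. Uniformity of the resulting constant $K$ over $x\in M$ then follows because every implied constant invoked — from Lemmas \ref{crit lemma}, \ref{crit extension}, \ref{Ham doubling} and from Proposition \ref{boundary identify} — depends only on $a$ and $A$, which are fixed by the flow.

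For the scaling relation \eqref{measure scaling}, I would simply invoke the conformal scaling property \eqref{conformal scaling}: the map $f^t\colon(\W^u(x),\rho_x)\to(\W^u(f^tx),\rho_{f^tx})$ multiplies all distances by the factor $e^{at}$, and a map rescaling a metric by $\lambda$ rescales the $Q$-dimensional Hausdorff measure by $\lambda^Q$; with $Q=h/a$ and $\lambda=e^{at}$ this is exactly the factor $e^{ht}$ in \eqref{measure scaling}, and it holds as an exact identity rather than up to constants. I expect the main obstacle to be the bookkeeping in the two limit passages: ensuring that the cone estimates for $\check\mu_{\sigma,l,x}$ are uniform in $l$ so that they survive the second limit defining $\theta_x$, and carefully handling the interplay between open and closed cones given that $\theta_x$ is supported only on the boundary $\p\X_b$. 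Each individual ingredient — Lemma \ref{crit extension}, the portmanteau theorem, and the biLipschitz identification $\Psi_x$ — is routine.
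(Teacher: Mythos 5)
Your proof is correct, and it takes a noticeably more hands-on route than the paper's. Where the paper exploits two facts already assembled in the discussion preceding the proposition — that $\theta_{y,l}$ has total mass $e^{hl}$ and is concentrated precisely on $\bar{\CC}_{*}(y,e^{al})$, and that the comparison \eqref{boundary measure} (in its $k\to\infty$ limiting case) identifies $\theta_{y,l}$ with $\theta_{x}|_{\bar{\CC}_{*}(y,e^{al})}$ up to uniform constants — you bypass the auxiliary measures $\theta_{y,l}$ for $y\neq x$ entirely. Instead you estimate $\check{\mu}_{\sigma,l,x}(\bar{\CC}(y,r))$ directly from Lemma \ref{crit extension}, and then carry the estimate through the two limits $\sigma\to h$ and $l\to\infty$ by portmanteau and the vague convergence $\theta_{l,x}\to\theta_{x}$ respectively. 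The subtle points you flag are exactly the ones that matter: that the absorption of the bounded correction terms in \eqref{crit estimate pos}--\eqref{crit estimate neg} is uniform in $l$ and $y$ (because the $\asymp$-constants in Lemma \ref{crit extension} depend only on $a,A$, while the threshold on $\sigma$ — which may depend on $l,r$ — is harmless since $l,r$ are held fixed during the $\sigma\to h$ passage), and that the open/closed sandwich $\bar{\CC}_{*}(y,r/2)\subseteq\bar{\CC}_{*}(y,r)\subseteq\CC(y,2r)$ converts the weak and vague convergence estimates into two-sided bounds on $\theta_{x}(\bar{\CC}_{*}(y,r))$. The paper's route is slicker because the total mass of $\theta_{y,l}$ packages the first limit passage invisibly, while \eqref{boundary measure} packages the second; yours makes both limit passages explicit at the cost of slightly more bookkeeping. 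The concluding steps — converting the cone estimate to a ball estimate, transferring Ahlfors regularity through the biLipschitz map $\Psi_{x}$ of Proposition \ref{boundary identify}, and deducing \eqref{measure scaling} from \eqref{conformal scaling} — match the paper's.
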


\begin{proof}
For $y \in \W^{u}(x)$ the measure $\theta_{y,l}$ constructed on $\p \X_{b}$ has total mass $e^{hl}$ by construction. Since $\theta_{y,l} \asymp \theta_{x}|_{\bar{\CC}_{*}(y,e^{al})}$ by the limiting case $k \rightarrow \infty$ of \eqref{boundary measure}, we conclude that $\theta_{x}(\bar{\CC}_{*}(y,e^{al})) \asymp e^{hl}$. By the estimate \eqref{cone pre chain} with $r = e^{al}$ (note this does not require the condition on $r$ that the lemma imposes)  there is a constant $K = K(a,A) \geq 1$ such that 
\[
\bar{\CC}_{*}(y,K^{-1}e^{al}) \subset \bar{B}_{*}(\bar{y},e^{al}) \subset \bar{\CC}_{*}(y,Ke^{al}),
\]
which gives, by setting $r = e^{al}$,
\[
\theta_{x}(\bar{B}_{*}(\bar{y},r)) \asymp r^{h/a}.
\]
Since $\bar{y} \in \p \X_{b}$ is arbitrary, we conclude that the metric space $(\p \X_{b},d_{b})$ is Ahlfors $h/a$-regular with constant $C$ independent of $x$ and $h/a$-Hausdorff measure uniformly equivalent to $\theta_{x}$. Thus by using the biLipschitz identification $\Psi_{x}:\W^{u}(x) \rightarrow \p \X_{b}$ of Proposition \ref{boundary identify} we conclude that the metric space $(\W^{u}(x),\rho_{x})$ is Ahlfors $h/a$-regular, again with constant $C$ independent of $x$. The scaling relationship \eqref{measure scaling} for the $h/a$-Hausdorff measure then follows immediately from \eqref{conformal scaling}.
\end{proof}

This completes the core of the proof of Theorem \ref{thm:Patterson-Sullivan}. As a corollary of Proposition \ref{Hamenstadt regular} we obtain the multiplicative asymptotic
\begin{equation}\label{mult estimate}
V_{t} \asymp e^{ht}.
\end{equation}
Thus for $\sigma > h$,
\[
G(\sigma) \asymp \int_{0}^{\infty}e^{(h-\sigma)t}\,dt = \frac{1}{\sigma-h},
\]
which shows for $v > 0$ small that $G(h+v) \asymp v^{-1}$, giving a multiplicative rate of divergence for $G(\sigma)$ to $\infty$ as $\sigma \rightarrow h$ that imposes a corresponding rate estimate on the $\mu_{\sigma}$-measure of the cones $\CC$ in Lemmas \ref{crit lemma} and \ref{crit extension}.

Since the $cs$-holonomy maps are locally biLipschitz in the Hamenst\"adt metrics, it is straightforward to show that the $cs$-holonomy maps are absolutely continuous with respect to the Hausdorff measures $\nu_{x}$.

\begin{prop}\label{holonomy quasi invariance}
If $U \subset \W^{u}(x)$, $V \subset \W^{u}(y)$ are open subsets with compact closure such that there is a $cs$-holonomy homeomorphism $h^{cs}:\bar{U} \rightarrow \bar{V}$ then $h^{cs}_{*}(\nu_{x}|_{U}) \asymp_{K} \nu_{y}|_{V}$ with $K = K_{R}$ depending only on $R = \sup_{x \in U} d^{cs}(x,h^{cs}(x))$ and $K_{R} \rightarrow 1$ as $R \rightarrow 0$.
\end{prop}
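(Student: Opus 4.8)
The plan is to reduce the statement to the Ahlfors regularity of the Hamenst\"adt metrics (Proposition \ref{Hamenstadt regular}) combined with the biLipschitz property of the $cs$-holonomy maps (Lemma \ref{Hamenstadt holonomy}), using a standard covering/Vitali argument to pass from the biLipschitz bound on the metric to a two-sided comparison of the Hausdorff measures. First I would fix $x$, $y \in M$, open sets $U \subset \W^{u}(x)$ and $V \subset \W^{u}(y)$ with compact closure, and a $cs$-holonomy homeomorphism $h^{cs}: \bar{U} \to \bar{V}$ with $R = \sup_{p \in \bar{U}} d^{cs}(p, h^{cs}(p))$. By Lemma \ref{Hamenstadt holonomy} (whose hypotheses we may arrange, possibly after covering $\bar{U}$ by finitely many small pieces of the form $B_{\rho}(p, r)$ with $r$ uniformly bounded and patching, but the global $d^{cs} \le R$ bound already puts us in the scope of that lemma with constant $K_R$) the map $h^{cs}$ is $K_R$-biLipschitz from $(\bar U, \rho_x)$ onto $(\bar V, \rho_y)$, with $K_R \to 1$ as $R \to 0$.

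The next step is the measure-theoretic core: given a $K_R$-biLipschitz homeomorphism between two subsets of metric spaces that are each Ahlfors $Q$-regular (here $Q = h/a$, with a uniform regularity constant $C$ by Proposition \ref{Hamenstadt regular}), I would show that the pushforward of the $Q$-Hausdorff measure of the source is comparable to the $Q$-Hausdorff measure of the target, with comparison constant controlled by $K_R$ and $C$. The cleanest route is to bound the $Q$-dimensional Hausdorff premeasures directly: a biLipschitz map with constant $K_R$ multiplies diameters of sets by at most $K_R$ and at least $K_R^{-1}$, hence multiplies the $Q$-dimensional Hausdorff content by at most $K_R^{Q}$ and at least $K_R^{-Q}$; taking infima over covers of mesh $\to 0$ (which are preserved under the biLipschitz map up to the factor $K_R$) gives $K_R^{-Q}\, \mathcal H^Q_{\rho_x}(E) \le \mathcal H^Q_{\rho_y}(h^{cs}(E)) \le K_R^{Q}\, \mathcal H^Q_{\rho_x}(E)$ for every Borel $E \subset \bar U$. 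Since $\nu_x = \mathcal H^{h/a}_{\rho_x}$ up to the uniform Ahlfors constant by Proposition \ref{Hamenstadt regular}, and likewise for $\nu_y$, this yields $h^{cs}_*(\nu_x|_U) \asymp_{K} \nu_y|_V$ with $K$ a function of $K_R$ and the uniform regularity constant $C$ only; in particular $K = K(R)$ and $K(R) \to 1$ as $R \to 0$ because $K_R \to 1$ and the Ahlfors constant is uniform (one should be slightly careful here: to make the comparison constant itself tend to $1$, I would use that $\nu_x$ \emph{equals} the normalized $h/a$-Hausdorff measure — or more precisely argue via the densities, which are bounded between the upper and lower $h/a$-densities and hence the biLipschitz distortion enters only through $K_R^{\pm h/a}$, with no residual contribution from $C$).

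The main obstacle is exactly this last point: Proposition \ref{Hamenstadt regular} only gives $\nu_x \asymp_C \mathcal{H}^{h/a}_{\rho_x}$ with a fixed constant $C \ge 1$, so a naive argument would produce $K \asymp C^2 K_R^{2h/a}$, which does \emph{not} tend to $1$ as $R \to 0$. To get the sharp conclusion $K_R \to 1$ one must instead work with $\nu_x$ directly (it \emph{is} the $h/a$-Hausdorff measure, by the statement of Proposition \ref{Hamenstadt regular}, not merely comparable to it) and apply the biLipschitz distortion estimate for Hausdorff measure itself, so that the only distortion is the clean factor $K_R^{\pm h/a}$. I would therefore phrase the covering argument directly in terms of $\mathcal H^{h/a}$, invoke the identity $\nu_x = \mathcal H^{h/a}_{\rho_x}$ (and the equivariance \eqref{measure scaling} is not even needed here), and conclude $h^{cs}_*(\nu_x|_U) \asymp_{K_R^{h/a}} \nu_y|_V$, so that the final comparison constant is $K = K_R^{h/a}$, which visibly tends to $1$ as $R \to 0$. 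A final routine remark: absolute continuity in both directions is immediate from the two-sided comparison, and the compactness of $\bar U$ ensures all the measures involved are finite so that the pushforward and restriction operations behave as expected.
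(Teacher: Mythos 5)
Your proof is correct and follows essentially the same route as the paper: invoke Lemma \ref{Hamenstadt holonomy} to get the locally $K_R$-biLipschitz property of $h^{cs}$ in the Hamenst\"adt metrics, then transport the $h/a$-dimensional Hausdorff measure through the biLipschitz map to get the comparison constant $K_R^{h/a}$. The ``obstacle'' you raise about the Ahlfors regularity constant not tending to $1$ is a non-issue, as you correctly realize at the end: $\nu_x$ \emph{is} the $h/a$-Hausdorff measure for $\rho_x$ by construction in Proposition \ref{Hamenstadt regular}, not merely comparable to it, which is exactly how the paper's one-line proof gets the clean factor.
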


\begin{proof}
Set $R = \sup_{x \in U} d^{cs}(x,h^{cs}(x))$. Then by Lemma \ref{Hamenstadt holonomy} there is a constant $K = K_{R}$ (with $K_{R} \rightarrow 1$ as $R \rightarrow 0$) such that the holonomy $h^{cs}:U \rightarrow V$ is locally $K$-biLipschitz in the Hamenst\"adt metrics. Since $\nu_{x}|_{U}$ and $\nu_{y}|_{V}$ are the $h/a$-dimensional Hausdorff measures for $(U,\rho_{x})$ and $(V,\rho_{y})$ respectively, this implies that $h^{cs}_{*}(\nu_{x}|_{U}) \asymp \nu_{y}|_{V}$ with comparison constant determined explicitly by $K$.
\end{proof}

We define the \emph{Margulis measure} $m_{x}$ on $\W^{cu}(x)$ by its disintegration along flowlines,
\begin{equation}\label{Margulis construction}
dm_{x}(y) = d\nu_{f^{t}x}(y)dt, 
\end{equation}
Note that $m_{x} = m_{y}$ for $y \in \W^{cu}(x)$ and for any $t \in \R$ the scaling \eqref{measure scaling} implies the scaling
\begin{equation}\label{Margulis scaling}
f^{t}_{*}m_{x} = e^{ht}m_{f^{t}x} = e^{ht}m_{x}.
\end{equation}
We also note from \eqref{measure scaling} that $m_{x}$ is not a product measure: if we integrate in the other order then we end up with the density
\begin{equation}\label{Margulis flip}
dm_{x}(y) = e^{ht}dt\,d\nu_{x}(y), 
\end{equation}
where $y \in \W^{u}(x)$.

We next consider $s$-holonomy maps $h^{s}:\W^{cu}_{\loc}(x) \rightarrow \W^{cu}_{\loc}(y)$, $y \in \W^{s}_{\loc}(x)$. We observe that the $s$-holonomies $h^{s}:\W^{c}_{\loc}(x) \rightarrow \W^{c}_{\loc}(y)$ between orbits are $C^r$; in fact they are isometric and orientation-preserving. This is straightforward to see via our description of $\W^{cu}(x)$ (and therefore of $\W^{cs}(x)$ by taking the inverse $f^{-t}$) from the fact that $s$-holonomy is height-preserving. 

If we consider the $s$-holonomy image $h^{s}:B_{\rho}(x,r) \rightarrow \W^{cu}_{\loc}(y)$ of a Hamenst\"adt ball for $d^{s}(x,y) \leq 1$ and $0 < r \leq 1$ then the projection $P_{y}(h^{s}(B_{\rho}(x,r))$ onto $\W^{u}(y)$ along the flowlines of $f^{t}$ coincides with the $cs$-holonomy $h^{cs}:B_{\rho}(x,r) \rightarrow \W^{u}_{\loc}(y)$. Lemma \ref{Hamenstadt holonomy} implies that $h^{cs}$ is biLipschitz in the Hamenst\"adt metrics, so in particular it is injective. Thus $P_{y}:h^{s}(B_{\rho}(x,r) \rightarrow \W^{u}(y)$ is injective and so we can consider $h^{s}(B_{\rho}(x,1))$ as a graph over its projection onto $\W^{u}(y)$.

With these properties noted we can prove the following proposition.

\begin{prop}\label{holonomy invariance}
If $U \subset \W^{cu}(x)$, $V \subset \W^{cu}(y)$ are open subsets such that there is a stable holonomy homeomorphism $h^{s}:\bar{U} \rightarrow \bar{V}$ then $h^{s}_{*}(m_{x}|_{U}) = m_{y}|_{V}$. 
\end{prop}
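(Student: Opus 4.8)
The plan is to deduce the exact identity $h^{s}_{*}(m_{x}|_{U})=m_{y}|_{V}$ from an \emph{approximate} version — a multiplicative comparison whose error collapses to $1$ as the holonomy becomes $C^{0}$-small — and then to upgrade the comparison to equality by exploiting the compatible way in which $m_{x}$ and stable holonomy transform under the flow. I may assume $\bar U$ is compact (the general open case follows by exhausting $U$ by open sets with compact closure) and, after relabeling, that $y=h^{s}(x)\in\W^{s}(x)$. Working in the coordinates $\W^{u}(x)\times\R\to\W^{cu}(x)$, $(\xi,t)\mapsto f^{t}\xi$, normalized so that $b_{x}\equiv 0$ on $\W^{u}(x)$, and similarly on $\W^{cu}(y)$, I would use the two structural facts recalled above — that $h^{s}$ carries each flowline isometrically and orientation-preservingly onto a flowline, and that $P_{y}\circ h^{s}|_{\W^{u}(x)}=h^{cs}$ — to write $h^{s}(\xi,t)=(h^{cs}(\xi),\,\tau(\xi)+t)$ with $\tau(\xi):=b_{y}(h^{s}(\xi))$ continuous. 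A Fubini computation against the disintegration \eqref{Margulis flip} then gives the density formula
\[
\frac{d\,h^{s}_{*}(m_{x}|_{U})}{d\,(m_{y}|_{V})}(\eta)=e^{-h\tilde\tau(\eta)}\,\frac{d\,h^{cs}_{*}\nu_{x}}{d\,\nu_{y}}(\eta),\qquad \tilde\tau:=\tau\circ(h^{cs})^{-1}.
\]

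Next I would control both factors on the right. Setting $R=\sup_{z\in\bar U}d^{cs}(z,h^{s}(z))$, the compactness-and-continuity argument used in the proof of Lemma \ref{Hamenstadt holonomy} should give $\sup_{\bar U}|\tau|\to 0$ as $R\to 0$; since the center-stable displacement of $h^{cs}$ is bounded by $R$ plus $\sup_{\bar U}|\tau|$, it too tends to $0$, and Proposition \ref{holonomy quasi invariance} (whose constant tends to $1$) bounds $d\,h^{cs}_{*}\nu_{x}/d\nu_{y}$ between reciprocal constants tending to $1$. Hence the density in the display is squeezed between $K_{R}^{\pm1}$ with $K_{R}\to 1$ as $R\to 0$, so the proposition holds "in the limit of small displacement". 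The remaining issue is that for a fixed $U$ the displacement $R$ need not be small.

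To remove this restriction I would use that the identity is flow-invariant. Applying $f^{T}_{*}$ to both sides and using $f^{T}_{*}m_{x}=e^{hT}m_{x}$ from \eqref{Margulis scaling} together with the equivariance \eqref{equivariant holonomy}, the factors $e^{hT}$ cancel, so $h^{s}_{xy,*}(m_{x}|_{U})=m_{y}|_{V}$ is equivalent to the same statement for the flowed data $(f^{T}U,f^{T}V,h^{s}_{f^{T}x,f^{T}y})$; the same cancellation shows the density $\psi:=d\,h^{s}_{xy,*}(m_{x}|_{U})/d\,(m_{y}|_{V})$ of the original problem and the density $\psi_{T}$ of the flowed one are related by $\psi=\psi_{T}\circ f^{T}$. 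Since $y\in\W^{s}(x)$ and $C_{s}=1$, flowing forward contracts stable distances by \eqref{stable expansion}, so the displacement $R_{T}$ of the flowed holonomy tends to $0$ as $T\to\infty$. Feeding this into the approximate identity of the previous paragraph gives $\psi_{T}\asymp_{K'_{R_{T}}}1$ with $K'_{R_{T}}\to 1$, and since $\psi=\psi_{T}\circ f^{T}$ this forces $\psi\equiv 1$ $m_{y}$-almost everywhere on $V$, which is the assertion.

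I expect the approximate identity to be the main obstacle. One must verify honestly that $\sup_{\bar U}|\tau|\to 0$ and that the center-stable displacement of $h^{cs}$ degenerates with $R$, and — the genuine subtlety — observe that it is the \emph{pure stable} displacement of $h^{s}$, not the center-stable displacement of $h^{cs}$, that is contracted under forward flow; the latter retains a term of size $|\tau|$ which does \emph{not} shrink, which is precisely why the bootstrap must be run through $h^{s}$ rather than directly through $h^{cs}$. The rest — the coordinate change, the Fubini bookkeeping, and the manipulations of \eqref{equivariant holonomy} — should be routine.
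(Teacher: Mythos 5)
Your strategy — derive an approximate density comparison and then improve it to equality by flowing forward until the stable displacement is negligible — is the same as the paper's, and the explicit Radon--Nikodym formula $\psi=e^{-h\tilde\tau}\,d\,h^{cs}_{*}\nu_{x}/d\,\nu_{y}$ is a clean way to package the Fubini computation that the paper does implicitly. The gap is in the bootstrap. The relation you write down, $\psi=\psi_{T}\circ f^{T}$, says that $\psi_{T}$ is the \emph{same} function reparametrized by the diffeomorphism $f^{-T}$; in particular $\sup\lvert\psi_{T}-1\rvert$ is independent of $T$. So the claim ``$\psi_{T}\asymp_{K'_{R_{T}}}1$ with $K'_{R_{T}}\to1$'' is not a consequence of the approximate identity applied to the flowed data — it is equivalent to the statement $\psi\equiv1$ that you are trying to prove. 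The mechanism is exactly the flow-invariance of $\tau$: since $b_{f^{T}y}=b_{y}-T$ and $h^{s}_{f^{T}x,f^{T}y}\circ f^{T}=f^{T}\circ h^{s}_{xy}$, one computes $\tau_{T}(f^{T}\xi)=b_{f^{T}y}\bigl(f^{T}h^{s}(\xi)\bigr)=\tau(\xi)$, so the factor $e^{-h\tilde\tau_{T}}$ in the density of the flowed system does not shrink. Your claim ``$\sup_{\bar U}\lvert\tau\rvert\to0$ as $R\to0$'' is correct for a \emph{fixed} compact $U$ (varying $y\to x$), but the rate $\delta(\epsilon,U)$ degrades as $U$ grows, and when you flow, the domain grows to $f^{T}(U)$ at precisely the rate that keeps $\sup_{f^{T}\bar U}\lvert\tau_{T}\rvert$ constant. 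So Step 4 of the bootstrap is circular.

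The missing ingredient is the paper's restriction to flowboxes $\bigcup_{\lvert t\rvert<v}f^{t}(B_{\rho}(x_{0},r))$ of fixed Hamenst\"adt radius $r\le1$, for which the compactness argument in Lemma~\ref{Hamenstadt holonomy} gives a $\delta(\epsilon)$ that is genuinely independent of the center — the space of such configurations over $M$ is compact — followed by a covering step to pass to general open sets. The reason this beats the flow-invariance obstruction: on a small flowbox of the flowed set $f^{T}(U)$ centered at $x_{0}'=f^{T}x_{0}$, the relevant height-offset is the \emph{recentered} function $\tau_{i}(\xi')=b_{h^{s}_{T}(x_{0}')}\bigl(h^{s}_{T}(\xi')\bigr)=\tau_{T}(\xi')-\tau_{T}(x_{0}')$, i.e.\ the \emph{oscillation} of the original $\tau$ over the tiny preimage $f^{-T}\bigl(B_{\rho}(x_{0}',r)\bigr)\subset B_{\rho}(x_{0},e^{-aT}r)$, which does shrink because $\tau$ is continuous — not the flow-invariant value $\tau_{T}$ itself. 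Your proposal feeds the global $\tilde\tau$ into the bootstrap, where only the local oscillation is controlled. To repair the argument, establish the approximate comparison uniformly over unit-radius flowboxes, extend to arbitrary open $W$ with small displacement by the neighborhood-basis covering argument, \emph{then} flow; the final $\epsilon\to0$ step then goes through.
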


\begin{proof}
We begin with the special case 
\begin{equation}\label{U define}
U = \bigcup_{t \in (-v,v)} f^{t}(B_{\rho}(x,r)),
\end{equation}
with $0 < r,v \leq 1$ , and such that $d^{s}(z,h^{s}(z)) \leq R$ for some $0 < R \leq 1$ and all $z \in U$. By the discussion prior to the proposition statement the $s$-holonomy image $V$ of $U$ has the form
\[
V = \bigcup_{t \in (-v,v)} f^{t}(h^{s}(B_{\rho}(x,r))),
\]
since $s$-holonomy is isometric on flowlines. Letting $Q = P_{y}(h^{s}(B_{\rho}(x,r)))$, we see that there is a continuous function $\psi:Q \rightarrow \W^{c}(y)$ such that we can write $V$ as the region between translates of the graph of $\psi$,
\[
V = \{(z,t) \in \W^{cu}_{\loc}(y): z \in Q, \psi(z) - v \leq t \leq \psi(z) + v\},
\]
where the local coordinates are given by the local product structure of $\W^{u}$ and $\W^{c}$ within $\W^{cu}(y)$, so we take $z \in \W^{u}_{\loc}(y)$, $t \in \W^{c}_{\loc}(y) \cong [-T,T]$. Since the projection $P_{y} = f^{-t}:\W^{u}(f^{t}y) \rightarrow \W^{u}(y)$ sends the measure $\nu_{f^{t}y}$ to $e^{-ht}\nu_{y}$, as in \eqref{Margulis flip} we can compute the $m_{y}$-measure of $V$ by
\begin{align*}
m_{y}(V) &= \int_{Q}\int_{\psi(z) - v }^{\psi(z) + v}e^{ht}\,dt d\nu_{y}(z) \\
&= h^{-1}\int_{Q}e^{h(\psi(z)+v)}-e^{h(\psi(z)-v)}\, d\nu_{y}(z).
\end{align*}
On the other hand the same calculation for $U$ yields
\[
m_{x}(U) = h^{-1}(e^{hv}-e^{-hv})\nu_{x}(B_{\rho}(x,r)).
\]
By a similar argument to the one used at the end of Lemma \ref{Hamenstadt holonomy}, we see that as $R \rightarrow 0$ the function $\psi$ in this configuration must converge uniformly to the zero map $\psi(z) = 0$. Likewise from Proposition \ref{holonomy quasi invariance} we see that as $R \rightarrow 0$ the $\nu_{y}$-measure of the projection $Q$ will converge uniformly to $\nu_{x}(B_{\rho}(x,r))$. Putting these facts together with the above formulas, we see that given any $\e > 0$ there is a $\delta > 0$ such that if $R < \delta$ then
\[
m_{x}(U) \asymp_{1+\e} m_{y}(V),
\]
where $U$ has the form \eqref{U define} for some $0 < r,v \leq 1$ and we have $d^{s}(z,h^{s}(z)) \leq R$ for all $z \in U$. Since sets $U$ of the form \eqref{U define} form a neighborhood basis of each point in any open subset $W \subset \W^{cu}(x)$ we conclude that if $\e > 0$ is given and $d^{s}(z,h^{s}(z)) \leq R$ for some $R < \delta$ and all $z \in W$ then 
\begin{equation}\label{proto invariance}
h^{s}_{*}(m_{x}|_{W}) \asymp_{1+\e} m_{y}|_{h^{s}(W)}. 
\end{equation}

Now let $U$ and $V$ be given as in the proposition statement. It's clear that it suffices to prove the claim when $U$ has compact closure in $\W^{cu}(x)$, as we can extend to the general case by exhausting $U$ by open subsets $U_{i}$ with compact closure in $U$. Then $L = \sup_{z \in U}d^{s}(z,h^{s}(z))$ is finite, so given any $\e > 0$ we can choose $\delta > 0$ such that \eqref{proto invariance} holds for distances $R < \delta$. Then, since $f^{t}$ uniformly contracts the stable foliation $\W^{s}$, we can choose $T > 0$ large enough that 
\[
\sup_{z \in U}d^{s}(f^{T}(z),f^{T}(h^{s}(z))) = \sup_{z \in U}d^{s}(f^{T}(z),h^{s}(f^{T}(z))) < \delta.
\]
Thus we conclude from \eqref{proto invariance} that
\[
h^{s}_{*}(m_{f^{T}x}|_{f^{T}(U)}) \asymp_{1+\e} m_{f^{T}y}|_{f^{T}(V)}.
\]
But by \eqref{Margulis scaling} we then have
\[
h^{s}_{*}(m_{x}|_{U}) \asymp_{1+\e} m_{y}|_{V}.
\]
Since this holds for any $\e > 0$ we can let $\e \rightarrow 0$ to conclude the proof. 
\end{proof}

Having established these properties of the Margulis measures $m_{x}$, $x \in M$, we can label them as $m^{u}_{x}$ and $\nu^{u}_{x}$, then swap $f^{-t}$ for $f^{t}$ to get corresponding measures $m^{s}_{x}$ and $\nu^{s}_{x}$ on $\W^{cs}(x)$ and $\W^{s}(x)$ with scaling laws $f^{t}_{*}m^{s}_{x} = e^{-ht}m_{f^{t}x}$ and $f^{t}_{*}\nu^{s}_{x} = e^{-ht}\nu_{f^{t}x}^{s}$. Here we use the fact that the inverse $f^{-t}$ of a flow $f^{t}$ has the same topological entropy $h_{\mathrm{top}}(f) = h_{\mathrm{top}}(f^{-1})$. Since these measures have the exact same properties as the Margulis measures in Margulis' construction, we can build the measure of maximal entropy locally as a product measure by standard methods; for instance one can follow \cite[Chapter 8.6]{FH19} starting from Lemma 8.6.17. As this construction is well-known, we will not provide further details here. 

Let's return to the setting of Section \ref{sec:doubling} with the measures $\mu_{\sigma,p}$ on the expanding cones $\X_{p}$, $\sigma > h$, now with the additional knowledge that $V_{t} \asymp e^{ht}$. By the compactness of $M$ the balls $(B^{cu}(x,1),d^{cu}_{x})$ for $x \in M$ equipped with the Riemannian leaf metric on $\W^{cu}(x)$ are all $L$-biLipschitz to the standard Euclidean unit ball in $\R^{n}$ for an $L$ that's independent of $x$. Thus by the discussion prior to the statement of Lemma \ref{Whitney Poincare}, we conclude that the metric measure spaces $(\X_{p},d_{p},\mu_{p})$ are uniformly locally doubling and support a uniformly local Poincar\'e inequality with constants and radius independent of $p$. With this we can prove Theorem \ref{thm:Anosov Poincare}.

\begin{proof}[Proof of Theorem \ref{thm:Anosov Poincare}]
For each $x \in \X_{p}$ we apply Lemma \ref{crit extension} with $l \geq 0$ chosen such that $e^{al} = L'$, where $L' = L(a,A)$ is the constant of Proposition \ref{cone comparison}, so that $l = l(a,A)$. Then by \eqref{crit estimate pos} we have
\[
\mu_{\sigma,p}(\bar{\CC}(x,L)) \ls e^{-\sigma b(x)} G(\sigma),
\]
since the second term on the left in \eqref{crit estimate pos} is positive, and $G(\sigma) < \infty$ since $\sigma > h$. Since we have a uniform estimate $\mu_{x}(B^{cu}(x,1)) \asymp 1$ for all $x \in M$ by compactness, we conclude that the estimate \eqref{cone upper} holds for a uniform constant $K$ proportional to $G(\sigma)$. We thus conclude by Proposition \ref{cone comparison} that the metric spaces $(\bar{\X}_{b,p},d_{b,p},\mu_{\sigma,p})$ are doubling with a uniform doubling constant. It then follows from Proposition \ref{global Poincare} that $(\bar{\X}_{b,p},d_{p},\mu_{\sigma,p})$ supports a Poincar\'e inequality with constants depending only on $\sigma$ through $G(\sigma)$. 
\end{proof}

The conclusions of Theorem \ref{thm:Anosov Poincare} cannot be extended to $\sigma = h$. In fact Proposition \ref{ball comparison} shows that $\mu_{h}(B_{b}(\xi,r)) = \infty$ for any $\xi \in \p \X_{b}$ and $r > 0$ since $\mu_{h}(\CC_{\infty}(x,r)) = \infty$ for any $x \in \X_{p}$ and $r > 0$ by the multiplicative asymptotic \eqref{mult estimate}. Thus $\mu_{h}$ assigns infinite measure to any ball centered at a boundary point of $\bar{\X}_{b}$, which prevents $\mu_{h}$ from satisfying any sort of doubling condition on $\bar{\X}_{b}$ or even $\X_{b}$.

\bibliographystyle{plain}
\bibliography{ExtensionTrace}

\end{document}